\newtheorem{theorem}{Theorem}
\newtheorem*{theorem*}{Theorem}
\newtheorem{corollary}[theorem]{Corollary}
\newtheorem{lemma}[theorem]{Lemma}
\newtheorem{proposition}[theorem]{Proposition}
\newtheorem{conjecture}[theorem]{Conjecture}
\newtheorem{claim}[theorem]{Claim}
\theoremstyle{definition}
\newtheorem{defin}[theorem]{Definition}
\newcounter{boldSectionCounter}
\newcounter{boldSubsectionCounter}
\newcommand{\ls}[2]{{}^{#1}\,#2}
\newcommand{\tss}[1]{\textsuperscript{#1}}
\newcommand{\inj}{\xhookrightarrow{}}
\newcommand\apps[1]{
	\overset{#1}{\approx}
}
\newcommand{\on}[1]{
	\operatorname{#1}
}
\newcommand{\bias}{\operatorname{bias}}
\newcommand{\boldSubsection}[1]{
	 \addtocounter{boldSectionCounter}{-1}
   \noindent {\bfseries{\scshape \arabic{boldSectionCounter}.\arabic{boldSubsectionCounter}. #1}}\\[6pt]
   \stepcounter{boldSubsectionCounter}
   \addtocounter{boldSectionCounter}{1}
}		
\newcommand{\boldSection}[1]{
   \large\begin{center}\noindent {\bfseries{\scshape \S \arabic{boldSectionCounter} #1}}\\[6pt]\end{center}\normalsize
   \stepcounter{boldSectionCounter}
	 \setcounter{boldSubsectionCounter}{1}
}	
\newcommand\mder{{\Delta\!\!\!\!\!\hbox{\raisebox{0.3ex}{\tiny\ \textbullet}}}\,}
\newcommand{\tdt}{\times\cdots\times}
\newcommand{\cmm}[1]{\ignorespaces}
\newcommand{\tightoverset}[2]{
  \mathop{#2}\limits^{\vbox to -.5ex{\kern-1.15ex\hbox{$#1$}\vss}}}
\newcommand{\conv}{
	\tightoverset{\boldsymbol{-}}{\ast}
}
\newcommand{\prank}{\operatorname{prank}}
\newcommand\restr[2]{{
  \left.\kern-\nulldelimiterspace 
  #1 
  \vphantom{\big|} 
  \right|_{#2} 
}}
\newcommand{\subalign}[1]{%
  \vcenter{%
    \Let@ \restore@math@cr \default@tag
    \baselineskip\fontdimen10 \scriptfont\tw@
    \advance\baselineskip\fontdimen12 \scriptfont\tw@
    \lineskip\thr@@\fontdimen8 \scriptfont\thr@@
    \lineskiplimit\lineskip
    \ialign{\hfil$\m@th\scriptstyle##$&$\m@th\scriptstyle{}##$\hfil\crcr
      #1\crcr
    }%
  }%
}
\newcommand\blfootnote[1]{%
  \begingroup
  \renewcommand\thefootnote{}\footnote{#1}%
  \addtocounter{footnote}{-1}%
  \endgroup
}
\newcommand\ssk[1]{
	\substack{#1}
}
\newcommand\ex{\mathop{\mathbb{E}}}
\newcommand{\exx}{
  \mathop{
    \mathchoice{\vcenter{\hbox{\larger[4]$\mathbb{E}$}}}
               {\kern0pt\mathbb{E}}
               {\kern0pt\mathbb{E}}
               {\kern0pt\mathbb{E}}
  }\displaylimits
}
\newcommand*\bcdot{\mathpalette\bigcdot@{0.5}\,\,}
\newcommand*\bigcdot@[2]{\mathbin{\vcenter{\hbox{\scalebox{#2}{$\m@th#1\bullet$}}}}}
\def\blfootnote{\gdef\@thefnmark{}\@footnotetext}
\newcommand{\mls}{\on{Spec}^{\on{ml}}}
\newcommand\fco{\lbrack}
\newcommand\fcc{\rbrack^\wedge}
\begin{document}
\thispagestyle{empty}
\begin{center}\Large\noindent{\bfseries{\scshape An Inverse Theorem for Certain Directional Gowers Uniformity Norms}}\\[24pt]\normalsize\noindent{\scshape Luka Mili\'cevi\'c\dag}\\[6pt]
\end{center}
\blfootnote{\noindent\dag\ Mathematical Institute of the Serbian Academy of Sciences and Arts\\\phantom{\dag\ }Email: luka.milicevic@turing.mi.sanu.ac.rs\\\phantom{\dag\ }2010 \emph{Mathematics Subject Classification}: 11B30.}

\footnotesize
\begin{changemargin}{1in}{1in}
\centerline{\sc{\textbf{Abstract}}}
\phantom{a}\hspace{12pt}~Let $G$ be a finite-dimensional vector space over a prime field $\mathbb{F}_p$ with some subspaces $H_1, \dots, H_k$. Let $f \colon G \to \mathbb{C}$ be a function. Generalizing the notion of Gowers uniformity norms, Austin introduced directional Gowers uniformity norms of $f$ over $(H_1, \dots, H_k)$ as
\[\|f\|_{\mathsf{U}(H_1, \dots, H_k)}^{2^k} = \exx_{x \in G,h_1 \in H_1, \dots, h_k \in H_k} \mder_{h_1} \dots \mder_{h_k} f(x)\]
where $\mder_u f(x) \colon= f(x + u) \overline{f(x)}$ is the discrete derivative.\\
\phantom{a}\hspace{12pt}~Suppose that $G$ is a direct sum of subspaces $G = U_1 \oplus U_2 \oplus \dots \oplus U_k$. In this paper we prove the inverse theorem for the norm
\[\|\cdot\|_{\mathsf{U}(U_1, \dots, U_k, \smash[b]{\underbrace{{\scriptstyle G, \dots, G}}_{{\scriptscriptstyle \ell}}})},\]
which is the simplest interesting unknown case of the inverse problem for the directional Gowers uniformity norms. Namely, writing $\|\cdot\|_{\mathsf{U}}$ for the norm above, we show that if $f \colon G \to \mathbb{C}$ is a function bounded by 1 in magnitude and obeying $\|f\|_{\mathsf{U}} \geq c$, provided $\ell < p$, one can find a polynomial $\alpha \colon G \to \mathbb{F}_p$ of degree at most $k + \ell - 1$ and functions $g_i \colon \oplus_{j \in [k] \setminus \{i\}} G_j \to \{z \in \mathbb{C} \colon |z| \leq 1\}$ for $i \in [k]$ such that
\[\Big|\exx_{x \in G} f(x) \omega^{\alpha(x)} \prod_{i \in [k]} g_i(x_1, \dots, x_{i-1}, x_{i+1}, \dots, x_k)\Big| \geq \Big(\exp^{(O_{p,k,\ell}(1))}(O_{p,k,\ell}(c^{-1}))\Big)^{-1}.\]
The proof relies on an approximation theorem for the cuboid-counting function that is proved using the inverse theorem for Freiman multi-homomorphisms.
\end{changemargin}
\vspace{\baselineskip}

\boldSection{Introduction}

In his groundbreaking work~\cite{TimSzem} concerning Szemer\'edi's theorem on arithmetic progressions~\cite{SzemAP}, Gowers introduced the following norms.

\begin{defin}[Gowers uniformity norms]Let $G$ be a finite abelian group and let $f \colon G \to \mathbb{C}$. The \emph{$\mathsf{U}^k$ norm} of $f$ is given by the formula
\[\|f\|_{\mathsf{U}^k}^{2^k} = \exx_{x, a_1, \dots, a_k \in G} \prod_{\varepsilon \in \{0,1\}^k}\operatorname{Conj}^{|\varepsilon|} f\Big(x - \sum_{i = 1}^k \varepsilon_i a_i\Big),\]
where $\operatorname{Conj}^{l}$ stands for the conjugation operator being applied $l$ times and $|\varepsilon|$ is shorthand for $\sum_{i = 1}^k \varepsilon_i$.\end{defin}

These norms measure quasirandomness of a function $f$ in the sense that whenever $f$ has small $\mathsf{U}^k$ norm, it behaves like a randomly chosen function when it comes to counting objects of `complexity' $k-1$. We are deliberately vague about what complexity means, but in the context of arithmetic progressions, where the complexity of an arithmetic progression of length $k$ is $k-2$, this statement can be formalized as follows.

\begin{proposition}[Gowers~\cite{TimSzem}]Let $N$ be a sufficiently large prime, let $A \subset \mathbb{Z}_N$ be a set of size $\delta N$ and suppose that $\|\mathbbm{1}_A - \delta\|_{\mathsf{U}^k} \leq \varepsilon$. Then the number $n_{\text{AP}}$ of arithmetic progressions of length $k+1$ (and hence complexity $k-1$) inside $A$ satisfies $|N^{-2} n_{\text{AP}} - \delta^{k+1}| = O_k(\varepsilon)$.\end{proposition}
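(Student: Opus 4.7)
My plan is to reduce the statement to the generalized von Neumann inequality, apply it termwise to the expansion of a product, and sum the resulting bounds.

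First I set $f = \mathbbm{1}_A - \delta$, so $\|f\|_{\mathsf{U}^k} \leq \varepsilon$ and $|f|, |\mathbbm{1}_A|, \delta \leq 1$. Writing the AP count as an expectation and substituting $\mathbbm{1}_A = \delta + f$,
\[
N^{-2} n_{\text{AP}} = \exx_{x, d \in \mathbb{Z}_N} \prod_{i=0}^{k} \mathbbm{1}_A(x + id) = \exx_{x,d} \prod_{i=0}^{k} \bigl(\delta + f(x+id)\bigr).
\]
Expanding the product yields the main term $\delta^{k+1}$ together with $2^{k+1} - 1$ error terms, each indexed by a non-empty subset $S \subseteq \{0, 1, \dots, k\}$ and of the form
\[
E_S = \delta^{k+1-|S|} \, \exx_{x,d} \prod_{i \in S} f(x+id).
\]

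The main ingredient is the generalized von Neumann inequality: for any $g_0, \dots, g_k \colon \mathbb{Z}_N \to \mathbb{C}$ bounded by $1$ in magnitude,
\[
\Big| \exx_{x,d} \prod_{i=0}^k g_i(x + id) \Big| \leq \min_{0 \leq i \leq k} \|g_i\|_{\mathsf{U}^k}.
\]
This is proved by $k$ successive applications of the Cauchy--Schwarz inequality: at each stage one introduces a fresh shift variable $h_j$, bounds one factor by $\|g_j\|_\infty \leq 1$, and replaces the remaining factors by their discrete derivatives in direction $jh_j$; after a change of variables (using that $N$ is prime so that the indices $1, 2, \dots, k$ are invertible), the surviving function appears as $\mder_{h_1} \dots \mder_{h_k} g_i$ averaged over $x, h_1, \dots, h_k$, which is exactly $\|g_i\|_{\mathsf{U}^k}^{2^k}$. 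The symmetry of the AP lets one arrange for any chosen $i$ to survive, giving the minimum. To bound $E_S$, I apply the inequality with $g_i = f$ for $i \in S$ and $g_i \equiv 1$ for $i \notin S$ (so that $\prod_i g_i(x+id) = \prod_{i \in S} f(x+id)$); selecting some $i \in S$ in the minimum gives $|E_S| \leq \delta^{k+1-|S|} \|f\|_{\mathsf{U}^k} \leq \varepsilon$.

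Summing the $2^{k+1}-1$ error terms by the triangle inequality gives $|N^{-2} n_{\text{AP}} - \delta^{k+1}| \leq (2^{k+1} - 1)\, \varepsilon = O_k(\varepsilon)$, as required. The only non-routine step is the generalized von Neumann inequality, which is the property for which Gowers introduced the $\mathsf{U}^k$ norms; the rest is bookkeeping in the expansion and a triangle inequality.
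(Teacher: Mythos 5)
The paper states this proposition as a cited background result of Gowers and does not supply a proof of its own, so there is no in-paper argument to compare against. Your proof is correct and follows the standard route: write $\mathbbm{1}_A = \delta + f$, expand the $(k{+}1)$-fold product into a main term $\delta^{k+1}$ plus $2^{k+1}-1$ error terms, and bound each error term via the generalized von Neumann inequality (with $g_i \equiv 1$ for $i \notin S$ and $g_i = f$ for $i \in S$, all bounded by $1$ since $|\mathbbm{1}_A - \delta| \le 1$), which gives $|E_S| \le \|f\|_{\mathsf{U}^k} \le \varepsilon$; summing yields $(2^{k+1}-1)\varepsilon$. Two small points worth being explicit about: first, the generalized von Neumann inequality as you state it is exactly what $k$ iterations of Cauchy--Schwarz produce, and primality of $N$ (with $N > k$, which the hypothesis ``sufficiently large'' provides) is needed so the reparametrizations that place a chosen index at position $0$ are bijections; second, $\exx_{x,d}\prod_i\mathbbm{1}_A(x+id)$ includes degenerate progressions with $d=0$, contributing $\delta/N$, which is subsumed into $O_k(\varepsilon)$ under the ``sufficiently large $N$'' hypothesis but should be acknowledged if one wants $n_{\text{AP}}$ to count only genuine progressions.
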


Thus, the problem of proving the existence of arithmetic progressions of length $k$ in a dense set $A$ is reduced to describing functions with large $\|\cdot\|_{\mathsf{U}^{k -1}}$. To complete the proof of Szemer\'edi's theorem on arithmetic progressions, Gowers obtained a partial description of such functions.

\begin{theorem}[Gowers~\cite{TimSzem}, Local inverse theorem for uniformity norms] Let $f \colon \mathbb{Z}_N \to \mathbb{D} = \{z \in \mathbb{C} \colon |z| \leq 1\}$ be a function such that $\|f\|_{\mathsf{U}^k} \geq c$. Then there exist a polynomial $\psi : \mathbb{Z}_N \to \mathbb{Z}_N$ of degree at most $k-1$ and an arithmetic progression $P$ of length $N^{\Omega(1)}$ such that $\sum_{x \in P} f(x) \exp\Big(\frac{2 \pi i}{N} \psi(x)\Big) = \Omega_c(|P|)$.\end{theorem}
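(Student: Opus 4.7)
The plan is induction on $k$. The base case $k = 2$ is essentially Plancherel: $\|f\|_{\mathsf{U}^2}^4 = \sum_\xi |\widehat f(\xi)|^4$, so $\|f\|_{\mathsf{U}^2} \geq c$ forces some $|\widehat f(\xi)| \geq c^2$ and one may take $\psi(x) = \xi x$ and $P = \mathbb{Z}_N$. For the inductive step, the starting point is the derivative identity
\[\|f\|_{\mathsf{U}^k}^{2^k} = \ex_{h \in \mathbb{Z}_N} \|\mder_h f\|_{\mathsf{U}^{k-1}}^{2^{k-1}},\]
so that by pigeonhole a set $H$ of at least $\Omega(c^{2^k} N)$ values of $h$ satisfies $\|\mder_h f\|_{\mathsf{U}^{k-1}} \geq c/2$. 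Applying the inductive hypothesis to each such $h$, I obtain a polynomial $\psi_h \colon \mathbb{Z}_N \to \mathbb{Z}_N$ of degree at most $k - 2$ and an arithmetic progression $P_h$ of length $N^{\Omega_c(1)}$ on which $\mder_h f$ correlates with $\exp(2\pi i\, \psi_h(x)/N)$.

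The heart of the argument is to combine this family of polynomial phases into a single global polynomial of degree $k - 1$. First, I will pass from local correlation on $P_h$ to a global Fourier statement: taking further derivatives $\mder_{a_1} \cdots \mder_{a_{k-2}} \mder_h f$ in suitable random directions should produce a large Fourier coefficient at a frequency $\Phi(a_1, \ldots, a_{k-2}, h)$ essentially determined by the leading coefficient $\alpha_h$ of $\psi_h$. Second, by repeated Cauchy--Schwarz (in the spirit of Gowers' weak-regularity / sunflower estimates) I will extract an approximate additivity property: for many Schur quadruples $h_1 + h_2 = h_3 + h_4$ in $H$ one has $\alpha_{h_1} + \alpha_{h_2} \approx \alpha_{h_3} + \alpha_{h_4}$. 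Third, the Balog--Szemer\'edi--Gowers theorem together with Freiman's theorem on $\mathbb{Z}_N$ will locate a coset progression on which $h \mapsto \alpha_h$ genuinely agrees with a linear function. Finally, integrating this linearity through identities such as $\mder_{h+h'} f(x) = \mder_h f(x+h') \cdot \mder_{h'} f(x)$ will yield a polynomial $\psi$ of degree $k-1$ and a single progression $P$ on which $f$ correlates with $\exp(2\pi i\, \psi(x)/N)$.

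The main obstacle will be the third step, where one converts approximate additivity of $\alpha_h$ into genuine linearity on a sizeable progression. The combination of Balog--Szemer\'edi--Gowers and Freiman on $\mathbb{Z}_N$ produces a coset progression whose dimension depends on $c$, which is precisely why the resulting $P$ can only be guaranteed to have length $N^{\Omega_c(1)}$ rather than $\Omega_c(N)$. A secondary delicate point is to control the lower-order coefficients of $\psi_h$ throughout the integration step, so that approximate polynomial phases defined on many short progressions do glue into an honest polynomial phase on one long progression; this I expect to require a polynomial equidistribution argument of Weyl type, together with a further pigeonholing to synchronise the common differences of the $P_h$ before passing to a single $P$.
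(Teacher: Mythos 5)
This theorem is not proved in the paper at all; it is stated in the introduction as background and cited to Gowers~\cite{TimSzem}, so there is no internal proof to compare against. Your sketch does follow the architecture of Gowers' actual argument (induction on $k$, pigeonhole over $h$, approximate additivity of the leading coefficients $\alpha_h$, Balog--Szemer\'edi--Gowers combined with Freiman, and a final integration step), but there is a concrete gap in what you call the ``first'' step. The inductive hypothesis gives correlation of $\mder_h f$ with $\exp(2\pi i\, \psi_h/N)$ only over a progression $P_h$ of length a small power of $N$, and the multiplicative derivative $\mder_a(\mder_h f)(x) = \mder_h f(x+a)\overline{\mder_h f(x)}$ retains essentially none of this polynomial phase information when $a$ is a random element of $\mathbb{Z}_N$: for such $a$ the points $x$ and $x+a$ almost never both lie in $P_h$. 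The $a_i$ must instead be taken among small multiples of the common difference of $P_h$, and since that difference varies with $h$, one must first pigeonhole $H$ so that the progressions $P_h$ share a common difference and have comparable lengths. This synchronisation, which you defer to the very end after the BSG/Freiman step, has to come before the derivative extraction; that is where Gowers places it.

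There is also a degree mismatch in the same step: $\psi_h$ has degree at most $k-2$, so applying $k-2$ further multiplicative derivatives flattens the phase to a constant, and the information about $\alpha_h$ then sits in the argument of that constant (the phase of the zeroth Fourier coefficient), not in the location of a large Fourier coefficient. If you want the \emph{location} of a large coefficient to encode $\alpha_h$, the correct count is $k-3$ further derivatives, producing a linear phase. The remaining steps are correctly identified in outline, though the final integration is, as you anticipate, the most delicate part: the $\alpha_h$ are only approximately linear, and the lower-order coefficients of the $\psi_h$ are not controlled by the additivity argument, so gluing the local phases into one global $\psi$ on a single progression requires the full weight of Gowers' combinatorial machinery rather than a routine Weyl equidistribution argument.
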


That result led to many other efforts to reach a better understanding of functions with large Gowers uniformity norms. The overall goal was to replace the local correlation (over the arithmetic progression $P$ in the theorem above) by a global correlation (that is, over the whole group) with an algebraically structured function. There are many results in this direction that are worth noting, but let us first mention the remarkable results of Green, Tao and Ziegler~\cite{StrongUkZ} who proved a global correlation result in the setting of $\mathbb{Z}_N$, while in the case of $\mathbb{F}_p^n$ as the ambient group, Bergelson, Tao and Ziegler obtained analogous result~\cite{BergelsonTaoZiegler} (with a further refinement by Tao and Ziegler~\cite{TaoZiegler}). In both settings, the structured functions that are sufficient are explicitly described. In the case of $\mathbb{Z}_N$ these are the so-called \emph{nilsequences} that are an algebraically structured functions defined on nilmanifolds (we will not go in further details here, as we are primarily interested in the finite vector spaces case.) On the other hand, for $\mathbb{F}_p^n$ the structured functions are a generalization of the usual polynomials that Tao and Ziegler named \emph{non-classical polynomials}. Let us give a full definition here.\\
\indent Let $G$ and $H$ be finite-dimensional vector spaces over a prime field $\mathbb{F}_p$. Given a function $f \colon G\to H$, we write $\Delta_af$ for the function $\Delta_a f(x) = f(x + a) - f(x)$. We say that $f$ is a \emph{non-classical polynomial of degree $\leq d$} if 
\[\Delta_{a_1} \dots \Delta_{a_{d+1}} f(x) = 0\]
holds for all $a_1, \dots, a_{d+1}, x \in G$.\\
\indent In particular, in the so-called `high-characteristic case', which is the case when $k \leq p$, the only non-classical polynomials are the usual polynomials and therefore polynomial phases are again sufficient in the inverse theorem. Let us mention further works by Szegedy~\cite{Szeg}, jointly by Camarena and Szegedy~\cite{CamSzeg} and by Gutman, Manners and Varju~\cite{GMV1},~\cite{GMV2},~\cite{GMV3}.\\
\indent More recently, quantitative bounds were obtained. For the case of cyclic groups $\mathbb{Z}_N$ this was achieved by Manners in~\cite{Manners}, while in the case of finite vector spaces $\mathbb{F}_p^n$ for fixed $p$ and large characteristic, this was done by Gowers and the author in~\cite{genPaper}.\\

Having a reasonably good understanding of the theory of uniformity norms, and recalling that these were used to count arithmetic progressions, it is natural to go one step further and pose the general question of how to adapt this approach to proving the mutlidimensional version of Szemer\'edi's theorem, originally proved by Furstenberg and Katznelson in~\cite{FurstKatz}. Such considerations motivated Austin~\cite{Austin1},~\cite{Austin2} to generalize the notion of Gowers uniformity norms to that of the \emph{directional Gowers uniformity norms}. In this paper, we shall deal only with finite vector spaces, so we give the definition only in that setting, although it can be stated for arbitrary finite abelian group.

\begin{defin}Let $G, H_1, \dots, H_r$ be a finite-dimensional vector spaces over a prime field $\mathbb{F}_p$. Let $f \colon G \to \mathbb{C}$ be a function. We write $\|f\|_{\mathsf{U}(H_1, \dots, H_r)}$ for the non-negative real defined by
\[\|f\|_{\mathsf{U}(H_1, \dots, H_r)}^{2^r} = \exx_{h_1 \in H_1, \dots, h_r \in H_r} \exx_{x \in G} \mder_{h_1} \dots \mder_{h_r} f(x).\]\end{defin}

This indeed defines a norm when $r \geq 2$. This follows from standard and well-known ideas, so we only include a short sketch proof of this fact and leave it to the reader to fill in the details.\\
\indent \textbf{Sketch proof.} Set $H = H_1 + \dots + H_r$ and let $W \leq G$ be a subspace such that $G = H \oplus W$. For each $w \in W$ let $f_w \colon H \to \mathbb{C}$ be the function defined by $f_w(h) = f(w + h)$. Notice that the power of the norm $\|f\|_{\mathsf{U}(H_1, \dots, H_r)}^{2^r}$ is just the average
\[\exx_{w \in W}\|f_w\|_{\mathsf{U}(H_1, \dots, H_r)}^{2^r},\]
from which it follows by H\"older's inequality that we may without loss of generality assume that $G = H_1 + \dots + H_r$.\\ 
\indent Define a generalized inner product of functions $f_I \colon G \to \mathbb{C}$, where $I \subseteq [r]$, by
\[\langle f_I \rangle_{I \subseteq [r]} = \exx_{h_1, h'_1 \in H_1, \dots, h_r, h'_r \in H_r} \prod_{I \subseteq [r]} \on{Conj}^{r-|I|} f\Big(\sum_{i \in I} h_i + \sum_{i \in [r] \setminus I} h'_i\Big).\]
Define auxiliary functions $\tilde{f}_I \colon H_1 \tdt H_k \to \mathbb{C}$ by $\tilde{f}_I(h_1, \dots, h_r) = f_I(h_1 + \dots + h_r)$. Then the inner product above equals the Gowers-Cauchy-Schwarz inner product of the functions $\tilde{f}_I$ for $I \subseteq [r]$. We may use the Gowers-Cauchy-Schwarz inequality (see Lemma~\ref{gcs}) to bound the inner product from above by
\[\prod_{I \subseteq [r]} \|f_I\|_{\square(H_1, \dots, H_r)},\]
which turns out to be equal to 
\[\prod_{I \subseteq [r]}\|f_I\|_{\mathsf{U}(H_1, \dots, H_r)}.\]
Using this bound to the inner product of $2^r$ copies of $f + g$ for the given functions $f, g \colon G \to \mathbb{C}$, the claim follows.\qed

For an example of a directional Gowers uniformity norm, we remark that in order to count squares in a set $A \subseteq G \times G$, that is quadruples of the form $\Big((x,y), (x + a, y), (x, y + a), (x + a, y + a)\Big)$ one needs to understand the directional norm $\|f\|_{\mathsf{U}(H_1, H_2, H_3)}$ for the subgroups $H_1 = G \times \{0\}$, $H_2 = \{0\} \times G$ and $H_3 = \{(x,-x) \colon x \in G\}$.\\

\indent From this point on, we shall refer to the Gowers uniformity norms $\|\cdot\|_{\mathsf{U}^k}$ as the \emph{classical uniformity norms}, and to the directional Gowers uniformity norms simply as the \emph{directional uniformity norms}. There is another notable subfamily of directional uniformity norms, namely the (arithmetic\footnote{We stress that this is the definition in the arithmetic setting, as the box norms can be defined more generally for functions on products of sets, without additional algebraic structure.}) box norms defined for functions $f \colon H_1 \tdt H_k \to \mathbb{D}$ by
\[\|f\|_{\square{H_1, \dots, H_k}}^{2^k} = \exx_{h_1,a_1 \in H_1, \dots, h_k, a_k \in H_k} \mder_{(a_1, 0, \dots, 0)} \dots \mder_{(0,0, \dots, a_k)} f(h_1, \dots, h_k).\]
The inverse theorems are currently available only for the classical norms and for the box norms, the latter inverse theorem being trivial.\\
\indent Before stating our results in this paper, let us formulate the inverse conjecture for the directional uniformity norms in the case of finite vector spaces. It is partially motivated by Austin's work~\cite{Austin1},~\cite{Austin2} in which he described the functions with directional uniformity norms equal to 1 (i.e. the solutions to the extremal case of the inverse problem).

\begin{conjecture}[Inverse conjecture for the directional uniformity norms]\label{inverseconj}Let $p$ be a fixed prime. Suppose that $G$ is a finite-dimensional vector space over the field $\mathbb{F}_p$ with subgroups $H_1, \dots, H_r$. We write $\Sigma = \Sigma(H_1, \dots, H_r)$ for all subgroups of $G$ that can be obtained as sums of the form $H_{i_1}  \dots + H_{i_s}$ for some indices $i_1,\dots, i_s \in [r]$ and $s \geq 1$. For each $K \in \Sigma$ we write $d(K)$ for the number of $H_i$ that are contained in $K$.\\
\indent Suppose that $f \colon G \to \mathbb{D}$ is a function such that $\|f\|_{\mathsf{U}(H_1, \dots, H_r)} \geq c$. Then there exists functions $u_K \colon G \to \mathbb{F}_p$ for $K \in \Sigma$ such that $u_K$ is a non-classical polynomial of degree at most $d(K) - 1$ on every coset of the group $K$ in $G$ and
\[\Big|\exx_{x \in G} f(x) \prod_{K \in \Sigma} u_K(x)\Big| \geq \Omega_{p, c, r}(1).\]
\end{conjecture}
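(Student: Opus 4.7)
The plan is to prove Conjecture~\ref{inverseconj} by induction on $r$, combining a Cauchy--Schwarz decrement in the direction $H_r$ with a ``lifting'' step that turns a parametric family of structured approximants into a single structured function of one higher degree. The base case $r=1$ is immediate: the Cauchy--Schwarz inequality applied to
\[\|f\|_{\mathsf{U}(H_1)}^2 = \ex_{x \in G} \Big|\ex_{h \in H_1} f(x+h)\Big|^2 \geq c^2\]
produces a bounded function that is constant on cosets of $H_1$ and correlates with $f$, giving the required $u_{H_1}$ of degree $d(H_1)-1 = 0$ on each coset.

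For the inductive step, observe that
\[\|f\|_{\mathsf{U}(H_1, \dots, H_r)}^{2^r} = \ex_{h_r \in H_r} \|\mder_{h_r} f\|_{\mathsf{U}(H_1, \dots, H_{r-1})}^{2^{r-1}},\]
so a positive proportion of directions $h_r \in H_r$ satisfy $\|\mder_{h_r} f\|_{\mathsf{U}(H_1, \dots, H_{r-1})} \geq \Omega_{p,c,r}(1)$. Applying the inductive hypothesis to each such $\mder_{h_r} f$ yields non-classical polynomial phases $u_{K, h_r}$, indexed by $K \in \Sigma(H_1, \dots, H_{r-1})$, whose product correlates with $\mder_{h_r} f$. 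The key step is then to show that, after extracting suitable symmetries via repeated applications of the Cauchy--Schwarz inequality and bias-increment arguments in the spirit of~\cite{genPaper}, each parametric family $(u_{K, h_r})_{h_r \in H_r}$ can be replaced by $\mder_{h_r} v_K$ for a single non-classical polynomial $v_K$ of one degree higher on cosets of $K + H_r$. Rearranging the resulting approximation and applying a final Cauchy--Schwarz in $h_r$ (to absorb the contribution of $K = H_r$, which is only constrained to be constant on cosets of $H_r$) then yields the desired correlation of $f$ with a product of phases indexed by $\Sigma(H_1, \dots, H_r)$, with the degree bounds matching those of the conjecture.

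The main obstacle, and the reason why only the direct-sum configuration $\mathsf{U}(U_1, \dots, U_k, G, \dots, G)$ is treated in this paper, is precisely this lifting step. In the direct-sum case the $h_r$-dependence reduces to counting cuboids, to which the approximation theorem for the cuboid-counting function (proved here via the inverse theorem for Freiman multi-homomorphisms) applies directly. In the general case, one has to untangle cocycle obstructions for every $K \in \Sigma(H_1, \dots, H_{r-1})$ simultaneously on an arbitrary lattice of subgroup sums, which would require a multilinear Freiman-type inverse theorem that is presently beyond reach. A secondary obstruction, present only when $r > p$, is the genuinely non-classical behaviour of the polynomial phases, which introduces torsion corrections in the cocycle equations that must be treated in the spirit of~\cite{TaoZiegler}; this compounds but does not change the essential difficulty of the lifting step, which is the true heart of the problem.
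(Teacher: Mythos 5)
The statement you have attempted is Conjecture~\ref{inverseconj}, which the paper poses explicitly as an open problem and does not prove. The paper's main result, Theorem~\ref{inverseUmixed}, establishes only the special case where the subgroups are $U_1,\dots,U_k$ forming a direct-sum decomposition of $G$ together with $\ell$ copies of $G$ itself. So there is no proof in the paper against which your proposal can be compared, and any claim to have proved the conjecture would need to stand entirely on its own.

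Your write-up does not in fact constitute a proof, and to your credit you say so. You correctly identify the ``lifting'' step --- upgrading the parametric family $(u_{K,h_r})_{h_r\in H_r}$ produced by the inductive hypothesis to a single phase $v_K$ of one higher degree on cosets of $K+H_r$, simultaneously for every $K\in\Sigma(H_1,\dots,H_{r-1})$ --- as the point at which the argument stalls, and you observe that in general this would require a multilinear Freiman-type inverse theorem over an arbitrary lattice of subgroup sums, which is not available. This diagnosis is consistent with the paper's own framing: in the direct-sum case the lattice $\Sigma$ is rigid enough that the lifting reduces to the cubical-convolution approximation (Theorem~\ref{multConv}) and a symmetry argument, and neither tool extends to general $H_1,\dots,H_r$. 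As a strategic sketch with an honest gap analysis this is reasonable, but the central step is missing and the conjecture remains open.

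Two smaller remarks. Your base case $r=1$ needs more care than you give it: the Cauchy--Schwarz step produces a bounded $\mathbb{D}$-valued function constant on cosets of $H_1$ that correlates with $f$, and passing from that to correlation with a phase $\omega^{u_{H_1}(x)}$ with $u_{H_1}$ an $\mathbb{F}_p$-valued function constant on cosets requires a rounding or pigeonholing argument that costs a factor depending on $p$; this is routine but not immediate, especially for small $p$. Separately, the conjecture as literally printed has $u_K\colon G\to\mathbb{F}_p$ multiplied directly with $f(x)\in\mathbb{C}$, which does not typecheck; the intended reading, consistent with the form of Theorem~\ref{inverseUmixed}, is surely a product of phases $\omega^{u_K(x)}$ (or bounded complex-valued factors), and your proposal should state explicitly which interpretation it is working with.
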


\vspace{\baselineskip}
Again, in the case of high characteristic, namely $r \geq p$, we get (classical) polynomials instead of the non-classical ones.\\
\indent In the setting of cyclic groups we believe that non-classical polynomials of degree $d(K) - 1$ can be replaced by appropriate nilsequences -- for the subgroup $K$, the structured functions should be the nilsequences appearing in the inverse theorem for $\mathsf{U}^{d(K)}$ norm on each coset of $K$. However, it is possible that one would like to use more general sets than just subgroups for the sets of directions in that setting, for example, generalized arithmetic progressions of bounded dimension.\\

\noindent\textbf{Results.} Our ambition in this paper is more modest and we prove an inverse theorem for the norms that could be seen as a combination of the box norms and the classical uniformity norms. More precisely, our main result is the following.

\begin{theorem}\label{inverseUmixed}Let $G_1, \dots, G_k$ be finite-dimensional vector spaces over $\mathbb{F}_p$. Let $G^{\oplus} = G_1 \oplus G_2 \oplus \cdots \oplus G_k$. We view each $G_i$ as a subspace of $G^\oplus$ and misuse the notation by writing $G_i$ instead of $\{0\} \oplus \cdots \oplus G_i \oplus \cdots \oplus \{0\}$ (where $G_i$ appears at $i$\textsuperscript{th} place). Let $r$ be a positive integer and suppose that $f \colon G^\oplus \to \mathbb{D}$ is a function such that
\[\|f\|_{\mathsf{U}\big(G_1, G_2, \dots, G_k, \underbrace{{\scriptstyle G^{\oplus}, \dots, G^{\oplus}}}_r\big)} \,\geq\, c.\]
Assume that $p \geq r$. Then we may find a polynomial $P$ on $G^\oplus$ of degree at most $k + r - 1$ and functions $g_i \colon G_{[k] \setminus \{i\}} \to \mathbb{D}$ for $i \in [k]$ such that
\[\exx_{x_{[k]} \in G^\oplus} f(x_{[k]}) \omega^{P(x_{[k]})} \Big(\prod_{i \in [k]} g_i(x_{[k] \setminus \{i\}}) \Big) \geq \Big(\exp^{(O_{k,r}(1))}(O_{k,r, p}(c^{-1}))\Big)^{-1}.\]
\end{theorem}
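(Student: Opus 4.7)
My plan is to argue by induction on $r$, the number of free $G^\oplus$-directions in the norm. For the base case $r = 0$, the hypothesis reads $\|f\|_{\mathsf{U}(G_1,\ldots,G_k)} \geq c$; since $\mder_{h_i}$ with $h_i \in G_i$ only affects the $i$-th summand of $G^\oplus = G_1 \oplus \cdots \oplus G_k$, this norm is literally the box norm $\|f\|_{\square(G_1,\ldots,G_k)}$ of $f$ viewed as a function on $G_1 \times \cdots \times G_k$. The elementary inverse theorem for the box norm then supplies functions $g_i \colon G_{[k]\setminus\{i\}} \to \mathbb{D}$ with $\bigl|\mathbb{E}_x f(x) \prod_i g_i(x_{[k]\setminus\{i\}})\bigr| \geq c^{2^k}$, and the polynomial phase is taken to be $P \equiv 0$, which trivially satisfies the degree budget $k-1$.

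For the inductive step, I apply Gowers--Cauchy--Schwarz to peel off one free derivative, obtaining
\[
c^{2^{k+r}} \;\leq\; \mathbb{E}_{h \in G^\oplus}\, \|\mder_h f\|_{\mathsf{U}(G_1,\ldots,G_k,\underbrace{G^\oplus,\ldots,G^\oplus}_{r-1})}^{2^{k+r-1}}.
\]
Markov's inequality produces a set of $h \in G^\oplus$ of density polynomial in $c$ on which $\mder_h f$ has the corresponding norm of size $\Omega(c^2)$. The inductive hypothesis applied to each such $h$ yields a polynomial $P_h$ of degree at most $k + r - 2$ together with functions $g_{h,i} \colon G_{[k] \setminus \{i\}} \to \mathbb{D}$ with
\[
\Big|\mathbb{E}_x \mder_h f(x)\, \omega^{P_h(x)} \prod_i g_{h,i}(x_{[k]\setminus\{i\}})\Big| \;\geq\; c_1,
\]
for a quantitative $c_1 = c_1(c, p, k, r)$ inherited from the inductive step.

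The bulk of the work, and the main obstacle, is to \emph{integrate} this $h$-indexed family $(P_h,\{g_{h,i}\}_i)$ into a single polynomial $P$ of degree at most $k + r - 1$ and a single tuple $(g_1,\ldots,g_k)$ yielding the desired correlation with $f$. The standard Gowers cocycle argument (square, average over $h$, exploit approximate additivity of $P_h$ in $h$) is obstructed by the box factors $g_{h,i}$, which lack the algebraic rigidity of polynomial phases. This is where the cuboid-counting approximation theorem flagged in the abstract is indispensable: squaring the display above and averaging over $h$ expresses the resulting quantity via a cuboid-counting function of $f$ on a mixed cuboid with $k$ box-directions and $r + 1$ free directions, and the inverse theorem for Freiman multi-homomorphisms then approximates the phase of that cuboid-counting function by a multilinear form of appropriate complexity. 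The high-characteristic assumption $p \geq r$ is what permits the integration of this multilinear form to a single \emph{classical} polynomial $P$ of degree exactly one higher than in the inductive hypothesis, namely $k + r - 1$, while simultaneously collapsing the $h$-indexed box factors into a single tuple $(g_i)$. The quantitative losses at each inductive step (one exponential per step, plus those baked into the cuboid-counting step) account for the tower-of-exponentials form of the final bound.
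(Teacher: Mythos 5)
Your base case $r=0$ is fine, but the inductive step has a real gap exactly where you mark ``the bulk of the work.'' After applying the inductive hypothesis to $\mder_h f$, you hold, for a dense set of $h\in G^\oplus$, a polynomial $P_h$ of degree at most $k+r-2$ and box factors $g_{h,i}$, and you assert that squaring and averaging over $h$ ``expresses the resulting quantity via a cuboid-counting function of $f$'' to which Theorem~\ref{multConv} applies. That is not the case. The squared average $\exx_h\bigl|\exx_x \mder_h f(x)\,\omega^{P_h(x)}\prod_i g_{h,i}(x_{[k]\setminus\{i\}})\bigr|^2$ has the unknown, $h$-dependent phases $\omega^{P_h}$ entangled inside; by contrast the cubical convolution $\square f_\bcdot$ of Theorem~\ref{multConv} is built purely from $2^k$ copies of $f$, with the convolution variable sitting in the principal $G_i$-directions rather than in a free $G^\oplus$-direction carrying a varying polynomial family $(P_h)_h$. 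Nothing in your sketch explains how $(P_h)_h$ is to be removed, made additive in $h$, or integrated; the classical cocycle/polarization argument that does this for $\mathsf{U}^{s+1}$ is precisely what the box factors obstruct, as you yourself note. You have named the obstacle but not surmounted it, and Theorem~\ref{multConv} as stated does not touch it.

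The paper also does not apply an inductive hypothesis to $\mder_h f$. Instead it lifts $f$ to a function $\tilde f$ on the larger group $H = G_1 \oplus \cdots \oplus G_k \oplus (G^\oplus)^{r-1}$, for which the $r$ free $G^\oplus$-directions of the original norm collapse to a single free $H$-direction of $\tilde f$, and applies the $r=1$ case (Proposition~\ref{r1caseinverse}, which is where Theorem~\ref{multConv} is actually used, once) to $\tilde f$. This yields a single multilinear form $\psi(a^{(1)},\dots,a^{(r-1)},d_1,\dots,d_k)$ in all derivative variables at once (Proposition~\ref{mlStructStep}). The cocycle-type difficulty is then handled by showing $\psi$ is approximately symmetric both among the $a^{(i)}$ and between each $a^{(i)}$ and $d_j$ (Propositions~\ref{sym1prop} and~\ref{sym2prop}), passing to an exactly symmetric form $\rho$ nearby in bias (Proposition~\ref{approxsymtoexact}), and writing $\rho = \Delta_{a^{(1)},\dots,a^{(r-1)}} P + Q$ for a classical polynomial $P$ of degree $k+r-1$ and a lower-order $Q$ supported on boxes (Proposition~\ref{exactderivpoly}, where the high-characteristic assumption enters). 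Only at that point does induction on $r$ appear, applied to $f\omega^{P}$, whose $\mathsf{U}(G_1,\dots,G_k,G^\oplus\times(r-1))$ norm is shown to be large. So the paper's route is: linearize to a single multilinear form in all variables, symmetrize, polynomialize, then induct once at the end -- not the ``peel one derivative and integrate a phase family'' scheme you propose.
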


Note that the bound in the theorem is quantitative.\\

We prove this theorem by proving an approximation result for the function that counts cuboids. To state this result, we need a definition. Let $(f_{I})_{I \subseteq [k]}$ be a collection of $2^k$ functions $f_I \colon G_1 \tdt G_k \to \mathbb{D}$ indexed by subsets $I \subseteq [k]$. We define the \emph{cubical convolution} of functions $(f_{I})_{I \subseteq [k]}$ to be the function $\square f_{\bcdot} \colon G_{[k]} \to \mathbb{D}$ given by
\[\square f_{\bcdot}(a_1, \dots, a_k) = \exx_{x_1 \in G_1, \dots, x_k \in G_k} \prod_{I \subseteq [k]} \on{Conj}^{k - |I|} f_I\Big((x_i + a_i) \colon i \in I, x_i \colon i \in [k] \setminus I\Big).\]
Observe that in the case when $f$ is the indicator function of a set $A \subset G_1 \tdt G_k$, the value $|G_1| \cdots |G_k| \square f_{\bcdot}(a_1, \dots, a_k)$ is precisely the number of cuboids parallel to principal directions with side-lengths $a_1, \dots, a_k$ with all $2^k$ points lying in the set $A$, which is why we termed the function $\square f_{\bcdot}$ the cubical convolution. The approximation result we mentioned can be stated as follows.

\begin{theorem}\label{multConv}Let $f_I \colon G_1 \tdt G_k \to \mathbb{D}$ be a function for each subset $I \subseteq [k]$. Let $\varepsilon > 0$. Then, there are a positive integer $m \leq \exp^{(O_{k}(1))}\Big(O_{k,p}(\varepsilon^{-1})\Big)$, a multiaffine map $\alpha \colon G_1 \tdt G_k \to \mathbb{F}_p^m$ and a function $c \colon \mathbb{F}_p^m \to \mathbb{D}$ such that
\[\Big\|\square f_\bcdot - c \circ \alpha\Big\|_{L^2} \leq \varepsilon.\]
\end{theorem}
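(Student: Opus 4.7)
The plan is to build the multi-affine map $\alpha$ incrementally by an energy-increment argument, with the density-increment dichotomy powered by the inverse theorem for Freiman multi-homomorphisms. The starting structural observation is that $\square f_\bcdot$ has a convolution representation in each variable separately. Fixing $a_{-i} := (a_j)_{j \neq i}$, the coordinate $a_i$ enters $\square f_\bcdot(a_1,\dots,a_k)$ only through the translations $x_i \mapsto x_i + a_i$ in the factors $f_I$ with $i \in I$, so for each fixed $x_{-i}$ the $x_i$-average becomes a convolution on $G_i$ between $\prod_{I \ni i} \on{Conj}^{k-|I|}f_I$ and $\prod_{I \not\ni i} \on{Conj}^{k-|I|}f_I$. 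Cauchy-Schwarz applied to the partial Fourier transform then yields the uniform bound $\sum_{\xi_i \in \widehat{G}_i} |F_{\xi_i}(a_{-i})| \leq 1$, where $F_{\xi_i}(a_{-i})$ denotes the partial Fourier coefficient of $\square f_\bcdot$ in the $a_i$ variable, and by symmetry the same holds in every direction.

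Given this, I would run the following iteration. Start with a trivial multi-affine map $\alpha_0$. At stage $t$: if $\|\square f_\bcdot - \mathbb{E}[\square f_\bcdot \mid \alpha_t]\|_{L^2} \leq \varepsilon$, stop. Otherwise the residual $R_t := \square f_\bcdot - \mathbb{E}[\square f_\bcdot \mid \alpha_t]$ has $L^2$-norm at least $\varepsilon$, and I would produce a new scalar multi-affine function $\beta_t \colon G^\oplus \to \mathbb{F}_p$, not already a function of $\alpha_t$, with $|\langle R_t, \omega^{\beta_t}\rangle| \geq \delta$ for some $\delta = \delta(\varepsilon,k)>0$. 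Appending $\beta_t$ to set $\alpha_{t+1} := (\alpha_t, \beta_t)$ increases $\|\mathbb{E}[\square f_\bcdot \mid \alpha_t]\|_{L^2}^2$ by at least $\delta^2$, and since this quantity is bounded by $1$, the iteration must terminate after $O(\delta^{-2})$ rounds.

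Locating $\beta_t$ is the central step and the place where the Freiman multi-homomorphism machinery is used. The residual $R_t$ inherits the partial-Fourier $\ell^1$ bound from $\square f_\bcdot$, so for a positive-density subset of tuples $a_{-i}$ one can extract a ``dominant frequency'' $\xi_i(a_{-i}) \in \widehat{G}_i$ witnessing a non-trivial partial Fourier coefficient. A Cauchy-Schwarz computation on the Gowers-type inner product underlying $\square f_\bcdot$ then shows that the map $a_{-i} \mapsto \xi_i(a_{-i})$ is an approximate Freiman multi-homomorphism in the remaining $k-1$ variables; the inverse theorem for Freiman multi-homomorphisms then replaces it, on a dense subset, by a genuine multi-affine map $\gamma \colon \prod_{j \neq i} G_j \to \widehat{G}_i$. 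Setting $\beta_t(a_1,\dots,a_k) := \gamma(a_{-i}) \cdot a_i$ gives a multi-affine function on $G^\oplus$ that correlates with $R_t$ by construction.

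The tower-of-exponentials shape $m \leq \exp^{(O_k(1))}\!\big(O_{k,p}(\varepsilon^{-1})\big)$ reflects the fact that the inverse theorem for multi-homomorphisms is invoked $O_k(1)$ times—once for each direction worth extracting—each invocation contributing one exponential layer to the dimension bound. The main obstacle is precisely verifying the approximate multi-homomorphism property of the dominant-frequency map: this cannot be avoided and demands a careful Cauchy-Schwarz analysis of the explicit product defining $\square f_\bcdot$, combined with the uniform partial-Fourier $\ell^1$ bound established at the outset, in order to convert the convolution structure in each variable into the multi-linearity-type functional identities needed to apply the Freiman multi-homomorphism inverse theorem.
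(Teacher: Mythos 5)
Your proposal takes a genuinely different route from the paper (a global energy-increment on the $\sigma$-algebra generated by $\alpha$, rather than the paper's recursion that peels off one coordinate at a time via Proposition~\ref{multConvMainStep}), and the two agree on the hardest ingredient: the dominant partial-Fourier frequency in each direction can be captured by a multiaffine map via the Freiman multi-homomorphism inverse theorem, using the $\ell^1$ bound on partial Fourier coefficients and a Gowers-style Cauchy--Schwarz to verify the approximate multi-homomorphism property. That part of your sketch tracks the paper's Claims~\ref{daddquad} and~\ref{cubicalphidensemultihom} closely.

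The gap is in the sentence ``Setting $\beta_t(a_1,\dots,a_k):=\gamma(a_{-i})\cdot a_i$ gives a multi-affine function on $G^\oplus$ that correlates with $R_t$ by construction.'' What the construction actually gives is that $|\widehat{(R_t)_{a_{-i}}}(\gamma(a_{-i}))|\geq\delta$ for a positive density of $a_{-i}$; written out, $\big|\ex_{a_i}R_t(a)\,\omega^{-\gamma(a_{-i})\cdot a_i}\big|\geq\delta$ for many $a_{-i}$. This does \emph{not} give $|\langle R_t,\omega^{\beta_t}\rangle|\geq\delta$, since $\langle R_t,\omega^{\beta_t}\rangle=\ex_{a_{-i}}\widehat{(R_t)_{a_{-i}}}(\gamma(a_{-i}))$ and the complex phase of the partial Fourier coefficient is an uncontrolled (and in general far from multiaffine) function of $a_{-i}$, so the terms can cancel. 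The same obstruction prevents you from lower-bounding $\|\ex[R_t\mid(\alpha_t,\beta_t)]\|_{L^2}$ directly: the test functions you have available after conditioning on $(\alpha_t,\beta_t)$ must themselves be $(\alpha_t,\beta_t)$-measurable, and they cannot carry the $a_{-i}$-dependent phase. So the density increment as stated is not justified. The paper avoids exactly this problem by never trying to bound the complex phase: instead it records the whole partial Fourier coefficient as an explicit average $\ex_{d_k}\omega^{-d_k\cdot\Phi_i(a_{[k-1]})}\,\square f_{\bcdot}(a_{[k-1]},d_k)$, notices that the multiaffine phase $d_k\cdot\Phi_i(a_{[k-1]})$ is harmless and that the remaining factor is again a cubical convolution with $a_k$ replaced by a fresh dummy $d_k$, and then recurses on the reduced object (Proposition~\ref{multConvMainStep} and the inductive step in the proof of Theorem~\ref{multConv}), finishing with a random-sampling argument to truncate the resulting infinite sum. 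If you want to salvage an energy-increment version, you would need a mechanism that absorbs the $a_{-i}$-dependent phase into the $\sigma$-algebra, and it is not clear that such a mechanism exists at the level of multiaffine forms.

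A smaller point: the tower of height $O_k(1)$ does not come from running the energy increment once per direction, but from the single construction of the multiaffine map capturing the dominant frequency, which iterates the $1$-variable Freiman theorem across $k-1$ directions and composes the resulting density losses (this is the paper's Claim~\ref{cubicalphidensemultihom}). Each new iteration of your energy increment would then re-invoke that whole construction at the same, fixed quality parameter $\delta(\varepsilon,k)$, and the number of iterations is $O(\delta^{-2})$, so the bookkeeping is consistent with the stated bound once $\delta$ is taken to be $1/\text{(tower)}$, but the source of each exponential layer is the within-iteration direction-by-direction argument, not the outer loop.
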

\vspace{\baselineskip}

We may think of this theorem as the direct generalization of the classical fact that the convolution of two functions of a single variable can be approximated in $L^2$ norm by a linear combination of linear phases. The proof of Theorem~\ref{multConv} depends crucially on the inverse theorem for Freiman multi-homomorphism, which was the main result of~\cite{genPaper}.\\

Once Theorem~\ref{multConv} is proved, we proceed to prove Theorem~\ref{inverseUmixed}. We prove the case $r = 1$ separately, and then use it to prove the general $r > 1$ case, similarly to that way the inverse theorem for the classical $\mathsf{U}^2$ norm is used in the proof of the inverse theorem for classical uniformity norms. We then use a symmetry argument, based on important ideas of Green and Tao~\cite{StrongU3}, to finish the proof. However, given a more complicated structure of directions in the current paper than that in the case of classical uniformity norms, the symmetry argument is significantly subtler than that in~\cite{genPaper}.
\vspace{\baselineskip}

\noindent\textbf{Large multilinear spectrum.} There is another notion in this paper that generalizes a classical one-dimensional counterpart, namely the \emph{large multilinear spectrum} of a function $f \colon G_1 \tdt G_k \to \mathbb{D}$ which we now define.

\begin{defin}\label{mlspecdefin}Let $f \colon G_1 \tdt G_k \to \mathbb{D}$ be a function and let $\varepsilon > 0$. We define \emph{$\varepsilon$-large multilinear spectrum} of $f$ to be the set
\[\mls_{\varepsilon}(f) = \Big\{\mu \in \on{ML}(G_{[k]} \to \mathbb{F}_p) \colon \|f \omega^{\mu}\|_{\square^k} \geq \varepsilon\Big\},\]
where $ \on{ML}(G_{[k]} \to \mathbb{F}_p) $ stands for the set of all multilinear forms on $G_1 \tdt G_k$ and where $\|\cdot\|_{\square^k}$ stands for the box norm with respect to sets $G_1, \dots, G_k$.\end{defin}

It generalizes the usual large spectrum of a function of a single variable reasonably directly. The analogies between the usual spectrum and the multilinear spectrum are explained more thoroughly later in the paper (see Definition~\ref{mlspecdefin2} and the discussion that follows it), but let us point out one of them here. As we have already remarked, Theorem~\ref{multConv} is a generalization of the fact that the convolution of two functions $f, g \colon G \to \mathbb{D}$ can be approximated by linear combinations of linear phases. Actually, these linear phases come from the large spectrum of $f$ (and $g$). It turns out that the analogous phenomenon occurs in Theorem~\ref{multConv}. Namely, the mutliaffine forms $\alpha_i$, $i \in [m]$, appearing in that theorem, can be taken to lie the large mutlilinear spectrum of the functions $f_I$ (see Proposition~\ref{cubconvmlsapprox}).\\    

We do not use the multilinear spectrum directly in this paper, but it motivated some of our steps in the proof of Theorem~\ref{inverseUmixed} and it seems to be closely related to some of ideas in this paper. Therefore, we decided to explore the properties of the large multilinear spectrum a bit further, which we do in the final section.\\

\noindent\textbf{Organization of the paper.} The paper is organized as follows. The next preliminary section lists some useful auxiliary results. After that we devote the following two sections to the proofs of Theorems~\ref{multConv} and~\ref{inverseUmixed}, respectively. Finally, in the last section we study the large multilinear spectrum.\\

\noindent\textbf{Acknowledgements.} This work was supported by the Serbian Ministry of Education, Science and Technological Development through Mathematical Institute of the Serbian Academy of Sciences and Arts. I would like to thank Tim Gowers for helpful discussions.\\ 

\boldSection{Preliminaries}

\noindent\textbf{Notation.} As above, we write $\mathbb{D} = \{z \in \mathbb{C}\colon |z| \leq 1\}$ for the unit disk. We use the standard expectation notation $\ex_{x \in X}$ as shorthand for the average $\frac{1}{|X|} \sum_{x \in X}$, and when the set $X$ is clear from the context we simply write $\ex_x$. As in~\cite{LukaRank},~\cite{genPaper}, we use the following convention to save writing in situations where we have many indices appearing in predictable patterns. Instead of denoting a sequence of length $m$ by $(x_1, \dots, x_m)$, we write $x_{[m]}$, and for $I\subset[m]$ we write $x_I$ for the subsequence with indices in $I$. This applies to products as well: $G_{[k]}$ stands for $\prod_{i \in [k]} G_i$ and $G_I = \prod_{i \in I} G_i$. For example, instead of writing $\alpha \colon \prod_{i \in I} G_i \to \mathbb{F}_p$ and $\alpha(x_i \colon i \in I)$, we write $\alpha \colon G_I \to \mathbb{F}_p$ and $\alpha(x_I)$. This notation is particularly useful when $I=[k]\setminus\{d\}$ as it saves us writing expressions such as $(x_1,\dots,x_{d-1},x_{d+1},\dots,x_k)$ and $G_1\tdt G_{d-1}\times G_{d+1}\tdt G_k$.\\
\indent We extend the use of the dot product notation to any situation where we have two sequences $x=x_{[n]}$ and $y=y_{[n]}$ and a meaningful multiplication between elements $x_i y_i$, writing $x\cdot y$ as shorthand for the sum $\sum_{i=1}^n x_i y_i$. For example, if $\lambda=\lambda_{[n]}$ is a sequence of scalars, and $A=A_{[n]}$ is a suitable sequence of maps, then $\lambda\cdot A$ is the map $\sum_{i=1}^n\lambda_iA_i$.\\  
\indent Frequently we shall consider `slices' of sets $S\subset G_{[k]}$, by which we mean sets $S_{x_I} = \{y_{[k]\setminus I} \in G_{[k] \setminus I} \colon (x_I, y_{[k] \setminus I}) \in S\}$, for $I \subset [k], x_I \in G_I$. (Here we are writing $(x_I,y_{[k]\setminus I})$ not for the concatenation of the sequences $x_I$ and $y_{[k]\setminus I}$ but for the `merged' sequence $z_{[n]}$ with $z_i=x_i$ when $i\in I$ and $z_i=y_i$ otherwise.) If $I$ is a singleton $\{i\}$ and $z_i\in G_i$, then we shall write $S_{z_i}$ instead of $S_{z_{\{i\}}}$. Sometimes, the index $i$ will be clear from the context and it will be convenient to omit it. For example, $f(x_{[k]\setminus\{i\}},a)$ stands for $f(x_1,\dots,x_{i-1},a,x_{i+1},\dots,x_k)$. If the index is not clear, we emhasize it by writing it as a superscript to the left of the corresponding variable, e.g.\ $f(x_{[k]\setminus\{i\}},{}^i\,a)$.\\ 
\indent More generally, when $X_1, \dots, X_k$ are finite sets, $Z$ is an arbitrary set, $f \colon X_1 \tdt X_k = X_{[k]} \to Z$ is a function, $I \subsetneq [k]$ and $x_i \in X_i$ for each $i \in I$, we define a function $f_{x_I} \colon X_{[k] \setminus I} \to Z$, by mapping each $y_{[k] \setminus I} \in X_{[k] \setminus I}$ as $f_{x_I}(y_{[k] \setminus I}) = f(x_I, y_{[k] \setminus I})$. When the number of variables is small -- for example, when we have a function $f(x,y)$ that depends only on two variables $x$ and $y$ instead of on indexed variables -- we also write $f_x$ for the map $f_x(y)=f(x,y)$.\\

Let $G, G_1, \dots, G_k$ be finite-dimensional vector spaces over a finite field $\mathbb{F}_p$ and let $\omega = \exp\Big(\frac{2 \pi i}{p}\Big)$. For maps $f,g \colon G \to \mathbb{C}$, we write $f\conv g$ for the function defined by $f \conv g(x) = \ex_{y \in G} f(x + y) \overline{g(y)}$. Fix a dot product $\cdot$ on $G$. The Fourier transform of $f \colon G \to \mathbb{C}$ is the function $\hat{f} \colon G \to \mathbb{C}$ defined by $\hat{f}(r) = \ex_{x \in G} f(x) \omega^{-r\cdot x}$.\\

The $L^q$ norms have their usual meaning: for a function $f \colon X \to \mathbb{D}$ we define $\|f\|_{L^q} = \Big(\exx_{x \in X} |f(x)|^q\Big)^{1/q}$. Frequently, when $f(x)$ is an explicit and complicated expression depending on the variable $x$, we write the dummy variable in the subscript $\|f(x)\|_{L^q, x}$ to stress that $L^q$ norm is calculated by averaging over $x \in X$. Furthermore, for two such expressions $f(x),g(x)$ we write
\[f(x) \apps{\varepsilon}_{L^q, x} g(x)\]
to mean that $\|f - g\|_{L^q} \leq \varepsilon$.\\ 
\vspace{\baselineskip}

\noindent\textbf{Useful lemmas and results.} We recall the definition of the Gowers box norms. Let $X_1, \dots, X_k$ be arbitrary sets. The \emph{Gowers box norm} of a function $f \colon X_1 \tdt X_k \to \mathbb{C}$ (Definition B.1 in the Appendix B of~\cite{GreenTaoPrimes}) is defined by
\[\|f\|_{\square^{k}}^{2^k} = \exx_{x_1, y_1 \in X_1, \dots, x_k, y_k \in X_k} \prod_{I \subset [k]} \operatorname{Conj}^{|I|} f(x_I, y_{[k] \setminus I}).\]
The following lemma is the Gowers-Cauchy-Schwarz inequality for the box norm.

\begin{lemma}\label{gcs}Let $f_I \colon X_1 \tdt X_k \to \mathbb{C}$ be a function for each $I \subset [k]$. Then
\[\Big|\exx_{x_1, y_1 \in X_1, \dots, x_k, y_k \in X_k} \prod_{I \subset [k]} \operatorname{Conj}^{|I|} f_I(x_I, y_{[k] \setminus I})\Big| \leq \prod_{I \subset [k]} \|f_I\|_{\square^k}.\]
\end{lemma}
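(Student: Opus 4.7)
The proof is a standard $k$-fold iteration of Cauchy-Schwarz, one application per coordinate. I would introduce the generalized inner product
$$\langle (f_I)_{I \subset [k]} \rangle := \exx_{x_{[k]}, y_{[k]}} \prod_{I \subset [k]} \operatorname{Conj}^{|I|} f_I(x_I, y_{[k] \setminus I}),$$
and observe that $\|f\|_{\square^k}^{2^k} = \langle (f)_{I \subset [k]} \rangle$ when all $f_I = f$. The lemma then amounts to showing $|\langle (f_I)\rangle|^{2^k} \leq \prod_{I \subset [k]} \|f_I\|_{\square^k}^{2^k}$.

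The key step is a ``Cauchy-Schwarz in direction $j$'' inequality: for any $j \in [k]$,
$$|\langle (f_I) \rangle|^2 \leq \langle (g^{(0)}_I) \rangle \cdot \langle (g^{(1)}_I) \rangle,$$
where $(g^{(0)}_I)$ is the tuple obtained by replacing every $f_I$ with $j \notin I$ by $f_{I \cup \{j\}}$ (keeping $f_I$ when $j \in I$), and $(g^{(1)}_I)$ is the symmetric swap. To prove it, I would fix all variables except $(x_j, y_j)$, factor the product into the piece $F(x_j) = \prod_{I \ni j} \operatorname{Conj}^{|I|} f_I$ and the piece $G(y_j) = \prod_{I \not\ni j} \operatorname{Conj}^{|I|} f_I$, write the original expression as $\exx_{\text{outer}} (\ex_{x_j} F)(\ex_{y_j} G)$ (which is valid since $x_j$ and $y_j$ are averaged independently), apply Cauchy-Schwarz on the outer expectation, and expand $|\ex_{x_j} F|^2 = \ex_{x_j, x_j'} F(x_j) \overline{F(x_j')}$ (and symmetrically for $G$). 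Relabeling $x_j' \to y_j$ in the expansion restores the form of a generalized inner product, and the conjugation parities match because $\operatorname{Conj}^{|I|+1}$ applied to $f_I$ agrees with $\operatorname{Conj}^{|I \setminus \{j\}|}$ placed in the slot indexed by $I \setminus \{j\}$.

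Iterating this operation over directions $j = 1, 2, \ldots, k$, each round squares the quantity and doubles the number of inner products appearing on the right. After $k$ rounds, the $2^k$ resulting inner products are naturally indexed by bit strings $c \in \{0,1\}^k$, where $c_j$ records which of the two branches was taken at direction $j$. Tracking the substitutions through the full iteration shows that the inner product indexed by $c$ consists of $2^k$ copies of the single function $f_{K(c)}$, with $K(c) = \{j \in [k] : c_j = 0\}$, and therefore equals $\|f_{K(c)}\|_{\square^k}^{2^k}$. Since $c \mapsto K(c)$ is a bijection $\{0,1\}^k \to \mathcal{P}([k])$, the right-hand side becomes $\prod_{K \subset [k]} \|f_K\|_{\square^k}^{2^k}$, and taking $2^k$-th roots yields the lemma.

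The main obstacle is purely combinatorial bookkeeping: verifying that each $K \subset [k]$ appears exactly once among the $2^k$ final factors, and that the conjugation parities line up correctly after each squaring. Once the one-direction Cauchy-Schwarz step is set up carefully, the rest of the argument is mechanical.
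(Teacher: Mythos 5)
The paper states Lemma~\ref{gcs} without proof, treating it as a standard known fact (it is the Gowers--Cauchy--Schwarz inequality for box norms, proved for instance in Appendix~B of Green--Tao \cite{GreenTaoPrimes}), so there is no in-paper argument to compare against. Your iterated Cauchy--Schwarz argument is correct and is the canonical proof: the ``direction-$j$'' step, the observation that the substitution is simply $g^{(0)}_I = f_{I\cup\{j\}}$ and $g^{(1)}_I = f_{I\setminus\{j\}}$ uniformly in $I$, and the resulting collapse after $k$ rounds to the single index $K(c)=\{j:c_j=0\}$ for each branch $c\in\{0,1\}^k$ all check out, with the conjugation parities matching via $\operatorname{Conj}^2 = \mathrm{id}$.
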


A particularly useful fact is the following corollary.

\begin{corollary}\label{gcsCor}Let $f_I \colon X_I \to \mathbb{C}$ be a function for each $I \subset [k]$. Then
\[\Big|\exx_{x_1 \in X_1, \dots, x_k\in X_k} \prod_{I \subset [k]} f_I(x_I)\Big| \leq \|f_{[k]}\|_{\square^k}.\]
\end{corollary}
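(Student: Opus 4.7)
The plan is to reduce to Lemma~\ref{gcs} by inflating each $f_I \colon X_I \to \mathbb{C}$ to a function $\tilde{f}_I \colon X_{[k]} \to \mathbb{C}$ that ignores the coordinates outside $I$, namely $\tilde{f}_I(z_{[k]}) := f_I(z_I)$. Then for any $y \in X_{[k]}$ one has $\tilde{f}_I(x_I, y_{[k]\setminus I}) = f_I(x_I)$, since the `$y$-part' of the input contributes no coordinate in $I$. Consequently, inserting a dummy average over $y$ leaves the left-hand side of the corollary unchanged:
\[\exx_{x} \prod_{I \subseteq [k]} f_I(x_I) \;=\; \exx_{x,\,y} \prod_{I \subseteq [k]} \tilde{f}_I(x_I, y_{[k]\setminus I}).\]
After possibly replacing $f_I$ by $\overline{f_I}$ for each $I$ with $|I|$ odd---which changes neither the modulus of the above quantity nor the norms $\|\tilde{f}_I\|_{\square^k}$---I can insert the conjugation weights $\on{Conj}^{|I|}$ demanded by Lemma~\ref{gcs}, and that lemma then bounds the quantity of interest by $\prod_{I \subseteq [k]} \|\tilde{f}_I\|_{\square^k}$.

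The remaining task will be to show $\|\tilde{f}_{[k]}\|_{\square^k} = \|f_{[k]}\|_{\square^k}$ and $\|\tilde{f}_I\|_{\square^k} \leq 1$ for every proper $I \subsetneq [k]$. The first is immediate, because $\tilde{f}_{[k]} = f_{[k]}$. For the second I plan to exploit that $\tilde{f}_I$ is independent of any fixed coordinate $j \in [k]\setminus I$: the $2^k$ factors in the product defining $\|\tilde{f}_I\|_{\square^k}^{2^k}$ pair up via $J \longleftrightarrow J \cup \{j\}$ for $J \not\ni j$, and each pair collapses to $|\tilde{f}_I(x_J, y_{[k]\setminus J})|^2$, since the two factors take equal values but differ by exactly one conjugation. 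Iterating over $j \in [k]\setminus I$ leaves an expectation of a product of non-negative real numbers, each at most $1$ by the paper's standing convention that the functions take values in $\mathbb{D}$, so the whole expression is bounded by $1$.

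The main obstacle is purely bookkeeping: matching the conjugation signs from the Gowers-Cauchy-Schwarz formula with the unsigned product appearing in the corollary, and being careful about which coordinates are `real' and which are `dummy' for each $\tilde{f}_I$. Since $|\cdot|$ and $\|\cdot\|_{\square^k}$ are both invariant under pointwise conjugation, no genuine difficulty arises.
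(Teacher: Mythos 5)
The paper states this corollary without proof, so there is no paper argument to compare against; your proof is correct and is the standard one. You inflate each $f_I$ to $\tilde f_I\colon X_{[k]}\to\mathbb C$ ignoring coordinates outside $I$, insert a dummy $y$-average, absorb the $\on{Conj}^{|I|}$ weights of Lemma~\ref{gcs} by pre-conjugating the odd-$|I|$ factors, apply the lemma, and then observe that because $\tilde f_I$ is independent of any $j\notin I$ the $2^k$ factors of $\|\tilde f_I\|_{\square^k}^{2^k}$ pair off along the $j$-direction into $|\tilde f_I(\cdot)|^2$, giving $\|\tilde f_I\|_{\square^k}\le 1$; together with $\tilde f_{[k]}=f_{[k]}$ this yields the claimed bound. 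Two small remarks on precision. First, the corollary as printed allows $f_I\colon X_I\to\mathbb C$, but the conclusion involves only $\|f_{[k]}\|_{\square^k}$ and is therefore false without a normalization on the $f_I$ with $I\subsetneq[k]$; you are right to use $|f_I|\le 1$ for proper $I$ (which is how the corollary is applied in the paper), but this should be flagged as a needed hypothesis rather than attributed to a blanket convention, since the corollary's own text says $\mathbb C$. Second, the sentence that replacing $f_I$ by $\overline{f_I}$ for odd $|I|$ ``changes neither the modulus of the above quantity'' is slightly misphrased: that replacement by itself does change $\exx_x\prod_I f_I(x_I)$; what is true, and what you clearly intend, is that the subsequent insertion of $\on{Conj}^{|I|}$ restores the original integrand, so the modulus of the quantity being estimated is unchanged and the box norms of the replacements agree with those of the $\tilde f_I$. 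Neither point affects the validity of the argument.
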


The following lemma is a technical result that allows us to replace values in the unit disk $\mathbb{D}$ by values of modulus exactly 1.

\begin{lemma}\label{unitLemma} Let $X$ be a finite set. Suppose that $f,g \colon X \to \mathbb{D}$ are two functions such that $|\ex_{x \in X} f(x) \overline{g(x)}| \geq c$. Then, there is another function $\tilde{g} \colon G \to \mathbb{D}$ such that $|g(x)| = 1$ for all $x \in X$ and $|\ex_{x \in X} f(x) \overline{\tilde{g}(x)}| \geq c$ as well.\end{lemma}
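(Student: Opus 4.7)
The plan is to reduce to a pointwise decomposition of $g$ into two unit-modulus functions and then apply the triangle inequality. Specifically, I will exploit the elementary geometric fact that every point of the closed unit disk is the midpoint of two points on the unit circle: for $z = r e^{i\theta} \in \mathbb{D}$, choosing $\varphi \in [0, \pi/2]$ with $\cos \varphi = r$, we have
\[z = \tfrac{1}{2}\Bigl(e^{i(\theta + \varphi)} + e^{i(\theta - \varphi)}\Bigr).\]
Applied at each $x \in X$ (with an arbitrary choice when $g(x) = 0$, e.g.\ $\varphi = \pi/2$, $\theta = 0$), this yields functions $g_1, g_2 \colon X \to \mathbb{D}$ with $|g_1(x)| = |g_2(x)| = 1$ for every $x$, and $g(x) = \tfrac{1}{2}(g_1(x) + g_2(x))$.

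Taking the inner product with $f$ then gives
\[\exx_{x \in X} f(x)\,\overline{g(x)} \;=\; \tfrac{1}{2}\,\exx_{x \in X} f(x)\,\overline{g_1(x)} \;+\; \tfrac{1}{2}\,\exx_{x \in X} f(x)\,\overline{g_2(x)}.\]
Since the left-hand side has magnitude at least $c$, the triangle inequality forces at least one of the two terms on the right to have magnitude at least $c$. Setting $\tilde{g}$ to be whichever of $g_1, g_2$ achieves this bound produces a unit-modulus function with the required property.

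There is no real obstacle here; the only thing to verify carefully is the decomposition lemma on $\mathbb{D}$, which is a one-line trigonometric identity. As a sanity check: if one instead tried the naive approach of replacing $g$ by $g(x)/|g(x)|$ on its support, the inner product could shrink drastically (consider $g$ with very small modulus but correctly aligned phase), so an averaging or splitting argument is genuinely needed. The midpoint decomposition gives a clean deterministic proof; alternatively one could run a probabilistic argument by writing $g(x)$ as the expectation of a unit-modulus random variable and invoking $|\mathbb{E} Z| \le \mathbb{E}|Z|$, but the splitting into two functions is more concrete and gives exactly the same constant.
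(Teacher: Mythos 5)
Your proof is correct. The paper's argument is probabilistic: for each $x$ with $v=g(x)\neq 0$ one samples $\tilde g(x)$ independently to be $v/|v|$ with probability $(1+|v|)/2$ and $-v/|v|$ with probability $(1-|v|)/2$, so that $\mathbb{E}\tilde g(x) = g(x)$, and then one invokes $\mathbb{E}\bigl|\sum f\overline{\tilde g}\bigr| \ge \bigl|\mathbb{E}\sum f\overline{\tilde g}\bigr| = \bigl|\sum f\overline g\bigr|$ to find a good realization. Your version instead writes $g(x)$ deterministically as the \emph{equal-weight} midpoint of the two points $e^{i(\theta\pm\varphi)}$ on the unit circle, decomposing $g=\tfrac12(g_1+g_2)$ globally, and then applies the triangle inequality to the two resulting inner products. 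Conceptually both are instances of the same idea (express each $g(x)$ as a convex combination of unit-modulus points, then use linearity plus triangle inequality / pigeonhole to select a good unit-modulus function), but the concrete decompositions differ: the paper uses the antipodal pair $\pm v/|v|$ with $|v|$-dependent weights, while you use a symmetric chord with fixed weights $1/2$. Your route is slightly cleaner in that it avoids the probabilistic framing and the case split at $g(x)=0$ becomes a trivial degenerate choice ($\varphi=\pi/2$ gives $\pm i$); it also makes the ``one of two unit-modulus candidates works'' structure fully explicit. Both give the same constant and the same conclusion. Your remark about why the naive normalization $g/|g|$ fails is also apt and matches the reason the paper doesn't do that.
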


\begin{proof}Define $\tilde{g} \colon X \to \mathbb{D}$ by choosing each $\tilde{g}(x)$ independently according to the following probability distribution. If $v = g(x) \not= 0$, we set $\tilde{g}(x) = v/|v|$ with probability $\frac{1+|v|}{2}$, and $\tilde{g}(x) = -v/|v|$ with probability $\frac{1-|v|}{2}$. If $g(x) = 0$, then we simply set $\tilde{g}(x) = 1$ and $\tilde{g}(x) = -1$ with probabilities $\frac{1}{2}$ each.\\
\indent These distributions were chosen so that for each $x$ we have $\ex [f(x)\overline{\tilde{g}}(x)] = f(x)\overline{g(x)}$. Taking the expectation we obtain
\[\exx\Big[\sum_{x \in X} f(x) \overline{\tilde{g}(x)}\Big] = \sum_{x \in X} f(x) \overline{g(x)}.\]
By triangle inequality it follows that
\[\exx\Big|\sum_{x \in X} f(x) \overline{\tilde{g}(x)}\Big| \geq \Big|\sum_{x \in X} f(x) \overline{g(x)}\Big|.\]
In particular, there is a choice of $\tilde{g}$ such that
\[\Big|\frac{1}{|X|}\sum_{x \in X} f(x) \overline{\tilde{g}(x)}\Big| \geq c,\]
as desired.\end{proof}

Next we record a simple consequence of Hoeffding's inequality that allows us to approximate averages of long sequences by averages of short subsequences.

\begin{lemma}[Random sampling approximation]\label{randomsampling}Let $a_1, \dots, a_n \in \mathbb{D}$ and let $k \in [n]$, $\varepsilon > 0$. Pick indices $i_1, \dots, i_k \in [n]$ uniformly and independently at random. Then
\[\mathbb{P}\Big(\Big|\frac{1}{n} \sum_{j \in [n]} a_j - \frac{1}{k} \sum_{j \in [k]} a_{i_j}\Big| \leq \varepsilon\Big) \geq 1 - 4\exp\Big(-\frac{\varepsilon^2 k}{8}\Big).\]
\end{lemma}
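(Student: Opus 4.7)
The plan is to apply Hoeffding's inequality componentwise to the real and imaginary parts. Let $b_j = a_{i_j}$ for $j \in [k]$; these are i.i.d.\ random variables taking values in $\mathbb{D}$, with common mean $\mu = \frac{1}{n}\sum_{j=1}^n a_j$. Writing $Y_j = \on{Re}(b_j)$ and $Z_j = \on{Im}(b_j)$, both sequences consist of i.i.d.\ random variables bounded in $[-1,1]$, with means $\on{Re}(\mu)$ and $\on{Im}(\mu)$ respectively.

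Since each summand has range of length $2$, Hoeffding's inequality gives for any $\delta > 0$
\[\mathbb{P}\Big(\Big|\tfrac{1}{k}\sum_{j \in [k]} Y_j - \on{Re}(\mu)\Big| \geq \delta\Big) \leq 2\exp\Big(-\tfrac{k\delta^2}{2}\Big),\]
and the same bound for the $Z_j$. Observe that if $\big|\frac{1}{k}\sum b_j - \mu\big| > \varepsilon$, then the real part or the imaginary part of the deviation must exceed $\varepsilon/\sqrt{2}$ in absolute value, since $|z|^2 = \on{Re}(z)^2 + \on{Im}(z)^2$. Taking $\delta = \varepsilon/\sqrt{2}$ and applying a union bound over the two components yields
\[\mathbb{P}\Big(\Big|\tfrac{1}{k}\sum_{j \in [k]} b_j - \mu\Big| > \varepsilon\Big) \leq 4 \exp\Big(-\tfrac{k\varepsilon^2}{4}\Big) \leq 4\exp\Big(-\tfrac{k\varepsilon^2}{8}\Big),\]
which is exactly the claim.

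There is no real obstacle here: the only minor subtlety is the complex-valued setting, which is handled by the trivial reduction to real and imaginary parts (alternatively, one could invoke a direct vector-valued Hoeffding bound, but the two-fold union bound comfortably absorbs the loss). The factor $8$ in the denominator of the exponent, rather than $4$, leaves room to spare and suggests the author may even be aiming for the cleaner constant for later convenience.
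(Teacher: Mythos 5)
Your proof is correct and follows essentially the same approach as the paper: apply Hoeffding's inequality separately to the real and imaginary parts and then combine via a union bound. The only cosmetic difference is that you split the deviation at $\varepsilon/\sqrt{2}$ per component (via the Pythagorean identity $|z|^2 = \on{Re}(z)^2 + \on{Im}(z)^2$), whereas the paper splits at $\varepsilon/2$; your choice yields the marginally stronger intermediate exponent $-k\varepsilon^2/4$, but both comfortably give the stated bound.
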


\begin{proof}Write $\alpha = \frac{1}{n} \sum_{j \in [n]} a_j$. For $j \in [k]$, let $X_j$ be the random variable given by $\on{Re} a_{i_j}$ and let $Y_j$ be the random variable given by $\on{Im} a_{i_j}$. Then, random variables $X_1, \dots, X_k$ are independent and take values in $[-1,1]$. Likewise, $Y_1, \dots, Y_k$ are independent and take values in $[-1,1]$. Notice that $\ex X_j = \on{Re} \alpha$ and $\ex Y_j = \on{Im} \alpha$. Applying Hoeffding's inequality we obtain
\[\mathbb{P}\Big(\Big|\frac{1}{k} \sum_{j \in [k]} X_j - \on{Re} \alpha\Big| \geq \varepsilon/2\Big) \leq 2\exp\Big(-\frac{\varepsilon^2 k}{8}\Big)\]
and
\[\mathbb{P}\Big(\Big|\frac{1}{k} \sum_{j \in [k]} Y_j - \on{Im} \alpha\Big| \geq \varepsilon/2\Big) \leq 2\exp\Big(-\frac{\varepsilon^2 k}{8}\Big).\]
The lemma follows after combining these two bounds.\end{proof}

We need some standard elementary Fourier-analytic facts.

\begin{lemma}\label{largeSpec}Let $f \colon G \to \mathbb{D}$ and let $\varepsilon > 0$. Write $S = \{r \in G \colon |\hat{f}(r)|\, \geq \varepsilon\}$. Then $|S| \leq \varepsilon^{-2}$.\end{lemma}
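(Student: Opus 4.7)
The plan is to reduce the claim to Parseval's identity, which is the standard tool for bounding the size of a large spectrum. With the normalization adopted in the paper, namely $\hat{f}(r) = \ex_{x \in G} f(x) \omega^{-r\cdot x}$, Parseval reads $\sum_{r \in G} |\hat{f}(r)|^2 = \ex_{x \in G} |f(x)|^2$. Since $f$ takes values in $\mathbb{D}$, the right-hand side is at most $1$.

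Given that upper bound on the total $\ell^2$ mass of the Fourier coefficients, I would simply restrict the sum to the large spectrum $S$. Every $r \in S$ contributes at least $\varepsilon^2$ to $\sum_r |\hat{f}(r)|^2$, so
\[
\varepsilon^2 |S| \;\leq\; \sum_{r \in S} |\hat{f}(r)|^2 \;\leq\; \sum_{r \in G} |\hat{f}(r)|^2 \;\leq\; 1,
\]
which rearranges to $|S| \leq \varepsilon^{-2}$.

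There is essentially no obstacle here; the only points that require a sentence of care are the normalization of the Fourier transform (to make sure the Parseval statement is stated with the correct placement of $|G|$ factors) and the observation that $\|f\|_{L^2}^2 \leq 1$ follows from the pointwise bound $|f| \leq 1$. Everything else is a one-line pigeonhole.
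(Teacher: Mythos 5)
Your proof is correct and is exactly the paper's argument: bound $\varepsilon^2|S|$ by $\sum_{r\in S}|\hat f(r)|^2$, extend to the full sum, and apply Parseval together with $|f|\le 1$. No differences worth noting.
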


\begin{proof}This follows from 
\[\varepsilon^2 |S| \leq \sum_{r \in S} |\hat{f}(r)|^2 \leq \sum_r |\hat{f}(r)|^2 = \exx_x |f(x)|^2 \leq 1.\qedhere\]\end{proof}

\begin{lemma}\label{l2approxLemma}Suppose that $f, g \colon G \to \mathbb{D}$ are two functions. Then
\[\Big\|f\conv g(x) - \sum_{r \in S} \hat{f}(r) \overline{\hat{g}(r)} \omega^{r \cdot x}\Big\|_{L^2, x} \leq 2\varepsilon,\]
where $S$ is a set such that $\{r \in G \colon |\hat{f}(r)|, |\hat{g}(r)| \,\geq \varepsilon\} \subseteq S$.
\end{lemma}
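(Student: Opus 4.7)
The plan is to reduce the $L^2$ estimate to a Plancherel computation on the Fourier side and then split the tail sum according to which of $|\hat f(r)|$ or $|\hat g(r)|$ is small.

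First I would compute $\widehat{f\conv g}$. Using the definition $f\conv g(x)=\ex_y f(x+y)\overline{g(y)}$, the change of variables $z=x+y$ immediately gives $\widehat{f\conv g}(r)=\hat f(r)\overline{\hat g(r)}$. Fourier inversion then yields
\[f\conv g(x)\;=\;\sum_{r\in G}\hat f(r)\overline{\hat g(r)}\,\omega^{r\cdot x},\]
so the function we want to bound in $L^2$ is exactly the Fourier series restricted to $r\notin S$:
\[f\conv g(x)-\sum_{r\in S}\hat f(r)\overline{\hat g(r)}\,\omega^{r\cdot x}\;=\;\sum_{r\notin S}\hat f(r)\overline{\hat g(r)}\,\omega^{r\cdot x}.\]

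Next I would apply Plancherel with the normalization matching the paper's conventions, giving
\[\Big\|\sum_{r\notin S}\hat f(r)\overline{\hat g(r)}\,\omega^{r\cdot x}\Big\|_{L^2,x}^{2}\;=\;\sum_{r\notin S}|\hat f(r)|^{2}|\hat g(r)|^{2}.\]
The key observation is that $S$ contains every $r$ with both $|\hat f(r)|\geq\varepsilon$ and $|\hat g(r)|\geq\varepsilon$, so for every $r\notin S$ at least one of $|\hat f(r)|<\varepsilon$ or $|\hat g(r)|<\varepsilon$ holds. Partition $S^{c}=T\sqcup U$ with $T=\{r\notin S:|\hat f(r)|<\varepsilon\}$ and $U=S^{c}\setminus T\subseteq\{|\hat g(r)|<\varepsilon\}$. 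Then bound
\[\sum_{r\in T}|\hat f(r)|^{2}|\hat g(r)|^{2}\leq\varepsilon^{2}\sum_{r\in T}|\hat g(r)|^{2}\leq\varepsilon^{2}\|g\|_{L^{2}}^{2}\leq\varepsilon^{2},\]
and symmetrically $\sum_{r\in U}|\hat f(r)|^{2}|\hat g(r)|^{2}\leq\varepsilon^{2}\|f\|_{L^{2}}^{2}\leq\varepsilon^{2}$, where we used that $f,g$ take values in $\mathbb D$. Adding these gives a squared $L^{2}$ bound of $2\varepsilon^{2}$, hence an $L^{2}$ bound of $\sqrt{2}\,\varepsilon\leq 2\varepsilon$.

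There is no real obstacle here; the only things to be careful about are the Fourier normalization (so that $\widehat{f\conv g}=\hat f\,\overline{\hat g}$ and Plancherel reads $\ex_{x}|h(x)|^{2}=\sum_{r}|\hat h(r)|^{2}$ with the convention $\hat f(r)=\ex_{x}f(x)\omega^{-r\cdot x}$ fixed earlier), and the splitting of $S^{c}$ so that one does not end up with $|\hat f|^{2}|\hat g|^{2}\leq\varepsilon^{2}(|\hat f|^{2}+|\hat g|^{2})$ costing an unnecessary factor; the disjoint split above gives the cleaner constant $\sqrt 2$ and is well within the stated $2\varepsilon$.
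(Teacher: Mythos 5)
Your proof is correct and follows essentially the same approach as the paper: both reduce to the Plancherel identity, arrive at $\sum_{r\notin S}|\hat f(r)|^2|\hat g(r)|^2$, and bound the tail using the fact that every $r\notin S$ has at least one small Fourier coefficient, obtaining $2\varepsilon^2$ for the squared norm. The only cosmetic difference is that you first identify $\widehat{f\conv g}=\hat f\,\overline{\hat g}$ and invoke Fourier inversion, while the paper expands the $L^2$ norm directly; the tail estimate is the same in both.
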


\begin{proof}This is a consequence of simple algebraic manipulation. Namely, expanding out gives
\begin{align*}\Big\|f\conv g(x) - \sum_{r \in S} \hat{f}(r) \overline{\hat{g}(r)} \omega^{r \cdot x}\Big\|_{L^2, x}^2 =& \exx_x \Big| \sum_{r \notin S} \hat{f}(r) \overline{\hat{g}(r)} \omega^{r \cdot x} \Big|^2
=\exx_x \sum_{r, s \notin S} \hat{f}(r) \overline{\hat{g}(r)} \overline{\hat{f}(s)}\hat{g}(s) \omega^{(r - s) \cdot x}\\
=& \sum_{r, s \notin S} \hat{f}(r) \overline{\hat{g}(r)} \overline{\hat{f}(s)}\hat{g}(s) \mathbbm{1}(r = s)
= \sum_{r \notin S} |\hat{f}(r)|^2|\hat{g}(r)|^2\\
\leq & \varepsilon^2 \Big(\sum_{r} |\hat{g}(r)|^2\Big) + \varepsilon^2 \Big(\sum_{r} |\hat{f}(r)|^2\Big)\\
\leq&  2\varepsilon^2,\end{align*}
from which the lemma follows.\end{proof}

\begin{lemma}[Gowers~\cite{TimSzem}]\label{convL4lemma} Let $f,g \colon G \to \mathbb{C}$ be two functions. Then
\[\bigg(\exx_{d}\Big|\exx_{x}f(x + d) \overline{g(x)}\Big|^2 \bigg)^2\leq \sum_r |\hat{f}(r)|^4.\]
\end{lemma}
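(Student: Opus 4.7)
The plan is to identify the left-hand side as the fourth power of an $L^2$ norm of a convolution and then pass to the Fourier side via Plancherel, followed by a single Cauchy--Schwarz.

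First I would recognize the inner quantity $\ex_x f(x+d)\overline{g(x)}$ as the cross-correlation $(f\conv g)(d)$, so that
\[
\ex_d \Big|\ex_x f(x+d)\overline{g(x)}\Big|^2 \;=\; \|f\conv g\|_{L^2}^2.
\]
Next I would compute the Fourier transform of $f\conv g$. Substituting $y=x+d$ in the definition and separating the two variables gives
\[
\widehat{f\conv g}(r) \;=\; \hat f(r)\,\overline{\hat g(r)}.
\]
By Parseval's theorem applied in the form $\|h\|_{L^2}^2 = \sum_r |\hat h(r)|^2$, this yields
\[
\|f\conv g\|_{L^2}^2 \;=\; \sum_{r} |\hat f(r)|^2 |\hat g(r)|^2.
\]

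Squaring, the left-hand side of the lemma equals $\bigl(\sum_r |\hat f(r)|^2|\hat g(r)|^2\bigr)^2$. A single application of Cauchy--Schwarz, treating $|\hat f(r)|^2$ and $|\hat g(r)|^2$ as the two vectors, gives
\[
\Big(\sum_r |\hat f(r)|^2|\hat g(r)|^2\Big)^2 \;\leq\; \Big(\sum_r |\hat f(r)|^4\Big)\Big(\sum_r |\hat g(r)|^4\Big).
\]
Using that $g$ takes values in $\mathbb{D}$ (as is standing throughout the paper), we have $\|\hat g\|_\infty \leq \|g\|_{L^1}\leq 1$ and $\sum_r |\hat g(r)|^2 = \ex_x |g(x)|^2 \leq 1$, so
\[
\sum_r |\hat g(r)|^4 \;\leq\; \|\hat g\|_\infty^2 \sum_r |\hat g(r)|^2 \;\leq\; 1,
\]
which gives the advertised bound $\sum_r |\hat f(r)|^4$.

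There is no real obstacle here: the whole proof is essentially Parseval plus one Cauchy--Schwarz. The only thing worth being careful about is the Fourier convolution identity with the chosen convention $\hat f(r) = \ex_x f(x)\omega^{-r\cdot x}$ (which produces $\hat f \cdot \overline{\hat g}$ rather than $\hat f \cdot \hat g$ because of the complex conjugate on $g$), and noting that the final reduction of $\sum_r |\hat g(r)|^4$ to $1$ uses the implicit normalization $\|g\|_\infty\leq 1$.
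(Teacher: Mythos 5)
Your proof is correct and follows essentially the same route as the paper: identify the left-hand side as $\|f\conv g\|_{L^2}^2$, apply Parseval to get $\sum_r |\hat f(r)|^2|\hat g(r)|^2$, then Cauchy--Schwarz and the normalization $|g|\leq 1$ to discard the $\hat g$ factor. You are right, and slightly more careful than the paper, to flag that the lemma as stated (with $g\colon G\to\mathbb{C}$) implicitly requires $\|g\|_\infty\leq 1$ for the last step to go through.
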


\begin{proof}After manipulating the expression a bit and using the Cauchy-Schwarz inequality, we have
\begin{align*}\exx_{d}\Big|\exx_{x}f(x + d) \overline{g(x)}\Big|^2 = &\exx_d |f \conv g(d)|^2 = \sum_r \Big|\fco f\conv g \fcc(r)\Big|^2 = \sum_r |\hat{f}(r)|^2 |\hat{f}(r)|^2\\
 \leq &\sqrt{\sum_r |\hat{f}(r)|^4} \sqrt{\sum_r |\hat{g}(r)|^4} \leq \sqrt{\sum_r |\hat{f}(r)|^4} \sqrt{\sum_r |\hat{g}(r)|^2} \leq \sqrt{\sum_r |\hat{f}(r)|^4}.\qedhere\end{align*}
\end{proof}

Next, we need some facts about multilinear forms. Let $\alpha \colon G_{[k]} \to \mathbb{F}_p$ be a multilinear form. The quantity $\ex_{x_{[k]}} \omega^{\alpha(x_{[k]})}$ is called the \emph{bias} of $\alpha$, written $\bias \alpha$, and it measures the uniformity of the distribution of values of the form $\alpha$. The quantity $-\log_p \bias \alpha$ was introducted as the \emph{analytic rank} of $\alpha$ by Gowers and Wolf in~\cite{TimWolf}.\\

It turns out that the analytic rank is sub-additive, as proved by Lovett~\cite{Lovett}.

\begin{lemma}[Lovett, Reformulation of Theorem 1.5 in~\cite{Lovett}] \label{subBias}Let $\alpha, \beta\colon G_{[k]} \to \mathbb{F}_p$ be two multilinear forms. Then
\[\bias (\alpha + \beta) \geq \bias \alpha \cdot \bias \beta.\]\end{lemma}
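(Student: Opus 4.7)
The plan is to recognize this lemma as a direct reformulation of Lovett's subadditivity theorem for the analytic rank, exactly as the statement label advertises. Recall that the \emph{analytic rank} of a multilinear form $\mu$ is defined as $\on{arank}(\mu) = -\log_p \bias(\mu)$, with value $+\infty$ when $\bias(\mu) = 0$. Lovett's Theorem~1.5 in \cite{Lovett} asserts that for multilinear forms $\alpha, \beta \colon G_{[k]} \to \mathbb{F}_p$,
\[\on{arank}(\alpha + \beta) \leq \on{arank}(\alpha) + \on{arank}(\beta).\]
Hence the plan is simply to unpack this definition and exponentiate in base $p$ to obtain the multiplicative form stated in the lemma.

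Concretely, assuming for the moment that both $\bias(\alpha)$ and $\bias(\beta)$ are strictly positive, taking $-\log_p$ of the desired inequality $\bias(\alpha+\beta) \geq \bias(\alpha)\bias(\beta)$ produces exactly Lovett's subadditivity statement, and the implication is reversible. So in this regime the reformulation is immediate. The case where one of the biases vanishes, in which case $\on{arank}$ becomes infinite, has to be handled separately: here the right-hand side of our inequality is $0$, so it suffices to know that $\bias(\alpha+\beta) \geq 0$. This nonnegativity of the bias of a multilinear form is a standard fact, provable for example by induction on $k$ using the identity $\bias(\mu) = \ex_{x_k} \bias(\mu(\,\cdot\,, x_k))$ with base case $k=1$ where $\bias \in \{0,1\}$, or directly from the observation (noted in passing earlier in the paper) that $\bias(\mu) = \|\omega^\mu\|_{\square^k}^{2^k}$ up to conjugation.

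The only real step therefore is verifying that the two formulations are literally equivalent; there is no obstacle to overcome beyond the harmless bookkeeping for the degenerate case. If one wanted instead a self-contained proof, the natural inductive attempt would slice in the last variable and try to combine the inductive estimate $\bias(\alpha_{x_k} + \beta_{x_k}) \geq \bias(\alpha_{x_k}) \bias(\beta_{x_k})$ with a positive-correlation inequality $\ex_{x_k} \bias(\alpha_{x_k}) \bias(\beta_{x_k}) \geq \big(\ex_{x_k} \bias(\alpha_{x_k})\big)\big(\ex_{x_k} \bias(\beta_{x_k})\big)$; the hard part in that approach is precisely establishing this last covariance bound, which is the content of Lovett's argument, so in the present paper citing \cite{Lovett} is by far the cleanest route.
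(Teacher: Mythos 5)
Your proposal is correct and matches the paper's treatment: the paper simply cites Lovett's Theorem 1.5 and labels the lemma a reformulation, and you correctly explain that passing between $\on{arank} = -\log_p \bias$ subadditivity and multiplicativity of the bias is an immediate exponentiation, with the degenerate case handled by the standard nonnegativity $\bias(\mu) \geq 0$ (indeed $\bias(\mu)$ is the probability that the slice $\mu(x_{[k-1]},\,\cdot\,)$ is the zero linear form, hence a power of $1/p$). Nothing further is needed.
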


An interesting corollary of the result above is the correlation result for multilinear varieties.

\begin{corollary}[Lovett, Claim 1.6 in~\cite{Lovett}] \label{coincPoscor} Let $U, V \subset G_{[k]}$ be two multilinear varieties. Then
\[|G_{[k]}|\,|U \cap V| \geq |U|\,|V|.\]
\end{corollary}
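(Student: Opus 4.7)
The plan is to use the standard Fourier-type expansion of the indicator of a multilinear variety, reducing the claim to a pointwise application of Lemma~\ref{subBias}, and then average.

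First I would write $U = \{x \in G_{[k]} : \alpha_i(x) = 0 \text{ for all } i \in [m]\}$ and $V = \{x \in G_{[k]} : \beta_j(x) = 0 \text{ for all } j \in [n]\}$, where each $\alpha_i$, $\beta_j$ is a multilinear form on $G_{[k]}$. Using the identity $\mathbbm{1}_{\{z = 0\}} = \ex_{t \in \mathbb{F}_p} \omega^{tz}$ for $z \in \mathbb{F}_p$, one obtains
\[\frac{|U|}{|G_{[k]}|} = \ex_{x} \prod_{i \in [m]} \ex_{t_i \in \mathbb{F}_p} \omega^{t_i \alpha_i(x)} = \ex_{t \in \mathbb{F}_p^m} \bias(t \cdot \alpha),\]
and analogously $|V|/|G_{[k]}| = \ex_{s \in \mathbb{F}_p^n} \bias(s \cdot \beta)$ as well as $|U \cap V|/|G_{[k]}| = \ex_{(t,s)} \bias(t \cdot \alpha + s \cdot \beta)$. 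The crucial observation is that for each fixed $(t,s)$ the form $t \cdot \alpha + s \cdot \beta$ is again multilinear, so Lemma~\ref{subBias} applies and yields
\[\bias(t \cdot \alpha + s \cdot \beta) \,\geq\, \bias(t \cdot \alpha) \cdot \bias(s \cdot \beta).\]

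To conclude, I would note that the bias of any multilinear form is a non-negative real number (integrating out the first coordinate, $\bias \gamma$ equals the probability over $x_2, \dots, x_k$ that $\gamma(\cdot, x_2, \dots, x_k) \equiv 0$), so the pointwise inequality survives taking expectations over $(t,s) \in \mathbb{F}_p^{m+n}$, and the right-hand side factorises by independence:
\[\ex_{(t,s)} \bias(t \cdot \alpha + s \cdot \beta) \,\geq\, \ex_t \bias(t \cdot \alpha) \cdot \ex_s \bias(s \cdot \beta).\]
Combining this with the three identities above and multiplying through by $|G_{[k]}|^2$ gives precisely $|G_{[k]}| \, |U \cap V| \geq |U| \, |V|$. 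There is no real obstacle here beyond Lemma~\ref{subBias} itself; the only routine checks are that sums of multilinear forms remain multilinear and that biases of multilinear forms are non-negative reals, so that the averaging step preserves the inequality.
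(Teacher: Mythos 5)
Your proof is correct and is exactly the expected derivation of this fact from Lemma~\ref{subBias}: expand the indicators of the varieties via $\mathbbm{1}_{\{z=0\}}=\ex_{t\in\mathbb{F}_p}\omega^{tz}$, apply the sub-multiplicativity of bias pointwise in the dual variables $(t,s)$, and average using the non-negativity of the bias of a multilinear form. The paper itself states this as a corollary of Lemma~\ref{subBias} without giving a proof (citing Lovett directly), and your argument matches the intended route.
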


When $\alpha \colon G_1 \tdt G_k \to \mathbb{F}_p$ is a multiaffine form, it we may still use the definition of the bias above. Write $\alpha(x_{[k]}) = \sum_{I \subseteq [k]} \alpha_I(x_I)$ for some multilinear forms $\alpha_I \colon G_I \to \mathbb{F}_p$. We call the multilinear form $\alpha_{[k]}$ the multilinear part of $\alpha$. It turns out that the bias of $\alpha$ can be related to the bias of $\alpha_{[k]}$. 

\begin{lemma}[Lovett, Lemma 2.1 in~\cite{Lovett}] \label{mlbias}Let $\alpha \colon G_{[k]} \to \mathbb{F}_p$ be a multiaffine form with multilinear part $\alpha^{\on{ml}}$. Then
\[\Big|\exx_{x_{[k]}} \omega^{\alpha(x_{[k]})} \Big| \leq \bias \alpha^{\on{ml}}.\]
\end{lemma}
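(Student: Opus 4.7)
The plan is to proceed by induction on $k$. The base case $k=1$ is immediate: a multiaffine form on $G_1$ is just an affine form $\alpha(x_1) = \ell(x_1) + c$ with $\ell$ linear, so $|\omega^c| = 1$ gives $|\ex_{x_1}\omega^{\alpha(x_1)}| = |\ex_{x_1}\omega^{\ell(x_1)}| = \bias(\alpha^{\on{ml}})$.

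For the inductive step I would freeze $x_k$ and view $\alpha(\cdot, x_k)$ as a multiaffine form on $G_{[k-1]}$. Writing $\alpha(x_{[k]}) = \sum_{I\subseteq[k]} \alpha_I(x_I)$, the terms that are multilinear in $x_{[k-1]}$ are precisely those with $[k-1]\subseteq I$, namely $\alpha_{[k]}(x_{[k-1]}, x_k) = \alpha^{\on{ml}}(x_{[k-1]}, x_k)$ and $\alpha_{[k-1]}(x_{[k-1]})$. Hence the multilinear part of $\alpha(\cdot, x_k)$ on $G_{[k-1]}$ is
\[\tilde{\alpha}_{x_k}(x_{[k-1]}) \;=\; \alpha^{\on{ml}}(x_{[k-1]}, x_k) + \alpha_{[k-1]}(x_{[k-1]}),\]
and the inductive hypothesis yields $\bigl|\ex_{x_{[k-1]}} \omega^{\alpha(x_{[k-1]}, x_k)}\bigr| \leq \bias(\tilde{\alpha}_{x_k})$. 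A short auxiliary observation one needs here is that the bias of \emph{any} multilinear form on $G_{[k-1]}$ is a nonnegative real number: integrating out the last coordinate produces $\mathbbm{1}[\mu(\cdot)\equiv 0]$, which is nonnegative, and successive averages preserve this. Consequently $\bias(\tilde{\alpha}_{x_k})$ equals $\ex_{x_{[k-1]}} \omega^{\tilde{\alpha}_{x_k}(x_{[k-1]})}$ without absolute-value bars, so averaging over $x_k$ and applying the triangle inequality gives
\[\bigl|\ex_{x_{[k]}} \omega^{\alpha(x_{[k]})}\bigr| \;\leq\; \ex_{x_k} \bias(\tilde{\alpha}_{x_k}) \;=\; \ex_{x_{[k]}} \omega^{\alpha^{\on{ml}}(x_{[k]}) + \alpha_{[k-1]}(x_{[k-1]})}.\]

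To finish, I would evaluate the $x_k$-integral on the right. Since $\alpha^{\on{ml}}(x_{[k-1]}, \cdot)$ is a linear form on $G_k$, orthogonality gives $\ex_{x_k}\omega^{\alpha^{\on{ml}}(x_{[k-1]}, x_k)} = \mathbbm{1}[\alpha^{\on{ml}}(x_{[k-1]}, \cdot)\equiv 0]$. One final triangle inequality, using that $|\omega^{\alpha_{[k-1]}(x_{[k-1]})}| = 1$, bounds the result by
\[\Pr_{x_{[k-1]}}\bigl[\alpha^{\on{ml}}(x_{[k-1]}, \cdot)\equiv 0\bigr] \;=\; \ex_{x_{[k]}} \omega^{\alpha^{\on{ml}}(x_{[k]})} \;=\; \bias(\alpha^{\on{ml}}),\]
closing the induction.

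The one subtle point is that when $\alpha$ is restricted to a slice $x_k = \text{const}$, its multilinear part on $G_{[k-1]}$ picks up the stray summand $\alpha_{[k-1]}$ in addition to $\alpha^{\on{ml}}(\cdot, x_k)$. This stray term rides along in the exponent until the very last step, where it is harmless precisely because it contributes a factor of unit modulus and is killed by the final triangle inequality; getting this bookkeeping right is essentially the only thing to watch, after which the argument is entirely elementary.
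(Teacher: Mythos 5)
The paper does not prove this statement; it quotes it as Lemma 2.1 of Lovett's paper and uses it as a black box, so there is no proof in the source to compare against. Your argument, however, is correct and self-contained. The decomposition of the sliced multilinear part as $\tilde{\alpha}_{x_k}=\alpha^{\on{ml}}(\cdot,x_k)+\alpha_{[k-1]}$ is right, the observation that the bias of a multilinear form is a nonnegative real number is both true and essential for dropping the absolute-value bars before averaging over $x_k$, and the final triangle inequality (legitimate because the quantity $\ex_{x_k}\bias\tilde{\alpha}_{x_k}$ is already known to be real and nonnegative, so it equals its own modulus) cleanly eliminates the stray term $\alpha_{[k-1]}$ and closes the induction. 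The only point worth emphasizing, which you did flag, is that the bound you actually prove at the penultimate step is $\ex_{x_{[k-1]}}\omega^{\alpha_{[k-1]}(x_{[k-1]})}\mathbbm{1}[\alpha^{\on{ml}}(x_{[k-1]},\cdot)\equiv 0]\le\bias\alpha^{\on{ml}}$, and this requires knowing the left-hand side is real and nonnegative before the final triangle inequality can be applied without loss; you have the necessary justification in hand via the nonnegativity of bias.
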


We also need the following two results from~\cite{LukaRank}.

\begin{lemma}[Approximating dense varieties externally, Lemma 12 in~\cite{LukaRank}]\label{varOuterApprox} Let $A \colon G_{[k]} \to H$ be a multiaffine map. Then for every positive integer $s$ there is a multiaffine map $\phi \colon G_{[k]} \to \mathbb{F}_p^s$ such that $A^{-1}(0) \subset \phi^{-1}(0)$ and $|\phi^{-1}(0) \setminus A^{-1}(0)| \leq p^{-s}|G_{[k]}|$. \end{lemma}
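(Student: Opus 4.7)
The plan is to construct $\phi$ by taking random linear projections of $A$ onto $\mathbb{F}_p^s$. After fixing a basis of $H$ to identify $H \cong \mathbb{F}_p^d$, I would sample $s$ independent uniformly random linear functionals $L_1, \dots, L_s \colon \mathbb{F}_p^d \to \mathbb{F}_p$ and set
\[\phi(x_{[k]}) = (L_1(A(x_{[k]})), \dots, L_s(A(x_{[k]}))).\]
Since each $L_j$ is linear and $A$ is multiaffine, the composition $L_j \circ A$ is multiaffine in each coordinate (linearity of the outer map simply regroups the multiaffine components of $A$), so $\phi$ is a multiaffine map $G_{[k]} \to \mathbb{F}_p^s$. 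Moreover $\phi$ vanishes wherever $A$ vanishes, which gives the containment $A^{-1}(0) \subseteq \phi^{-1}(0)$ automatically.

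For the size bound, I would estimate $|\phi^{-1}(0) \setminus A^{-1}(0)|$ in expectation. Fix any $x_{[k]} \in G_{[k]}$ with $A(x_{[k]}) \neq 0$. For each $j \in [s]$, the event $L_j(A(x_{[k]})) = 0$ has probability exactly $p^{-1}$ over the random choice of $L_j$, since the uniform distribution on linear functionals of $\mathbb{F}_p^d$ pushes forward to the uniform distribution on $\mathbb{F}_p$ when evaluated on any nonzero vector. By independence of the $L_j$'s, the probability that $\phi(x_{[k]}) = 0$ equals $p^{-s}$. Hence
\[\exx_{L_1, \dots, L_s}|\phi^{-1}(0) \setminus A^{-1}(0)| = \sum_{x_{[k]}\,:\,A(x_{[k]}) \neq 0} p^{-s} \leq p^{-s}|G_{[k]}|.\]
Therefore some realization of $(L_1, \dots, L_s)$ yields a $\phi$ satisfying the required bound, which completes the proof.

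There is no real obstacle here: the argument is a straightforward probabilistic construction, and the only mild point to verify carefully is that post-composition with a linear map preserves multiaffineness, which is immediate from the definition.
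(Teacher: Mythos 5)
Your proof is correct. The probabilistic construction via random linear projections is exactly the standard argument for this kind of external variety approximation: post-composing the multiaffine map $A$ with independent uniformly random linear functionals $L_1,\dots,L_s$ preserves multiaffineness, automatically gives the containment $A^{-1}(0)\subseteq\phi^{-1}(0)$, and the independence plus the fact that a random linear functional vanishes on a fixed nonzero vector with probability exactly $p^{-1}$ gives the expected-size bound $p^{-s}|G_{[k]}|$ for the excess, so some deterministic choice works. The paper itself does not reproduce a proof of this lemma (it cites it as Lemma 12 of~\cite{LukaRank}), but your argument is the natural one and is the one used in the cited source; there is no gap.
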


\begin{theorem}\label{invbias}Let $\alpha \colon G_{[k]} \to \mathbb{F}_p$ be a multilinear form such that $\bias \alpha \geq c$. Then there are a positive integer $m \leq O\Big((\log_p c^{-1})^{O(1)} \Big)$, subsets $\emptyset \not= I_i \subsetneq [k]$ and multilinear forms $\beta_i \colon G_{I_i} \to \mathbb{F}_p$ and $\gamma_i \colon G_{[k] \setminus I_i} \to \mathbb{F}_p$ for $i \in [m]$ such that
\[\alpha(x_{[k]}) = \sum_{i \in [m]} \beta_i(x_{I_i}) \gamma_i(x_{[k] \setminus I_i}).\]\end{theorem}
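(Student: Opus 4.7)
The plan is to induct on $k$. For $k=1$, any nonzero linear form has bias $0$, so the hypothesis $\bias\alpha\ge c>0$ forces $\alpha\equiv 0$, and we take $m=0$ (the empty decomposition). From now on assume $k\ge 2$ and that the statement holds in fewer variables.

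I would begin by singling out two coordinates, say $\{1,2\}$, and viewing $\alpha$ as a bilinear form in $(x_1,x_2)$ whose coefficients are multilinear forms in $x_{[3,k]}$. The identity
\[\bias\alpha=\exx_{x_{[3,k]}}\bias\bigl(\alpha(\cdot,\cdot,x_{[3,k]})\bigr)\ge c\]
together with Markov's inequality produces a set $S\subseteq G_{[3,k]}$ of density at least $c/2$ on which every restricted bilinear form has bias at least $c/2$. The $k=2$ base case of the theorem (which reduces to the classical fact that a bilinear form of bias $\ge c'$ has rank at most $\log_p(c')^{-1}$) then yields, for each $x_{[3,k]}\in S$, a rank-$O(\log_p c^{-1})$ decomposition of the bilinear slice as a sum of products of linear forms in $x_1$ and $x_2$.

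The next task, which is also the main obstacle, is to upgrade these per-slice decompositions, which a priori depend wildly on $x_{[3,k]}$, to a single global identity for $\alpha$. My plan is to apply Lemma~\ref{varOuterApprox} to the multilinear variety that encodes the property "bilinear slice has low rank" in suitable coordinates on the space of bilinear forms on $G_1\times G_2$, approximating it externally by $\phi^{-1}(0)$ for a multiaffine map $\phi\colon G_{[3,k]}\to\mathbb{F}_p^s$ with $s=O(\log_p c^{-1})$. Using Lemma~\ref{subBias} to track biases through Fourier inversion on $\mathbb{F}_p^s$, and Lemma~\ref{mlbias} to pass between a multiaffine form and its multilinear part, one can then extract an algebraic identity of the form
\[\alpha(x_{[k]})=\sum_{i=1}^{t}u_i(x_1,x_{[3,k]})\,v_i(x_2,x_{[3,k]})+\rho(x_{[k]}),\]
with $t=O((\log_p c^{-1})^{O(1)})$ and a residual form $\rho$ of strictly lower complexity (fewer effective variables, or a strictly smaller bias-rank parameter).

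Finally, the residual $\rho$ is handled by the inductive hypothesis, and each product $u_i(x_1,\cdot)\,v_i(x_2,\cdot)$ is reshaped to the required form $\beta_i(x_{I_i})\gamma_i(x_{[k]\setminus I_i})$ by absorbing the linear factor in $x_1$ or $x_2$ into the appropriate side. Summing term counts across the recursion yields the announced polylogarithmic bound on $m$. The crux of the argument is the coherence step: without the external approximation of the "low-rank slice" variety, the per-slice decompositions have no algebraic relation as $x_{[3,k]}$ varies, and the polynomial exponent in $(\log_p c^{-1})^{O(1)}$ accumulates from the compounding cost of imposing this coherence at each step of the induction.
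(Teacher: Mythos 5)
A preliminary observation: the paper does not prove Theorem~\ref{invbias}; it imports it from~\cite{LukaRank}, with the remark noting the independent proof by Janzer~\cite{Janzer2} and the qualitative precursor of Bhowmick and Lovett~\cite{BhowLov}. So there is no internal proof of this statement to compare against, and it is one of the deepest black-box inputs to this paper.

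On the merits of your sketch, the induction framework, the two base cases, and the Markov step producing a dense set $S$ of $x_{[3,k]}$ on which the bilinear slice has bounded rank are all fine. The genuine gap is at the coherence step, which you correctly identify as the crux but do not actually resolve. You propose to apply Lemma~\ref{varOuterApprox} to the locus $\{x_{[3,k]} \colon \rk\,\alpha(\cdot,\cdot,x_{[3,k]}) \leq r\}$, but that lemma applies only to zero sets of \emph{multiaffine} maps, and the low-rank locus inside $\mathrm{Bil}(G_1\times G_2)$ is a determinantal variety cut out by $(r+1)\times(r+1)$ minors, i.e.\ by polynomials of degree $r+1$ in the matrix entries. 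Pulled back through the multilinear family $x_{[3,k]}\mapsto\alpha(\cdot,\cdot,x_{[3,k]})$ these become degree-$(r+1)(k-2)$ conditions, which no choice of ``suitable coordinates'' will linearize, so Lemma~\ref{varOuterApprox} simply does not apply. Even granting an external approximation of the good slice set, the sketch never explains how to convert it into a \emph{single} algebraic identity with a residual of strictly smaller complexity: the per-slice decompositions supplied by the $k=2$ case are not canonical and need not cohere as $x_{[3,k]}$ varies, and it is exactly this obstruction that makes the naive slice-and-patch strategy (as in~\cite{BhowLov}) lose a tower at each step rather than a polynomial factor. Achieving the polynomial bound was the main point of~\cite{LukaRank} and~\cite{Janzer2}, and it requires machinery well beyond what is outlined here.
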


\noindent \textbf{Remark.} The least number $m$ such that $\alpha$ can be expressed in terms of $m$ pairs of forms $(\beta_i, \gamma_i)$ as above is called \emph{the partition rank} of $\alpha$, and is denoted $\prank \alpha$. This notion was introduced by Naslund in~\cite{Naslund}. Thus, high bias, or equivalently low analytic rank, implies low partition rank. In a qualitative sense, this theorem was first proved by Bhowmick and Lovett in~\cite{BhowLov}, generalizing an approach of Green and Tao~\cite{GreenTaoPolys}. An almost identical result (there is a slight difference in bounds) to the one stated here was obtained independently by Janzer in~\cite{Janzer2} (who had previously obtained tower-type bounds in this problem~\cite{Janzer1}). There is also a very recent related result of Cohen and Moshkovitz~\cite{CohenMoshkovitz}, but their work has an additional requirement that the prime is sufficiently large depending on the dimensions of spaces $G_i$.\\

The following result follows straightforwardly from Freiman's theorem~\cite{Freiman},~\cite{greenRuzsaFreiman},~\cite{Sanders} and Balog-Szemer\'edi-Gowers~\cite{BalogSzemeredi},~\cite{TimSzem} theorem. 

\begin{theorem}\label{FreimanAddQuads}Let $G, H$ be finite-dimensional vector space over $\mathbb{F}_p$. Let $A \subset G$ be a set of density $\delta$. Suppose that $\phi \colon A \to H$ is a map which \emph{respects all additive quadruples} in the sense that whenever $a,b,c,d \in A$ satisfy $a-b+c-d = 0$ then one has $\phi(a) - \phi(b) + \phi(c) - \phi(d) = 0$. Then there is a global affine map $\Phi \colon G \to H$ such that $\phi(x) = \Phi(x)$ holds for at least $\exp(-\log^{O(1)} \delta^{-1}) |G|$ of $x \in A$.\end{theorem}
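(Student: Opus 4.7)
I would work with the graph $\Gamma = \{(a, \phi(a)) : a \in A\} \subseteq G \oplus H$, which converts the quadruple-preserving property of $\phi$ into additive energy for $\Gamma$. Writing $|A| = \delta|G|$, Cauchy--Schwarz gives at least $|A|^4/|G| = \delta|A|^3$ quadruples $(a_1, a_2, a_3, a_4) \in A^4$ satisfying $a_1 - a_2 + a_3 - a_4 = 0$, and every such quadruple lifts to a zero-sum quadruple in $\Gamma^4$ by hypothesis (and no others can occur since $\Gamma$ is a graph). Hence the additive energy of $\Gamma$ is at least $\delta |\Gamma|^3$.

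The plan is then to apply the Balog--Szemer\'edi--Gowers theorem in the form with polynomial dependence on $\delta$ to produce a subset $\Gamma' \subseteq \Gamma$ with $|\Gamma'| \geq \delta^{O(1)}|\Gamma|$ and $|\Gamma' - \Gamma'| \leq \delta^{-O(1)}|\Gamma'|$. Sanders' quasipolynomial Freiman--Ruzsa theorem in $\mathbb{F}_p^n$ then places $\Gamma'$ inside a coset $V = v_0 + V_0$ of a subspace $V_0 \leq G \oplus H$ with $|V_0| \leq K|\Gamma'|$, where $K = \exp(\log^{O(1)} \delta^{-1})$.

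The final step extracts an affine map. Let $\pi \colon G \oplus H \to G$ be projection onto the first coordinate, and put $W_0 = \pi(V_0)$ and $K_H = V_0 \cap (\{0\} \oplus H)$, the latter identified with a subspace of $H$. Since $\Gamma'$ is a graph, $\pi|_{\Gamma'}$ is injective, so $|\Gamma'| \leq |W_0|$; combined with $|V_0| = |W_0|\,|K_H|$ and the Freiman bound, this forces $|K_H| \leq K$. Choose a linear splitting of the short exact sequence $0 \to K_H \to V_0 \to W_0 \to 0$ and extend to obtain an affine map $\Phi_0 \colon G \to H$ whose restriction to $\pi(V)$ is a section of $\pi|_V$. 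Then $\phi(a) - \Phi_0(a) \in K_H$ for every $a \in \pi(\Gamma')$, and pigeonholing over the at most $K$ possible values yields a single $v \in K_H$ attained on a set of size at least $|\Gamma'|/|K_H| \geq \delta^{O(1)}|G|/K = \exp(-\log^{O(1)} \delta^{-1})|G|$. On this set $\phi$ agrees with the affine map $\Phi(x) := \Phi_0(x) + v$.

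The only real difficulty is bookkeeping: one must invoke BSG and Freiman--Ruzsa in their explicit quantitative forms so that the losses at each step compose to give shape $\exp(\log^{O(1)} \delta^{-1})$. Since $\delta^{-O(1)} \leq \exp(\log^{O(1)} \delta^{-1})$, the polynomial BSG losses are absorbed by the Freiman step, and no further ideas are needed.
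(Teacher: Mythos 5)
Your argument is correct and is precisely the standard deduction the paper has in mind: the paper does not give a proof of Theorem~\ref{FreimanAddQuads} but simply cites it as following from Balog--Szemer\'edi--Gowers and Freiman's theorem (with Sanders' quasipolynomial bounds), and your graph-of-$\phi$ reduction, BSG, Sanders, and projection/pigeonhole steps fill in exactly that chain with the right quantitative dependence.
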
 

A generalization of that theorem was proved by Gowers and the author in~\cite{genPaper}.

\begin{theorem}\label{multihom}Let $G, H$ be finite-dimensional vector space over $\mathbb{F}_p$. Suppose that $A \subset G_{[k]}$ is a set of density $\delta > 0$ and let $\phi \colon A \to H$ be a Freiman multihomomorphism. Then there is a global multiaffine map $\Phi \colon G_{[k]} \to H$ which coincides with $\phi$ on at least $\bigg(\exp^{(O_{k,p}(1))}\Big(O_{k,p}(\delta^{-1})\Big)\bigg)^{-1} |G_{[k]}|$ of points in $A$.\end{theorem}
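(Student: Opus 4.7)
The plan is to argue by induction on $k$, with the base case $k=1$ being precisely Theorem~\ref{FreimanAddQuads}. Suppose the statement holds for Freiman multi-homomorphisms in $k-1$ variables, and let $\phi\colon A\to H$ be a Freiman multi-homomorphism with $A\subset G_{[k]}$ of density $\delta$. I would single out the last coordinate: by Fubini and pigeonholing, for at least a $\delta/2$-fraction of $x_k\in G_k$, the slice $A_{x_k}\subset G_{[k-1]}$ has density at least $\delta/2$, and on each such slice the restriction $\phi(\,\cdot\,,x_k)$ is still a Freiman multi-homomorphism in $k-1$ variables. Apply the inductive hypothesis to each good slice to obtain a multiaffine map $\Phi^{(x_k)}\colon G_{[k-1]}\to H$ agreeing with $\phi(\,\cdot\,,x_k)$ on a subset of $A_{x_k}$ of density $\delta_1=(\exp^{(O_{k-1,p}(1))}(O_{k-1,p}(\delta^{-1})))^{-1}$.

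The heart of the argument is then to glue the family $\{\Phi^{(x_k)}\}$ into a single multiaffine map $\Phi\colon G_{[k]}\to H$. The idea is to promote the Freiman property of $\phi$ in the $k$-th coordinate to the family of multiaffine extensions: if $x_k^{(1)}-x_k^{(2)}+x_k^{(3)}-x_k^{(4)}=0$ and the four slices $A_{x_k^{(j)}}$ share a positive-density common refinement on which each $\Phi^{(x_k^{(j)})}$ agrees with $\phi(\,\cdot\,,x_k^{(j)})$, then the Freiman property forces $\Phi^{(x_k^{(1)})}-\Phi^{(x_k^{(2)})}+\Phi^{(x_k^{(3)})}-\Phi^{(x_k^{(4)})}$ to vanish on a dense subset of $G_{[k-1]}$, hence identically by the rigidity of multiaffine maps (a multiaffine map vanishing on a dense subset must be zero, proved e.g.\ via an iterated Cauchy--Schwarz / box-norm argument, cf.\ Corollary~\ref{gcsCor}). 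This turns $x_k\mapsto\Phi^{(x_k)}$ into a Freiman homomorphism from a dense subset of $G_k$ into the finite-dimensional space of multiaffine maps $G_{[k-1]}\to H$. One last application of Theorem~\ref{FreimanAddQuads} then yields an affine model in $x_k$ of the coefficients, which, combined with their multiaffine dependence on $x_{[k-1]}$, gives a genuinely multiaffine $\Phi\colon G_{[k]}\to H$ coinciding with $\phi$ on a suitably large set.

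The main obstacle I anticipate is non-uniqueness: the inductive hypothesis yields some $\Phi^{(x_k)}$ for each good $x_k$, but different choices can disagree, destroying the additive quadruple property sketched above before it can be exploited. To overcome this I would use Lemma~\ref{randomsampling} to pin down $\Phi^{(x_k)}$ from a bounded number of $\phi$-samples (so the choice depends only measurably on $x_k$ in a controlled way), together with the partition-rank / bias input of Theorem~\ref{invbias} to control the genuine multilinear contributions of the coefficients across slices. A secondary subtlety is arranging for the ``large common refinement'' inside the gluing step; this is where Corollary~\ref{coincPoscor} about intersections of multilinear varieties would be indispensable for producing four simultaneously-dense slices with compatible agreement sets.

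Tracking losses, each layer of the induction and each application of Theorem~\ref{FreimanAddQuads} introduces one exponential blow-up in $\delta^{-1}$, so the total degradation compounds into the iterated-exponential bound $\exp^{(O_{k,p}(1))}(O_{k,p}(\delta^{-1}))$ claimed in the statement, with the $p$-dependence entering because each additive-quadruple/rigidity step works over $\mathbb{F}_p$ and loses a factor depending on $p$. The overall structure is thus a two-directional induction: a ``slice-wise'' inductive call in $k-1$ variables, followed by a ``coefficient-wise'' Freiman step in the $k$-th variable, with the rigidity of multiaffine maps acting as the bridge between them.
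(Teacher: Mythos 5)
This statement is not proved in the paper; it is quoted verbatim from the reference~\cite{genPaper} (Gowers--Mili\'cevi\'c, \emph{An inverse theorem for Freiman multi-homomorphisms}), and used as a black box. So there is no in-paper proof to compare against; I can only assess your sketch on its own terms.

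The central gap is the ``rigidity'' step. You claim that since
$\Phi^{(x_k^{(1)})}-\Phi^{(x_k^{(2)})}+\Phi^{(x_k^{(3)})}-\Phi^{(x_k^{(4)})}$
vanishes on the common refinement of the four slice-agreement sets, it must vanish identically because ``a multiaffine map vanishing on a dense subset must be zero.'' This is false at the density you actually have. A nonzero multiaffine map can vanish on a set whose density is any fixed constant bounded away from $1$; for example a single nonzero linear form already vanishes with density exactly $1/p$, and products of linear forms give zero sets of density as large as $1 - (1-1/p)^k$. Rigidity only kicks in when the vanishing density exceeds a threshold of the form $1 - c(p,k)$ with $c(p,k)>0$. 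But the agreement set from the inductive hypothesis has density only $\delta_1 = \bigl(\exp^{(O_{k-1,p}(1))}(O_{k-1,p}(\delta^{-1}))\bigr)^{-1}$, and the common refinement over four slices is tinier still, of order $\delta_1^4$ at best. That is nowhere near the rigidity threshold, so the difference of the four multiaffine maps need not vanish and the family $x_k\mapsto\Phi^{(x_k)}$ need not respect any additive quadruples. This is in fact the core obstruction the actual proof in~\cite{genPaper} has to fight (at great length), and the sketch papers over it.

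Two secondary issues: Corollary~\ref{coincPoscor} only applies to multilinear varieties, whereas the agreement sets produced by the inductive hypothesis are arbitrary dense subsets of the slices, so it gives no handle on the intersection of four of them. And Lemma~\ref{randomsampling} (a Hoeffding bound for averaging bounded scalars) does not give a canonical choice of $\Phi^{(x_k)}$ from a bounded number of samples of $\phi$: a multiaffine map on $G_{[k-1]}$ has $\prod_i(\dim G_i + 1)\cdot\dim H$ degrees of freedom, which is unbounded, so a bounded random sample of values of $\phi$ does not determine it, and random sampling does nothing to address the genuine non-uniqueness of the inductive output. The slice-then-glue skeleton is a natural first guess, but without a substitute for the rigidity step the argument does not close.
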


We included Theorem~\ref{FreimanAddQuads} even though it is as special case of Theorem~\ref{multihom} since in that case very good bounds are available thanks to Sander's proof of the Bogolyubov-Ruzsa lemma~\cite{Sanders}.

\vspace{\baselineskip}

\boldSection{Cubical convolutions}

Let $(f_{I})_{I \subseteq [k]}$ be a collection of $2^k$ functions  $f_I \colon G_{[k]} \to \mathbb{D}$ indexed by subsets $I \subseteq [k]$. Recall from the introductory section that we write $\square f_{\bcdot} (a_{[k]})$ for the value
\[\exx_{x_{[k]}} \prod_{I \subseteq [k]} \on{Conj}^{k - |I|} f_I((x + a)_{I}, (x)_{[k] \setminus I}),\]
defining the cubical convolution of functions $(f_{I})_{I \subseteq [k]}$. The main result of this section is Theorem~\ref{multConv}, whose statement is repeated below, says that cubical convolutions are approximately constant on layers of a multiaffine map to a low-dimensional space.

\begin{theorem*}[Theorem~\ref{multConv}]Let $f_I \colon G_{[k]} \to \mathbb{D}$ be a function for each subset $I \subseteq [k]$. Let $\varepsilon > 0$. Then, there are a positive integer $m \leq \exp^{(O_{k}(1))}\Big(O_{k,p}(\varepsilon^{-1})\Big)$, a multiaffine map $\alpha \colon G_{[k]} \to \mathbb{F}_p^m$ and a function $c \colon \mathbb{F}_p^m \to \mathbb{D}$ such that
\[\square f_\bcdot\,(a_{[k]}) \apps{\varepsilon}_{L^2, a_{[k]}} c(\alpha(a_{[k]})).\]
\end{theorem*}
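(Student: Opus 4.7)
The plan is to run an energy-increment iteration that builds up the multiaffine map $\alpha \colon G_{[k]} \to \mathbb{F}_p^m$ one coordinate at a time, taking as the approximating function the conditional expectation
\[ c(\alpha(a_{[k]})) = \exx_{a'_{[k]} \,:\, \alpha(a'_{[k]}) = \alpha(a_{[k]})} \square f_{\bcdot}(a'_{[k]}). \]
By orthogonality the $L^2$ energy $\|c \circ \alpha\|_{L^2}^2$ is monotone non-decreasing and bounded above by $\|\square f_{\bcdot}\|_{L^2}^2 \leq 1$, while the residual $e = \square f_{\bcdot} - c \circ \alpha$ is perpendicular in $L^2$ to every function of $\alpha$. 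In particular, appending to $\alpha$ a new multilinear form $\beta$ satisfying $|\exx_{a_{[k]}} e(a_{[k]}) \omega^{-\beta(a_{[k]})}| \geq \eta$ forces an energy jump of at least $\eta^2$, so only $O(\eta^{-2})$ successful extensions can occur.

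The engine of the iteration is therefore the following spectral step: if $\|e\|_{L^2} > \varepsilon$, produce a multilinear $\beta$ not already in the coordinate span of $\alpha$ whose phase correlates with $e$ at some polynomial-in-$\varepsilon$ level $\eta$. The cubical structure of $g = \square f_{\bcdot}$ naturally supplies candidates: after the change of variables $y_i = x_i + a_i$ and the multilinear expansion $\beta(y-x) = \sum_{J \subseteq [k]} (-1)^{|[k]\setminus J|} \beta(y_J, x_{[k]\setminus J})$, the quantity $\exx_{a_{[k]}} g(a_{[k]}) \omega^{-\beta(a_{[k]})}$ is rewritten as a Gowers--Cauchy--Schwarz inner product of $2^k$ functions of the form $f_I \omega^{\pm \beta(\cdot)}$; Lemma~\ref{gcs} bounds it above by $\prod_I \|f_I \omega^{\pm \beta}\|_{\square^k}$. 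Consequently, a useful $\beta$ must lie in the large multilinear spectrum $\mls_{\eta}(f_I)$ for every $I \subseteq [k]$, which is exactly the link with Proposition~\ref{cubconvmlsapprox} flagged in the introduction.

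The genuinely hard step is producing such a $\beta$ from the $L^2$ lower bound on $e$ alone. Here the plan is a chain of Cauchy--Schwarz manipulations applied to $\exx_{a_{[k]}} g(a_{[k]}) \overline{e(a_{[k]})} = \|e\|_{L^2}^2 > \varepsilon^2$, successively differentiating in each coordinate direction, to extract a map $\Psi$ defined on a dense subset $A$ of some product $G_{I}$ and taking values in multilinear forms on $G_{[k]\setminus I}$, and satisfying an approximate Freiman multi-homomorphism property. Feeding $\Psi$ into Theorem~\ref{multihom} upgrades it to a global multiaffine map, whose multilinear part yields the required new direction $\beta$; any residual lower-order multiaffine pieces are absorbed by appending boundedly many extra coordinates to $\alpha$. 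The main obstacle I expect is verifying the multi-homomorphism property on $\Psi$, since the cubical symmetry of $g$ must be used carefully to show that derivatives of $\Psi$ in the various coordinate directions behave linearly, and also to ensure the extracted form is truly multilinear rather than just multiaffine.

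Putting the pieces together: the iteration halts once $\|e\|_{L^2} \leq \varepsilon$, after at most $O(\eta(\varepsilon)^{-2})$ energy-increment steps. Each invocation of Theorem~\ref{multihom} contributes a dimension cost of the form $\exp^{(O_k(1))}(O_{k,p}(\cdot^{-1}))$, and composing these bounds across the iteration yields the final dimension $m \leq \exp^{(O_k(1))}(O_{k,p}(\varepsilon^{-1}))$ claimed in the theorem. The resulting $c \circ \alpha$ then satisfies $\|\square f_{\bcdot} - c \circ \alpha\|_{L^2} \leq \varepsilon$ as desired.
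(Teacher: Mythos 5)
Your proposal takes a genuinely different route from the paper. The paper does not run an energy-increment iteration. Instead, it proves a self-referential approximation statement (Proposition~\ref{multConvMainStep}) that re-expresses $\square f_{\bcdot}(a_{[k]})$ as a short sum of terms of the form $c_i\omega^{\alpha_i(a_{[k]})}\,\ex_{d_k}\square f_{\bcdot}(a_{[k-1]},d_k)\omega^{\beta_i(a_{[k-1]},d_k)}$, i.e.\ it replaces the last ``live'' coordinate with a dummy variable; iterating this in each coordinate in turn and finishing with random sampling (Lemma~\ref{randomsampling}) produces the finite sum of multiaffine phases directly, with no appeal to orthogonality or conditional expectations. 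Within that proposition, the multilinear structure emerges not from a residual $e$ but from the convolution itself: fix $a_{[k-1]}$, Fourier-expand the one-variable slice, identify the set $R_{a_{[k-1]}}$ of frequencies that are large for a positive fraction of $x_{[k-1]}$, and then show (Claim~\ref{daddquad}, via Gowers's additive-quadruple trick and Lemma~\ref{convL4lemma}) that any section $\Phi(a_{[k-1]})\in R_{a_{[k-1]}}$ respects many $d$-additive quadruples, so that after Theorem~\ref{FreimanAddQuads} in each direction and then Theorem~\ref{multihom}, the frequencies are captured by a few global multiaffine maps.

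The genuine gap in your plan is exactly the ``spectral step'' you flag as the hard part, and I do not think it can be repaired as sketched. You begin from $\exx_a g(a)\overline{e(a)}=\|e\|_{L^2}^2>\varepsilon^2$ and want to extract a multilinear form $\beta$ with $|\exx_a e(a)\omega^{-\beta(a)}|\geq\eta$. If you expand $g$ as a cubical convolution and apply Gowers--Cauchy--Schwarz repeatedly in the variable $a_{[k]}$ (treating $e$ as an opaque bounded function), what comes out is a lower bound on a box norm of $e$, not a correlation of $e$ with a multilinear phase: the inverse statement for box norms only gives a product of functions each omitting one coordinate. There is no reason, from $\|e\|_{L^2}$ alone, for $e$ to correlate with any multilinear $\omega^{\beta}$ at all, and your observation that any such $\beta$ must lie in $\bigcap_I\mls_\eta(f_I)$ is a necessary condition, not a construction. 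The paper sidesteps this by never passing through an abstract residual: it works directly with the Fourier expansion of the convolution in a single coordinate, where the candidate frequencies are concrete, and it is the map $a_{[k-1]}\mapsto r\in R_{a_{[k-1]}}$ that is shown to be a Freiman multi-homomorphism, using the additive-quadruple mechanism rather than $L^2$ orthogonality. A further structural concern is that after the first step your residual is $e=g-c\circ\alpha$, whose dependence on the conditional expectation makes it unclear that any differentiation-in-coordinates scheme could recover multi-homomorphism structure from it; the paper's inductive statement~\eqref{cubeapproxeqn} is designed precisely so that the object being analysed at every stage is still $\square f_{\bcdot}$ itself (with some arguments averaged out), not a generic bounded error. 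To make your route work you would need to supply the analogue of Claim~\ref{daddquad} and the surrounding Fourier analysis, at which point you would essentially have reproduced the paper's argument with the energy increment as an unnecessary extra layer.
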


The main step in the proof of the above theorem is the following proposition, which allows us to express $\square f_\bcdot$ in terms of itself. The gain is that the terms involving $\square f_\bcdot$ appearing in the  approximation sum involve a dummy variable in the argument, and thus are simpler than the starting function.

\begin{proposition}\label{multConvMainStep}Let $f_I \colon G_{[k]} \to \mathbb{D}$ be a function for each subset $I \subseteq [k]$. Let $\varepsilon > 0$. Then, there are a positive integer $m \leq \exp^{(O_{k}(1))}\Big(O_{k,p}(\varepsilon^{-1})\Big)$, constants $c_1, \dots, c_m \in \mathbb{D}$ and multiaffine forms $\alpha_1, \dots, \alpha_m, \beta_1, \dots, \beta_m \colon G_{[k]} \to \mathbb{F}_p$ such that
\[\square f_\bcdot\,(a_{[k]}) \apps{\varepsilon}_{L^2, a_{[k]}} \sum_{i \in [m]}\, c_i\omega^{\alpha_i(a_{[k]})}\, \exx_{d_k} \square f_{\bcdot}\,(a_{[k-1]}, d_k) \omega^{\beta_i(a_{[k-1]}, d_k)}.\]
\end{proposition}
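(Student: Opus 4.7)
After decomposing $\alpha_i(a_{[k]}) = \alpha_i'(a_{[k-1]})\cdot a_k + \alpha_i''(a_{[k-1]})$ and $\beta_i(a_{[k-1]}, d_k) = \beta_i'(a_{[k-1]})\cdot d_k + \beta_i''(a_{[k-1]})$ into their multiaffine pieces, each summand on the right-hand side becomes
\[c_i\,\omega^{(\alpha_i''+\beta_i'')(a_{[k-1]})}\,\widehat{\square f_\bcdot}(a_{[k-1]}, -\beta_i'(a_{[k-1]}))\,\omega^{\alpha_i'(a_{[k-1]})\cdot a_k},\]
where $\widehat{\square f_\bcdot}(a_{[k-1]}, r) := \ex_{d_k} \square f_\bcdot(a_{[k-1]}, d_k) \omega^{-r\cdot d_k}$. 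Matching this to a genuine Fourier term in $a_k$ forces $\alpha_i' = -\beta_i'$, so the proposition is equivalent to approximating $\square f_\bcdot$, in $L^2(a_{[k]})$, by a sum indexed by $i$ of Fourier terms in $a_k$ at frequencies $-\gamma_i(a_{[k-1]})$ for multiaffine maps $\gamma_i \colon G_{[k-1]} \to G_k$. My plan is therefore to (i)~Fourier-expand $\square f_\bcdot$ in the coordinate $a_k$; (ii)~truncate to the $\eta$-large spectrum; and (iii)~cover that spectrum by the values of a bounded number of multiaffine maps, using Theorem~\ref{multihom}.

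For (i), splitting $\prod_{I \subseteq [k]} \on{Conj}^{k-|I|} f_I$ according to whether $k \in I$ (factors with $k \notin I$ depend on $x_k$, those with $k \in I$ on $x_k+a_k$) expresses $\square f_\bcdot$ as an average of single-variable convolutions in the $a_k$ direction,
\[\square f_\bcdot(a_{[k-1]}, a_k) \;=\; \ex_{x_{[k-1]}}\bigl(Q_{a_{[k-1]},x_{[k-1]}} \conv \overline{P_{a_{[k-1]},x_{[k-1]}}}\bigr)(a_k),\]
where $P, Q \colon G_k \to \mathbb{D}$ are the appropriate conjugated products of fibres of $f_I$ for $k \notin I$ and $k \in I$ respectively. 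In particular, the Fourier coefficient in $a_k$ factorises as an average over $x_{[k-1]}$ of a product of Fourier coefficients of $P$ and $Q$. For (ii), setting $S(a_{[k-1]}) = \{r : |\widehat{\square f_\bcdot}(a_{[k-1]},r)| \geq \eta\}$ with $\eta = \varepsilon^{O(1)}$, Parseval yields $|S(a_{[k-1]})| \leq \eta^{-2}$, and the convolution structure enables a Lemma~\ref{l2approxLemma}-style argument (applied inside the expectation over $x_{[k-1]}$ and then aggregated by Cauchy--Schwarz) to give
\[\square f_\bcdot(a_{[k-1]}, a_k) \apps{O(\eta)}_{L^2, a_{[k]}} \sum_{r \in S(a_{[k-1]})} \widehat{\square f_\bcdot}(a_{[k-1]}, r)\,\omega^{r\cdot a_k}.\]

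Step (iii) is the main obstacle. The goal is to cover the set $T = \{(a_{[k-1]}, r) \in G_{[k]} : r \in S(a_{[k-1]})\}$ by a bounded collection of graphs of multiaffine maps $\gamma_i \colon G_{[k-1]} \to G_k$. My proposed strategy is iterative: at each stage, extract a partial function $\phi$ defined on a suitably dense subset $A \subseteq G_{[k-1]}$ by selecting, for each $a_{[k-1]} \in A$, one element of $S(a_{[k-1]})$; verify via the factorisation from (i) that $\phi$ is an approximate Freiman multihomomorphism in each coordinate; and apply Theorem~\ref{multihom} to produce a global multiaffine map agreeing with $\phi$ on many points, contributing one $\gamma_i$. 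The Freiman multihomomorphism property is plausible because an additive shift of $a_{[k-1]}$ in a single coordinate translates the functions $P, Q$ in a structured fashion, so the large frequencies in $a_k$ behave additively on additive quadruples up to a small defect, cleaned by Balog--Szemer\'edi--Gowers-style reasoning. Subtracting the Fourier contribution associated to $\gamma_i$ from $\square f_\bcdot$ and repeating, until the uncovered portion of $T$ produces less than $O(\varepsilon)$ in $L^2$, completes the extraction; the stated bound $m \leq \exp^{(O_k(1))}(O_{k,p}(\varepsilon^{-1}))$ comes from tracking the $\varepsilon$-dependence through the quantitative form of Theorem~\ref{multihom} across the iteration. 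Finally, matching $\alpha_i' = -\beta_i' = \gamma_i$ and absorbing $\alpha_i'' + \beta_i''$ together with unimodular constants into $c_i \in \mathbb{D}$ completes the proof.
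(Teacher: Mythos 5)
Your plan follows the same overall route as the paper's proof: slice $\square f_\bcdot$ into an average over $x_{[k-1]}$ of single-variable convolutions in the $k$\textsuperscript{th} coordinate, truncate to the large Fourier spectrum, and cover the graph of the large spectrum by a bounded family of global multiaffine maps $\gamma_i \colon G_{[k-1]} \to G_k$ via Theorem~\ref{multihom}. But there is a genuine gap at the end which the proposal does not confront: collisions among the $\gamma_i$. After unwinding your decomposition, the right-hand side of the proposition becomes $\sum_{i\in[m]} c_i\, \widehat{\square f_\bcdot}\bigl(a_{[k-1]}, \gamma_i(a_{[k-1]})\bigr)\, \omega^{\gamma_i(a_{[k-1]})\cdot a_k}$, whereas the truncated Fourier expansion you are trying to match is a sum over the \emph{set} $\{\gamma_1(a_{[k-1]}),\dots,\gamma_m(a_{[k-1]})\}$. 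Whenever $\gamma_i(a_{[k-1]})=\gamma_j(a_{[k-1]})$ for $i\ne j$ --- and this can happen on a large set of $a_{[k-1]}$, since the $\gamma_i$ are constrained only to cover the spectrum graph --- the same frequency gets counted with multiplicity and the approximation fails. The paper's Step~3 exists precisely for this: it rewrites the sum by inclusion--exclusion with indicators of the varieties $\{a_{[k-1]} : (\forall j\in I)\,\Phi_j(a_{[k-1]})=\Phi_i(a_{[k-1]})\}$, approximates these externally by preimages of low-dimensional multiaffine maps using Lemma~\ref{varOuterApprox}, and Fourier-expands the indicators into further multiaffine phases absorbed into $\alpha_i$. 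Your alternative of literally subtracting the Fourier contribution of $\gamma_i$ from $\square f_\bcdot$ before the next round is also not safe: the modified function is no longer a cubical convolution, so the factorisation that powers step~(iii) is lost; one must instead keep $\square f_\bcdot$ fixed and delete covered pairs $(a_{[k-1]},r)$ from the covering problem, as the paper does with its set $\mathcal{L}$ --- which leaves the collision problem intact.

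A smaller issue concerns the justification in step~(iii). The intuition that shifting $a_{k-1}$ ``translates the functions $P,Q$ in a structured fashion'' is not accurate: a shift in $a_{k-1}$ reslices the $f_I$'s in the $(k-1)$\textsuperscript{th} coordinate rather than translating $P,Q$ as functions of the $G_k$-variable, and no additive structure on the large spectrum is visible from this alone. The paper's Claim~\ref{daddquad} supplies the missing argument: starting from the observation that $\phi(a_{[k-1]})$ is a large Fourier coefficient of both convolution factors for a positive proportion of $x_{[k-1]}$ (which does follow from your definition of $S(a_{[k-1]})$ by averaging $\widehat{\square f_\bcdot}(a_{[k-1]},\cdot) = \ex_{x_{[k-1]}}\widehat{P}_{x,a}\,\overline{\widehat{Q}_{x,a}}$), one fixes one coordinate direction, applies Cauchy--Schwarz to isolate the dependence on that coordinate into a single auxiliary function, and invokes Gowers's $L^4$ lemma (Lemma~\ref{convL4lemma}) to produce a density of respected additive quadruples; Theorem~\ref{FreimanAddQuads} is then applied coordinate by coordinate before Theorem~\ref{multihom}. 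This is where the real technical substance of the proposition lies, and the sketch does not yet contain it.
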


Before proceeding with the proof of the proposition, we use it to prove Theorem~\ref{multConv}.

\begin{proof}[Proof of Theorem~\ref{multConv}] By induction on $\ell \in [k]$, we show that for a parameter $\eta > 0$ there are a positive integer $m \leq \exp^{(O_{k}(1))}\Big(O_{k,p}(\eta^{-1})\Big)$, constants $c_1, \dots, c_m \in \mathbb{D}$ and multiaffine forms $\alpha^{(1)}_1, \dots,$ $\alpha^{(1)}_m, \dots,$ $\alpha^{(\ell + 1)}_1, \dots,$ $\alpha^{(\ell + 1)}_m \colon G_{[k]} \to \mathbb{F}_p$ such that

\begin{equation}\label{cubeapproxeqn}\square f_\bcdot\,(a_{[k]}) \apps{\eta}_{L^2, a_{[k]}} \sum_{i \in [m]}\, \exx_{d_{[k - \ell + 1, k]}} c_i \omega^{\alpha^{(1)}_i(a_{[k]}) + \alpha^{(2)}_i(a_{[k-1]}, d_k) + \dots + \alpha^{(\ell + 1)}_i(a_{[k-\ell]}, d_{[k- \ell + 1, k]})} \, \square f_{\bcdot}\,(a_{[k-\ell]}, d_{[k- \ell + 1, k]}).\end{equation}

The base case $\ell = 1$ is exactly Proposition~\ref{multConvMainStep}. Suppose now the claim holds for some $\ell \in [k-1]$. Apply the inductive hypothesis for approximation parameter $\eta / 2$ to get a positive integer $m \leq \exp^{(O_{k}(1))}\Big(O_{k,p}(\eta^{-1})\Big)$, constants $c_1, \dots,$ $c_m \in \mathbb{D}$ and multiaffine forms $\alpha^{(1)}_1, \dots,$ $\alpha^{(1)}_m, \dots,$ $\alpha^{(\ell + 1)}_1, \dots,$ $\alpha^{(\ell + 1)}_m \colon G_{[k]} \to \mathbb{F}_p$ such that~\eqref{cubeapproxeqn} holds (with $\eta/2$ instead of $\eta$), that is
\begin{equation}\label{cubeapproxeqnIH1}\square f_\bcdot\,(a_{[k]}) \apps{\eta/2}_{L^2, a_{[k]}} \sum_{i \in [m]}\, \exx_{d_{[k - \ell + 1, k]}} c_i \omega^{\alpha^{(1)}_i(a_{[k]}) + \alpha^{(2)}_i(a_{[k-1]}, d_k) + \dots + \alpha^{(\ell + 1)}_i(a_{[k-\ell]}, d_{[k- \ell + 1, k]})} \, \square f_{\bcdot}\,(a_{[k-\ell]}, d_{[k- \ell + 1, k]}).\end{equation}
Apply Proposition~\ref{multConvMainStep} to $(f_I)_{I \subseteq [k]}$ but this time with a much smaller approximation parameter $\frac{1}{2m}\eta$ to get a positive integer $q \leq \exp^{(O_{k}(1))}\Big(O_{k,p}(m \eta^{-1})\Big)$, constants $c'_1, \dots, c'_q \in \mathbb{D}$ and multiaffine forms $\beta_1, \dots,$ $\beta_q,$ $\gamma_1, \dots,$ $\gamma_q \colon G_{[k]} \to \mathbb{F}_p$ such that
\begin{equation}\label{cubeapproxeqnIH2}\square f_\bcdot\,(a_{[k]}) \apps{\eta / 2m}_{L^2, a_{[k]}} \sum_{i \in [q]}\, c'_i \omega^{\beta_i(a_{[k]})}\, \exx_{d_{k - \ell}} \square f_{\bcdot}\,(a_{[k] \setminus \{k-\ell\}}, d_{k - \ell}) \omega^{\gamma_i(a_{[k] \setminus \{k - \ell\}}, d_{k - \ell})}.\end{equation}
Use approximation~\eqref{cubeapproxeqnIH2} instead of $f_{\bcdot}\,(a_{[k-\ell]}, d_{[k- \ell + 1, k]})$ terms on the right-hand-side of~\eqref{cubeapproxeqnIH1}. By the triangle inequality for $L^2$ norms we have
\begin{align*}\square f_\bcdot\,(a_{[k]}) \apps{\eta}_{L^2, a_{[k]}}& \sum_{i \in [m]}\, c_i \exx_{d_{[k - \ell + 1, k]}} \omega^{\alpha^{(1)}_i(a_{[k]}) + \alpha^{(2)}_i(a_{[k-1]}, d_k) + \dots + \alpha^{(\ell + 1)}_i(a_{[k-\ell]}, d_{[k- \ell + 1, k]})}\\
&\hspace{1cm}\sum_{j \in [q]}\, c'_j \omega^{\beta_j(a_{[k - \ell]}, d_{[k - \ell + 1, k]})}\, \exx_{d_{k - \ell}} \square f_{\bcdot}\,(a_{[k - \ell - 1]}, d_{[k - \ell, k]}) \omega^{\gamma_j(a_{[k - \ell - 1]}, d_{[k - \ell, k]})}\\
= & \sum_{\substack{i \in [m]\\j \in [q]}}\, \exx_{d_{[k - \ell, k]}} c_ic'_j \omega^{\alpha^{(1)}_i(a_{[k]}) + \alpha^{(2)}_i(a_{[k-1]}, d_k) + \dots + \alpha^{(\ell)}_i(a_{[k-\ell + 1]}, d_{[k- \ell + 2, k]})}\\
&\hspace{2cm}\omega^{\big(\alpha^{(\ell + 1)}_i(a_{[k-\ell]}, d_{[k- \ell + 1, k]}) + \beta_j(a_{[k - \ell]}, d_{[k - \ell + 1, k]})\big)} \square f_{\bcdot}\,(a_{[k - \ell - 1]}, d_{[k - \ell, k]}),\end{align*}
as claimed.\\

Using approximation~\eqref{cubeapproxeqn} for $\ell = k$ and approximation parameter $\varepsilon / 2$, we get a positive integer $m \leq \exp^{(O_{k}(1))}\Big(O_{k,p}(\epsilon^{-1})\Big)$, constants $c_1, \dots,$ $c_m \in \mathbb{D}$ and multiaffine forms $\alpha^{(1)}_1, \dots,$ $\alpha^{(1)}_m, \dots,$ $\alpha^{(k + 1)}_1, \dots,$ $\alpha^{(k + 1)}_m \colon G_{[k]} \to \mathbb{F}_p$ such that
\begin{equation}\square f_\bcdot\,(a_{[k]}) \apps{\varepsilon/2}_{L^2, a_{[k]}} \sum_{i \in [m]}\, \exx_{d_{[k]}} c_i\omega^{\alpha^{(1)}_i(a_{[k]}) + \alpha^{(2)}_i(a_{[k-1]}, d_k) + \dots + \alpha^{(k+1)}_i(d_{[k]})} \, \square f_{\bcdot}\,(d_{[k]}).\label{cubicalmideqn}\end{equation}
To finish the proof, we use random sampling to find a finite collection of $d_{[k]}$ so that the approximation above is still accurate with only a finite number of terms. Let $r$ be a parameter to be chosen later. Pick points $\tilde{d}^{(i)}_{[k]} \in G_{[k]}$ for $i = 1,\dots, r$ uniformly and independently at random. By Lemma~\ref{randomsampling}, for $a_{[k]} \in G_{[k]}$ and $i \in [m]$, with probability at least $1 - 4\exp\Big(-\frac{r \varepsilon^2}{128m^2}\Big)$ we have that
\begin{align*}&\Big|\exx_{d_{[k]}} \omega^{\alpha^{(1)}_i(a_{[k]}) + \alpha^{(2)}_i(a_{[k-1]}, d_k) + \dots + \alpha^{(k+1)}_i(d_{[k]})} \, \square f_{\bcdot}\,(d_{[k]})\\
&\hspace{6cm}- \frac{1}{r} \sum_{\ell \in [r]} \omega^{\alpha^{(1)}_i(a_{[k]}) + \alpha^{(2)}_i(a_{[k-1]}, \tilde{d}^{(\ell)}_k) + \dots + \alpha^{(k+1)}_i(\tilde{d}^{(\ell)}_{[k]})} \, \square f_{\bcdot}\,(\tilde{d}^{(\ell)}_{[k]})\Big| \leq \varepsilon / 4m.\end{align*}
By the union bound, for each $a_{[k]}$ we have
\begin{align}&\Big|\sum_{i \in [m]} \exx_{d_{[k]}} c_i\omega^{\alpha^{(1)}_i(a_{[k]}) + \alpha^{(2)}_i(a_{[k-1]}, d_k) + \dots + \alpha^{(k+1)}_i(d_{[k]})} \, \square f_{\bcdot}\,(d_{[k]})\nonumber\\
&\hspace{6cm}- \sum_{i \in [m]}\sum_{\ell \in [r]} \frac{1}{r}c_i  \square f_{\bcdot}\,(\tilde{d}^{(\ell)}_{[k]})\,\omega^{\alpha^{(1)}_i(a_{[k]}) + \alpha^{(2)}_i(a_{[k-1]}, \tilde{d}^{(\ell)}_k) + \dots + \alpha^{(k+1)}_i(\tilde{d}^{(\ell)}_{[k]})}\Big| \leq \varepsilon / 4\label{finiteapproxcubical}\end{align}
with probability at least $1 - 4m\exp\Big(-\frac{r \varepsilon^2}{128m^2}\Big)$. We conclude that there is a choice of $\tilde{d}^{(1)}_{[k]}, \dots,$ $\tilde{d}^{(r)}_{[k]} \in G_{[k]}$ such that~\eqref{finiteapproxcubical} holds for at least $1 - 4m\exp\Big(-\frac{r \varepsilon^2}{128m^2}\Big)|G_{[k]}|$ of $a_{[k]} \in G_{[k]}$. Let $A \subset G_{[k]}$ be the set of such $a_{[k]}$. Note on the other hand that if $a_{[k]} \notin A$ then trivially
\begin{align}&\Big|\sum_{i \in [m]} \exx_{d_{[k]}} c_i \omega^{\alpha^{(1)}_i(a_{[k]}) + \alpha^{(2)}_i(a_{[k-1]}, d_k) + \dots + \alpha^{(k+1)}_i(d_{[k]})} \, \square f_{\bcdot}\,(d_{[k]})\nonumber\\
&\hspace{6cm}- \sum_{i \in [m]}\sum_{\ell \in [r]} \frac{1}{r} c_i \square f_{\bcdot}\,(\tilde{d}^{(\ell)}_{[k]})\,\omega^{\alpha^{(1)}_i(a_{[k]}) + \alpha^{(2)}_i(a_{[k-1]}, \tilde{d}^{(\ell)}_k) + \dots + \alpha^{(k+1)}_i(\tilde{d}^{(\ell)}_{[k]})}\Big| \leq 2m.\label{finiteapproxcubical2}\end{align}
Returning to approximation~\eqref{cubicalmideqn}, we may use inequalities~\eqref{finiteapproxcubical} and~\eqref{finiteapproxcubical2} to obtain
\begin{align}\square f_\bcdot\,(a_{[k]}) \apps{\varepsilon'}_{L^2, a_{[k]}} \sum_{i \in [m]}\sum_{\ell \in [r]} \frac{1}{r}c_i  \square f_{\bcdot}\,(\tilde{d}^{(\ell)}_{[k]})\,\omega^{\alpha^{(1)}_i(a_{[k]}) + \alpha^{(2)}_i(a_{[k-1]}, \tilde{d}^{(\ell)}_k) + \dots + \alpha^{(k+1)}_i(\tilde{d}^{(\ell)}_{[k]})}\label{cubicalFinalApprox}\end{align}
where
\[\varepsilon' = \frac{\varepsilon}{2} + \sqrt{\frac{\varepsilon^2}{16} + 4m\exp\Big(-\frac{r \varepsilon^2}{128m^2}\Big) \cdot 4m^2} \leq \frac{3\varepsilon}{4} + 4m^2\exp\Big(-\frac{r \varepsilon^2}{256m^2}\Big).\]
We may pick $r = O(m^{O(1)} \varepsilon^{-O(1)})$ so that $\varepsilon' \leq \varepsilon$, which completes the proof. The total number of summands in~\eqref{cubicalFinalApprox} is $rm \leq O(m^{O(1)} \varepsilon^{-O(1)}) \leq \exp^{(O_{k}(1))}\Big(O_{k,p}(\epsilon^{-1})\Big)$, as required.\end{proof}

We now prove Proposition~\ref{multConvMainStep}. The method of the proof is similar to that of the proofs of approximations results for mixed convolutions in~\cite{U4paper} and~\cite{genPaper}.

\begin{proof}[Proof of Proposition~\ref{multConvMainStep}]Fix $a_{[k-1]} \in G_{[k-1]}$ and consider the sliced function $(\square f_{\bcdot}\,)_{a_{[k-1]}} \colon a_k \mapsto \square f_{\bcdot}\,(a_{[k]})$. Note that $(\square f_{\bcdot}\,)_{a_{[k-1]}}$ is given by an average of (single-variable) convolutions
\begin{align}(\square f_{\bcdot}\,)_{a_{[k-1]}}(b_k) &= \exx_{x_{[k-1]}} \exx_{y_k} \Big(\prod_{I \subseteq [k-1]} \on{Conj}^{k - 1 - |I|} (f_{I \cup \{k\}})_{(x + a)_I; x_{[k-1] \setminus I}}(y_k + b_k)\Big)\nonumber\\
&\hspace{8cm}\overline{\Big(\prod_{I \subseteq [k-1]} \on{Conj}^{k - 1 - |I|} (f_I)_{(x + a)_I; x_{[k-1] \setminus I}}(y_k)\Big)} \nonumber\\
&= \exx_{x_{[k-1]}} \Big(\prod_{I \subseteq [k-1]} \on{Conj}^{k - 1 - |I|} (f_{I \cup \{k\}})_{(x + a)_I; x_{[k-1] \setminus I}}\Big) \conv \Big(\prod_{I \subseteq [k-1]} \on{Conj}^{k - 1 - |I|} (f_I)_{(x + a)_I; x_{[k-1] \setminus I}}\Big)(b_k).\label{multiConvMainStepFirstEqn}\end{align}

Let $\rho > 0$ be a parameter to be specified later. For each $x_{[k-1]}, a_{[k-1]} \in G_{[k-1]}$, let $S_{x_{[k-1]}, a_{[k-1]}}$ be the set of all $r \in G_k$ such that
\begin{equation}\label{Sdefin}\bigg|\Big[\prod_{I \subseteq [k-1]} \on{Conj}^{k - 1 - |I|} (f_{I \cup \{k\}})_{(x + a)_I; x_{[k-1] \setminus I}}\Big]^\wedge(r)\Big|,\,\,\Big|\overline{\Big[\prod_{I \subseteq [k-1]}\on{Conj}^{k - 1 - |I|} (f_I)_{(x + a)_I; x_{[k-1] \setminus I}}\Big]^\wedge(r)}\bigg| \geq \rho,\end{equation}
i.e. the large Fourier coefficients. 

We now show that for each $a_{[k-1]}$ the only $r$ that matter in the approximation above are those such that $r \in S_{x_{[k-1]}, a_{[k-1]}}$ for many $x_{[k-1]}$. 

Let $\xi > 0$ a parameter to be chosen later. For $a_{[k-1]} \in G_{[k-1]}$ define $R_{a_{[k-1]}} \subset G_k$ to be the set of all $r \in G_k$ such that for at least $\xi |G_{[k-1]}|$ of $x_{[k-1]} \in G_{[k-1]}$ we have $r \in S_{x_{[k-1]}, a_{[k-1]}}$. 
%

\begin{claim}\label{frequentlargefcsclaim}Let $R \subset G_k$ be an arbitrary subset and let $a_{[k-1]} \in G_{[k-1]}$. Then
\[\Big\|\sum_{r \in R} \exx_{d_k} \omega^{r \cdot (a_k - d_k)} \square f_{\bcdot}\,(a_{[k-1]}, d_k)\Big\|_{L^2, a_k} \leq 1.\]
Moreover, if $R$ is disjoint from $R_{a_{[k-1]}}$ then we have a stronger bound
\[\Big\|\sum_{r \in R} \exx_{d_k} \omega^{r \cdot (a_k - d_k)} \square f_{\bcdot}\,(a_{[k-1]}, d_k)\Big\|_{L^2, a_k} \leq \rho^2 + \xi\rho^{-2}.\]
\end{claim}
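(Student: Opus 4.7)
The key observation is that the expression inside the norm is a Fourier projection onto frequencies in $R$. Setting $h(a_k) := \square f_\bcdot(a_{[k-1]}, a_k)$ and noting that $\hat{h}(r) = \exx_{d_k} h(d_k) \omega^{-r \cdot d_k}$, the inner sum is exactly $\sum_{r \in R} \hat{h}(r) \omega^{r \cdot a_k}$. By Parseval's identity the $L^2$ norm in $a_k$ equals $\big(\sum_{r \in R} |\hat{h}(r)|^2\big)^{1/2}$. The first bound is now immediate: $\sum_{r \in R} |\hat{h}(r)|^2 \le \|h\|_{L^2}^2 \le 1$, since $\square f_\bcdot$ takes values in $\mathbb{D}$.

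For the second bound I would use~(\ref{multiConvMainStepFirstEqn}) to express $h$ as an average of one-variable convolutions, $h(a_k) = \exx_{x_{[k-1]}}(F_{x,a} \conv G_{x,a})(a_k)$, where $F_{x,a}$ and $G_{x,a}$ denote the two products of sliced factors appearing there. Taking Fourier transforms in the $a_k$-variable gives $\hat{h}(r) = \exx_{x_{[k-1]}} \hat{F}_{x,a}(r)\overline{\hat{G}_{x,a}(r)}$. The plan is then to split the averaging over $x_{[k-1]}$ according to whether $r \in S_{x_{[k-1]}, a_{[k-1]}}$ or not, writing $\hat{h}(r) = A_r + B_r$ with $A_r$ collecting the $r \in S_{x,a}$ contribution and $B_r$ the complementary one.

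For the ``out'' term $B_r$: whenever $r \notin S_{x,a}$, at least one of $|\hat{F}_{x,a}(r)|, |\hat{G}_{x,a}(r)|$ is strictly less than $\rho$, so $|\hat{F}_{x,a}(r)|^2 |\hat{G}_{x,a}(r)|^2 \le \rho^2\big(|\hat{F}_{x,a}(r)|^2 + |\hat{G}_{x,a}(r)|^2\big)$. Passing the square inside the expectation over $x_{[k-1]}$ by Jensen and applying Plancherel to $F_{x,a}, G_{x,a} \in \mathbb{D}$ then yields $\sum_r |B_r|^2 = O(\rho^2)$. For the ``in'' term $A_r$: the disjointness $R \cap R_{a_{[k-1]}} = \emptyset$ is by definition the statement that $\Pr_{x_{[k-1]}}[r \in S_{x,a}] \le \xi$ for each $r \in R$, which gives the pointwise-in-$r$ bound $|A_r| \le \xi$. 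Coupling this with the total spectral estimate $\sum_{r \in R} \Pr_{x_{[k-1]}}[r \in S_{x,a}] \le \exx_{x_{[k-1]}} |S_{x,a}| \le \rho^{-2}$, which follows from Lemma~\ref{largeSpec} applied to $F_{x,a}$ (since each $r \in S_{x,a}$ satisfies $|\hat{F}_{x,a}(r)| \ge \rho$), one obtains $\sum_{r \in R} |A_r|^2 \le \xi\rho^{-2}$. Assembling the two pieces via Minkowski's inequality in $L^2$ produces the advertised bound of the form $\rho^2 + \xi\rho^{-2}$.

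The only real subtlety is choosing the right interaction between the pointwise-in-$r$ bound on $A_r$ (coming from disjointness with $R_{a_{[k-1]}}$) and the total large-Fourier-coefficients bound from Lemma~\ref{largeSpec}: their product is exactly what gives the $\xi\rho^{-2}$ term, while the $\rho^2$ term is just the ``out'' contribution. Everything else is bookkeeping with Parseval and Minkowski.
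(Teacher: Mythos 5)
Your proposal is correct and follows essentially the same route as the paper: reduce via Parseval to $\sum_{r\in R}|\hat h(r)|^2$, express $\hat h(r)$ as an average of products of sliced Fourier coefficients, split the average according to membership in $S_{x_{[k-1]},a_{[k-1]}}$, bound the non-large-coefficient part by $O(\rho^2)$ via Plancherel, and bound the large-coefficient part by $\xi\rho^{-2}$ using the disjointness from $R_{a_{[k-1]}}$ together with Lemma~\ref{largeSpec}. The only cosmetic difference is that you recombine via Minkowski where the paper uses $|a+b|^2\le 2|a|^2+2|b|^2$, which gives the same shape of bound.
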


\begin{proof} Expanding the expression we get
\begin{align}&\Big\|\sum_{r \in R} \exx_{d_k} \omega^{r \cdot (a_k - d_k)} \square f_{\bcdot}\,(a_{[k-1]}, d_k)\Big\|_{L^2, a_k}^2 = \exx_{a_k} \Big|\sum_{r \in R} \exx_{d_k} \omega^{r \cdot (a_k - d_k)} \square f_{\bcdot}\,(a_{[k-1]}, d_k)\Big|^2\nonumber\\
&\hspace{2cm}=\sum_{r,s \in R} \exx_{d_k, e_k, a_k} \omega^{r \cdot (a_k - d_k) - s \cdot (a_k - e_k)} \square f_{\bcdot}\,(a_{[k-1]}, d_k) \overline{\square f_{\bcdot}\,(a_{[k-1]}, e_k)}\nonumber\\
&\hspace{2cm}= \sum_{r,s \in R} \exx_{d_k, e_k} \Big(\exx_{a_k}  \omega^{(r -s) \cdot a_k}\Big)\omega^{s \cdot e_k - r \cdot d_k} \square f_{\bcdot}\,(a_{[k-1]}, d_k) \overline{\square f_{\bcdot}\,(a_{[k-1]}, e_k)}\nonumber\\
&\hspace{2cm}= \sum_{r \in R} \exx_{d_k, e_k} \omega^{r \cdot e_k - r \cdot d_k} \square f_{\bcdot}\,(a_{[k-1]}, d_k) \overline{\square f_{\bcdot}\,(a_{[k-1]}, e_k)}\nonumber\\
&\hspace{2cm}= \sum_{r \in R}\Big|\exx_{d_k} \square f_{\bcdot}\,(a_{[k-1]}, d_k)\omega^{- r \cdot d_k}\Big|^2.\label{rapproxspectrumineq}\end{align}
It is now easy to finish the proof of the first claim. (We deliberately stopped the argument here as we shall use~\eqref{rapproxspectrumineq} for the proof of the second part of the claim.) The expression above is at most
\begin{align*}\sum_{r \in G_k}\Big|\exx_{d_k} \square f_{\bcdot}\,(a_{[k-1]}, d_k)\omega^{- r \cdot d_k}\Big|^2 = \sum_{r \in G_k} \exx_{d_k, e_k} \omega^{r \cdot e_k - r \cdot d_k} \square f_{\bcdot}\,(a_{[k-1]}, d_k) \overline{\square f_{\bcdot}\,(a_{[k-1]}, e_k)}= \exx_{d_k}|\square f_{\bcdot}\,(a_{[k-1]}, d_k)|^2 \leq 1.\end{align*}
For the second claim, return to~\eqref{rapproxspectrumineq}. We have
\begin{align*}&\Big\|\sum_{r \in R} \exx_{d_k} \omega^{r \cdot (a_k - d_k)} \square f_{\bcdot}\,(a_{[k-1]}, d_k)\Big\|_{L^2, a_k}^2 = \sum_{r \in R}\Big|\exx_{d_k} \square f_{\bcdot}\,(a_{[k-1]}, d_k)\omega^{- r \cdot d_k}\Big|^2\\
&\hspace{2cm}= \sum_{r \in R}\Big|\exx_{d_k} \exx_{x_{[k-1]}} \exx_{y_k} \Big(\prod_{I \subseteq [k-1]} \on{Conj}^{k - 1 - |I|} (f_{I \cup \{k\}})_{(x + a)_I; x_{[k-1] \setminus I}}(y_k + d_k)\Big)\\
&\hspace{6cm}\overline{\Big(\prod_{I \subseteq [k-1]} \on{Conj}^{k - 1 - |I|} (f_I)_{(x + a)_I; x_{[k-1] \setminus I}}(y_k)\Big)}\omega^{- r \cdot d_k} \Big|^2\\
&\hspace{2cm}= \sum_{r \in R}\Big|\exx_{x_{[k-1]}} \Big[\prod_{I \subseteq [k-1]} \on{Conj}^{k - 1 - |I|} (f_{I \cup \{k\}})_{(x + a)_I; x_{[k-1] \setminus I}}\Big]^{\wedge}(r) \\
&\hspace{6cm}\overline{\Big[\prod_{I \subseteq [k-1]} \on{Conj}^{k - 1 - |I|} (f_I)_{(x + a)_I; x_{[k-1] \setminus I}}\Big]^{\wedge}(r)}\Big|^2\\
&\hspace{2cm}= \sum_{r \in R}\Big|\exx_{x_{[k-1]}} \Big(\mathbbm{1}(r \in S_{a_{[k-1]}, x_{[k-1]}}) + \mathbbm{1}(r \notin S_{a_{[k-1]}, x_{[k-1]}})\Big) \Big[\prod_{I \subseteq [k-1]} \on{Conj}^{k - 1 - |I|} (f_{I \cup \{k\}})_{(x + a)_I; x_{[k-1] \setminus I}}\Big]^{\wedge}(r) \\
&\hspace{6cm}\overline{\Big[\prod_{I \subseteq [k-1]} \on{Conj}^{k - 1 - |I|} (f_I)_{(x + a)_I; x_{[k-1] \setminus I}}\Big]^{\wedge}(r)}\Big|^2\\
&\hspace{2cm}\leq 2\sum_{r \in R}\Big|\exx_{x_{[k-1]}} \mathbbm{1}(r \in S_{a_{[k-1]}, x_{[k-1]}})\Big[\prod_{I \subseteq [k-1]} \on{Conj}^{k - 1 - |I|} (f_{I \cup \{k\}})_{(x + a)_I; x_{[k-1] \setminus I}}\Big]^{\wedge}(r) \\
&\hspace{6cm}\overline{\Big[\prod_{I \subseteq [k-1]} \on{Conj}^{k - 1 - |I|} (f_I)_{(x + a)_I; x_{[k-1] \setminus I}}\Big]^{\wedge}(r)}\Big|^2\\
&\hspace{2cm}+2\sum_{r \in R}\Big|\exx_{x_{[k-1]}} \mathbbm{1}(r \notin S_{a_{[k-1]}, x_{[k-1]}})\Big[\prod_{I \subseteq [k-1]} \on{Conj}^{k - 1 - |I|} (f_{I \cup \{k\}})_{(x + a)_I; x_{[k-1] \setminus I}}\Big]^{\wedge}(r) \\
&\hspace{6cm}\overline{\Big[\prod_{I \subseteq [k-1]} \on{Conj}^{k - 1 - |I|} (f_I)_{(x + a)_I; x_{[k-1] \setminus I}}\Big]^{\wedge}(r)}\Big|^2\\
&\hspace{2cm}\leq 2\sum_{r \in R}\Big|\exx_{x_{[k-1]}} \mathbbm{1}(r \in S_{a_{[k-1]}, x_{[k-1]}})\Big[\prod_{I \subseteq [k-1]} \on{Conj}^{k - 1 - |I|} (f_{I \cup \{k\}})_{(x + a)_I; x_{[k-1] \setminus I}}\Big]^{\wedge}(r) \\
&\hspace{6cm}\overline{\Big[\prod_{I \subseteq [k-1]} \on{Conj}^{k - 1 - |I|} (f_I)_{(x + a)_I; x_{[k-1] \setminus I}}\Big]^{\wedge}(r)}\Big|^2\\
&\hspace{2cm}+2\sum_{r \in R}\exx_{x_{[k-1]}} \mathbbm{1}(r \notin S_{a_{[k-1]}, x_{[k-1]}})\Big| \Big[\prod_{I \subseteq [k-1]} \on{Conj}^{k - 1 - |I|} (f_{I \cup \{k\}})_{(x + a)_I; x_{[k-1] \setminus I}}\Big]^{\wedge}(r) \\
&\hspace{6cm}\overline{\Big[\prod_{I \subseteq [k-1]} \on{Conj}^{k - 1 - |I|} (f_I)_{(x + a)_I; x_{[k-1] \setminus I}}\Big]^{\wedge}(r)}\Big|^2\\
&\hspace{2cm}\leq 2\sum_{r \in R}\Big|\exx_{x_{[k-1]}} \mathbbm{1}(r \in S_{a_{[k-1]}, x_{[k-1]}})\Big[\prod_{I \subseteq [k-1]} \on{Conj}^{k - 1 - |I|} (f_{I \cup \{k\}})_{(x + a)_I; x_{[k-1] \setminus I}}\Big]^{\wedge}(r) \\
&\hspace{6cm}\overline{\Big[\prod_{I \subseteq [k-1]} \on{Conj}^{k - 1 - |I|} (f_I)_{(x + a)_I; x_{[k-1] \setminus I}}\Big]^{\wedge}(r)}\Big|^2 + 2\rho^2.\end{align*}
It remains to bound
\begin{align}&\sum_{r \in R}\Big|\exx_{x_{[k-1]}} \mathbbm{1}(r \in S_{a_{[k-1]}, x_{[k-1]}})\Big[\prod_{I \subseteq [k-1]} \on{Conj}^{k - 1 - |I|} (f_{I \cup \{k\}})_{(x + a)_I; x_{[k-1] \setminus I}}\Big]^{\wedge}(r)\nonumber\\
&\hspace{6cm} \overline{\Big[\prod_{I \subseteq [k-1]} \on{Conj}^{k - 1 - |I|} (f_I)_{(x + a)_I; x_{[k-1] \setminus I}}\Big]^{\wedge}(r)}\Big|^2.\label{rsumfcbound1}\end{align}
To that end, for $r \in G_k$ we set
\begin{align*}v_{r} = \exx_{x_{[k-1]}} \mathbbm{1}\Big(r \in S_{x_{[k-1]}, a_{[k-1]}}\Big) &\Big[\prod_{I \subseteq [k-1]} \on{Conj}^{k - 1 - |I|} (f_{I \cup \{k\}})_{(x + a)_I; x_{[k-1] \setminus I}}\Big]^\wedge(r)\\
&\hspace{1cm}\overline{\Big[\prod_{I \subseteq [k-1]}\on{Conj}^{k - 1 - |I|} (f_I)_{(x + a)_I; x_{[k-1] \setminus I}}\Big]^\wedge(r)}.\end{align*}
Note that $v_{r} \in \mathbb{D}$ holds for all $r$ and that we also have
\begin{align*}\sum_{r \in G_k} |v_{r}| \leq &\sum_{r \in G_k} \exx_{x_{[k-1]}} \mathbbm{1}\Big(r \in S_{x_{[k-1]}, a_{[k-1]}}\Big) \bigg|\Big[\prod_{I \subseteq [k-1]} \on{Conj}^{k - 1 - |I|} (f_{I \cup \{k\}})_{(x + a)_I; x_{[k-1] \setminus I}}\Big]^\wedge(r)\nonumber\\
&\hspace{6cm}\overline{\Big[\prod_{I \subseteq [k-1]}\on{Conj}^{k - 1 - |I|} (f_I)_{(x + a)_I; x_{[k-1] \setminus I}}\Big]^\wedge(r)}\bigg|\nonumber\\
\leq &\exx_{x_{[k-1]}} |S_{x_{[k-1]}, a_{[k-1]}}| \leq \rho^{-2},\end{align*}
where we used Lemma~\ref{largeSpec} in the last step. Using this, we may bound the expression~\eqref{rsumfcbound1} from above by
\begin{align*}\sum_{r \in R} |v_{r}|^2 \leq \sum_{r \in R} |v_{r}|\bigg(\exx_{x_{[k-1]}}\mathbbm{1}\Big(r \in S_{x_{[k-1]}, a_{[k-1]}}\Big)\bigg) \leq \xi \sum_{r \in R} |v_{r}| \leq \xi \rho^{-2},\end{align*}
completing the proof.\end{proof}

\noindent \textbf{Step 2. Multilinear structure in $R_{a_{[k-1]}}$.} Let $\Phi$ be a function defined on a subset of $G_{[k-1]}$ such that $\Phi(a_{[k-1]}) \in S_{x_{[k-1]}, a_{[k-1]}}$ for at least $\xi |G_{[k-1]}|$ of $x_{[k-1]} \in G_{[k-1]}$. We claim that $\Phi$ is in fact a multi-2-homomorphism on a somewhat smaller subset of $G_{[k-1]}$. By a \emph{$d$-additive quadruple} we mean a quadruple $(x^{(1)}_{[k-1]}, \dots, x^{(4)}_{[k-1]})$ of points in $G_{[k-1]}$ such that $x^{(1)}_i = \dots = x^{(4)}_i$ for each $i \not= d$ and $x^{(1)}_d - x^{(2)}_d + x^{(3)}_d - x^{(4)}_d = 0$.\\

Similarly to~\cite{U4paper} and~\cite{genPaper}, we use a Fourier-analytic technique invented by Gowers in~\cite{TimSzem}.

\begin{claim}\label{daddquad}Let $A \subset G_{[k-1]}$ be a set of density $\eta$ and let $\Phi \colon A \to G_k$ be a function such that for each $a_{[k-1]} \in A$ we have $\Phi(a_{[k-1]}) \in S_{x_{[k-1]}, a_{[k-1]}}$ for at least $\xi |G_{[k-1]}|$ of $x_{[k-1]} \in G_{[k-1]}$. Let $d \in [k-1]$ be a direction. Then $\Phi$ respects at least $\xi^2 \eta^2 \rho^4|G_{[k-1]}| |G_d|^2$ $d$-additive quadruples in $A$.\end{claim}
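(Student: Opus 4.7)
The plan is to follow Gowers' Fourier doubling technique of \cite{TimSzem}, adapted as in \cite{U4paper,genPaper}. First, I would translate the hypothesis into a Fourier inequality: the condition $\Phi(a_{[k-1]})\in S_{x_{[k-1]},a_{[k-1]}}$ forces $|\hat F_{x,a}(\Phi(a))|\geq \rho$ (with $F_{x,a}$ denoting the first convolvand appearing in Eqn.~(10)), and such $(a,x)$ form a fraction at least $\xi\eta$ of the total; hence
\[\xi\eta\rho^2 \leq \exx_{a,x}\mathbbm{1}(a\in A,\Phi(a)\in S_{x,a})\,|\hat F_{x,a}(\Phi(a))|^2.\]
Expanding $|\hat F(r)|^2=\ex_{y_k,h_k}F(y_k)\overline{F(y_k-h_k)}\omega^{-rh_k}$ rewrites this as an inequality for the average over the variables $(a_{[k-1]},x_{[k-1]},y_k,h_k)$, in which the phase $\omega^{-\Phi(a)h_k}$ now appears.

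Next, I would split $a=(a_{[k-1]\setminus\{d\}},a_d)=(\bar a,a_d)$ and apply Cauchy--Schwarz over all variables except $a_d$, which duplicates $a_d$ into a pair $(a_d,a_d')$; after relabelling $a_d'=a_d+s$ and carrying out the $(y_k,h_k)$-integration by a change of variable $u=y_k-h_k$, this produces an inequality of the form
\[(\xi\eta\rho^2)^2 \leq \exx_{\bar a,x,a_d,s}\mathbbm{1}(a,a+(0,s)\in A)\cdot(\text{non-negative weight})\cdot\Bigl|\exx_{u} F_{x,a}(u)\overline{F_{x,a+(0,s)}(u)}\omega^{\Delta^d_s \Phi(a)\cdot u}\Bigr|^2,\]
where $\Delta^d_s \Phi(a):=\Phi(a+(0,s))-\Phi(a)$ is the discrete derivative of $\Phi$ in the $d$-th coordinate by $s$. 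The magic of this step is that the phase in the inner $u$-Fourier coefficient is precisely the $d$-derivative of $\Phi$ at $a$, which is exactly the quantity that controls respecting of $d$-additive quadruples.

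The third step---and the main obstacle---is to recognise the resulting right-hand side as matching the quadruple-counting quantity
\[\exx_{\bar a,h_k}\sum_{r\in G_d}|\hat g_{h_k}(r)|^4,\qquad g_{h_k}(a_d):=\mathbbm{1}((\bar a,a_d)\in A)\omega^{h_k\Phi(\bar a,a_d)},\]
whose value equals the density of respected $d$-additive quadruples in $A$. The identification will be performed by bounding the inner Fourier factors involving $F$ by~$1$ (using $|F|\leq 1$), discarding the $S$-indicator (also $\leq 1$), and then using Plancherel in the $a_d$-variable together with the identity $\sum_r|\hat g|^4=\ex_s|g\conv g(s)|^2$, in such a way that the shift $s$ of the convolution corresponds to the shift introduced in the Cauchy--Schwarz step and the variable $h_k$ plays the role of the Fourier dual to $\Delta^d_s\Phi$. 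Once this identification is carried through, multiplying the resulting density bound $\xi^2\eta^2\rho^4$ by the total number of $d$-additive quadruples, namely $|G_{[k-1]\setminus\{d\}}|\,|G_d|^3=|G_{[k-1]}|\,|G_d|^2$, yields the claimed lower bound.
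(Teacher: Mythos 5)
Your Steps 1--2 are essentially sound and follow the paper's strategy: starting from $\xi\eta\rho^2\leq\exx_{a,x}A(a)\,|\hat F_{x,a}(\Phi(a))|^2$ (using only one of the two large Fourier coefficients is fine, since either one already gives a $\rho^2$-lower bound), expanding the square, and applying Cauchy--Schwarz to duplicate $a_d$. Your claimed post--Cauchy--Schwarz form is also correct after a little algebra. The genuine gap is in Step 3. You propose to bound $|F_{x,a}(u)\overline{F_{x,a+(0,s)}(u)}|\leq 1$ inside the $u$-average, but that bounds the entire factor $\bigl|\exx_u F_{x,a}(u)\overline{F_{x,a+(0,s)}(u)}\,\omega^{\Delta^d_s\Phi(a)\cdot u}\bigr|^2\leq 1$, and in doing so it erases the $\Phi$-phase (since $|\omega^{\cdot}|=1$). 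The inequality then collapses to the useless $(\xi\eta\rho^2)^2\leq 1$. The information about $\Phi$ cannot be recovered after a pointwise bound on $F$, because $F$ and the phase sit inside the same $u$-integral.

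The paper's proof sidesteps this by exploiting a structural feature that your proposal does not use: as a function of $(x_d,a_d)$ (with all other variables frozen), the product $F_{x,a}(y_k)\overline{F_{x,a}(y_k-h_k)}$ factors as $P(x_d)\cdot Q(x_d+a_d)$, because the $f$-terms with $d\notin I$ see only $x_d$ while those with $d\in I$ see $x_d+a_d$. One drops \emph{only} the $P$-factor by the triangle inequality, and after Cauchy--Schwarz is left with $\exx_{x_d}\bigl|\exx_{a_d}Q(x_d+a_d)G(a_d)\bigr|^2$, where $G(a_d)=A(a_{[k-1]\setminus\{d\}},a_d)\,\omega^{-h_k\cdot\Phi(a_{[k-1]\setminus\{d\}},a_d)}$. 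This is a genuine convolution whose \emph{shift variable is} $x_d$, and Lemma~\ref{convL4lemma} (Gowers' Fourier doubling) applied with $x_d$ as the shift yields the bound $\bigl(\sum_r|\hat G(r)|^4\bigr)^{1/2}$: the lemma discards $\hat Q$ by Cauchy--Schwarz in Fourier space while keeping the fourth moment of $\hat G$, which is the $d$-additive-quadruple count. The identity you cite, $\sum_r|\hat g|^4=\exx_s|g\conv g(s)|^2$, is the $f=g$ special case of that lemma; what is needed here is the two-function version with $Q\neq G$, and the relevant shift is $x_d$, not the Cauchy--Schwarz variable $s$. Without invoking the factorization through $x_d$ and Lemma~\ref{convL4lemma}, the passage from your second display to the quadruple-counting quantity is missing.
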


\begin{proof}We prove the claim for $d = k-1$ which is clearly sufficient. Recalling the definining property~\eqref{Sdefin} of $S_{x_{[k-1]}, a_{[k-1]}}$, we have that
\begin{align}\xi \eta \rho^2\leq & \exx_{a_{[k-1]}} A(a_{[k-1]}) \exx_{x_{[k-1]}} \bigg|\Big[\prod_{I \subseteq [k-1]} \on{Conj}^{k - 1 - |I|} (f_{I \cup \{k\}})_{(x + a)_I; x_{[k-1] \setminus I}}\Big]^\wedge\Big(\Phi(a_{[k-1]})\Big)\nonumber\\
&\hspace{6cm} \overline{\Big[\prod_{I \subseteq [k-1]}\on{Conj}^{k - 1 - |I|} (f_I)_{(x + a)_I; x_{[k-1] \setminus I}}\Big]^\wedge\Big(\Phi(a_{[k-1]})\Big)}\bigg|^2\nonumber\\
\leq & \exx_{a_{[k-1]}, x_{[k-1]}}   A(a_{[k-1]}) \bigg|\exx_{y_k, z_k} \Big(\prod_{I \subseteq [k-1]}\on{Conj}^{k - 1 - |I|} f_{I \cup \{k\}} ((x + a)_I, x_{[k-1] \setminus I}, y_k)\Big)\nonumber\\
&\hspace{6cm} \Big(\prod_{I \subseteq [k-1]} \on{Conj}^{k - |I|} f_{I}((x + a)_I, x_{[k-1] \setminus I},z_k)\Big) \omega^{(z_k -y_k) \cdot \Phi(a_{[k-1]})}\bigg|^2\nonumber\\
= & \exx_{a_{[k-1]}, x_{[k-1]}}   A(a_{[k-1]}) \exx_{y_k, y'_k, z_k, z'_k} \Big(\prod_{I \subseteq [k-1]}\on{Conj}^{k - 1 - |I|} f_{I \cup \{k\}} ((x + a)_I, x_{[k-1] \setminus I}, y_k)\Big)\nonumber\\
&\hspace{6cm}\Big(\prod_{I \subseteq [k-1]}\on{Conj}^{k - |I|} f_{I \cup \{k\}} ((x + a)_I, x_{[k-1] \setminus I}, y'_k)\Big)\nonumber\\
&\hspace{6cm} \Big(\prod_{I \subseteq [k-1]} \on{Conj}^{k - |I|} f_{I}((x + a)_I, x_{[k-1] \setminus I},z_k)\Big)\nonumber\\
&\hspace{6cm} \Big(\prod_{I \subseteq [k-1]} \on{Conj}^{k -1- |I|} f_{I}((x + a)_I, x_{[k-1] \setminus I},z'_k)\Big) \omega^{(z_k -y_k - z'_k + y'_k) \cdot \Phi(a_{[k-1]})}.\label{squaredboundaddquads}\end{align}

Write $\bm{p}$ for the shorthand for the sequence of parameters $\Big(a_{[k-2]}, x_{[k-2]}, y_k, y'_k, z_k, z'_k\Big)$. For any fixed value of $\bm{p}$, we define functions $F_{\bm{p}} \colon G_{k-1} \to \mathbb{D}$ and $G_{\bm{p}} \colon G_{k-1} \to \mathbb{D}$ by
\begin{align*}F_{\bm{p}}(w) =& \Big(\prod_{k-1 \in I \subseteq [k-1]}\on{Conj}^{k - 1 - |I|} f_{I \cup \{k\}} ((x + a)_{I \setminus \{k-1\}}, x_{[k-2] \setminus I}, y_k,\ls{k-1}{\,w})\Big)\\
&\hspace{2cm}\Big(\prod_{k-1 \in I \subseteq [k-1]}\on{Conj}^{k - |I|} f_{I \cup \{k\}} ((x + a)_{I \setminus \{k-1\}}, x_{[k-2] \setminus I}, y'_k,\ls{k-1}{\,w})\Big)\\
&\hspace{2cm} \Big(\prod_{k-1 \in I \subseteq [k-1]} \on{Conj}^{k - |I|} f_{I}((x + a)_{I \setminus \{k-1\}}, x_{[k-2] \setminus I},z_k,\ls{k-1}{\,w})\Big)\\
&\hspace{2cm} \Big(\prod_{k-1 \in I \subseteq [k-1]} \on{Conj}^{k -1- |I|} f_{I}((x + a)_{I \setminus \{k-1\}}, x_{[k-2] \setminus I}, z'_k, \ls{k-1}{\,w})\Big)
\end{align*}
and
\[G_{\bm{p}}(w) = A(a_{[k-2]}, {}^{k-1}\, w) \omega^{(z_k -y_k - z'_k + y'_k) \cdot \Phi(a_{[k-2]}, {}^{k-1}\, w)}.\]

Using this notation and applying triangle inequality in~\eqref{squaredboundaddquads}, we get
\[\xi \eta \rho^2\leq \exx_{\bm{p}} \exx_{x_{k-1}} \Big|\exx_{a_{k-1}} F_{\bm{p}}(x_{k-1} + a_{k-1}) G_{\bm{p}}(a_{k-1})\Big|.\]
By Cauchy-Schwarz inequality we get
\[\xi^2 \eta^2 \rho^4\leq \exx_{\bm{p}} \exx_{x_{k-1}} \Big|\exx_{a_{k-1}} F_{\bm{p}}(x_{k-1} + a_{k-1}) G_{\bm{p}}(a_{k-1})\Big|^2\]
which can be bounded from above using Lemma~\ref{convL4lemma} by
\begin{align*}\exx_{\bm{p}} \sum_{r_{k-1}} |\widehat{G_{\bm{p}}}(r_{k-1})|^4 = &\exx_{a_{[k-2]}, y_k, y'_k, z_k, z'_k} \sum_{r_{k-1}} \Big|\exx_{w_{k-1}} A(a_{[k-2]}, w_{k-1}) \omega^{(z_k -y_k - z'_k + y'_k) \cdot \Phi(a_{[k-2]}, w_{k-1}) - r_{k-1} \cdot w_{k-1}}\Big|^4\\
= &\exx_{\substack{a_{[k-2]}\\y_k, y'_k, z_k, z'_k}} \sum_{r_{k-1}} \exx_{\substack{u_{k-1}, v_{k-1}\\w_{k-1}, t_{k-1}}} A(a_{[k-2]}, u_{k-1}) A(a_{[k-2]}, v_{k-1}) A(a_{[k-2]}, w_{k-1}) A(a_{[k-2]}, t_{k-1})\\
&\hspace{2cm} \omega^{(z_k -y_k - z'_k + y'_k) \cdot \big(\Phi(a_{[k-2]}, u_{k-1}) - \Phi(a_{[k-2]}, v_{k-1}) + \Phi(a_{[k-2]}, w_{k-1}) - \Phi(a_{[k-2]}, t_{k-1})\big)}\\
&\hspace{2cm} \omega^{-r_{k-1} \cdot \big(u_{k-1} - v_{k-1} + w_{k-1} - t_{k-1}\big)}\\
= &\exx_{a_{[k-2]}} \exx_{\substack{u_{k-1}, v_{k-1}\\w_{k-1}, t_{k-1}}} \,|G_{k-1}| \hspace{2pt} A(a_{[k-2]}, u_{k-1}) A(a_{[k-2]}, v_{k-1}) A(a_{[k-2]}, w_{k-1}) A(a_{[k-2]}, t_{k-1}) \\
&\hspace{2cm}\mathbbm{1}\Big(\Phi(a_{[k-2]}, u_{k-1}) - \Phi(a_{[k-2]}, v_{k-1}) + \Phi(a_{[k-2]}, w_{k-1}) - \Phi(a_{[k-2]}, t_{k-1}) = 0\Big)\\
&\hspace{2cm}\mathbbm{1}\Big(u_{k-1} - v_{k-1} + w_{k-1} - t_{k-1} = 0\Big),
\end{align*}
which is exactly the density of additive $(k-1)$-quadruples whose points lie in $A$ and are respected by $\Phi$.\end{proof}

Now combine the claim we have just proved with Theorem~\ref{FreimanAddQuads} for each direction in $[k-1]$ to deduce that $\Phi$ is a Freiman multi-homomorphism on a subset $A' \subset A$ of somewhat smaller density. Once we find a part of $\Phi$ which is a Freiman multi-hmomorphism, we apply Theorem~\ref{multihom} to pass to a global multiaffine map.

\begin{claim}\label{cubicalphidensemultihom}Let $A \subset G_{[k-1]}$ be a set of density $\eta$ and let $\Phi \colon A \to G_k$ be a function such that for each $a_{[k-1]} \in A$ we have $\Phi(a_{[k-1]}) \in S_{x_{[k-1]}, a_{[k-1]}}$ for at least $\xi |G_{[k-1]}|$ of $x_{[k-1]} \in G_{[k-1]}$. Then $\Phi$ coincides with a global multiaffine map on a set of size $\bigg(\exp^{(O_{k}(1))}\Big(O_{k,p}(\eta^{-1} \xi^{-1} \rho^{-1})\Big)\bigg)^{-1} |G_{[k-1]}|$.\end{claim}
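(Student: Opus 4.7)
The plan is to use Claim~\ref{daddquad} direction by direction in order to refine $A$, in $k-1$ successive steps, to a subset $A' \subset A$ on which $\Phi$ becomes a genuine Freiman multi-homomorphism, and then to feed $\Phi|_{A'}$ into Theorem~\ref{multihom} to produce the global multiaffine map.

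Concretely, I construct a decreasing sequence $A = A^{(0)} \supset A^{(1)} \supset \cdots \supset A^{(k-1)} = A'$ such that on $A^{(d)}$ the map $\Phi$ respects \emph{all} additive quadruples in each of the first $d$ directions. To pass from $A^{(d-1)}$, of density $\eta_{d-1}$, to $A^{(d)}$, I apply Claim~\ref{daddquad} in direction $d$ to $A^{(d-1)}$. The hypothesis of that claim is inherited from the hypothesis on the original set, since the condition $\Phi(a_{[k-1]}) \in S_{x_{[k-1]}, a_{[k-1]}}$ for many $x_{[k-1]}$ only refers to values of $\Phi$ and to the ambient space. The claim produces at least $\xi^2 \eta_{d-1}^2 \rho^4 |G_{[k-1]}| |G_d|^2$ respected $d$-additive quadruples with all four points in $A^{(d-1)}$. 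A Markov argument across slices in direction $d$ then yields an $\Omega((\xi \eta_{d-1} \rho)^{O(1)})$-fraction of slices $s$ on which $\Phi_s$ respects an $\Omega((\xi \eta_{d-1} \rho)^{O(1)})$-proportion of the additive quadruples of $A^{(d-1)}_s$. On each such slice I invoke the Balog-Szemer\'edi-Gowers graph-theoretic lemma followed by Freiman's theorem (together, these underlie Theorem~\ref{FreimanAddQuads}) to extract a subset $A_s^{\star} \subset A^{(d-1)}_s$ of density at least $\exp(-\log^{O(1)}(\eta_{d-1} \xi \rho)^{-1})$ on which $\Phi_s$ coincides with some affine map $G_d \to G_k$. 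Taking $A^{(d)}$ to be the union of these $A_s^{\star}$ over the good slices, the map $\Phi$ respects \emph{all} direction-$d$ additive quadruples on $A^{(d)}$, and inherits the analogous property for directions $1, \dots, d-1$ by $A^{(d)} \subset A^{(d-1)}$.

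After $k-1$ iterations, $\Phi|_{A'}$ is a Freiman multi-homomorphism on a set of density $\delta \geq (\exp(\log^{O_k(1)}(\eta \xi \rho)^{-1}))^{-1}$, since $k-1$ applications of a quasipolynomial loss compound only into another quasipolynomial loss in the original parameters. Theorem~\ref{multihom} then yields a global multiaffine map $\Phi^{\sharp} \colon G_{[k-1]} \to G_k$ agreeing with $\Phi$ on at least $(\exp^{(O_{k,p}(1))}(O_{k,p}(\delta^{-1})))^{-1} |G_{[k-1]}|$ points, which, composed with the quasipolynomial bound on $\delta^{-1}$, is exactly the claimed density $(\exp^{(O_k(1))}(O_{k,p}((\eta \xi \rho)^{-1})))^{-1}$.

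The main technical obstacle is the direction-by-direction quantitative bookkeeping: one must check that the quasipolynomial losses from the BSG-Freiman step on each slice, together with the losses from slice-averaging, combine across the $k-1$ iterations into only one additional level of exponentiation, so that the bottleneck in the final bound is the single invocation of Theorem~\ref{multihom}. A subsidiary point is confirming that successive shrinking indeed accumulates the multi-homomorphism property in every previously treated direction, which follows because once $\Phi$ agrees with a global affine map on some subset it automatically respects \emph{all} additive quadruples within that subset, and this property is preserved under further passage to subsets.
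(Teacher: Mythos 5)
Your proposal is correct and follows essentially the same route as the paper: iterate direction by direction, at each stage applying Claim~\ref{daddquad} to get many respected $d$-additive quadruples, average to find good slices, apply the BSG+Freiman input on each slice, union the resulting slice-level pieces, and finally invoke Theorem~\ref{multihom} on the resulting Freiman multi-homomorphism. The only cosmetic difference is that you make the BSG step explicit where the paper folds it implicitly into the invocation of Theorem~\ref{FreimanAddQuads}; the quantitative bookkeeping and the conclusion are the same.
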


\begin{proof}We first show inductively that for each $i \in [0,k-1]$ there is a subset $A_i \subseteq A$ of size at least
\[|A_i| \geq \Omega_{k,p}\Big(\exp(-\log^{O(i)}(\xi^{-1}\eta^{-1}\rho^{-1}))\Big) |G_{k-1}|\]
on which $\Phi$ is a Freiman homomorphism in directions $1,\dots, i$. The base case $i = 0$ is trivial as we may simply take $A_0 = A$. Suppose now that the claim holds for some $0 \leq i \leq k-2$ and let $A_i$ be the set given by the induction hypothesis. Let $\eta_i$ be the density of $A_i \in G_{[k-1]}$. Claim~\ref{daddquad} applies to the set $A_i$ in direction $i + 1$ and we deduce that $\Phi$ respects at least $\xi^2 \eta_i^2 \rho^4|G_{[k-1]}| |G_{i+1}|^2$ of $(i+1)$-additive quadruples with points in $A_i$. By averaging, we get a set $X \subset G_{[k-1] \setminus \{i+1\}}$ of density at least $\frac{1}{2}\xi^2 \eta_i^2 \rho^4$ such that for each $x_{[k-1] \setminus \{i+1\}} \in X$, the map $\Phi$ respects at least $\frac{1}{2}\xi^2 \eta_i^2 \rho^4 |G_{i+1}|^3$ $(i+1)$-additive quadruples with vertices in $A_i \cap (\{x_{[k-1] \setminus \{i+1\}}\} \times G_{i+1})$. For each $x_{[k-1] \setminus \{i+1\}} \in X$ apply Theorem~\ref{FreimanAddQuads} to deduce that there is a subset $Y_{x_{[k-1] \setminus \{i+1\}}} \subseteq (A_i)_{x_{[k-1] \setminus \{i+1\}}}$ of size
\[|Y_{x_{[k-1] \setminus \{i+1\}}}| \geq \Omega_{k,p}\Big(\exp(-\log^{O(1)}(\xi^{-1}\rho^{-1} \eta_i^{-1}))\Big) |G_{i+1}|\]
on which the sliced function $\Phi_{x_{[k-1] \setminus \{i+1\}}}$ is a Freiman homomorphism. Finally set
\[A_{i+1} = \bigcup_{x_{[k-1] \setminus \{i+1\}} \in X} \{x_{[k-1] \setminus \{i+1\}}\} \times Y_{x_{[k-1] \setminus \{i+1\}}},\]
completing the inductive step.\\

Let $A_{k-1}$ be the set obtained in the case $i = k-1$. Then $\Phi|_{A_{k-1}}$ is a Freiman multi-homomorphism so we may apply Theorem~\ref{multihom} to finish the proof.\end{proof}

Let $\xi' > 0$ be a positive parameter to be specified later. We iteratively apply Claim~\ref{cubicalphidensemultihom} to find a subset $B \subset G_{[k-1]}$ of size at least $(1 - \xi') |G_{[k-1]}|$ and global multiaffine maps $\Psi_1, \dots, \Psi_m \colon G_{[k-1]} \to G_k$, where $m \leq \exp^{(O_{k}(1))}\Big(O_{k,p}(\varepsilon^{-1} \xi^{-1} {\xi'}^{-1})\Big)$, such that whenever $a_{[k-1]} \in B$ and $r \in R_{a_{[k-1]}}$ then $r \in \{\Psi_1(a_{[k-1]}, \dots, \Psi_m(a_{[k-1]})\}$. We begin this procedure by gathering all pairs $(a_{[k-1]}, r) \in G_{[k-1]} \times G_k$ such that $r \in R_{a_{[k-1]}}$ into the set $\mathcal{L}$, which we shall modify by removing some pairs in each step. More precisely, at $i$\textsuperscript{th} step we shall find a global multiaffine map $\Phi_i \colon G_{[k-1]} \to G_k$ and remove all pairs of the form $(x_{[k-1]}, \Phi_i(x_{[k-1]}))$ from $\mathcal{L}$.\\
\indent Suppose that we are in $i$\textsuperscript{th} step, and that we have defined maps $\Phi_1, \dots, \Phi_{i-1}$ so far. As long as there is a set $X \subset G_{[k-1]}$ of size at least $\xi' |G_{[k-1]}|$ such that for each $x_{[k-1]} \in X$ there is a pair $(x_{[k-1]}, r)$ still in $\mathcal{L}$, we may define a map $\Psi \colon X \to G_k$ so that $(x_{[k-1]}, \Psi(x_{[k-1]})) \in \mathcal{L}$. By Claim~\ref{cubicalphidensemultihom}, there is a further subset $X' \subset X$ of size
\[\bigg(\exp^{(O_{k}(1))}\Big(O_{k,p}(\xi^{-1} {\xi'}^{-1} \rho^{-1})\Big)\bigg)^{-1} |G_{[k-1]}|\]
and a global multiaffine map $\Phi_i \colon G_{[k-1]} \to G_k$ such that $\Psi|_{X'} = \Phi_i|_{X'}$. Thus, removing all pairs of the form $(x_{[k-1]}, \Phi_i(x_{[k-1]}))$ form $\mathcal{L}$ decreases the size of $\mathcal{L}$ by at least $|X'|$.\\
\indent This procedure has to terminate in $m \leq \exp^{(O_{k}(1))}\Big(O_{k,p}(\varepsilon^{-1} \xi^{-1} {\xi'}^{-1})\Big)$ steps as the initial size of $\mathcal{L}$ is at most
\[\sum_{a_{[k-1]}} |R_{a_{[k-1]}}| \leq \sum_{a_{[k-1]}} \sum_{x_{[k-1]}} \frac{\xi^{-1}}{|G_{[k-1]}|} |S_{x_{[k-1]}, a_{[k-1]}}| \leq \xi^{-1}\rho^{-2}|G_{[k-1]}|,\]
where we used Lemma~\ref{largeSpec} in the last step.\\

We now return to~\eqref{multiConvMainStepFirstEqn}. We have the following identity for each $a_{[k]}$
\[\square f_{\bcdot}\,(a_{[k]}) = \sum_{r \in G_k} \omega^{r \cdot a_k}\, \exx_{d_k} \omega^{-d_k \cdot r} \square f_{\bcdot}\,(a_{[k-1]}, d_k).\]
Using the stronger conclusion of Claim~\ref{frequentlargefcsclaim} for every $a_{[k-1]} \in B$ to see that
\[\Big\|\sum_{r \notin \{\Phi_1(a_{[k-1]}), \dots, \Phi_m(a_{[k-1]})\}} \exx_{d_k} \omega^{r \cdot (a_k - d_k)} \square f_{\bcdot}\,(a_{[k-1]}, d_k)\Big\|_{L^2, a_k} \leq \rho^2 + \xi\rho^{-2},\]
and the weaker conclusion for $a_{[k-1]} \setminus B$ to see that
\[\Big\|\sum_{r \notin \{\Phi_1(a_{[k-1]}), \dots, \Phi_m(a_{[k-1]})\}} \exx_{d_k} \omega^{r \cdot (a_k - d_k)} \square f_{\bcdot}\,(a_{[k-1]}, d_k)\Big\|_{L^2, a_k} \leq 1,\]
Moreover, if $R$ is disjoint from $R_{a_{[k-1]}}$ then we have a stronger bound
we obtain approximation
\begin{align}\square f_{\bcdot}\,(a_{[k]}) &\apps{\rho^2 + \xi\rho^{-2} + {\xi'}^{1/2}}_{L^2, a_{[k]}}  \sum_{r \in \{\Phi_1(a_{[k-1]}), \dots, \Phi_m(a_{[k-1]})\}}\, \omega^{r \cdot a_k}\, \exx_{d_k} \omega^{-d_k \cdot r} \square f_{\bcdot}\,(a_{[k-1]}, d_k). \label{cubicalapproxii}
\end{align}

\noindent \textbf{Step 3. Inclusion-exclusion argument.} Like in~\cite{U4paper} and~\cite{genPaper}, we now have to be careful about whether some values among $\Psi_1(a_{[k-1]}, \dots, \Psi_m(a_{[k-1]})$ are equal. To this end, we make the convention that the term coming from $r = \Phi_i(a_{[k-1]})$
\[\omega^{\Phi_i(a_{[k-1]}) \cdot a_k}\, \exx_{d_k} \omega^{-d_k \cdot \Phi_i(a_{[k-1]})} \square f_{\bcdot}\,(a_{[k-1]}, d_k)\]
contributes only when $\Phi_i(a_{[k-1]}) \notin \{\Phi_1(a_{[k-1]}), \dots, \Phi_{i-1}(a_{[k-1]})\}$. Algebraic manipulation (which is essentially the Inclusion-Exclusion principle) yields

\begin{align*}&\sum_{r \in \{\Phi_1(a_{[k-1]}), \dots, \Phi_m(a_{[k-1]})\}}\, \omega^{r \cdot a_k}\, \exx_{d_k} \omega^{-d_k \cdot r} \square f_{\bcdot}\,(a_{[k-1]}, d_k) \\
&\hspace{1cm} = \sum_{i \in [m]} \mathbbm{1}\Big(\Phi_i(a_{[k-1]}) \notin \{\Phi_1(a_{[k-1]}), \dots, \Phi_{i-1}(a_{[k-1]})\}\Big) \omega^{\Phi_i(a_{[k-1]}) \cdot a_k}\, \exx_{d_k} \omega^{-d_k \cdot \Phi_i(a_{[k-1]})} \square f_{\bcdot}\,(a_{[k-1]}, d_k) \\
&\hspace{1cm} = \sum_{i \in [m]} \mathbbm{1}\Big(\Phi_i(a_{[k-1]}) \not= \Phi_1(a_{[k-1]})\Big)\cdots \mathbbm{1}\Big(\Phi_i(a_{[k-1]}) \not= \Phi_{i-1}(a_{[k-1]})\Big) \\
&\hspace{10cm}\omega^{\Phi_i(a_{[k-1]}) \cdot a_k}\, \exx_{d_k} \omega^{-d_k \cdot \Phi_i(a_{[k-1]})} \square f_{\bcdot}\,(a_{[k-1]}, d_k) \\
&\hspace{1cm} = \sum_{i \in [m]} \Big(1 - \mathbbm{1}\Big(\Phi_i(a_{[k-1]}) = \Phi_1(a_{[k-1]})\Big) \Big)\cdots \Big(1- \mathbbm{1}\Big(\Phi_i(a_{[k-1]}) = \Phi_{i-1}(a_{[k-1]})\Big) \Big)\\
&\hspace{10cm}\omega^{\Phi_i(a_{[k-1]})\cdot a_k}\, \exx_{d_k} \omega^{-d_k \cdot \Phi_i(a_{[k-1]})} \square f_{\bcdot}\,(a_{[k-1]}, d_k) \\
&\hspace{1cm} = \sum_{i \in [m]} \sum_{I \subset [i-1]} (-1)^{|I|} \mathbbm{1}\Big((\forall j \in I) \Phi_j(a_{[k-1]}) = \Phi_i(a_{[k-1]})\Big) \omega^{\Phi_i(a_{[k-1]})\cdot a_k}\, \exx_{d_k} \omega^{-d_k \cdot \Phi_i(a_{[k-1]})} \square f_{\bcdot}\,(a_{[k-1]}, d_k).
\end{align*}

We need to approximate varieties $\{a_{[k-1]} \in G_{[k-1]} \colon (\forall j \in I) \Phi_j(a_{[k-1]}) = \Phi_i(a_{[k-1]})\}$ by varieties of low codimension. To this end, we apply Lemma~\ref{varOuterApprox} with approximation parameter $\xi'' > 0$ (to be specified later) to obtain some $s_{i, I} \leq \log_p {\xi''}^{-1}$ and a multiaffine map $\tau_{i, I} \colon G_{[k-1]} \to \mathbb{F}_p^{s_{i, I}}$ such that
\[\Big\{a_{[k-1]} \in G_{[k-1]} \colon (\forall j \in I) \Phi_j(a_{[k-1]}) = \Phi_i(a_{[k-1]})\Big\}\, \subset\, \Big\{a_{[k-1]} \in G_{[k-1]} \colon \tau_{i, I}(a_{[k-1]}) = 0\Big\}\]
and
\[\Big|\Big\{a_{[k-1]} \in G_{[k-1]} \colon \tau_{i, I}(a_{[k-1]}) = 0\Big\}\,\setminus\,\Big\{a_{[k-1]} \in G_{[k-1]} \colon (\forall j \in I) \Phi_j(a_{[k-1]}) = \Phi_i(a_{[k-1]})\Big\}\Big| \leq \xi''|G_{[k-1]}|.\]

Finally, recalling~\eqref{cubicalapproxii}, we end up with approximation
\begin{align*}\square f_{\bcdot}\,(a_{[k]}) \apps{\varepsilon'}_{L^2, a_{[k]}} &\sum_{i \in [m]} \sum_{I \subset [i-1]} (-1)^{|I|} \mathbbm{1}\Big(\tau_{i, I}(a_{[k-1]}) = 0\Big) \omega^{\Phi_i(a_{[k-1]})\cdot a_k}\, \exx_{d_k} \omega^{-d_k \cdot \Phi_i(a_{[k-1]})} \square f_{\bcdot}\,(a_{[k-1]}, d_k)\\
=&\sum_{i \in [m]} \sum_{I \subset [i-1]} (-1)^{|I|} p^{-s_{i, I}}\sum_{\lambda \in \mathbb{F}_p^{s_{i, I}}} \omega^{\lambda \cdot \tau_{i, I}(a_{[k-1]})} \omega^{\Phi_i(a_{[k-1]})\cdot a_k}\, \exx_{d_k} \omega^{-d_k \cdot \Phi_i(a_{[k-1]})} \square f_{\bcdot}\,(a_{[k-1]}, d_k),\end{align*}
where $\varepsilon' = \rho^2 + \xi\rho^{-2} + {\xi'}^{1/2} + 2^m {\xi''}^{1/2}$.\\
\indent After a slight change of notation, we may rewrite the approximation sum as
\[\sum_{i \in [m']}\, c_i\omega^{\alpha_i(a_{[k]})}\, \exx_{d_k} \square f_{\bcdot}\,(a_{[k-1]}, d_k) \omega^{\beta_i(a_{[k-1]}, d_k)},\]
for some $m' \leq 2^m {\xi''}^{-1}$, constants $c_i \in \mathbb{D}$ and multiaffine forms $\alpha_i, \beta_i \colon G_{[k]} \to \mathbb{F}_p$. To complete the proof, we pick parameters $\rho, \xi, \xi', \xi''$ to be
\[\rho = \frac{\varepsilon}{4}, \xi = \frac{\varepsilon^3}{64}, \xi' = \frac{\varepsilon^2}{16}, \xi'' = \frac{\varepsilon^2}{2^{2m+4}}.\qedhere\]\end{proof}

\vspace{\baselineskip}

\boldSection{Inverse theorem for some directional Gowers uniformity norms}

Recall that we are interested in the norm
\[\mathsf{U}\big(G_1, G_2, \dots, G_k, \underbrace{G^{\oplus}, \dots, G^{\oplus}}_{r}\big),\]
where $G^{\oplus} = G_1 \oplus \dots \oplus G_k$ appears $r$ times in the norm subscript for some $r \in \mathbb{N}$.\footnote{When we view $G_1 \tdt G_k$ as an abelian group, we denote this product by $G^{\oplus}$. This is a different viewpoint from the previous parts of the paper where $G_1 \tdt G_k$ was abbreviated as $G_{[k]}$ and meant the set of $k$-tuples where $i$\textsuperscript{th} element belongs to $G_i$.} To simplify the notation slightly, we write $\mathsf{U}\big(G_1, G_2, \dots, G_k, G^{\oplus} \times r\big)$ instead. In this section we prove the inverse theorem for this norm, stated in the introductory section as Theorem~\ref{inverseUmixed}, which is the main result of this work. 

We prove the theorem by induction on $r$. We prove the base case separately, as it will have an important role in the proof of the general case.
\vspace{\baselineskip}

\boldSubsection{Inverse theorem for $\mathsf{U}(G_1, G_2, \dots, G_k, G^{\oplus})$ norm}

In this short subsection we use Theorem~\ref{multConv} which concerns approximating cubical convolutions to prove the base case of Theorem~\ref{inverseUmixed}.

\begin{proposition}[Inverse theorem for $\mathsf{U}(G_1, G_2, \dots, G_k, G^{\oplus})$ norm]\label{r1caseinverse}Let $f \colon G^{\oplus} \to \mathbb{D}$ be a function such that
\[\|f\|_{\mathsf{U}\big(G_1, G_2, \dots, G_k, G^{\oplus}\big)} \,\geq\, c.\]
Then there exists a multilinear form $\mu \colon G_1 \tdt G_k \to \mathbb{F}_p$ and functions $u_i \colon G_{[k] \setminus \{i\}} \to \mathbb{D}$ for $i \in [k]$ such that
\[\exx_{x_{[k]} \in G^\oplus} f(x) \omega^{\mu(x_{[k]})} \prod_{i \in [k]} u_i(x_{[k] \setminus \{i\}}) \geq \bigg(\exp^{(O_{k}(1))}\Big(O_{k,p}(c^{-1})\Big)\bigg)^{-1}.\]
\end{proposition}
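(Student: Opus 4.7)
The idea is to convert the hypothesis on $\|f\|_{\mathsf U}$ into a correlation of the cubical convolution $\square f_\bcdot$ (taken with $f_I=f$ for every $I\subseteq[k]$) with a structured function provided by Theorem~\ref{multConv}, then distil this into a multilinear-phase correlation via Fourier inversion on the target $\mathbb F_p^m$ and a careful application of the Gowers--Cauchy--Schwarz inequality.

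First, expanding the $\mathsf U$-norm, separating the outermost $G^\oplus$ derivative, and using the direct-sum structure of $G^\oplus$ to substitute $x=v_1+\cdots+v_k$ with $v_i\in G_i$, one obtains the identity
\[\|f\|_{\mathsf U(G_1,\ldots,G_k,G^\oplus)}^{2^{k+1}}=\exx_{a\in G^\oplus}|\square f_\bcdot(a)|^2.\]
Applying Theorem~\ref{multConv} with $\varepsilon\sim c^{2^{k+1}}$ yields a multiaffine $\alpha\colon G_{[k]}\to\mathbb F_p^m$ and $c\colon\mathbb F_p^m\to\mathbb D$ such that $\square f_\bcdot\apps{\varepsilon}_{L^2}c\circ\alpha$; hence $|\langle\square f_\bcdot,c\circ\alpha\rangle|\ge c^{2^{k+1}}/2$. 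Fourier expanding $c(y)=\sum_r\hat c(r)\omega^{r\cdot y}$, combining Parseval ($\sum_r|\hat c(r)|^2\le1$) with Cauchy--Schwarz, then pigeonholing over $r\in\mathbb F_p^m$, one finds $r^*\in\mathbb F_p^m$ such that
\[\bigl|\exx_a\square f_\bcdot(a)\omega^{-r^*\cdot\alpha(a)}\bigr|\ge c^{2^{k+1}}p^{-m/2}/2;\]
the $p^{m/2}$ loss is tower in $c^{-1}$ and fits within the final bound.

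Now decompose $r^*\cdot\alpha=\mu+\nu$, with $\mu\colon G_{[k]}\to\mathbb F_p$ the multilinear part and $\nu(a)=\sum_{I\subsetneq[k]}\beta_I(a_I)$ gathering the strictly lower-order pieces. The box-type identity $\mu(a)=\sum_\epsilon(-1)^{k-|\epsilon|}\mu(x+\epsilon\cdot a)$, valid for multilinear $\mu$, allows one to absorb $\omega^{-\mu(a)}$ into the cubical convolution: setting $g:=f\omega^{-\mu}$, one has $\exx_a\square f_\bcdot(a)\omega^{-r^*\cdot\alpha(a)}=\exx_a\square g_\bcdot(a)\omega^{-\nu(a)}$. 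After the substitution $y_i=x_i+a_i$ and the multilinear expansion $\beta_I(y_I-x_I)=\sum_{S\subseteq I}(-1)^{|I|-|S|}\beta_I((y_i)_S,(x_i)_{I\setminus S})$, each resulting summand can be attached to a canonical box corner $\epsilon\in\{0,1\}^k$ (specifically $\epsilon_i=\mathbbm 1(i\in S)$), so that with appropriate signs the total expression becomes $\prod_\epsilon C^{k-|\epsilon|}\tilde g_\epsilon(z_\epsilon)$ with each $\tilde g_\epsilon=g\cdot(\text{unit-modulus phase depending on }z_\epsilon\text{ restricted to some }I\subsetneq[k])$. A short cancellation argument shows that multiplying a bounded function by a unit-modulus function of a proper-subset argument leaves the box norm unchanged, so $\|\tilde g_\epsilon\|_\square=\|g\|_\square$ for each $\epsilon$; Lemma~\ref{gcs} then gives
\[\bigl|\exx_a\square g_\bcdot(a)\omega^{-\nu(a)}\bigr|\le\prod_\epsilon\|\tilde g_\epsilon\|_\square=\|g\|_{\square(G_1,\ldots,G_k)}^{2^k},\]
so $\|f\omega^{-\mu}\|_\square\ge\delta$ for a tower-type $\delta$. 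A final standard unfolding of $\|f\omega^{-\mu}\|_\square^{2^k}$---pigeonholing on a tuple of $y$-variables and grouping the $2^k-1$ non-principal corner factors by $\min\{j:\epsilon_j=1\}$ (so as to produce one factor missing each coordinate)---extracts the $u_i\colon G_{[k]\setminus\{i\}}\to\mathbb D$ with the required correlation, up to a harmless rotation of one of the $u_i$ to make the correlation a nonnegative real.

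The main obstacle I expect is the redistribution of the lower-order phase factors $\omega^{-\beta_I((y-x)_I)}$ into the box corners while respecting the conjugation convention of the cubical convolution, together with verifying that each per-corner phase correction depends only on a proper subset of the coordinates of $z_\epsilon$ so that the $\tilde g_\epsilon$ retain the full box norm of $g$. Once this bookkeeping is done, the bound on $\|f\omega^{-\mu}\|_\square$ follows directly from Lemma~\ref{gcs}, and the remaining extraction of the $u_i$'s is routine.
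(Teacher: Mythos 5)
Your proposal is essentially correct, and while it follows the same broad skeleton as the paper (expand the norm, apply Theorem~\ref{multConv}, Fourier-expand on $\mathbb F_p^m$, pigeonhole to get a single character $\omega^{-r^*\cdot\alpha}$, split off the multilinear part $\mu$), it packages the final extraction step differently, and the difference is worth noting. The paper makes the change of variables $h'_i = x_i + h_i$ inside the correlation $\exx_{h,x}\mder_h f(x)\,\omega^{-r^*\cdot\alpha(h)}$, pigeonholes on $x$ to freeze the lower-order pieces $\alpha_I$ into unit-modulus factors of $h'_I$ with $I\subsetneq[k]$, expands $\omega^{-r^*\cdot\alpha_{[k]}((h'-x)_{[k]})}$ over cube corners, and then simply reads off the correlation with $f\,\omega^{-r^*\cdot\alpha_{[k]}}$ and the bounded functions $u_i$ directly. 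You instead first prove the intermediate statement $\|f\omega^{-\mu}\|_{\square^k}\ge\delta$ (absorbing $\mu$ into the cubical convolution via the polarisation identity, then applying Lemma~\ref{gcs}), and only then unfold the box norm. Your route is slightly longer (it costs an extra Gowers--Cauchy--Schwarz followed by a separate unfolding/pigeonhole), but it makes explicit the statement ``$\mu\in\mls_\delta(f)$'' that the paper only extracts post hoc in its discussion of the multilinear spectrum, so it is conceptually cleaner.

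One step in your bookkeeping is stated more strongly than it is justified. After substituting $y_i=x_i+a_i$ and expanding each $\beta_I((y-x)_I)=\sum_{S\subseteq I}(-1)^{|I|-|S|}\beta_I(y_S,x_{I\setminus S})$, you attach the factor indexed by $(I,S)$ to the corner $\epsilon=S$ and assert that each $\tilde g_\epsilon$ is $g$ times a unit-modulus function of a proper-subset argument, so that $\|\tilde g_\epsilon\|_{\square^k}=\|g\|_{\square^k}$. That is not quite right: a fixed corner $\epsilon=S$ receives the factors from all pairs $(I,S)$ with $S\subseteq I\subsetneq[k]$, and the union of those $I$'s can be all of $[k]$, in which case the per-corner phase is \emph{not} a function of a proper subset of coordinates and one cannot conclude $\|\tilde g_\epsilon\|_{\square^k}=\|g\|_{\square^k}$ (your single-factor telescoping lemma is correct, but it does not apply to the product). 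Fortunately this is harmless: what you actually need from Lemma~\ref{gcs} is only that the \emph{principal} corner $\epsilon=(1,\ldots,1)$ carries no extra phase, which your assignment $\epsilon=S$ guarantees automatically (every $S$ satisfies $S\subseteq I\subsetneq[k]$, so $S\ne[k]$), together with the trivial bound $\|\tilde g_\epsilon\|_{\square^k}\le 1$ for the remaining corners. This already gives $|\exx_a\square g(a)\,\omega^{-\nu(a)}|\le\|g\|_{\square^k}$, which is all that is needed downstream. With that correction the argument is complete and the bounds match the required tower-type shape.
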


\begin{proof}Expanding out the definition of the norm, we obtain
\begin{align}c^{2^{k+1}} \leq &\|f\|_{\mathsf{U}\big(G_1, G_2, \dots, G_k, G^{\oplus}\big)}^{2^{k + 1}}\nonumber\\
= & \exx_{h_1 \in G_1, \dots, h_k \in G_k} \exx_{x \in G^{\oplus}} \mder_{h_1, \dots, h_k} f(x) \overline{\Big(\exx_{a \in G^{\oplus}} \mder_{h_1, \dots, h_k} f(x - a)\Big)}\nonumber\\
= & \exx_{h_1 \in G_1, \dots, h_k \in G_k} \exx_{x \in G^{\oplus}} \mder_{h_1, \dots, h_k} f(x) \overline{\square f(h_{[k]})}.\label{genU2eq1}\end{align}

Apply Theorem~\ref{multConv} to $f$ viewed as a function on $G_1 \tdt G_k$ with approximation parameter $\frac{1}{2}c^{2^{k+1}}$ to obtain a positive integer $m \leq  \exp^{(O_{k}(1))}\Big(O_{k,p}(c^{-1})\Big)$, a multiaffine map $\alpha \colon G_{[k]} \to \mathbb{F}_p^m$ and a function $g \colon \mathbb{F}_p^m \to \mathbb{D}$ such that
\[\square f_\bcdot\,(h_{[k]}) \apps{\frac{1}{2}c^{2^{k+1}}}_{L^2, a_{[k]}} g(\alpha(h_{[k]})).\]

Going back to~\eqref{genU2eq1} and using Cauchy-Schwarz inequality, we obtain
\begin{align*}\bigg|\exx_{h_1 \in G_1, \dots, h_k \in G_k} \exx_{x \in G^{\oplus}} \mder_{h_1, \dots, h_k} f(x) \overline{g(\alpha(h_{[k]}))}\bigg| \geq &\bigg|\exx_{h_1 \in G_1, \dots, h_k \in G_k} \exx_{x \in G^{\oplus}} \mder_{h_1, \dots, h_k} f(x) \overline{\square f(h_{[k]})}\bigg|\\
&\hspace{1cm} - \bigg|\exx_{h_1 \in G_1, \dots, h_k \in G_k} \exx_{x \in G^{\oplus}} \mder_{h_1, \dots, h_k} f(x) \overline{\Big(\square f(h_{[k]}) - g(\alpha(h_{[k]}))\Big)}\bigg|\\
\geq& c^{2^{k+1}} - \exx_{h_1 \in G_1, \dots, h_k \in G_k} \Big|\Big(\square f(h_{[k]}) - g(\alpha(h_{[k]}))\Big)\Big|\\
\geq & \frac{1}{2}c^{2^{k+1}}.\end{align*}
Writing $g(\alpha(h_{[k]}))$ as $p^{-m}\sum_{\lambda, \mu \in \mathbb{F}_p^m} g(\lambda) \omega^{\mu \cdot (\alpha(h_{[k]}) - \lambda)}$, we get
\[\frac{1}{2}c^{2^{k+1}} \leq p^{-m}\sum_{\lambda, \mu \in \mathbb{F}_p^m} \bigg|\exx_{h_1 \in G_1, \dots, h_k \in G_k} \exx_{x \in G^{\oplus}} \mder_{h_1, \dots, h_k} f(x) \overline{g(\lambda)} \omega^{-\mu \cdot (\alpha(h_{[k]}) - \lambda)}\bigg|,\]
from which it follows that we may find $\lambda, \mu \in \mathbb{F}_p^m$ such that
\[\frac{1}{2p^m}c^{2^{k+1}} \leq \bigg|\exx_{h_1 \in G_1, \dots, h_k \in G_k} \exx_{x \in G^{\oplus}}\mder_{h_1, \dots, h_k} f(x) \omega^{-\mu \cdot \alpha(h_{[k]})}\bigg|.\]
Writing $\alpha(h_{[k]}) = \sum_{I \subseteq [k]} \alpha_I(h_I)$ in terms of its multilinear parts, which are multilinear maps each depending on a subset of variables, and expanding out, we get
\[\frac{1}{2p^m}c^{2^{k+1}} \leq \bigg|\exx_{h_1 \in G_1, \dots, h_k \in G_k} \exx_{x \in G^{\oplus}}\prod_{I \subseteq [k]} \on{Conj}^{k - |I|} f\Big((x_i + h_i)_{i \in I},\, (x_i)_{i \in [k] \setminus I}\Big) \omega^{-\mu \cdot \alpha_I(h_I)}\bigg|.\]
Making a change of variables $h'_i = x_i + h_i$ to replace $h_i$, we get that
\[\frac{1}{2p^m}c^{2^{k+1}} \leq \bigg|\exx_{h'_1 \in G_1, \dots, h'_k \in G_k} \exx_{x \in G^{\oplus}}\prod_{I \subseteq [k]} \on{Conj}^{k - |I|} f\Big((h'_i)_{i \in I},\, (x_i)_{i \in [k] \setminus I}\Big) \omega^{-\mu \cdot \alpha_I((h' - x)_I)}\bigg|.\]
By averaging, we find $x \in G^{\oplus}$ such that
\[\frac{1}{2p^m}c^{2^{k+1}} \leq \bigg|\exx_{h'_1 \in G_1, \dots, h'_k \in G_k} \prod_{I \subseteq [k]} \on{Conj}^{k - |I|} f\Big((h'_i)_{i \in I},\, (x_i)_{i \in [k] \setminus I}\Big) \omega^{-\mu \cdot \alpha_I((h' - x)_I)}\bigg|.\]
Note that the only term on the right-hand-side that depends on all variables $h'_1, \dots, h'_k$ is the one corresponding to $I = [k]$. Moreover we may expand
\[\omega^{-\mu \cdot \alpha_{[k]}((h' - x)_{[k]})} = \prod_{J \subseteq [k]} \omega^{-(-1)^{k - |J|}\mu \cdot \alpha_{[k]}(h'_J, x_{[k] \setminus J})},\]
out of which only $\omega^{-\mu \alpha_{[k]}(h'_{[k]})}$ depends on all of $h'_1, \dots, h'_k$. The claim follows after straightforward algebraic manipulation which allows us to reorganize the expression into the desired form.\end{proof}

We now make a digression to compare this result with the classical univariate case. Recall that in the classical theory of uniformity norms we have the fundamental fact that the large $\mathsf{U}^2$ norm corresponds to having large Fourier coefficients, i.e.\ having non-empty large spectrum, and also recall that $\mathsf{U}^2$ norm is the base case in the univariate setting for proving the general inverse theorems. The proposition that we have just proved is a direct generalization of this fact to the multivariate setting, in the sense that we now explain. Let us recall the definition of the \emph{large multilinear spectrum} of a function $f \colon G_{[k]} \to \mathbb{D}$ which was given in the introductory section.

\begin{defin}[Definition~\label{mlspecdefin}]\label{mlspecdefin2}Let $f \colon G_1 \tdt G_k \to \mathbb{D}$ be a function and let $\varepsilon > 0$. We define $\varepsilon$-large multilinear spectrum of $f$ to be the set
\[\mls_{\varepsilon}(f) = \Big\{\mu \in \on{ML}(G_{[k]} \to \mathbb{F}_p) \colon \|f \omega^{\mu}\|_{\square^k} \geq \varepsilon\Big\},\]
where $ \on{ML}(G_{[k]} \to \mathbb{F}_p) $ stands for the set of all multilinear forms on $G_1 \tdt G_k$ and where $\|\cdot\|_{\square^k}$ stands for the box norm with respect to sets $G_1, \dots, G_k$.\end{defin}

To see why we may think of the large multilinear spectrum as the generalization of the large Fourier spectrum of a function of a single variable, consider an arbitrary function $f \colon G_1 \to \mathbb{D}$. If $r$ is an $\varepsilon$-large Fourier coefficient of $f$, we have the correlation $\Big|\ex_{x \in G_1} f(x) \omega^{-r \cdot x}\Big| \geq \varepsilon$. We may set $\alpha(x) = -r\cdot x$, which is a linear form associated to $r$ in a natural way. Furthermore, notice that the `box norm' in the case one variable\footnote{In the case of single variable, the box norm is not defined.} would be given by expression
\[\|g\|_{\square^1}^2 = \exx_{x, a \in G_1} g(x + a) \overline{g(x)} = \Big|\exx_{x \in G_1} g(x)\Big|^2,\]
which is just the absolute value of the expectation squared. Hence, we may interpret the correlation $\Big|\ex_{x \in G_1} f(x) \omega^{-r \cdot x}\Big| \geq \varepsilon$ as
\[\|f \omega^\alpha \|_{\square^1} \geq \varepsilon.\]
More conceptually, the spectrum consists of algebraically structured functions whose phases determine $f$ up to lower-order terms. (In the case of single variable, the lower-order terms are exactly the constant functions.)\\

Proposition~\ref{r1caseinverse} can be rephrased as the fact that the large value of the $\mathsf{U}(G_1, G_2, \dots, G_k, G^{\oplus})$ norm implies that the large multilinear spectrum is non-empty. It turns out that the large multilinear spectrum has some further properties that are analogous to those that hold for the usual Fourier spectrum. Once we complete the proof of the inverse theorems, we shall return to the discussion of the large multilinear spectrum.\\
%
%

\vspace{\baselineskip}

\boldSubsection{Very brief overview of the proof}

We now begin the proof of the inverse theorem for the $\mathsf{U}\big(G_1, G_2, \dots, G_k, G^{\oplus}\times r\big)$ norm in the general case. The proof will consist of three steps, similarly to the univariate theory. In the first step we find a multilinear form that is related to the given function. This is the content of the next result.

\begin{proposition}\label{mlStructStep}Let $f \colon G^\oplus \to \mathbb{D}$ be a function such that
\[\|f\|_{\mathsf{U}\big(G_1, G_2, \dots, G_k, G^{\oplus} \times r\big)} \,\geq\, c.\]
Then we may find a multilinear form $\psi \colon \underbrace{G^{\oplus} \tdt G^{\oplus}}_{r-1} \times G_1 \tdt G_k \to \mathbb{F}_p$ such that
\begin{equation}\label{mlcorrelationinvthm}\Big|\exx_{\ssk{a^{(1)}, \dots, a^{(r-1)} \in G^{\oplus}\\b_1 \in G_1, \dots, b_k \in G_k\\x \in G^{\oplus}}} \mder_{a^{(1)}, \dots, a^{(r-1)}} \mder_{b_1, \dots, b_k} f(x) \omega^{\psi(a^{(1)}, \dots, a^{(r-1)}, b_1, \dots, b_k)}\Big| \geq \Big(\exp^{(O_{k,r}(1))}(O_{k,r, p}(c^{-1}))\Big)^{-1}.\end{equation}
\end{proposition}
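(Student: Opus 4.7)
The plan is to induct on $r$. For the base case $r = 1$, I would invoke Proposition~\ref{r1caseinverse}, which produces a multilinear form $\mu \colon G_1 \times \cdots \times G_k \to \mathbb{F}_p$ and functions $u_i \colon G_{[k]\setminus\{i\}} \to \mathbb{D}$ with $\bigl|\exx f \cdot \omega^{\mu} \prod_i u_i\bigr| \geq \eta$. By Gowers-Cauchy-Schwarz (Corollary~\ref{gcsCor}) we then have $\|f \omega^{\mu}\|_{\square^k} \geq \eta$. Because $\mu$ is multilinear on $G_1 \times \cdots \times G_k$, a short induction on the number of derivatives yields the identity $\mder_{b_1, \ldots, b_k}(f \omega^{\mu})(x) = \mder_{b_1, \ldots, b_k} f(x) \cdot \omega^{\mu(b_1, \ldots, b_k)}$, so
\[
\Bigl|\exx_{b, x} \mder_{b_1, \ldots, b_k} f(x) \cdot \omega^{\mu(b_1, \ldots, b_k)}\Bigr| \,=\, \|f\omega^{\mu}\|_{\square^k}^{2^k} \,\geq\, \eta^{2^k},
\]
which is the required correlation.

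For the inductive step $r \geq 2$, the standard identity
\[
\|f\|_{\mathsf{U}(G_1, \ldots, G_k, G^\oplus \times r)}^{2^{k+r}} \,=\, \exx_{a \in G^\oplus} \|\mder_a f\|_{\mathsf{U}(G_1, \ldots, G_k, G^\oplus \times (r-1))}^{2^{k+r-1}}
\]
together with pigeonholing supplies a set $A \subseteq G^\oplus$ of density $\geq c^{2^{k+r}}/2$ on which $\|\mder_a f\|_{\mathsf{U}(G_1, \ldots, G_k, G^\oplus \times (r-1))}$ is at least some $c'$ depending polynomially on $c$. The inductive hypothesis applied to $\mder_a f$ then produces, for each $a \in A$, a multilinear form $\psi_a \colon (G^\oplus)^{r-2} \times G_1 \times \cdots \times G_k \to \mathbb{F}_p$ satisfying
\[
\Bigl|\exx_{a^{(1)}, \ldots, a^{(r-2)}, b, x} \mder_{a^{(1)}, \ldots, a^{(r-2)}, a, b} f(x) \cdot \omega^{\psi_a(a^{(1)}, \ldots, a^{(r-2)}, b)}\Bigr| \,\geq\, \eta'.
\]
After relabelling $a \to a^{(r-1)}$, the remaining task is to combine the family $\{\psi_{a^{(r-1)}}\}_{a^{(r-1)} \in A}$ — regarded as a map from $A$ into the $\mathbb{F}_p$-vector space $V$ of multilinear forms on $(G^\oplus)^{r-2} \times G_1 \times \cdots \times G_k$ — into a single multilinear form on $(G^\oplus)^{r-1} \times G_1 \times \cdots \times G_k$.

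This linearisation step is the crux of the proof. Mirroring the Gowers-style symmetry argument underlying the inductive proof of the inverse theorem for the classical $\mathsf{U}^k$ norm, I would square the correlation bound and average over additive quadruples in $A^4$, applying the Gowers-Cauchy-Schwarz inequality (Lemma~\ref{gcs}) in order to bound the resulting expression in terms of the bias of the multilinear form $\psi_{a_1} - \psi_{a_2} + \psi_{a_3} - \psi_{a_4}$ for many quadruples $(a_1, a_2, a_3, a_4) \in A^4$ with $a_1 - a_2 + a_3 - a_4 = 0$. Combining Lemmas~\ref{subBias} and~\ref{mlbias} (subadditivity of bias and the bias of a multiaffine form) with Theorem~\ref{invbias} (low partition rank from high bias) would then let the low-rank portion of $\psi_{a_1} - \psi_{a_2} + \psi_{a_3} - \psi_{a_4}$ be absorbed into modified $\psi_{a_i}$'s, upgrading the bias control to exact quadruple-respecting on a positive-density subset. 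A Balog-Szemer\'edi-Gowers step followed by Theorem~\ref{FreimanAddQuads} would then produce a global linear map $\tilde\psi \colon G^\oplus \to V$ agreeing with $\psi$ on a subset $A' \subseteq A$ of density bounded below by the claimed iterated-exponential quantity.

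Finally, the linear map $\tilde\psi$ corresponds to a multilinear form $\psi^*(a^{(r-1)}, \bm{c}) := \tilde\psi(a^{(r-1)})(\bm{c})$ on $(G^\oplus)^{r-1} \times G_1 \times \cdots \times G_k$, and restricting the correlation bound to $a^{(r-1)} \in A'$ (where $\psi = \tilde\psi$) and averaging over $a^{(r-1)} \in G^\oplus$ yields the desired correlation between $\mder f$ and $\omega^{\psi^*}$, losing only a density factor $|A'|/|G^\oplus|$. The quantitative iterated-exponential bound is then obtained by tracking the accumulated losses through the induction. I expect the hardest part to be the previous paragraph: the Cauchy-Schwarz bookkeeping is considerably more involved than in the single-frequency classical case, and converting the approximate, low-partition-rank structure on the quadruple sums $\psi_{a_1} - \psi_{a_2} + \psi_{a_3} - \psi_{a_4}$ back to an exact linear relation — in order to apply Theorem~\ref{FreimanAddQuads} cleanly — is delicate, and constitutes the multivariate analogue of the symmetry step referenced in the paper's introduction.
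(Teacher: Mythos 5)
Your base case $r=1$ is fine, and in fact the identity $\mder_{b_1,\dots,b_k}(f\omega^\mu)(x) = \mder_{b_1,\dots,b_k}f(x)\,\omega^{\mu(b_{[k]})}$ and the appeal to Corollary~\ref{gcsCor} are precisely the manipulations the paper uses at the end of its argument. However, the paper does not run an induction on $r$ at all. Instead it performs a one-shot reduction to the $r=1$ case: it introduces the auxiliary space $H = G_1 \oplus \cdots \oplus G_k \oplus (G^\oplus)^{r-1}$ and the function $\tilde{f}(x_{[k]}, y^{(1)}_{[k]}, \dots, y^{(r-1)}_{[k]}) = f(x_1 + \sum_i y^{(i)}_1, \dots, x_k + \sum_i y^{(i)}_k)$, then verifies by direct expansion that
\[\|f\|_{\mathsf{U}(G_1, \dots, G_k, G^\oplus \times r)} = \|\tilde{f}\|_{\mathsf{U}(G_1, \dots, G_k, \overset{(1)}{G^\oplus}, \dots, \overset{(r-1)}{G^\oplus}, H)}.\]
Since the right-hand side is a norm of the form $\mathsf{U}(H_1, \dots, H_{k+r-1}, H)$ with $H = H_1 \oplus \cdots \oplus H_{k+r-1}$, Proposition~\ref{r1caseinverse} applies directly to $\tilde{f}$ and delivers a multilinear form on $G_1 \tdt G_k \times (G^\oplus)^{r-1}$ in a single application, with no linearization step at all.

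The gap in your proposed inductive step is the passage from approximate to exact additive structure, and it is not small. What the Cauchy--Schwarz manipulation actually yields is that for many additive quadruples $a_1 - a_2 + a_3 - a_4 = 0$ in $A$, the form $\psi_{a_1} - \psi_{a_2} + \psi_{a_3} - \psi_{a_4}$ has large bias, hence (via Theorem~\ref{invbias}) low partition rank — but not that it vanishes. Low partition rank means the form equals $\sum_{i\le m}\beta_i\gamma_i$ for some lower-order forms $\beta_i, \gamma_i$ which depend on the quadruple and are not linear in $a_1, \dots, a_4$, so the error cannot simply be ``absorbed into modified $\psi_{a_i}$'s'': that would require choosing new representatives $\psi'_a$ so that all of these quadruple-dependent low-rank relations simultaneously become exact, which is not a local modification and is not achieved by the lemmas you cite. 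The paper does contain a delicate approximate-to-exact symmetry step (Proposition~\ref{approxsymtoexact}), but there the issue is the bias of fixed symmetrizations of a single form $\psi$, which is a quite different and more tractable problem than upgrading a family $\{\psi_a\}_{a\in A}$ to an exact Freiman homomorphism into the space of multilinear forms. In short, the template you borrow from the classical $\mathsf{U}^k$ inverse theorem does not transplant cleanly here, and the paper's blowup trick is designed precisely to avoid having to do this.
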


The second step is to prove some symmetry properties of the multilinear form $\psi$ provided by Proposition~\ref{mlStructStep}, based on ideas of Green and Tao~\cite{StrongU3}. Finally, once we have proved that $\psi$ is sufficiently symmetric we may use a polarization identity to produce the desired polynomial that $f$ correlates with. However, even though the overall structures of this proof and of that in the univariate setting are similar, given the different roles that arguments $a^{(i)}$ and $b_i$ of $\psi$ play, the multivariate case is more subtle.\\
\vspace{\baselineskip}

\boldSubsection{Correlation with multilinear form}

This subsection is devoted to the proof of Proposition~\ref{mlStructStep}.

\begin{proof}[Proof of Proposition~\ref{mlStructStep}] We shall use the base case of the inverse theorem (Proposition~\ref{r1caseinverse}) in this proof, but we first need to do some preparation. Let us define the vector space $H$ by
\[H = G_1 \oplus \dots \oplus G_k \oplus \underbrace{G^\oplus \oplus \dots \oplus G^\oplus}_{r-1}.\]
Write $\overset{(i)}{G^\oplus}$ for the subgroup given by $i$\textsuperscript{th} copy of $G^\oplus$ in the space $H$, which is a subgroup of the form $\overset{(i)}{G^\oplus} = \{0\} \tdt \{0\} \times G^{\oplus} \times \{0\} \tdt \{0\}$. Then, define an auxiliary function $\tilde{f} \colon H \to \mathbb{D}$ by
\[\tilde{f}\Big(x_1, \dots, x_k, y^{(1)}_{[k]}, \dots, y^{(r-1)}_{[k]}\Big) = f\Big(x_1 + \sum_{i \in [r-1]} y^{(i)}_1, \dots, x_k + \sum_{i \in [r-1]} y^{(i)}_k\Big)\]
for all $x_1 \in G_1, \dots, x_k \in G_k, y^{(1)}_{[k]}, \dots, y^{(r-1)}_{[k]} \in G^\oplus$. The relevance of the function $\tilde{f}$ stems from the following equality of norms
\[\Big\|f\Big\|_{\mathsf{U}(G_1, \dots, G_k, G^\oplus \times r)} = \Big\|\tilde{f}\Big\|_{\mathsf{U}(G_1, \dots, G_k, \overset{(1)}{G^\oplus}, \dots, \overset{(r-1)}{G^\oplus}, H)}.\]
This equality follows from a straightforward algebraic manipulation (below we use a more explicit notation and for example we write $(0,0,\dots, b_k)$ for the element of $G_{[k]}$ that has first $k-1$ coordinates equal to $0$ and the last one equal to $b_k$):
\begin{align*}\Big\|f\Big\|_{\mathsf{U}(G_1, \dots, G_k, G^\oplus \times r)}^{2^{k + r}} =& \exx_{\ssk{b_{[k]} \in G_{[k]}\\a^{(1)}_{[k]}, \dots, a^{(r-1)}_{[k]}, d_{[k]} \in G^\oplus\\x_{[k]} \in G_{[k]}}} \mder_{\ssk{(b_1, 0, \dots, 0)\\\vdots\\(0,0,\dots, b_k)}} \mder_{a^{(1)}_{[k]}, \dots, a^{(r-1)}_{[k]}, d_{[k]}} f(x_1, \dots, x_k)\\
= & \exx_{\ssk{n_{[k]} \in G_{[k]}\\a^{(1)}_{[k]}, \dots, a^{(r-1)}_{[k]} \in G^\oplus\\x_{[k]} \in G_{[k]}\\y^{(1)}_{[k]}, \dots, y^{(r-1)}_{[k]} \in G^\oplus\\d^{(0)}_{[k]}, \dots, d^{(r-1)}_{[k]} \in G^\oplus}} \mder_{\ssk{(b_1, 0, \dots, 0)\\\vdots\\(0,0,\dots, b_k)}} \mder_{a^{(1)}_{[k]}, \dots, a^{(r-1)}_{[k]}, d^{(0)}_{[k]} + \dots + d^{(r-1)}_{[k]}} f\Big(x_1 + \sum_{i \in [r-1]} y^{(i)}_1, \dots, x_k + \sum_{i \in [r-1]} y^{(i)}_k\Big)\\
= & \exx_{\ssk{b_{[k]} \in G_{[k]}\\a^{(1)}_{[k]}, \dots, a^{(r-1)}_{[k]} \in G^\oplus\\x_{[k]} \in G_{[k]}\\y^{(1)}_{[k]}, \dots, y^{(r-1)}_{[k]} \in G^\oplus}} \mder_{\ssk{(b_1, 0, \dots, 0)\\\vdots\\(0,0,\dots, b_k, 0, \dots, 0)}}\mder_{\ssk{(0_{[k]}, a^{(1)}_{[k]}, 0, \dots, 0)\\\vdots\\(0, \dots, 0, a^{(r-1)}_{[k]})}} \\
&\hspace{3cm} \mder_{(d^{(0)}_1, \dots, d^{(0)}_k, d^{(1)}_{[k]}, \dots, d^{(r-1)}_{[k]})} \tilde{f}(x_1, \dots, x_k, y^{(1)}_{[k]}, \dots, y^{(r-1)}_{[k]})\\
= & \Big\|\tilde{f}\Big\|_{\mathsf{U}(G_1, \dots, G_k, \overset{(1)}{G^\oplus}, \dots, \overset{(r-1)}{G^\oplus}, H)}^{2^{k+r}}.\end{align*}

We may apply Proposition~\ref{r1caseinverse} to $\tilde{f}$ to obtain functions $u_i \colon G_{[k] \setminus \{i\}} \times (G^{\oplus})^{r-1} \to \mathbb{D}$ for $i \in [k]$, functions $v_i \colon G_{[k]} \times (G^{\oplus})^{r-2} \to \mathbb{D}$ for $i \in [r-1]$ and a multilinear form $\psi \colon G_1 \times G_2 \tdt G_k \times \underbrace{G^\oplus \tdt G^\oplus}_{r-1} \to \mathbb{F}_p$ such that
\begin{align*}&\Big|\exx_{\ssk{x_1, \dots, x_k\\y^{(1)}_{[k]}, \dots, y^{(r-1)}_{[k]}}} \tilde{f}(x_1, \dots, x_k, y^{(1)}_{[k]}, \dots, y^{(r-1)}_{[k]}) \prod_{i \in [k]} u_i(x_{[k] \setminus \{i\}}, y^{(1)}_{[k]}, \dots, y^{(r-1)}_{[k]})\\
&\hspace{2cm} \prod_{i \in [r-1]} v_i(x_{[k]}, y^{(1)}_{[k]}, \dots, y^{(i - 1)}_{[k]}, y^{(i + 1)}_{[k]}, \dots, y^{(r - 1)}_{[k]}) \omega^{\psi(x_1, \dots, x_k, y^{(1)}_{[k]}, \dots, y^{(r-1)}_{[k]})}\Big|\\
&\hspace{10cm}\geq \bigg(\exp^{(O_{k,r}(1))}\Big(O_{k,r,p}(c^{-1})\Big)\bigg)^{-1}.\end{align*}

Notice that the only terms having all $k + r - 1$ variables $x_1, \dots, x_k, y^{(1)}_{[k]}, \dots, y^{(r-1)}_{[k]}$ are $\tilde{f}(x_1, \dots,$ $x_k,$ $y^{(1)}_{[k]}, \dots,$ $y^{(r-1)}_{[k]})$ and $\omega^{\psi(x_1, \dots, x_k, y^{(1)}_{[k]}, \dots, y^{(r-1)}_{[k]})}$. We may apply Lemma~\ref{gcs} to deduce the bound
\[\Big|\exx_{\ssk{a^{(1)}, \dots, a^{(r-1)} \in G^{\oplus}\\b_1 \in G_1, \dots, b_k \in G_k\\x \in G^{\oplus}}} \mder_{a^{(1)}, \dots, a^{(r-1)}} \mder_{\ssk{(b_1, 0_{[2,k]})\\\vdots\\(0_{[k-1]}, b_k)}} f(x) \omega^{\psi(a_1, \dots, a_k, b^{(1)}_{[k]}, \dots, b^{(r-1)}_{[k]})}\Big|\geq \bigg(\exp^{(O_{k,r}(1))}\Big(O_{k,r,p}(c^{-1})\Big)\bigg)^{-1}.\qedhere\]
\end{proof}

\vspace{\baselineskip}

\boldSubsection{Symmetry argument}

We now focus on the symmetry properties of a multilinear form $\psi \colon \underbrace{G^{\oplus} \tdt G^{\oplus}}_{r-1} \times G_1 \tdt G_k \to \mathbb{F}_p$ which obeys
\begin{equation}\exx_{\ssk{a^{(1)}, \dots, a^{(r-1)} \in G^{\oplus}\\d_{[k]} \in G_{[k]}\\x \in G^\oplus}} \mder_{a^{(1)}, \dots, a^{(r-1)}} \mder_{\ssk{(d_1, 0_{[2,k]})\\\vdots\\(0_{[k-1]}, d_k)}}f(x) \omega^{\psi(a^{(1)}, \dots, a^{(r-1)}, d_1, \dots, d_k)} \geq \xi\label{mlstreqn}\end{equation}
for some $\xi > 0$. It turns out that such a form is approximately symmetric in the variables $a^{(1)}, \dots, a^{(r-1)}$ and that is has another approximate symmetry property which allows us to replace $i$\textsuperscript{th} component of $a^{(j)}$ by $d_i$. These two properties are articulated in the following two propositions respectively.

\begin{proposition}[Symmetry I]\label{sym1prop}Let $\xi > 0$ and suppose that a multilinear form $\psi \colon (G^{\oplus})^{r-1} \times G_{[k]} \to \mathbb{F}_p$ satisfies~\eqref{mlstreqn}. Let $i < j$ be two elements of $[r-1]$. Define the multilinear map $\psi_{ij} \colon (G^{\oplus})^{r-1} \times G_{[k]} \to \mathbb{F}_p$ by
\begin{align}\psi_{ij}\Big(a^{(1)}, \dots, a^{(r-1)}, d_1, \dots, d_k\Big) =\, &\psi\Big(a^{(1)}, \dots, a^{(i-1)}, a^{(i)}, a^{(i+1)}, \dots, a^{(j-1)}, a^{(j)}, a^{(j+1)}, \dots, a^{(r-1)}, d_1, \dots, d_k\Big) \label{psidefeqn}\\
-&\psi\Big(a^{(1)}, \dots, a^{(i-1)}, a^{(j)}, a^{(i+1)}, \dots, a^{(j-1)}, a^{(i)}, a^{(j+1)}, \dots, a^{(r-1)}, d_1, \dots, d_k\Big),\nonumber\end{align}
where the argument of the second form in the expression is obtained by swapping $a^{(i)}$ and $a^{(j)}$. Then
\[\bias \psi_{ij} \geq \xi^8.\]\end{proposition}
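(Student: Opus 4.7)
The argument turns on the commutativity of discrete derivatives. Since $\mder_{a^{(i)}}$ and $\mder_{a^{(j)}}$ commute, the iterated derivative
\[F(\vec a,\vec d,x) := \mder_{a^{(1)},\dots,a^{(r-1)}}\mder_{(d_1,0_{[2,k]}),\dots,(0_{[k-1]},d_k)}f(x)\]
is invariant under the swap of $a^{(i)}$ and $a^{(j)}$. Setting $T := \exx_{\vec a,\vec d,x} F\omega^{\psi(\vec a,\vec d)}$, the change of variables $a^{(i)}\leftrightarrow a^{(j)}$ inside the expectation, together with the symmetry of $F$, shows that $T = T^\sigma$, where $T^\sigma := \exx F\omega^{\psi^\sigma(\vec a,\vec d)}$ and $\psi^\sigma(\vec a,\vec d) := \psi(\vec a^\sigma,\vec d)$. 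Consequently $|T|^2 = T\overline{T^\sigma}$, and the hypothesis $|T|\ge\xi$ yields
\[\xi^2\le\Big|\exx_{\vec a,\vec d,x,\vec a',\vec d',x'} F(\vec a,\vec d,x)\overline{F(\vec a',\vec d',x')}\,\omega^{\psi(\vec a,\vec d)-\psi^\sigma(\vec a',\vec d')}\Big|.\]

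Next, I would substitute $\vec a'=\vec a+\vec u$ and $\vec d'=\vec d+\vec v$ in the second expectation and expand $\psi^\sigma(\vec a+\vec u,\vec d+\vec v)$ by multilinearity as $\psi^\sigma(\vec a,\vec d) + R(\vec a,\vec d,\vec u,\vec v)$, where $R$ gathers the multilinear cross-terms each involving at least one coordinate from $\vec u$ or $\vec v$. Using the identity $\psi-\psi^\sigma=\psi_{ij}$, the resulting phase is $\omega^{\psi_{ij}(\vec a,\vec d)-R(\vec a,\vec d,\vec u,\vec v)}$, so
\[\xi^2\le\Big|\exx F(\vec a,\vec d,x)\overline{F(\vec a+\vec u,\vec d+\vec v,x')}\,\omega^{\psi_{ij}(\vec a,\vec d)}\,\omega^{-R(\vec a,\vec d,\vec u,\vec v)}\Big|,\]
isolating the phase $\omega^{\psi_{ij}(\vec a,\vec d)}$ whose bias we wish to bound below.

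The proof is then completed by two further Cauchy--Schwarz applications, each squaring the exponent so that $\xi^2\to\xi^4\to\xi^8$. These steps use corresponding difference substitutions on the still-free variables, together with multilinearity of $R$, to convert the surviving $F$-product and the $\omega^{-R}$ phase into (auxiliary) multilinear biases. Since multilinear biases are non-negative, these auxiliary factors may be either bounded below trivially or absorbed through the sub-multiplicativity of bias (Lemma~\ref{subBias}), leaving $\bias\psi_{ij}\ge\xi^8$. The main technical obstacle is the careful bookkeeping of the multilinear cross-terms through these Cauchy--Schwarz steps: the difference substitutions and squarings must be arranged so that the $\omega^{\psi_{ij}(\vec a,\vec d)}$ phase is preserved (rather than being itself differentiated away) and the residual phases factor cleanly through non-negative auxiliary biases.
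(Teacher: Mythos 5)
Your opening observation --- that the iterated derivative $F$ is invariant under swapping $a^{(i)}$ and $a^{(j)}$, so that $T = T^\sigma$ and hence $\xi^2 \le |T\overline{T^\sigma}|$ --- is a valid and attractively direct way to make the phase $\psi_{ij} = \psi - \psi^\sigma$ appear. The paper does not invoke this symmetry at all: there, $\psi_{ij}$ \emph{emerges} from a clever change of variables, as described below. So up to this point your route is genuinely different and arguably more transparent.

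The gap is in the claim that ``two further Cauchy--Schwarz applications'' finish the argument. After your substitution $\vec a' = \vec a + \vec u$, $\vec d' = \vec d + \vec v$, you have
\[
\xi^2 \le \Big|\exx F(\vec a,\vec d,x)\,\overline{F(\vec a+\vec u,\vec d+\vec v,x')}\,\omega^{\psi_{ij}(\vec a,\vec d)}\,\omega^{-R}\Big|,
\]
where $\psi_{ij}$ is a full $(r-1+k)$-linear form in $(\vec a,\vec d)$ and where \emph{both} $F$-factors, as well as $R$, depend on every coordinate of $\vec a$ and $\vec d$. There is no Gowers inner-product structure here to exploit, because nothing has been arranged so that only one factor depends on both of two distinguished variables. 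If you apply plain Cauchy--Schwarz in $a^{(i)}$ and then in $a^{(j)}$ --- the natural choice given the alternation of $\psi_{ij}$ in those slots --- the resulting $\xi^8$-bounded expression does contain the telescoped phase $\omega^{\psi_{ij}(\ldots,a^{(i)}-a^{(i)'},\ldots,a^{(j)}-a^{(j)'},\ldots)}$, but it also drags along a sixteen-fold product of shifted $F$-values and the iterated difference of $\omega^{-R}$, all tangled with that phase and sharing all the other common averaging variables. That mixture is not the bias of a multilinear form, and Lemma~\ref{subBias} cannot strip the $F$-product away: it controls $\bias(\alpha+\beta)$ for multilinear $\alpha,\beta$, not the correlation of $\omega^\alpha$ against an arbitrary $\mathbb{D}$-valued function. ``Converting the surviving $F$-product into an auxiliary multilinear bias'' is not something one can do; the box norm of that product is not a bias.

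The paper engineers precisely the separation you are missing. It fixes $x_{[k]}$ by averaging, applies Cauchy--Schwarz only in the single slot $a^{(r-1)}$ (introducing two copies $u,v$ of it and turning the phase into $\omega^{\psi(\ldots,a^{(r-2)},u-v,d)}$), and then makes the key substitution $a^{(r-2)} = -u-v-z$. Expanding $\mder_{-u-v-z}$ makes every $f$-factor depend on at most one of $u$ and $v$ (plus $z$ and the remaining variables), while multilinearity of $\psi$ in the last two $a$-slots simultaneously produces
\[
\psi(\ldots,-u-v-z,u-v,d)=\psi_{r-2\,r-1}(\ldots,u,v,d)-\psi(\ldots,u+z,u,d)+\psi(\ldots,v+z,v,d),
\]
so $\omega^{\psi_{r-2\,r-1}(\ldots,u,v,d)}$ is the \emph{only} factor in the integrand depending on both $u$ and $v$. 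Gowers--Cauchy--Schwarz in just those two variables then yields $\xi^8 \le \exx_{\text{rest}}\exx_{u,u',v,v'}\omega^{\psi_{ij}(\ldots,u,v,\ldots)-\psi_{ij}(\ldots,u',v,\ldots)-\psi_{ij}(\ldots,u,v',\ldots)+\psi_{ij}(\ldots,u',v',\ldots)}$, which telescopes by bilinearity to $\bias\psi_{ij}$, with every $f$-factor having dropped out. To salvage your variant you would need an analogue of this change of variables (or some other device) that makes the $F$-factors depend on at most one of two designated variables, so that the final Gowers--Cauchy--Schwarz leaves nothing but the $\psi_{ij}$-phase.
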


\begin{proposition}[Symmetry II]\label{sym2prop}Let $\xi > 0$ and suppose that a multilinear form $\psi \colon (G^{\oplus})^{r-1} \times G_{[k]} \to \mathbb{F}_p$ satisfies~\eqref{mlstreqn}. Let $i \in [r-1]$ and let $j \in [k]$. Define the multilinear map $\psi'_{ij} \colon (G^{\oplus})^{[i-1]} \times G_j \times (G^{\oplus})^{[i+1, r-1]} \times G_{[k]} \to \mathbb{F}_p$ by
\begin{align}\psi'_{ij}\Big(a^{(1)}, \dots, a^{(i-1)}, u_j, a^{(i+1)}, &\dots, a^{(r-1)}, d_1, \dots, d_k\Big) =\,\nonumber\\
&\psi\Big(a^{(1)}, \dots, a^{(i-1)}, (0_{[k] \setminus \{j\}},\ls{j}\,u_j), a^{(i+1)}, \dots, a^{(r-1)}, d_1, \dots, d_{j-1}, d_j, d_{j+1}, \dots, d_k\Big)\nonumber\\
-&\psi\Big(a^{(1)}, \dots, a^{(i-1)}, (0_{[k] \setminus \{j\}},\ls{j}\,d_j), a^{(i+1)}, \dots, a^{(r-1)}, d_1, \dots, d_{j-1}, u_j, d_{j+1}, \dots, d_k\Big).\label{psidefeqn2}\end{align}
Then
\[\bias \psi'_{ij} \geq \xi^8.\]\end{proposition}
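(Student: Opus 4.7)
\emph{Proof proposal.} The plan is to follow Proposition~\ref{sym1prop}, adapted to the present mismatch: the $i$-th $a$-slot direction $a^{(i)}$ ranges over all of $G^\oplus$, while the $j$-th $d$-slot direction $(0_{[k] \setminus \{j\}}, \ls{j}{d_j})$ lies only in the subgroup $\{0\} \oplus \cdots \oplus G_j \oplus \cdots \oplus \{0\} \cong G_j$. The underlying symmetry is the same: the discrete derivative $\mder$ in~\eqref{mlstreqn} is symmetric in all $(r-1+k)$ of its direction arguments, so the $i$-th and $j$-th slots may be swapped pointwise. The obstacle is that the two variables range over different domains.

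To bridge this, decompose $a^{(i)} = b + (0_{[k] \setminus \{j\}}, \ls{j}{c})$ with $b \in \bigoplus_{\ell \neq j} G_\ell$ and $c \in G_j$. The cocycle identity
\[\mder_{b + (0_{[k] \setminus \{j\}}, \ls{j}{c}),\, w_{[m]}} f(x) = \mder_{(0_{[k] \setminus \{j\}}, \ls{j}{c}),\, w_{[m]}} f(x + b) \cdot \mder_{b,\, w_{[m]}} f(x),\]
which follows from $\mder_{u+v} g(x) = \mder_v g(x + u) \cdot \mder_u g(x)$ together with the fact that $\mder_w$ distributes over pointwise products, combined with the multilinearity of $\psi$ in slot $i$, splits the integrand in~\eqref{mlstreqn} into a $b$-factor and a $c$-factor. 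A Cauchy-Schwarz applied to $b$ absorbs the $b$-factor into a harmless bounded-modulus estimate, reducing the problem---at the cost of squaring the exponent---to a correlation in which the slot-$i$ direction is restricted to $\{0\} \oplus \cdots \oplus G_j \oplus \cdots \oplus \{0\}$, the same subgroup containing the slot-$j$ direction.

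With both directions now living in a common copy of $G_j$, the symmetry of $\mder$ under swap of slots $i$ and $j$ becomes an ordinary dummy-variable renaming $c \leftrightarrow d_j$, which is exactly the situation of Proposition~\ref{sym1prop}. Two further Cauchy-Schwarzes---first expressing the reduced quantity in two ways by combining the slot swap with the renaming, then eliminating $f$---pair the two forms $\omega^{\psi(\dots,(0,\dots,c,\dots,0),\dots,d_j,\dots)}$ and $\omega^{\psi(\dots,(0,\dots,d_j,\dots,0),\dots,c,\dots)}$ and extract their antisymmetric difference as a pure multilinear bias. The three squarings together give $\bias \psi'_{ij} \geq \xi^{2^3} = \xi^8$.

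The main obstacle will be executing the cocycle split and the Cauchy-Schwarz in $b$ cleanly enough that the multilinear form forced to have large bias at the end is \emph{exactly} $\psi'_{ij}$, and not $\psi'_{ij}$ plus a $b$-dependent correction coming from the $\psi(\dots, b, \dots)$ contribution. Multilinearity in slot $i$ should confine the swap to the $c$-part of $\psi$, while the shift $x \mapsto x + b$ produced by the cocycle must be absorbed by a change of variables in $x$ so as not to pollute the final phase.
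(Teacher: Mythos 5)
Your high-level intuition---that one must bring the slot-$i$ and slot-$j$ directions into a common copy of $G_j$ before the symmetry argument can run---is correct, but the proposed mechanism of a cocycle split $a^{(i)} = b + (0_{[k]\setminus\{j\}},{}^j\,c)$ followed by a Cauchy--Schwarz in $b$ has a genuine gap that the rest of the sketch does not close.

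The problem is what one is left with after that first Cauchy--Schwarz. Writing $w$ for the remaining $r+k-2$ directions and $g = \mder_w f$, the cocycle plus the change of variable $x \mapsto x+b$ give $\xi \le \bigl|\exx (\exx_c A)(\exx_b B)\bigr|$ with $A(c) = \mder_{(0,\dots,c),w}f(x)\,\omega^{\psi((0,\dots,c),\dots)}$, and Cauchy--Schwarz yields $\xi^2 \le \exx \bigl|\exx_c A\bigr|^2 = \exx_{c,c'} g(x+(0,\dots,c))\,\overline{g(x+(0,\dots,c'))}\,|g(x)|^2\,\omega^{\psi((0,\dots,c-c'),\dots)}$. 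This is \emph{not} a correlation of the form~\eqref{mlstreqn} with the slot-$i$ direction restricted to $G_j$: it carries a spare factor $|g(x)|^2$ and two copies $c,c'$, so the claimed ``dummy-variable renaming $c \leftrightarrow d_j$'' is not available---the $\mder$-part $g(x+(0,\dots,c))\overline{g(x+(0,\dots,c'))}|g(x)|^2$ is no longer symmetric under swapping $c$ with $d_j$, precisely because of the second copy $c'$. More seriously, after this step one has spent the $j$-th coordinate of $a^{(i)}$ (it became $c$, which was then integrated), so there is no remaining free variable to re-parameterize. Any change of variable in $(c,c',d_j)$ that produces the cross-term $\psi((0,\dots,c),\dots,c') - \psi((0,\dots,c'),\dots,c)$ in the exponent (e.g.\ $d_j = z_j - c - c'$) leaves an $f$-argument whose $j$-th coordinate still contains $c+c'$, which blocks the Gowers--Cauchy--Schwarz step because the $f$-terms no longer factor over $(c,c')$. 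So the ``two further Cauchy--Schwarzes'' cannot pair the two forms as sketched, and I do not see how to reach $\xi^8$ along this route without new ideas.

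For comparison, the paper never decomposes $a^{(i)}$ and does not use the cocycle at all. After averaging over $x$, it first applies Cauchy--Schwarz in $d_j$ (getting two copies $d_j,d'_j$), and then makes the single change of variable $z_j = x_j + d_j + d'_j + a^{(i)}_j$, re-parameterizing the $j$-th coordinate of $a^{(i)}$. That substitution does two things at once: it forces every $f$-argument to depend on at most one of $d_j,d'_j$, and via multilinearity it isolates the antisymmetric cross-term $\psi\bigl((0,\dots,d_j),\dots,d'_j\bigr) - \psi\bigl((0,\dots,d'_j),\dots,d_j\bigr) = \psi'_{ij}(d_j,\dots,d'_j)$. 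From there one Gowers--Cauchy--Schwarz in $(d_j,d'_j)$ and one H\"older give $\xi^8 \le \bias\psi'_{ij}$. The key idea your sketch is missing is precisely this change of variable; keeping $a^{(i)}$ whole so that its $j$-th coordinate can serve as the fresh variable is what makes the $f$-terms factor at the crucial moment.
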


Once these two propositions are proved, we shall show that the only sources of the approximately symmetric functions in the sense above are functions with the corresponding exact symmetry properties.

\begin{proposition}\label{approxsymtoexact}Let $\xi > 0$. Assume that $p \geq r + 1$. Suppose that $\psi \colon (G^{\oplus})^{r-1} \times G_{[k]} \to \mathbb{F}_p$ is a multilinear form. Let $\psi_{ij}$ and $\psi'_{ij}$ be defined by~\eqref{psidefeqn} and by~\eqref{psidefeqn2} respectively. Suppose that for all $i, j \in [r-1]$ we have
\begin{equation}\bias \psi_{ij} \geq \xi\label{approxtoexsym1}\end{equation}
and for each $i \in [r-1], j \in [k]$ we have
\begin{equation}\bias \psi'_{ij} \geq \xi.\label{approxtoexsym2}\end{equation}
Then there is another multilinear form $\rho \colon (G^{\oplus})^{r-1} \times G_{[k]} \to \mathbb{F}_p$ such that
\[\bias (\psi - \rho) \geq \xi^{4(k + r)!^4}\]
and if $\rho_{ij}$ and $\rho'_{ij}$ are the multilinear forms defined by~\eqref{psidefeqn} and by~\eqref{psidefeqn2} for $\rho$ instead of $\psi$, then $\rho_{ij} = 0$ and $\rho'_{ij} = 0$.\end{proposition}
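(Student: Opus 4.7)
The plan is to construct $\rho$ by symmetrizing $\psi$ over the combined group of symmetries implicit in the hypotheses, and to control $\bias(\psi - \rho)$ via Lemma~\ref{subBias}. I would work at the level of the sub-form decomposition $\psi = \sum_{J \in [k]^{r-1}} \tilde\psi_J$ obtained by writing $a^{(i)} = \sum_{j=1}^k a^{(i)}_j$ (with $a^{(i)}_j \in G_j$), so that each $\tilde\psi_J$ is multilinear in $a^{(1)}_{j_1}, \dots, a^{(r-1)}_{j_{r-1}}, d_1, \dots, d_k$. Group the tuples $J$ by their \emph{type} $\vec n = (n_1,\dots,n_k)$, where $n_j = |\{i : j_i = j\}|$. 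Tracing through the definitions, the requirements $\rho_{ii'} = 0$ (for $i,i' \in [r-1]$) and $\rho'_{ij} = 0$ together translate into the following: within each type $\vec n$, the sub-forms $\tilde\rho_J$ (for $J$ of type $\vec n$) all arise from a single symmetric multilinear form $R_{\vec n}$ on $G_1^{n_1+1} \times \cdots \times G_k^{n_k+1}$ that is symmetric in each $G_j$-block of $n_j+1$ arguments.

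I would then define $\rho$ by full averaging. For each type $\vec n$, set $R_{\vec n}$ to be the average over $J \in \mathcal{J}(\vec n)$ and over $\tau \in \prod_j S_{n_j+1}$ of $\tau \tilde\psi_J$ (after the natural identification of arguments), set $\tilde\rho_J := R_{\vec n}$ for each $J$ of type $\vec n$, and finally let $\rho = \sum_J \tilde\rho_J$. The hypothesis $p \geq r+1$ guarantees that all factorial denominators involved (which divide $(k+r)!$) are invertible in $\mathbb{F}_p$, so the averaging is well-defined, and $\rho$ satisfies $\rho_{ii'} = 0$ and $\rho'_{ij} = 0$ by construction.

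To estimate $\bias(\psi - \rho)$, I would express the error $\psi - \rho = \sum_{\vec n}\sum_{J \in \mathcal J(\vec n)} (\tilde\psi_J - R_{\vec n})$ as a linear combination of basic differences, each of which telescopes into sums that match—up to sign and variable-relabeling—one of the forms $\psi_{ii'}$ or $\psi'_{ij}$. Since the bias of a multilinear form is invariant under both permutation of its variables and multiplication by a nonzero scalar in $\mathbb{F}_p$, each basic summand has bias at least $\xi$ by hypothesis. Applying Lemma~\ref{subBias} iteratively across the at most $(k+r)!^{O(1)}$ basic summands yields
\[\bias(\psi - \rho) \geq \xi^{(k+r)!^{O(1)}} \geq \xi^{4(k+r)!^4},\]
provided the implicit constant is tracked generously.

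The main obstacle is to identify each sub-form-level basic difference with the \emph{global} form $\psi'_{ij}$ or $\psi_{ii'}$ (suitably translated), since $\psi'_{ij}$ is defined as a sum over sub-forms rather than as a direct variable permutation of $\psi$. Indeed, writing $\psi'_{ij} = \sum_{J : j_i = j}(\tilde\psi_J - (a^{(i)}_j \leftrightarrow d_j)\tilde\psi_J)$, the individual sub-form summands have biases whose product is only guaranteed to be $\geq \xi$, so one cannot freely invoke Lemma~\ref{subBias} on them one by one without losing too much. The remedy is to orchestrate the telescoping so that each step performs the $(a^{(i)}_j \leftrightarrow d_j)$-swap on the entire aggregate $\sum_{J : j_i = j}(\cdot)$ at once, thereby applying Lemma~\ref{subBias} only to the global $\psi'_{ij}$'s and $\psi_{ii'}$'s; the intertwining of the $S_{r-1}$-action (which permutes $J$'s of a given type) with the block-swaps then allows the full symmetrization to be realized as a sequence of such global moves.
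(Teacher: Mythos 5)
Your construction of $\rho$ matches the paper's (which writes it as $\rho = \sum_{i_{[r-1]}}\ex_{\sigma\in\on{Sym}_{[r-1]}}\ex_{\tau\in\on{Sym}(i_{[r-1]})}\tau\overset{\bm 2}{\circ}\psi_{i_{[r-1]}}(a^{(\sigma(1))}_{i_1},\dots)$), and your observation about which factorials must be inverted is correct. You also correctly identify the obstacle: the hypotheses control $\bias(\psi_{ii'})$ and $\bias(\psi'_{ij})$, which are sums over all sub-forms, not the individual sub-form differences appearing in $\psi-\rho$. However, your proposed remedy has a genuine gap.

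The remedy you describe is to realize $\psi - \rho$ as a telescope of \emph{global} aggregate swaps, so that Lemma~\ref{subBias} is only ever applied to (permutations of) the forms $\psi_{ii'}$ and $\psi'_{ij}$. This fails after the first step. Write $\Sigma_{ij}=\sum_{J\colon j_i=j}\tilde\psi_J$, so that $\Sigma_{ij}-T_{ij}\Sigma_{ij}=\psi'_{ij}$ lifted. Once you replace $\Sigma_{i_0j_0}$ by $T_{i_0j_0}\Sigma_{i_0j_0}$, the sub-forms of the resulting object are no longer the original $\tilde\psi_J$: the $J$'s with $j_{i_0}=j_0$ have been permuted. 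Consequently the next aggregate difference $\Sigma'_{ij}-T_{ij}\Sigma'_{ij}$ is \emph{not} $\psi'_{ij}$ up to a single variable relabeling. Concretely, take $k=2$, $r=3$ and apply $T_{11}$ and then $T_{21}$: the $\tilde\psi_{(1,1)}$-part of the second difference requires the change of variables $a^{(1)}_1\leftrightarrow d_1$, while its $\tilde\psi_{(2,1)}$-part requires the identity on those same variables. Since bias is an expectation over a single joint distribution, you cannot apply inconsistent substitutions to different summands, so the intermediate difference's bias is not bounded by $\xi$ via the hypotheses alone. The $S_{r-1}$-action does not rescue this: it permutes the $a^{(i)}$'s within a type but does not restore the alignment of the already-swapped block-arguments.

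What the paper does differently, and what is genuinely needed, is to first establish bias lower bounds at the \emph{sub-form level}. Claims~\ref{psiseqsym1} and~\ref{psiseqsym2} accomplish this: they expand the hypothesis $\bias\psi_{c_1c_2}\geq\xi$ (resp.\ $\bias\psi'_{c_1\ell}\geq\xi$) as a sum over all sub-forms, observe that a given tuple $i_{[r-1]}$ contributes the \emph{unique} pair of forms involving all of a specific set of variables, and then invoke the Gowers--Cauchy--Schwarz inequality (Lemma~\ref{gcs}) to isolate that pair, yielding $\bias(\psi_{i_{[r-1]}}-\sigma\overset{\bm 1}{\circ}\psi_{i_{[r-1]}})\geq\xi^{r2^{k+r-1}}$ and $\bias(\psi_{i_{[r-1]}}-\sigma\overset{\bm 2}{\circ}\psi_{i_{[r-1]}})\geq\xi^{3(k+r)2^{k+r-1}}$. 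Only with these sub-form bounds in hand can one combine them via Lemma~\ref{subBias} (through Claim~\ref{claimapproxsymmmaps}) to estimate $\bias(\psi-\rho)$. Your proposal omits this Gowers--Cauchy--Schwarz extraction step, and without it the bias estimate cannot be completed.
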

\vspace{\baselineskip}
\noindent\textbf{Proofs of approximate symmetry properties.} We now proceed to prove the stated results.

\begin{proof}[Proof of Proposition~\ref{sym1prop}]It suffices to prove the claim for $i = r-2$ and $j = r-1$. We start by expanding~\eqref{mlstreqn}
\begin{align*}\xi \leq & \exx_{a^{(1)}, \dots, a^{(r-1)} \in G^{\oplus}} \exx_{x_1, d_1 \in G_1, \dots, x_k, d_k \in G_k} \bigg(\prod_{I \subseteq [k]} \on{Conj}^{k - |I|} \mder_{a^{(1)}, \dots, a^{(r-1)}}f\Big((x_i + d_i)_{i \in I}, x_{[k] \setminus I}\Big)\bigg) \,\,\omega^{\psi(a^{(1)}, \dots, a^{(r-1)}, d_{[k]})}.\end{align*}
By averaging, there is $x_{[k]} \in G_{[k]}$ such that
\begin{align*}\xi \leq & \Big|\exx_{a^{(1)}, \dots, a^{(r-1)} \in G^{\oplus}} \exx_{d_1 \in G_1, \dots, d_k \in G_k} \bigg(\prod_{I \subseteq [k]} \on{Conj}^{k - |I|} \mder_{a^{(1)}, \dots, a^{(r-1)}}f\Big((x + d)_{I}, x_{[k] \setminus I}\Big)\bigg) \,\,\omega^{\psi(a^{(1)}, \dots, a^{(r-1)}, d_{[k]})}\Big|\\
\leq & \exx_{a^{(1)}, \dots, a^{(r-2)} \in G^{\oplus}} \exx_{d_1 \in G_1, \dots, d_k \in G_k}\bigg|\exx_{a^{(r-1)} \in G^{\oplus}}  \bigg(\prod_{I \subseteq [k]} \on{Conj}^{k - |I|} \mder_{a^{(1)}, \dots, a^{(r-1)}}f\Big((x + d)_{I}, x_{[k] \setminus I}\Big)\bigg) \,\,\omega^{\psi(a^{(1)}, \dots, a^{(r-1)}, d_{[k]})}\bigg|.\end{align*}
Apply Cauchy-Schwarz inequality to obtain
\begin{align*}\xi^2 \leq &  \exx_{a^{(1)}, \dots, a^{(r-2)} \in G^{\oplus}}\exx_{d_1 \in G_1, \dots, d_k \in G_k}  \bigg|\exx_{a^{(r-1)} \in G^{\oplus}} \bigg(\prod_{I \subseteq [k]} \on{Conj}^{k - |I|} \mder_{a^{(1)}, \dots, a^{(r-1)}}f\Big((x + d)_{I}, x_{[k] \setminus I}\Big)\bigg) \,\,\omega^{\psi(a^{(1)}, \dots, a^{(r-1)}, d_{[k]})}\bigg|^2\\
\leq &  \exx_{a^{(1)}, \dots, a^{(r-2)} \in G^{\oplus}}\exx_{d_1 \in G_1, \dots, d_k \in G_k}  \bigg|\exx_{a^{(r-1)} \in G^{\oplus}} \bigg(\prod_{I \subseteq [k]} \on{Conj}^{k - |I|} \mder_{a^{(1)}, \dots, a^{(r-2)}}f\Big((x+ d + a^{(r-1)})_{I}, (x + a^{(r-1)})_{[k] \setminus I}\Big)\bigg)\\
&\hspace{10cm} \omega^{\psi(a^{(1)}, \dots, a^{(r-1)}, d_{[k]})}\bigg|^2\\
= & \exx_{a^{(1)}, \dots, a^{(r-2)} \in G^{\oplus}}\exx_{d_1 \in G_1, \dots, d_k \in G_k}  \exx_{u, v \in G^{\oplus}} \bigg(\prod_{I \subseteq [k]} \on{Conj}^{k - |I|} \mder_{a^{(1)}, \dots, a^{(r-2)}}f\Big((x + d + u)_{I}, (x + u)_{[k] \setminus I}\Big)\\
&\hspace{5cm} \on{Conj}^{k + 1 - |I|} \mder_{a^{(1)}, \dots, a^{(r-2)}}f\Big((x + d + v)_{I}, (x + v)_{[k] \setminus I}\Big)\bigg) \,\,\omega^{\psi(a^{(1)}, \dots, a^{(r-2)}, u-v, d_{[k]})}.\end{align*}
Make a change of variables and replace $a^{(r-2)}$ by $z = - u - v - a^{(r-2)}$ to obtain
\begin{align*}\xi^2 \leq &\exx_{a^{(1)}, \dots, a^{(r-3)} \in G^{\oplus}}\exx_{d_1 \in G_1, \dots, d_k \in G_k}  \exx_{u, v, z \in G^{\oplus}} \bigg(\prod_{I \subseteq [k]} \on{Conj}^{k - |I|} \mder_{a^{(1)}, \dots, a^{(r-3)}, -u-v-z}f\Big((x + d + u)_{I}, (x + u)_{[k] \setminus I}\Big)\\
&\hspace{2cm} \on{Conj}^{k + 1 - |I|} \mder_{a^{(1)}, \dots, a^{(r-3)},-u-v-z}f\Big((x + d + v)_{I}, (x+ v)_{[k] \setminus I}\Big)\bigg) \,\,\omega^{\psi(a^{(1)}, \dots, a^{(r-3)}, -u-v-z, u-v, d_{[k]})}\\
=&\exx_{a^{(1)}, \dots, a^{(r-3)} \in G^{\oplus}}\exx_{d_1 \in G_1, \dots, d_k \in G_k}  \exx_{u, v, z \in G^{\oplus}} \bigg(\prod_{I \subseteq [k]} \on{Conj}^{k - |I|} \mder_{a^{(1)}, \dots, a^{(r-3)}}f\Big((x + d + u)_{I}, (x + u)_{[k] \setminus I}\Big)\\
&\hspace{2cm}\on{Conj}^{k + 1 - |I|} \mder_{a^{(1)}, \dots, a^{(r-3)}}f\Big((x + d - v - z)_{I}, (x - v - z)_{[k] \setminus I}\Big)\\
&\hspace{2cm}\on{Conj}^{k + 1 - |I|} \mder_{a^{(1)}, \dots, a^{(r-3)}}f\Big((x + d + v)_{I}, (x + v)_{[k] \setminus I}\Big)\\
&\hspace{2cm}\on{Conj}^{k - |I|} \mder_{a^{(1)}, \dots, a^{(r-3)}}f\Big((x + d - u - z)_{I}, (x - u - z)_{[k] \setminus I}\Big)\bigg)\\
&\hspace{2cm}\omega^{-\psi(a^{(1)}, \dots, a^{(r-3)}, u + z, u, d_{[k]})} \omega^{\psi(a^{(1)}, \dots, a^{(r-3)}, v + z, v, d_{[k]})} \omega^{\psi(a^{(1)}, \dots, a^{(r-3)}, u, v, d_{[k]}) - \psi(a^{(1)}, \dots, a^{(r-3)}, v, u, d_{[k]})}.\end{align*}
Recall the notation $\psi_{r-2\, r-1}(a_1, \dots, a_{r-3}, u, v, d_{[k]}) = \psi(a_1, \dots, a_{r-3}, u, v, d_{[k]}) - \psi(a_1, \dots, a_{r-3}, v, u, d_{[k]})$. By Gowers-Cauchy-Schwarz inequality (Lemma~\ref{gcs}) for the 2-dimensional box norm with respect to variables $u$ and $v$, we see that
\begin{align*}\xi^8 \leq &\exx_{a^{(1)}, \dots, a^{(r-3)} \in G^{\oplus}}\exx_{d_1 \in G_1, \dots, d_k \in G_k}  \exx_{z \in G^{\oplus}} \exx_{\substack{u,v \in G^{\oplus}\\u',v' \in G^{\oplus}}} \omega^{\psi_{r-2\, r-1}(a^{(1)}, \dots, a^{(r-3)}, u, v, d_{[k]}) }\\
&\hspace{2cm} \omega^{-\psi_{r-2\, r-1}(a^{(1)}, \dots, a^{(r-3)}, u', v, d_{[k]}) } \omega^{-\psi_{r-2\, r-1}(a^{(1)}, \dots, a^{(r-3)}, u, v', d_{[k]}) } \omega^{\psi_{r-2\, r-1}(a^{(1)}, \dots, a^{(r-3)}, u', v', d_{[k]})}\\
= & \bias \psi_{r-2\, r-1}.\qedhere\end{align*}
\end{proof}

\begin{proof}[Proof of Proposition~\ref{sym2prop}]It suffices to prove the claim in the case $i = 1$ and $j = k$. We start by expanding~\eqref{mlstreqn}
\begin{align*}\xi \leq & \exx_{a^{(1)}, \dots, a^{(r-1)} \in G^{\oplus}} \exx_{x_1, d_1 \in G_1, \dots, x_k, d_k \in G_k} \bigg(\prod_{I \subseteq [k]} \on{Conj}^{k - |I|} \mder_{a^{(1)}, \dots, a^{(r-1)}}f\Big((x + d)_{I}, x_{[k] \setminus I}\Big)\bigg) \,\,\omega^{\psi(a^{(1)}, \dots, a^{(r-1)}, d_{[k]})}.\end{align*}
By averaging, there is $x_{[k]}$ such that
\begin{align*}\xi \leq & \bigg|\exx_{a^{(1)}, \dots, a^{(r-1)} \in G^{\oplus}} \exx_{d_{[k]} \in G_{[k]}} \bigg(\prod_{I \subseteq [k]} \on{Conj}^{k - |I|} \mder_{a^{(1)}, \dots, a^{(r-1)}}f\Big((x + d)_{I}, x_{[k] \setminus I}\Big)\bigg) \,\,\omega^{\psi(a^{(1)}, \dots, a^{(r-1)}, d_{[k]})}\bigg|\\
= & \bigg|\exx_{a^{(1)}, \dots, a^{(r-1)} \in G^{\oplus}} \exx_{d_{[k]} \in G_{[k]}} \bigg(\prod_{I \subseteq [k]} \on{Conj}^{k - |I|} \mder_{a^{(2)}, \dots, a^{(r-1)}}f\Big((x + d + a^{(1)})_I, (x + a^{(1)})_{[k] \setminus I}\Big)\\
&\hspace{5cm}\overline{\on{Conj}^{k - |I|} \mder_{a^{(2)}, \dots, a^{(r-1)}}f\Big((x + d)_I, x_{[k] \setminus I}\Big)}\bigg) \,\,\omega^{\psi(a^{(1)}, \dots, a^{(r-1)}, d_{[k]})}\bigg|\\
= &\bigg| \exx_{a^{(1)}, \dots, a^{(r-1)} \in G^{\oplus}} \exx_{d_{[k]} \in G_{[k]}} \omega^{\psi(a^{(1)}, \dots, a^{(r-1)}, d_{[k]})}\\
&\hspace{2cm}\bigg(\prod_{I \subseteq [k-1]} \on{Conj}^{k - 1 - |I|} \mder_{a^{(2)}, \dots, a^{(r-1)}}f\Big((x + d + a^{(1)})_I, (x + a^{(1)})_{[k-1] \setminus I}, {}^k\,x_k + d_k + a^{(1)}_{k}\Big)\bigg)\\
&\hspace{2cm}\bigg(\prod_{I \subseteq [k-1]} \on{Conj}^{k - |I|} \mder_{a^{(2)}, \dots, a^{(r-1)}}f\Big((x + d + a^{(1)})_I, (x + a^{(1)})_{[k-1] \setminus I}, {}^k\,x_k + a^{(1)}_{k}\Big)\bigg)\\
&\hspace{2cm}\bigg(\prod_{I \subseteq [k-1]} \on{Conj}^{k - |I|} \mder_{a^{(2)}, \dots, a^{(r-1)}}f\Big((x + d)_I, x_{[k-1] \setminus I}, {}^k\,x_k + d_k\Big)\bigg)\\
&\hspace{2cm}\bigg(\prod_{I \subseteq [k-1]} \on{Conj}^{k - 1 - |I|} \mder_{a^{(2)}, \dots, a^{(r-1)}}f\Big((x + d)_I, x_{[k-1] \setminus I}, {}^k\,x_k\Big)\bigg)\bigg|.\end{align*}
Apply Cauchy-Schwarz inequality to obtain
\begin{align*}\xi^2 \leq & \exx_{a^{(1)}, \dots, a^{(r-1)} \in G^{\oplus}} \exx_{d_{[k-1]} \in G_{[k-1]}} \Bigg|\exx_{d_k \in G_k} \omega^{\psi(a^{(1)}, \dots, a^{(r-1)}, d_{[k]})}\nonumber\\
&\hspace{2cm}\bigg(\prod_{I \subseteq [k-1]} \on{Conj}^{k - 1 - |I|} \mder_{a^{(2)}, \dots, a^{(r-1)}}f\Big((x + d + a^{(1)})_I, (x + a^{(1)})_{[k-1] \setminus I}, {}^k\,x_k + d_k + a^{(1)}_{k}\Big)\bigg)\nonumber\\
&\hspace{2cm}\bigg(\prod_{I \subseteq [k-1]} \on{Conj}^{k - |I|} \mder_{a^{(2)}, \dots, a^{(r-1)}}f\Big((x + d)_I, x_{[k-1] \setminus I}, {}^k\,x_k + d_k\Big)\bigg)\Bigg|^2\nonumber\\
= & \exx_{a^{(1)}, \dots, a^{(r-1)} \in G^{\oplus}} \exx_{d_{[k-1]} \in G_{[k-1]}}\exx_{d_k, d_k' \in G_k} \omega^{\psi(a^{(1)}, \dots, a^{(r-1)}, d_{[k-1]}, d_k - d_k')}\nonumber\\
&\hspace{2cm}\bigg(\prod_{I \subseteq [k-1]} \on{Conj}^{k - 1 - |I|} \mder_{a^{(2)}, \dots, a^{(r-1)}}f\Big((x + d + a^{(1)})_I, (x + a^{(1)})_{[k-1] \setminus I}, {}^k\,x_k + d_k + a^{(1)}_{k}\Big)\bigg)\nonumber\\
&\hspace{2cm}\bigg(\prod_{I \subseteq [k-1]} \on{Conj}^{k - |I|} \mder_{a^{(2)}, \dots, a^{(r-1)}}f\Big((x + d + a^{(1)})_I, (x + a^{(1)})_{[k-1] \setminus I}, {}^k\,x_k + d'_k + a^{(1)}_{k}\Big)\bigg)\nonumber\\
&\hspace{2cm}\bigg(\prod_{I \subseteq [k-1]} \on{Conj}^{k - |I|} \mder_{a^{(2)}, \dots, a^{(r-1)}}f\Big((x + d)_I, x_{[k-1] \setminus I}, {}^k\,x_k + d_k\Big)\bigg)\nonumber\\
&\hspace{2cm}\bigg(\prod_{I \subseteq [k-1]} \on{Conj}^{k - 1 - |I|} \mder_{a^{(2)}, \dots, a^{(r-1)}}f\Big((x + d)_I, x_{[k-1] \setminus I}, {}^k\,x_k + d'_k\Big)\bigg).\end{align*}

We now make the following change of variables. We introduce a new variable $z_k = x_k + d_k + d'_k + a^{(1)}_{k}$ instead of $a^{(1)}_{k}$. With the new variable, we get inequality
\begin{align}\xi^2 \leq & \exx_{a^{(1)}_{[k-1]} \in G_{[k-1]}} \exx_{a^{(2)}, \dots, a^{(r-1)} \in G^{\oplus}} \exx_{d_{[k-1]} \in G_{[k-1]}} \exx_{d_k, d_k', z_k \in G_k} \omega^{\psi\big((a^{(1)}_{[k-1]}, {}^k\, z_k - d_k -d'_k - x_k), a^{(2)}, \dots, a^{(r-1)}, d_{[k-1]}, d_k - d'_k\big)}\nonumber\\
&\hspace{2cm}\bigg(\prod_{I \subseteq [k-1]} \on{Conj}^{k - 1 - |I|} \mder_{a^{(2)}, \dots, a^{(r-1)}}f\Big((x + d + a^{(1)})_I, (x + a^{(1)})_{[k-1] \setminus I}, {}^k\,z_k - d'_k\Big)\bigg)\nonumber\\
&\hspace{2cm}\bigg(\prod_{I \subseteq [k-1]} \on{Conj}^{k - |I|} \mder_{a^{(2)}, \dots, a^{(r-1)}}f\Big((x + d + a^{(1)})_I, (x + a^{(1)})_{[k-1] \setminus I}, {}^k\, z_k - d_k\Big)\bigg)\nonumber\\
&\hspace{2cm}\bigg(\prod_{I \subseteq [k-1]} \on{Conj}^{k - |I|} \mder_{a^{(2)}, \dots, a^{(r-1)}}f\Big((x + d)_I, x_{[k-1] \setminus I}, {}^k\,x_k + d_k\Big)\bigg)\nonumber\\
&\hspace{2cm}\bigg(\prod_{I \subseteq [k-1]} \on{Conj}^{k - 1 - |I|} \mder_{a^{(2)}, \dots, a^{(r-1)}}f\Big((x + d)_I, x_{[k-1] \setminus I}, {}^k\,x_k + d'_k\Big)\bigg).\label{dkdprimekineq}\end{align}

From multilinearity of $\psi$, we have
\begin{align*}&\psi\Big((a^{(1)}_{[k-1]}, {}^k\, z_k - d_k -d'_k - x_k), a^{(2)}, \dots, a^{(r-1)}, d_{[k-1]}, d_k - d'_k\Big)\\
= &\psi\Big((a^{(1)}_{[k-1]}, {}^k\, z_k - x_k), a^{(2)}, \dots, a^{(r-1)}, d_{[k-1]}, d_k - d'_k\Big)\\
&\hspace{1cm} - \psi\Big((0_{[k-1]}, {}^k\, d_k), a^{(2)}, \dots, a^{(r-1)}, d_{[k-1]}, d_k - d'_k\Big) - \psi\Big((0_{[k-1]}, {}^k\, d'_k), a^{(2)}, \dots, a^{(r-1)}, d_{[k-1]}, d_k - d'_k\Big)\\
=\,&\psi\Big((0_{[k-1]}, {}^k\, d_k), a^{(2)}, \dots, a^{(r-1)}, d_{[k-1]}, d'_k\Big) - \psi\Big((0_{[k-1]}, {}^k\, d'_k), a^{(2)}, \dots, a^{(r-1)}, d_{[k-1]}, d_k\Big)\\
\hspace{1cm}& +\psi\Big((a^{(1)}_{[k-1]}, {}^k\, z_k - x_k), a^{(2)}, \dots, a^{(r-1)}, d_{[k-1]}, d_k\Big) - \psi\Big((a^{(1)}_{[k-1]}, {}^k\, z_k - x_k), a^{(2)}, \dots, a^{(r-1)}, d_{[k-1]}, d'_k\Big)\\
\hspace{1cm}& - \psi\Big((0_{[k-1]}, {}^k\, d_k), a^{(2)}, \dots, a^{(r-1)}, d_{[k-1]}, d_k\Big) + \psi\Big((0_{[k-1]}, {}^k\, d'_k), a^{(2)}, \dots, a^{(r-1)}, d_{[k-1]}, d'_k\Big).
\end{align*}

After applying this identity in~\eqref{dkdprimekineq}, the only terms involving $d_k$ and $d_k'$ remaining are phases of
\[\psi\Big((0_{[k-1]}, {}^k\, d_k), a^{(2)}, \dots, a^{(r-1)}, d_{[k-1]}, d'_k\Big)\]
and
\[\psi\Big((0_{[k-1]}, {}^k\, d'_k), a^{(2)}, \dots, a^{(r-1)}, d_{[k-1]}, d_k\Big).\]

Apply  Gowers-Cauchy-Schwarz inequality (Lemma~\ref{gcs}) with respect to variables $d_k$ and $d'_k$ to deduce
\begin{align*}\xi^2 \leq \exx_{a^{(1)}_{[k-1]} \in G_{[k-1]}} \exx_{a^{(2)}, \dots, a^{(r-1)} \in G^{\oplus}} \exx_{d_{[k-1]} \in G_{[k-1]}} \exx_{z_k \in G_k} \Big\|&\omega^{\psi\big((0_{[k-1]}, {}^k\, d_k), a^{(2)}, \dots, a^{(r-1)}, d_{[k-1]}, d'_k\big)}\\
&\omega^{-\psi\big((0_{[k-1]}, {}^k\, d'_k), a^{(2)}, \dots, a^{(r-1)}, d_{[k-1]}, d_k\big)}\Big\|_{\square(d_k, d'_k)}.\end{align*}
Apply H\"{o}lder's inequality for 4\textsuperscript{th} power to get 
\begin{align*}\xi^8 \leq \exx_{a^{(1)}_{[k-1]} \in G_{[k-1]}} \exx_{a^{(2)}, \dots, a^{(r-1)} \in G^{\oplus}} \exx_{d_{[k-1]} \in G_{[k-1]}} \exx_{z_k, d_k, d'_k \in G_k} &\omega^{\psi\big((0_{[k-1]}, {}^k\, d_k), a^{(2)}, \dots, a^{(r-1)}, d_{[k-1]}, d'_k\big)}\\
&\omega^{-\psi\big((0_{[k-1]}, {}^k\, d'_k), a^{(2)}, \dots, a^{(r-1)}, d_{[k-1]}, d_k\big)},\end{align*}
which is exactly
\[\xi^8 \leq \on{bias} \psi'_{1 k}.\qedhere\]\end{proof}

\vspace{\baselineskip}
\noindent\textbf{From approximate to exact symmetry.} We now prove Proposition~\ref{approxsymtoexact} which allows us to deduce exact symmetry properties.

\begin{proof}[Proof of Proposition~\ref{approxsymtoexact}]Let $i_1, \dots, i_{r-1} \in [k]$ be arbitrary indices. We define further map $\psi_{i_{[r-1]}} \colon G_{i_1} \times G_{i_2} \tdt G_{i_{r-1}} \times G_1 \times G_2 \tdt G_k \to \mathbb{F}_p$ by
\[\psi_{i_{[r-1]}}(b_1, \dots, b_{r-1}, x_1, \dots, x_k) = \psi\Big((0_{[k] \setminus \{i_1\}},\ls{i_1}{b_1}), \dots, (0_{[k] \setminus \{i_{r-1}\}},\ls{i_{r-1}}{b_{r-1}}), x_1, \dots, x_k\Big).\]
Since $\psi$ is a multilinear form, so is $\psi_{i_{[r-1]}}$ for any choice of indices $i_{[r-1]}$. These new maps are related to $\psi$ via the following identity
\[\psi(a^{(1)}, \dots, a^{(r-1)}, x_1, \dots, x_k) = \sum_{i_1, \dots, i_{r-1} \in [k]} \psi_{i_{[r-1]}}(a^{(1)}_{i_1}, \dots, a^{(r-1)}_{i_{r-1}}, x_1, \dots, x_k).\]
It turns out that the approximate symmetry properties of $\psi$ induce approximate symmetry properties of the maps $\psi_{i_{[r-1]}}$. We formulate these properties in the next couple of claims. We write $\on{Sym}_X$ for the group of permutations of a finite set $X$.

\begin{claim}\label{psiseqsym1}Let $i_1, \dots, i_{r-1} \in [k]$ and let $\sigma \in \on{Sym}_{[r-1]}$. Let $\sigma \overset{\bm{1}}{\circ}\psi_{i_{[r-1]}} \colon G_{i_1} \tdt G_{i_{r-1}} \times G_{[k]} \to \mathbb{F}_p$ be a multilinear form defined by
\[\sigma \overset{\bm{1}}{\circ}\psi_{i_{[r-1]}}(b_1, \dots, b_{r-1}, x_1, \dots, x_k) = \psi_{i_{\sigma(1)}, \dots, i_{\sigma(r-1)}}(b_{\sigma(1)}, \dots, b_{\sigma(r-1)}, x_1, \dots, x_k).\]
Then
\[\bias \Big(\psi_{i_{[r-1]}} - \sigma \overset{\bm{1}}{\circ}\psi_{i_{[r-1]}}\Big) \geq \xi^{r 2^{ k + r - 1}}.\]
\end{claim}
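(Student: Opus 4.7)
The plan is to reduce via telescoping to the case of a single transposition, then handle that case by a restriction-of-bias argument.

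For the reduction, any $\sigma \in \on{Sym}_{[r-1]}$ can be written as a product $\sigma = \tau_1 \tau_2 \cdots \tau_m$ of at most $r-2$ transpositions. The telescoping identity
\[
\psi_{i_{[r-1]}} - \sigma \overset{\bm 1}{\circ} \psi_{i_{[r-1]}} = \sum_{s=1}^{m} (\tau_1 \cdots \tau_{s-1}) \overset{\bm 1}{\circ} \big(\psi_{i_{[r-1]}} - \tau_s \overset{\bm 1}{\circ} \psi_{i_{[r-1]}}\big),
\]
combined with Lovett's subadditivity of bias (Lemma~\ref{subBias}) and the invariance of bias under permutation of the arguments of a multilinear form, reduces the problem to the transposition case at the cost of at most a factor $r$ in the exponent.

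For the transposition $\tau = (\alpha, \beta)$, direct expansion of the definition of $\psi_{i_{[r-1]}}$ yields the identity
\[
\psi_{i_{[r-1]}}(b_{[r-1]}, x_{[k]}) - \tau \overset{\bm 1}{\circ} \psi_{i_{[r-1]}}(b_{[r-1]}, x_{[k]}) = \psi_{\alpha\beta}\big((0_{[k] \setminus \{i_1\}}, \ls{i_1}{b_1}), \ldots, (0_{[k] \setminus \{i_{r-1}\}}, \ls{i_{r-1}}{b_{r-1}}), x_{[k]}\big),
\]
where $\psi_{\alpha\beta}$ is the multilinear form defined by~\eqref{psidefeqn} in Proposition~\ref{sym1prop}. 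That is, the difference form on the left is precisely the restriction of $\psi_{\alpha\beta}$ to the subspace $G_{i_1} \tdt G_{i_{r-1}} \times G_{[k]}$. Since $\bias \psi_{\alpha\beta} \geq \xi$ by hypothesis~\eqref{approxtoexsym1}, the remaining task is to transfer this bias lower bound to the restricted form. This would be done by iterated Cauchy-Schwarz, one application per variable, exploiting linearity of $\psi_{\alpha\beta}$ in each of its $k + r - 1$ arguments separately; this accounts for the factor $2^{k+r-1}$ in the exponent.

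The main obstacle is correctly tracking the Cauchy-Schwarz loss in the bias-restriction step, as the factorization of $\psi_{\alpha\beta}$ into its component restrictions (obtained by decomposing each $a^{(\ell)} \in G^\oplus$ into its coordinates in $G_1, \ldots, G_k$) introduces cross-terms that must be integrated out one variable at a time. Combining the telescoping reduction (factor $r$) with the restriction step (exponent $2^{k+r-1}$) then yields the claimed bound $\xi^{r \cdot 2^{k+r-1}}$.
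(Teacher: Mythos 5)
Your proposal is correct and takes essentially the same route as the paper. Your ``restriction of bias'' step is exactly the Gowers--Cauchy--Schwarz argument (Corollary~\ref{gcsCor}) the paper uses: the cross-terms you worry about, namely the components $(\psi_{\alpha\beta})_{j_{[r-1]}}$ with $j_{[r-1]}\neq i_{[r-1]}$, each miss at least one of the $k+r-1$ relevant coordinates $a^{(1)}_{i_1},\dots,a^{(r-1)}_{i_{r-1}},x_{[k]}$, so the box-norm inequality isolates the restriction with no further loss; and your telescoping sum is a restatement of the paper's induction on the number of transpositions together with Lovett's subadditivity (Lemma~\ref{subBias}).
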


\begin{proof}Let $c_1$ and $c_2$ be two distinct elements of $[r-1]$. Using~\eqref{approxtoexsym1} for indices $c_1$ and $c_2$ and expanding the definition of $\psi_{c_1\,c_2}$ we get
\begin{align*}\xi \leq& \bias \psi_{c_1\, c_2} = \exx_{\substack{a^{(1)}, \dots, a^{(r-1)} \in G^\oplus\\x_{[k]} \in G_{[k]}}} \omega^{\psi_{c_1\, c_2}(a^{(1)}, \dots, a^{(r-1)}, x_{[k]})}\\
= &\exx_{\substack{a_1, \dots, a_{r-1} \in G^\oplus\\x_{[k]} \in G_{[k]}}} \prod_{j_1, \dots, j_{r-1} \in [k]} \omega^{\psi_{j_{[r-1]}}(a_{1, j_1}, \dots, a_{r-1, j_{r-1}}, x_{[k]})}\omega^{- \psi_{j_{[r-1]}}(a^{(1)}_{j_1}, \dots, a^{(c_2)}_{j_{c_1}}, \dots, a^{(c_1)}_{j_{c_2}}, \dots, a^{(r-1)}_{j_{r-1}}, x_{[k]})}.\end{align*}
Among all forms that appear in the last line of the expression above, the variables $a^{(1)}_{i_1}, \dots, a^{(r-1)}_{i_{r-1}}, x_1, \dots, x_k$ appear in exactly two, namely
\[\psi_{i_{[r-1]}}(a^{(1)}_{i_1}, \dots, a^{(r-1)}_{i_{r-1}}, x_{[k]})\]
and
\[-\psi_{i_{[c_1-1]}, i_{c_2}, i_{[c_1 + 1, c_2 - 1]}, i_{c_1}, i_{[c_2 + 1, r-1]}}(a^{(1)}_{i_1}, \dots, a^{(c_2)}_{i_{c_2}}, \dots, a^{(c_1)}_{i_{c_1}}, \dots, a^{(r-1)}_{i_{r-1}}, x_{[k]}).\]
Writing $\sigma$ for the transposition that exchanges $c_1$ and $c_2$, we may see that the latter form equals
\[\sigma \overset{\bm{1}}{\circ}\psi_{i_{[r-1]}}(a^{(1)}_{i_1}, \dots, a^{(r-1)}_{i_{r-1}}, x_{[k]}).\]
Using Lemma~\ref{gcs}, we see that
\[\xi \leq \Big\|\omega^{\psi_{i_{[r-1]}} - \sigma \overset{\bm{1}}{\circ}\psi_{i_{[r-1]}}}\Big\|_{\square^{r + k - 1}} = \Big(\bias(\psi_{i_{[r-1]}} - \sigma \overset{\bm{1}}{\circ}\psi_{i_{[r-1]}})\Big)^{2^{-(r + k -1)}}.\]
Recall that the symmetric group $\on{Sym}_{[r-1]}$ is generated by transpositions, and that we may in fact write any permutation as a composition of at most $r-1$ transpositions. By induction on the least length $\ell$ of a product of transpositions that gives a permutation $\sigma$ we prove that
\[\xi^{\ell 2^{r + k -1}} \leq \bias(\psi_{i_{[r-1]}} - \sigma \overset{\bm{1}}{\circ}\psi_{i_{[r-1]}}).\]
The base case has already been proved. Write $\sigma = \tau \circ \sigma'$ for a transposition $\tau$ and a permutation $\sigma'$ that can be written as a product of $\ell-1$ transpositions. By inductive hypothesis for a slightly different sequence of indices $i_{\tau(1)}, \dots, i_{\tau(r-1)}$ we have
\[\xi^{(\ell - 1)2^{r + k -1}} \leq \bias(\psi_{i_{\tau(1)}, \dots, i_{\tau(r-1)}} - \sigma' \overset{\bm{1}}{\circ}\psi_{i_{\tau(1)}, \dots, i_{\tau(r-1)}}).\]
Apply $\tau$ to both forms to obtain
\[\xi^{(\ell - 1) 2^{r + k -1}} \leq \bias(\psi_{i_{\tau(1)}, \dots, i_{\tau(r-1)}} - \sigma' \overset{\bm{1}}{\circ}\psi_{i_{\tau(1)}, \dots, i_{\tau(r-1)}}) = \bias(\tau \overset{\bm{1}}{\circ}\psi_{i_{[r-1]}} - \sigma \overset{\bm{1}}{\circ}\psi_{i_{[r-1]}}).\]
An appeal to the base case yields 
\[\xi^{2^{r + k -1}} \leq \bias(\psi_{i_{[r-1]}} - \tau \overset{\bm{1}}{\circ}\psi_{i_{[r-1]}}).\]
Finally, Lemma~\ref{subBias} gives
\[\xi^{\ell 2^{r + k -1}} \leq \bias(\psi_{i_{[r-1]}} - \sigma \overset{\bm{1}}{\circ}\psi_{i_{[r-1]}}).\qedhere\]
\end{proof}

\begin{claim}\label{psiseqsym2}Let $i_1, \dots, i_{r-1} \in [k]$. Extend the sequence by defining $i_r = 1, i_{r+1} = 2, \dots, i_{k + r - 1} = k$. Let $\sigma \in \on{Sym}_{[k + r - 1]}$ be a permutation such that $i_{\sigma(j)} = i_j$ for all $j \in [k + r -1]$. We denote the group of such permutations by $\on{Sym}(i_{[r-1]})$. Define the multilinear form $\sigma \overset{\bm{2}}{\circ}  \psi_{i_{[r-1]}} \colon G_{i_1} \tdt G_{i_{r-1}} \times G_1 \tdt G_k \to \mathbb{F}_p$ by
\[\sigma \overset{2}{\circ}  \psi_{i_{[r-1]}}(y_1, \dots, y_{k + r - 1}) = \psi_{i_{[r-1]}}(y_{\sigma(1)}, \dots, y_{\sigma(k + r - 1)}).\]
Then
\[\bias \Big(\psi_{i_{[r-1]}} - \sigma \overset{\bm{2}}{\circ}  \psi_{i_{[r-1]}}\Big) \geq \xi^{3(k + r) 2^{r + k -1}}.\]
\end{claim}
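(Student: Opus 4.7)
The plan is to mirror the proof of Claim~\ref{psiseqsym1}: decompose $\sigma$ into transpositions, handle a single transposition via a Gowers--Cauchy--Schwarz argument, and combine the pieces via Lemma~\ref{subBias}. Any $\sigma\in\on{Sym}(i_{[r-1]})$ acts by permuting positions within each label class of the partition of $[k+r-1]$ into $k$ label classes, so $\sigma$ is a product of at most $(k+r-1)-k=r-1$ \emph{label-preserving} transpositions, each of which is either of \emph{type~(i)}, swapping two indices $c_1,c_2\in[r-1]$ with $i_{c_1}=i_{c_2}$, or of \emph{type~(ii)}, swapping some $a\in[r-1]$ with position $r-1+i_a$. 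The type~(i) single-transposition bound $\bias(\psi_{i_{[r-1]}}-\tau\overset{\bm{2}}{\circ}\psi_{i_{[r-1]}})\geq\xi^{2^{k+r-1}}$ is exactly the base case already established inside the proof of Claim~\ref{psiseqsym1}, so the only new ingredient is the corresponding single-transposition bound for type~(ii).

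The main new step is therefore the following: for a type~(ii) transposition $\tau$ swapping $a\in[r-1]$ with $r-1+j$ where $j=i_a$, we have $\bias(\psi_{i_{[r-1]}}-\tau\overset{\bm{2}}{\circ}\psi_{i_{[r-1]}})\geq\xi^{2^{k+r-1}}$. To prove this I would start from the hypothesis $\bias\psi'_{a,j}\geq\xi$ supplied by~\eqref{approxtoexsym2} and expand $\psi$ as $\psi(a^{(1)},\dots,a^{(r-1)},d_{[k]})=\sum_{j_{[r-1]}\in[k]^{r-1}}\psi_{j_{[r-1]}}(a^{(1)}_{j_1},\dots,a^{(r-1)}_{j_{r-1}},d_{[k]})$. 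By multilinearity, the substitutions $a^{(a)}=(0_{[k]\setminus\{j\}},\,\ls{j}{u_j})$ and $a^{(a)}=(0_{[k]\setminus\{j\}},\,\ls{j}{d_j})$ prescribed in~\eqref{psidefeqn2} kill every summand with $j_a\neq j$, so $\omega^{\psi'_{aj}}$ becomes a product over $(j_{[r-1]\setminus\{a\}})\in[k]^{r-2}$ of ratios of two values of $\omega^{\psi_{j_1,\dots,j_{a-1},j,j_{a+1},\dots,j_{r-1}}}$ that differ only by swapping the roles of $u_j$ (sitting at position $a$) and $d_j$ (sitting at position $r-1+j$). Single out the $k+r-1$ ``key'' variables $u_j,d_1,\dots,d_k,a^{(c)}_{i_c}$ (for $c\in[r-1]\setminus\{a\}$); only the summand with $(j_{[r-1]\setminus\{a\}})=(i_1,\dots,i_{a-1},i_{a+1},\dots,i_{r-1})$ depends on all of them, while every other summand depends on a strict subset. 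Fixing the remaining non-key coordinates $a^{(c)}_i$ (for $i\neq i_c$) and applying Corollary~\ref{gcsCor} in these $k+r-1$ key variables extracts precisely this summand and yields the desired estimate, after recognising via
\[\tau\overset{\bm{2}}{\circ}\psi_{i_{[r-1]}}(b_{[r-1]},x_{[k]})=\psi_{i_{[r-1]}}(b_1,\dots,b_{a-1},x_j,b_{a+1},\dots,b_{r-1},x_1,\dots,x_{j-1},b_a,x_{j+1},\dots,x_k)\]
that the surviving difference is exactly $\psi_{i_{[r-1]}}-\tau\overset{\bm{2}}{\circ}\psi_{i_{[r-1]}}$.

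Finally, writing $\sigma=\tau\circ\sigma'$ and exploiting that bias is invariant under the linear change of variables $\tau\overset{\bm{2}}{\circ}(\cdot)$, one has $\bias(\tau\overset{\bm{2}}{\circ}\psi-\sigma\overset{\bm{2}}{\circ}\psi)=\bias(\psi-\sigma'\overset{\bm{2}}{\circ}\psi)$; Lemma~\ref{subBias} applied inductively to the identity $\psi-\sigma\overset{\bm{2}}{\circ}\psi=(\psi-\tau\overset{\bm{2}}{\circ}\psi)+(\tau\overset{\bm{2}}{\circ}\psi-\sigma\overset{\bm{2}}{\circ}\psi)$ then yields $\bias(\psi_{i_{[r-1]}}-\sigma\overset{\bm{2}}{\circ}\psi_{i_{[r-1]}})\geq\xi^{\ell\cdot 2^{k+r-1}}$ for a decomposition of $\sigma$ of length $\ell$. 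Since $\ell\leq r-1\leq 3(k+r)$, this gives the stated bound. The main obstacle is the bookkeeping in the middle paragraph: verifying that among the $k^{r-2}$ summands in the expansion of $\psi'_{aj}$ only one involves all $k+r-1$ key variables, and that after extracting it via Corollary~\ref{gcsCor} the resulting difference is precisely the type~(ii) symmetry defect $\psi_{i_{[r-1]}}-\tau\overset{\bm{2}}{\circ}\psi_{i_{[r-1]}}$.
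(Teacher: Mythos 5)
Your proposal is correct, and it in fact proves a slightly sharper bound $\xi^{(r-1)2^{k+r-1}}$; since $r-1\leq 3(k+r)$ and $\xi\leq 1$, this implies the stated $\xi^{3(k+r)2^{k+r-1}}$. The key new ingredient, the single-transposition bound for $\tau=(a,\,r-1+i_a)$ obtained by expanding $\bias\psi'_{a,i_a}$ over $j_{[r-1]\setminus\{a\}}\in[k]^{r-2}$ and extracting the unique summand with $j_{[r-1]\setminus\{a\}}=i_{[r-1]\setminus\{a\}}$ via Gowers--Cauchy--Schwarz, is identical to the paper's base-case computation. Where you depart from the paper is in the decomposition of $\sigma$ into transpositions and the resulting exponent. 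The paper works purely with type~(ii) moves: within each label class $I_s$ it uses the \emph{star} decomposition through the unique anchor $r-1+s$, writing $(a\,b)$ as $(a\,y_0)(b\,y_0)(a\,y_0)$ with $y_0=r-1+s$, which bounds the length by $3|I_s|$, summing to at most $3(k+r-1)$ transpositions in total. You instead decompose $\sigma|_{I_s}$ into at most $|I_s|-1$ arbitrary transpositions (total $r-1$), mixing type~(i) with type~(ii); for type~(i) you correctly observe that whenever $i_{c_1}=i_{c_2}$ one has $\tau\overset{\bm{1}}{\circ}\psi_{i_{[r-1]}}=\tau\overset{\bm{2}}{\circ}\psi_{i_{[r-1]}}$, so the base case already established inside Claim~\ref{psiseqsym1} (which rests on~\eqref{approxtoexsym1} rather than~\eqref{approxtoexsym2}) applies verbatim. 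Your route is a little shorter and reuses existing work, while the paper's self-contained star decomposition avoids invoking Claim~\ref{psiseqsym1} and the identification of $\overset{\bm{1}}{\circ}$ with $\overset{\bm{2}}{\circ}$ on label-preserving transpositions of $[r-1]$. Both close by the same telescoping with Lemma~\ref{subBias}.
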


\begin{proof}Let $c_1 \in [r-1], c_2 \in [k, k + r - 1]$ be two indices such that $i_{c_1} = i_{c_2}$. (This actually means that $i_{c_2} = c_2 - k + 1$.) To simplify notation, we denote $i_{c_1}$ by $\ell$. Using~\eqref{approxtoexsym2} for indices $c_1$ and $\ell$ and expanding the definition of $\psi'_{c_1\,\ell}$ we get
\begin{align*}\xi \leq & \on{bias}\psi'_{c_1\,i_{c_2}} = \exx_{\substack{a^{(1)}, \dots, a^{(c_1 - 1)}, a^{(c_1 + 1)}, \dots, a^{(r-1)} \in G^\oplus\\x_{[k]} \in G_{[k]}, y_\ell \in G_\ell}} \omega^{\psi'_{c_1\, \ell}(a^{(1)}, \dots, a^{(c_1 - 1)}, y_\ell, a^{(c_1 + 1)}, \dots, a^{(r-1)}, x_{[k]})}\\
= &\exx_{\substack{a^{(1)}, \dots, a^{(c_1 - 1)},\\ a^{(c_1 + 1)}, \dots, a^{(r-1)} \in G^\oplus\\x_{[k]} \in G_{[k]}, y_\ell \in G_\ell}} \prod_{\substack{j_1, \dots, j_{c_1 - 1},\\j_{c_1 + 1}, \dots, c_{r-1} \in [k]}} \omega^{\psi_{j_1, \dots, j_{c_1-1}, \ell, j_{c_1 + 1}, \dots, j_{r-1}}(a^{(1)}_{j_1}, \dots, a^{(c_1 - 1)}_{j_{c_1 - 1}}, y_\ell, a^{(c_1 + 1)}_{j_{c_1 + 1}}, \dots, a^{(r-1)}_{j_{r-1}}, x_{[k] \setminus \{\ell\}}, {}^\ell\,x_\ell)}\\
&\hspace{5cm}\omega^{-\psi_{j_1, \dots, j_{c_1-1}, \ell, j_{c_1 + 1}, \dots, j_{r-1}}(a^{(1)}_{j_1}, \dots, a^{(c_1 - 1)}_{j_{c_1 - 1}}, x_\ell, a^{(c_1 + 1)}_{j_{c_1 + 1}}, \dots, a^{(r-1)}_{j_{r-1}}, x_{[k] \setminus \{\ell\}}, {}^\ell\,y_\ell)}\end{align*}
The only forms above that have the variables $a^{(1)}_{j_1}, \dots, a^{(c_1 - 1)}_{j_{c_1 - 1}},$ $a^{(c_1 + 1)}_{j_{c_1 + 1}}, \dots, a^{(r-1)}_{j_{r-1}},$ $x_1, \dots, x_k, y_\ell$ appearing occur precisely when $j_{[r-1] \setminus \{c_1\}} = i_{[r-1] \setminus \{c_1\}}$. Write $\sigma \in \on{Sym}(i_{[r-1]})$ for the transposition that swaps $c_1$ and $c_2$. Lemma~\ref{gcs} then gives
\[\xi^{2^{k+r-1}} \leq \bias \Big(\psi_{i_{[r-1]}} - \sigma \overset{\bm{2}}{\circ}  \psi_{i_{[r-1]}}\Big).\]
The group of permutations $\on{Sym}(i_{[r-1]})$ can be seen as a product of symmetric groups $\on{Sym}_{I_1} \tdt \on{Sym}_{I_k}$ where $I_s = \{j \in [k + r - 1] \colon i_j = s\}$. Using the fact that for a fixed element $y_0 \in [m]$ every permutation in $\on{Sym}_m$ can be written as a product of at most $3m$ transpositions of the form $(x \,\,y_0)$ for $x \in [m] \setminus \{y_0\}$,\footnote{Notice that we may obtain any transposition $(a\,\,b)$ as $(a\,\,y_0) (b\,\,y_0) (a\,\,y_0)$.} the claim follows after a short inductive argument closely resembling the one in Claim~\ref{psiseqsym1}.\end{proof}

Finally, define a multinear form $\rho \colon (G^{\oplus})^{r-1} \times G_{[k]} \to \mathbb{F}_p$ by
\begin{equation}\label{}\rho(a^{(1)}, \dots, a^{(r-1)}, x_{[k]}) = \sum_{i_1, \dots, i_{r-1} \in [k]} \Big(\exx_{\sigma \in \on{Sym}_{[r-1]}} \exx_{\tau \in \on{Sym}(i_{[r-1]})} \tau  \overset{\bm{2}}{\circ} \psi_{i_{[r-1]}}(a^{(\sigma(1))}_{i_1}, \dots, a^{(\sigma(r-1))}_{i_{r-1}}, x_{[k]})\Big).\label{rhodefineqn}\end{equation}
Before proceeding further, let us stress that this is the place in the proof where we use the assumption on the characteristic of the field. The expectation over the permutations in the expression above has the usual meaning, namely $\ex_{\sigma \in \on{Sym}_{[r-1]}}$ is simply a shorthand for $\frac{1}{|\on{Sym}_{[r-1]}|} \sum_{\sigma \in \on{Sym}_{[r-1]}}$ and similarly $\ex_{\tau \in \on{Sym}(i_{[r-1]})}$ stands for $\frac{1}{|\on{Sym}(i_{[r-1]})|}\sum_{\tau \in \on{Sym}(i_{[r-1]})}$. This means that the expression above has a factor
\[\frac{1}{| \on{Sym}_{[r-1]}|} \frac{1}{|\on{Sym}(i_{[r-1]})|},\]
which equals 
\[\frac{1}{(r-1)! q_1! \dots q_k!},\]
where $q_s = |\{j \in [k + r - 1] \colon i_j = s\}|$. Recall that $i_r = 1, i_{r+1} = 2, \dots, i_{r + k - 1} = k$, so we have $q_s \leq r$ for each $s$. Thus, as long as $p \geq r + 1$ we may invert the element $(r-1)! q_1! \dots q_k!$ in $\mathbb{F}_p$.\\

Let us prove that $\rho$ has the desired symmetry properties. First, we show that $\rho_{c_1\,c_2} = 0$ for $c_1, c_2 \in [r-1]$. Let $\sigma_0 \in \on{Sym}_{[r-1]}$ be the transposition that swaps $c_1$ and $c_2$. We have
\begin{align*}&\rho(a^{(1)}, \dots, a^{(c_1 - 1)}, a^{(c_2)}, a^{(c_1 + 1)}, \dots, a^{(c_2 -1)}, a^{(c_1)}, a^{(c_2 + 1)}, \dots a^{(r-1)}, x_{[k]})\\
&\hspace{2cm} = \sum_{i_1, \dots, i_{r-1} \in [k]} \Big(\exx_{\sigma \in \on{Sym}_{[r-1]}} \exx_{\tau \in \on{Sym}(i_{[r-1]})} \tau  \overset{\bm{2}}{\circ} \psi_{i_{[r-1]}}(a^{(\sigma_0\circ \sigma(1))}_{i_1}, \dots, a^{(\sigma_0 \circ \sigma (r-1))}_{i_{r-1}}, x_{[k]})\Big)\\
&\hspace{2cm} = \sum_{i_1, \dots, i_{r-1} \in [k]} \Big(\exx_{\sigma \in \on{Sym}_{[r-1]}} \exx_{\tau \in \on{Sym}(i_{[r-1]})} \tau  \overset{\bm{2}}{\circ} \psi_{i_{[r-1]}}(a^{(\sigma (1))}_{i_1}, \dots, a^{(\sigma (r-1))}_{i_{r-1}}, x_{[k]})\Big)\\
&\hspace{2cm} = \rho(a^{(1)}, \dots, a^{(r-1)}, x_{[k]}),\end{align*}
as desired.\\
\indent Secondly, we show that $\rho'_{c\,\ell} = 0$ for $c \in [r-1], \ell \in [k]$. We have
\begin{align*}&\rho'_{c\,\ell}(a^{(1)}, \dots, a^{(c -1)}, y_\ell, a^{(c+1)}, \dots, a^{(r-1)}, x_1, \dots, x_k)\\
=&\rho\Big(a^{(1)}, \dots, a^{(c -1)}, (0_{[k] \setminus \{\ell\}},\ls{\ell}\,y_\ell), a^{(c+1)}, \dots, a^{(r-1)}, x_1, \dots, x_{\ell-1}, x_\ell, x_{\ell+1}, \dots, x_k\Big)\nonumber\\
&\hspace{2cm}-\rho\Big(a^{(1)}, \dots, a^{(c -1)}, (0_{[k] \setminus \{\ell\}},\ls{\ell}\,x_\ell), a^{(c+1)}, \dots, a^{(r-1)}, x_1, \dots, x_{\ell-1}, y_\ell, x_{\ell+1}, \dots, x_k\Big)\\
=&\sum_{i_1, \dots, i_{r-1} \in [k]} \Big(\exx_{\sigma \in \on{Sym}_{[r-1]}} \exx_{\tau \in \on{Sym}(i_{[r-1]})} \mathbbm{1}(i_{\sigma^{-1}(c)} = \ell)\, \tau  \overset{\bm{2}}{\circ} \psi_{i_{[r-1]}}\Big((a^{(\sigma(s))}_{i_s})_{s \in [r-1] \setminus \{\sigma^{-1}(c)\}}, \ls{\sigma^{-1}(c)}{y_\ell}; x_{[k] \setminus \{\ell\}}, {}^\ell\,x_\ell\Big)\Big)\\
&\hspace{1cm}-\sum_{i_1, \dots, i_{r-1} \in [k]} \Big(\exx_{\sigma \in \on{Sym}_{[r-1]}} \exx_{\tau \in \on{Sym}(i_{[r-1]})} \mathbbm{1}(i_{\sigma^{-1}(c)} = \ell)\\
&\hspace{8cm}\tau  \overset{\bm{2}}{\circ} \psi_{i_{[r-1]}}\Big((a^{(\sigma(s))}_{i_s})_{s \in [r-1] \setminus \{\sigma^{-1}(c)\}}, \ls{\sigma^{-1}(c)}{x_\ell}; x_{[k] \setminus \{\ell\}}, {}^\ell\,y_\ell\Big)\Big).\end{align*}
For $\sigma \in \on{Sym}_{[r-1]}$ define a transposition $\tau^{\sigma} = (\sigma^{-1}(c)\,\,r - 1 + \ell)$. Note that when $i_\sigma^{-1}(c) = \ell$ then $\tau^{\sigma} \in \on{Sym}(i_{[r-1]})$. Hence the expression above can be rewritten as
\begin{align*}&\sum_{i_1, \dots, i_{r-1} \in [k]} \Big(\exx_{\sigma \in \on{Sym}_{[r-1]}} \exx_{\tau \in \on{Sym}(i_{[r-1]})} \mathbbm{1}(i_{\sigma^{-1}(c)} = \ell)\, \tau  \overset{\bm{2}}{\circ} \psi_{i_{[r-1]}}\Big((a^{(\sigma(s))}_{i_s})_{s \in [r-1] \setminus \{\sigma^{-1}(c)\}}, \ls{\sigma^{-1}(c)}{y_\ell}; x_{[k] \setminus \{\ell\}}, {}^\ell\,x_\ell\Big)\Big)\\
&\hspace{1cm}-\sum_{i_1, \dots, i_{r-1} \in [k]} \Big(\exx_{\sigma \in \on{Sym}_{[r-1]}} \exx_{\tau \in \on{Sym}(i_{[r-1]})} \mathbbm{1}(i_{\sigma^{-1}(c)} = \ell)\\
&\hspace{8cm}(\tau^{\sigma} \circ \tau)  \overset{\bm{2}}{\circ} \psi_{i_{[r-1]}}\Big((a^{(\sigma(s))}_{i_s})_{s \in [r-1] \setminus \{\sigma^{-1}(c)\}}, \ls{\sigma^{-1}(c)}{y_\ell}; x_{[k] \setminus \{\ell\}}, {}^\ell\,x_\ell\Big)\Big)\\
&\hspace{1cm}=0,\end{align*}
as desired, where in the last line we used the fact that $\on{Sym}(i_{[r-1]})$ is a group.\\

To finish the proof we need to show that the bias of the difference of the forms $\rho - \psi$ is large. Fix some indices $j_1, \dots, j_{r-1} \in [k]$. We shall now determine which forms on the right hand side of~\eqref{rhodefineqn} have all of the variables $a^{(1)}_{j_1}, \dots, a^{(r-1)}_{j_{r-1}}, x_{[k]}$. For $i_1, \dots, i_{r-1}$ and $\sigma$ in~\eqref{rhodefineqn} we have variables $a^{(\sigma(1))}_{i_1}, \dots, a^{(\sigma(r-1))}_{i_{r-1}}$, so we must have $j_{\sigma(s)} = i_s$ for every $s \in [r-1]$. Therefore, given a permutation $\sigma \in \on{Sym}_{[r-1]}$, the variables $a^{(1)}_{j_1}, \dots, a^{(r-1)}_{j_{r-1}}, x_{[k]}$ appear for the sequence $i_s = j_{\sigma(s)}$. Fix $\sigma \in \on{Sym}_{[r-1]}$ and the corresponding $i_{[r-1]}$. 

\begin{claim}The bias of the map
\begin{equation}\label{differenceformsigmai}(a^{(1)}_{j_1}, \dots, a^{(r-1)}_{j_{r-1}}, x_{[k]}) \mapsto \psi_{j_{[r-1]}}(a^{(1)}_{j_1}, \dots, a^{(r-1)}_{j_{r-1}}, x_{[k]}) - \tau  \overset{\bm{2}}{\circ} \psi_{i_{[r-1]}}(a^{(\sigma(1))}_{i_1}, \dots, a^{(\sigma(r-1))}_{i_{r-1}}, x_{[k]})\end{equation}
is at least $\xi^{4(k + r) 2^{r + k -1}}$.\label{claimapproxsymmmaps}\end{claim}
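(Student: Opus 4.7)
The plan is to introduce an intermediate multilinear form and telescope via Lovett's subadditivity of bias (Lemma~\ref{subBias}), combining Claims~\ref{psiseqsym1} and~\ref{psiseqsym2}. Write
\[A = \psi_{j_{[r-1]}}(a^{(1)}_{j_1}, \dots, a^{(r-1)}_{j_{r-1}}, x_{[k]}), \qquad B = \tau \overset{\bm{2}}{\circ} \psi_{i_{[r-1]}}(a^{(\sigma(1))}_{i_1}, \dots, a^{(\sigma(r-1))}_{i_{r-1}}, x_{[k]}),\]
so that the task is to bound $\bias(A-B)$ from below. As an intermediate, define
\[C = \psi_{j_{\sigma(1)}, \dots, j_{\sigma(r-1)}}(a^{(\sigma(1))}_{j_{\sigma(1)}}, \dots, a^{(\sigma(r-1))}_{j_{\sigma(r-1)}}, x_{[k]}).\]
Using the relation $i_s = j_{\sigma(s)}$, all three expressions are multilinear forms on the space $G_{j_1} \tdt G_{j_{r-1}} \times G_{[k]}$ (viewed as functions of the variables $(a^{(s)}_{j_s})_{s\in[r-1]}$ and $x_{[k]}$), so the relevant differences are genuine multilinear forms and $\bias$ makes sense.

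Next I would identify the two legs of the telescope with the preceding claims. The form $A - C$ is exactly what Claim~\ref{psiseqsym1} controls when applied with the sequence $j_{[r-1]}$ and the permutation $\sigma$, since $C$ is $\sigma \overset{\bm{1}}{\circ} \psi_{j_{[r-1]}}$ evaluated at the variables $(a^{(s)}_{j_s}, x_{[k]})$; this yields
\[\bias(A - C) \geq \xi^{r\, 2^{k+r-1}}.\]
For the second leg, the identity $\psi_{i_{[r-1]}} = \psi_{j_{\sigma(1)}, \dots, j_{\sigma(r-1)}}$ shows that $C - B$ is the form $\psi_{i_{[r-1]}} - \tau \overset{\bm{2}}{\circ} \psi_{i_{[r-1]}}$ read at the substituted variables $(a^{(\sigma(s))}_{i_s}, x_{[k]})$; this substitution is merely a relabeling of the coordinates of the domain and so preserves bias, and Claim~\ref{psiseqsym2} delivers
\[\bias(C - B) \geq \xi^{3(k+r)\,2^{r+k-1}}.\]

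Finally, the decomposition $A - B = (A - C) + (C - B)$ together with Lemma~\ref{subBias} gives
\[\bias(A - B) \geq \bias(A - C) \cdot \bias(C - B) \geq \xi^{r\,2^{k+r-1}} \cdot \xi^{3(k+r)\,2^{k+r-1}} = \xi^{(4r + 3k)\,2^{k+r-1}} \geq \xi^{4(k+r)\,2^{r+k-1}},\]
where the last inequality uses $\xi \in (0,1]$ together with $4r + 3k \leq 4(k+r)$. The only place that needs genuine care is the bookkeeping of variables under the substitutions $a^{(\sigma(s))}_{j_{\sigma(s)}}$: one has to check that after these relabelings the two differences $A - C$ and $C - B$ really are the particular multilinear forms supplied by Claims~\ref{psiseqsym1} and~\ref{psiseqsym2}, so that those bounds apply verbatim. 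Once this identification is carried out, the rest of the argument is a one-line application of Lemma~\ref{subBias}.
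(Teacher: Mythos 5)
Your proof is correct and follows essentially the same route as the paper: introduce the intermediate form $C = \sigma \overset{\bm{1}}{\circ}\psi_{j_{[r-1]}}$ evaluated at the relabeled variables, bound $\bias(A-C)$ via Claim~\ref{psiseqsym1} and $\bias(C-B)$ via Claim~\ref{psiseqsym2}, and combine with Lemma~\ref{subBias}. The only cosmetic difference is that you spell out the telescoping $A-B=(A-C)+(C-B)$ and the final arithmetic $(4r+3k)\le 4(k+r)$ explicitly, whereas the paper leaves these implicit.
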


By Lemma~\ref{subBias} it then follows that
\[\bias \Big(\rho - \psi \Big) \geq \xi^{4(k + r) k^{r-1} (k + r -1)! (r-1)! 2^{r + k -1}}.\]

\begin{proof}[Proof of Claim~\ref{claimapproxsymmmaps}]By Claim~\ref{psiseqsym2}, we have that the map
\[\psi_{i_{[r-1]}}(a^{(\sigma(1))}_{i_1}, \dots, a^{(\sigma(r-1))}_{i_{r-1}}, x_{[k]}) - \tau  \overset{\bm{2}}{\circ} \psi_{i_{[r-1]}}(a^{(\sigma(1))}_{, i_1}, \dots, a^{(\sigma(r-1))}_{i_{r-1}}, x_{[k]})\]
has bias at least $\xi^{3(k + r) 2^{r + k -1}}$. Note that
\begin{align*}\psi_{i_{[r-1]}}(a^{(\sigma(1))}_{i_1}, \dots, a^{(\sigma(r-1))}_{i_{r-1}}, x_{[k]}) = &\psi_{j_{\sigma(1)}, \dots, j_{\sigma(r-1)}}(a^{(\sigma(1))}_{j_{\sigma(1)}}, \dots, a^{(\sigma(r-1))}_{j_{\sigma(r-1)}}, x_{[k]})\\
=&\sigma \overset{\bm{1}}{\circ} \psi_{j_{[r-1]}}(a^{(1)}_{j_1}, \dots, a^{(r-1)}_{j_{r-1}}, x_{[k]}).\end{align*}
By Claim~\ref{psiseqsym1}, we have the bound
\[\bias\Big(\psi_{j_{[r-1]}} - \sigma \overset{\bm{1}}{\circ} \psi_{j_{[r-1]}}\Big) \geq \xi^{r 2^{ k + r - 1}}.\]
Using Lemma~\ref{subBias}, we at last conclude that the bias of the map in~\eqref{differenceformsigmai} is at least $\xi^{4(k + r) 2^{r + k -1}}$.\end{proof}

Having proved Claim~\ref{claimapproxsymmmaps}, the proof of Proposition~\ref{approxsymtoexact} is also complete.\end{proof}

\vspace{\baselineskip}
\boldSubsection{Partially symmetric multlinear forms}

In this subsection, we describe the multilinear forms which have the (exact) symmetry properties in Proposition~\ref{approxsymtoexact}. We view each $G_j$ as $\mathbb{F}_p^{n_j}$. Thus, when $x_j \in G_j$, we have its further coordinates $x_{j, c}$ for $c \in [n_j]$. This further leads to coordinates of $x \in G^\oplus$ denoted by $x_{j, c}$ for $j \in [k], c \in  [n_j]$. Let us define $\mathcal{P}_{k, r}$ to be the set of polynomials of degree at most $k + r - 1$ on $G^{\oplus}$, where the monomials $x_{d_1, c_1} \cdots x_{d_{k+r-1}, c_{k+r-1}}$ are required to have every $i \in [k]$ present among $d_1, \dots, d_{k + r - 1}$. It turns out that derivatives of polynomials in $\mathcal{P}_{k, r}$ are the essentially the only source of the partially symmetric multilinear forms of interest.

\begin{proposition}\label{exactderivpoly}Assume that $p \geq r + 1$. Let $\psi \colon (G^{\oplus})^{r-1} \times G_{[k]} \to \mathbb{F}_p$ be a multilinear form such that
\begin{itemize}
\item[\textbf{(i)}] the maps $\psi_{ij}$ defined by~\eqref{psidefeqn} are all zero, and
\item[\textbf{(ii)}] the maps $\psi'_{ij}$ defined by~\eqref{psidefeqn2} are all zero.
\end{itemize}
Then there exist polynomials $P \in \mathcal{P}_{k, r}$ and $Q$ such that for all $a^{(1)}, \dots, a^{(r-1)} \in G^\oplus$ and $x_{[k]} \in G_{[k]}$ we have
\[\psi(a^{(1)}, \dots, a^{(r-1)}, x_1, \dots, x_k) = \Delta_{a^{(1)}, \dots, a^{(r-1)}} P(x_{[k]}) + Q(a^{(1)}, a^{(2)}, \dots, a^{(r-1)}, x_1, \dots, x_k)\]
and for each monomial $m$ appearing in $Q$ there is $i \in [k]$ such that no variable $x_{ic}$ appears in $m$.
\end{proposition}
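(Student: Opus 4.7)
The plan is to decompose $\psi$ according to the ``block structure'' of its monomials and construct $P$ as a properly normalized diagonalization. Using multilinearity, split each $a^{(s)}\in G^\oplus$ as $\sum_j a^{(s)}_j$ with $a^{(s)}_j\in G_j$, obtaining
\[\psi(a^{(1)},\ldots,a^{(r-1)},x_{[k]}) = \sum_{j_{[r-1]}\in[k]^{r-1}} \psi\bigl((0_{[k]\setminus\{j_s\}},\ls{j_s}{a^{(s)}_{j_s}}): s\in[r-1];\, x_{[k]}\bigr).\]
Group the summands by the tuple $q=(q_1,\ldots,q_k)$ with $q_i = 1+|\{s: j_s=i\}|$, so $q_i\geq 1$ and $\sum q_i = k+r-1$. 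Conditions (i) and (ii) combined say that the coefficient of any monomial $\prod_s a^{(s)}_{j_s,c_s}\prod_i x_{i,d_i}$ depends only on the multiset $T_i = \{c_s: j_s=i\}\cup\{d_i\}$ in each block: (i) gives full symmetry among the $a$-slots assigned to the same block, while (ii) lets one swap any such slot with the $x_i$-slot. Equivalently, for every $q$ there is a unique multilinear form $\Psi_q\colon G_1^{q_1}\times\cdots\times G_k^{q_k}\to\mathbb{F}_p$, symmetric within each of the $k$ blocks, such that the block-$q$ part of $\psi$ equals $\sum_{(S_1,\ldots,S_k)}\Psi_q\bigl(\{a^{(s)}_i\}_{s\in S_i},\,x_i;\,\text{block }i\bigr)$, with the outer sum running over ordered partitions of $[r-1]$ into parts with $|S_i|=q_i-1$.

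Define
\[P(x) := \sum_{q}\frac{1}{\prod_i q_i!}\,\Psi_q\bigl(\underbrace{x_1,\ldots,x_1}_{q_1};\ldots;\underbrace{x_k,\ldots,x_k}_{q_k}\bigr),\]
which makes sense because each $q_i\leq r$, and under the assumption $p\geq r+1$ every $q_i!$ is invertible in $\mathbb{F}_p$. Each monomial of $P$ has total degree $\sum q_i = k+r-1$ and involves every $i\in[k]$ (since $q_i\geq 1$), so $P\in\mathcal{P}_{k,r}$. The heart of the proof is a polarization identity: expand $\Psi_q\bigl((x_i+\sum_s t_s a^{(s)}_i)^{q_i}\text{ in block }i\bigr)$ by multilinearity and extract the coefficient of $t_1\cdots t_{r-1}$ combined with a squarefree $x$-monomial. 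In each block $i$ exactly one slot must contribute $x_i$ (giving a factor $q_i$), and the remaining $q_i-1$ slots must be filled by distinct $a^{(s)}_i$'s for $s$ in some $S_i\subset[r-1]$ with $|S_i|=q_i-1$ (the $(q_i-1)!$ internal orderings collapse by block symmetry). The combinatorial factors multiply to $\prod_i q_i\cdot(q_i-1)!=\prod_i q_i!$, cancelling the normalization and reproducing exactly the partition-sum expression for the block-$q$ part of $\psi$. Summing over $q$ yields $(\Delta_{a^{(1)},\ldots,a^{(r-1)}} P)_{\mathrm{mult}}=\psi$.

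Finally, let $Q := \psi - \Delta_{a^{(1)},\ldots,a^{(r-1)}} P$, so $Q$ is precisely the negative of the non-multilinear part of $\Delta P$. Because the finite difference operator preserves total degree, every monomial of $\Delta P$ has total degree exactly $k+r-1$, and every $a^{(s)}$ appears with positive degree from iterated differencing. If in addition every $x_i$ appeared with positive degree, then $\sum_s \deg a^{(s)}\geq r-1$ and $\sum_i \deg x_i\geq k$, together with total degree $k+r-1$ and the $k+r-1$ variables, would force all these exponents to equal $1$, making the monomial multilinear. Hence every non-multilinear monomial must omit some $x_i$, which is exactly the required condition on $Q$. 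The main obstacle is the bookkeeping in the polarization identity, where the multiplicities of repeated coordinates inside the multisets $T_i$ must conspire correctly with the $1/\prod q_i!$ factor; the hypothesis $p\geq r+1$ is exactly the clean condition ensuring that all factorials that appear (each at most $r!$) are invertible.
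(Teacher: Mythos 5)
Your proof is correct and follows the same underlying skeleton as the paper's: decompose $\psi$ by the block structure of the $a^{(s)}$'s, show via (i) and (ii) that the coefficient of each monomial depends only on the per-block multiset of coordinate indices (this is the paper's Claim~\ref{samelambdareorder}), define $P$ with a factorial normalization (your $\prod_i q_i!$ agrees with the paper's $\prod_i v_i!$ after accounting for the number of orderings of each multiset), and then verify that the multilinear part of $\Delta_{a^{(1)},\dots,a^{(r-1)}}P$ reproduces $\psi$.

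Where you diverge is in the verification step. The paper proves Claim~\ref{polyderivcalc}, an explicit expansion of $\Delta_{a^{(1)},\dots,a^{(t)}}m$ for a monomial $m$ (with the injection sum as the leading term), and then matches coefficients term by term in~\eqref{polyderrelnpsi}--\eqref{finalpsiderivequ}, including a direct count of the bijections $\overline{i}$ that yields exactly $v_1!\cdots v_s!$. You instead package the block-$q$ data into a block-symmetric multilinear form $\Psi_q$ and invoke a polarization identity: the $t_1\cdots t_{r-1}$-coefficient, restricted to the part where each $x_i$ appears exactly once, produces $\prod_i (q_i)_{q_i-1} = \prod_i q_i!$, cancelling the normalization. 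Your degree-counting argument for $Q$ (a homogeneous monomial of total degree $k+r-1$ with every $a^{(s)}$ present that also has every $x_i$ present must be multilinear, so all non-multilinear terms omit some $x_i$) is a tidier replacement for the paper's observation about which $\big((\tilde{d},\tilde{c}),i\big)\notin\tilde{\mathcal{M}}$. The two routes buy the same thing; yours trades explicit coefficient bookkeeping for a coordinate-free symmetrization and is arguably cleaner, while the paper's is more self-contained since it does not presuppose familiarity with polarization. One small point worth tightening: in asserting the existence and well-definedness of $\Psi_q$ you lean on the full strength of the multiset-invariance claim (including the cross-block consequences of (i), which are needed for consistency across the different ordered partitions $(S_1,\ldots,S_k)$ with the same $q$); this is correct, but your one-sentence justification is terser than the paper's Claim~\ref{samelambdareorder}.
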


(Recall the additive derivative notation is defined by $\Delta_h F(x) = F(x + h) - F(x)$.)

\begin{proof}Let $\mathcal{M}$ be the set of triples $(d,c,c')$ where $d = d_{[r-1]}$ is a sequence of elements in $[k]$, $c = c_{[r-1]}$ is a sequence such that $c_i \in [n_{d_i}]$ and $c' = c'_{[k]}$ is a sequence such that $c'_j \in [n_j]$. Since $\psi$ is a multilinear form, there are coefficients $\lambda_{d, c, c'} \in \mathbb{F}_p$ for triples of sequences $(d,c,c') \in \mathcal{M}$ such that
\[\psi(a^{(1)}, \dots, a^{(r-1)}, x_1, \dots, x_k) = \sum_{(d,c,c') \in \mathcal{M}} \lambda_{d, c, c'}\, a^{(1)}_{d_1, c_1} \cdots a^{(r - 1)}_{d_{r-1}, c_{r-1}} x_{1, c'_1} \cdots x_{k, c'_k}.\]
We shall use the symmetry properties of $\psi$ to conclude equalities between some of the coefficients $\lambda_{d,c,c'}$.

\begin{claim}\label{samelambdareorder}Let $(d,c,c'), (f,e,e') \in \mathcal{M}$ be two triples such that
\[(d_1, c_1), \dots, (d_{r-1}, c_{r-1}), (1, c'_1), \dots, (k, c'_k)\]
and
\[(f_1, e_1), \dots, (f_{r-1}, e_{r-1}), (1, e'_1), \dots, (k, e'_k)\]
are the same sequence up to reordering. Then $\lambda_{d,c,c'} = \lambda_{f,e,e'}$.\end{claim}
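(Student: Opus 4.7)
The plan is to translate the two symmetry hypotheses into symmetry properties of the coefficients $\lambda_{d,c,c'}$, and then show these properties generate enough moves to connect any two valid triples with the same underlying multiset of pairs.

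First, I would expand hypothesis \textbf{(i)}. Since $\psi_{ij}=0$ for all $i<j$ in $[r-1]$, the form $\psi$ is invariant under exchanging the arguments $a^{(i)}$ and $a^{(j)}$. Reading this invariance off the monomial expansion of $\psi$, we obtain $\lambda_{d,c,c'}=\lambda_{\tau(d),\tau(c),c'}$ whenever $\tau$ is the transposition of positions $i$ and $j$ in the first block. Since such transpositions generate $S_{r-1}$, the coefficient $\lambda_{d,c,c'}$ is in fact invariant under arbitrary simultaneous permutations of the pairs $(d_1,c_1),\dots,(d_{r-1},c_{r-1})$.

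Next, I would unpack hypothesis \textbf{(ii)}. Setting $a^{(i)}$ equal to a vector supported only on the $j$-th block $G_j$ of $G^\oplus$, the only terms of $\psi$ that contribute come from those with $d_i=j$. Expanding both sides of $\psi'_{ij}=0$ multilinearly and matching the coefficient of the monomial carrying $u_{j,\alpha}$ and $d_{j,\beta}$ (for arbitrary $\alpha,\beta$) on top of a common remainder in the other variables, I obtain $\lambda_{d,c,c'}=\lambda_{\tilde d,\tilde c,\tilde c'}$ whenever $d_i=\tilde d_i=j$, all entries of the two triples outside positions $i$ and $r-1+j$ agree, and the two triples differ only by exchanging the pair $(j,c_i)$ at position $i$ of the first block with the pair $(j,c'_j)$ at position $r-1+j$ of the second block.

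Finally I would combine these transpositions. For each $j\in[k]$, let $q_j$ denote the number of pairs in the multiset with first coordinate $j$, so that $\sum_j q_j=k+r-1$. The set of valid triples with this multiset is acted on by the group of permutations of $[k+r-1]$ preserving first coordinates, which is $S_{q_1}\times\cdots\times S_{q_k}$, and this action is transitive. Step 1 produces all permutations within the first block, and Step 2 (composed with Step 1) transposes the representative at position $r-1+j$ with any $(j,\cdot)$-pair in the first block; these moves together generate each factor $S_{q_j}$, hence the whole product. Consequently the triples $(d,c,c')$ and $(f,e,e')$ are connected by a sequence of coefficient-preserving moves, giving $\lambda_{d,c,c'}=\lambda_{f,e,e'}$. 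The main subtlety I expect is the book-keeping in Step 2: correctly tracking which label of $u_j$ on one side of $\psi'_{ij}=0$ pairs with which label on the other side after the swap of $u_j$ and $d_j$, so that the resulting identity is indeed the transposition of the two pairs as stated rather than some other relation.
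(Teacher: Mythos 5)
Your proposal is correct and follows essentially the same route as the paper: use hypothesis \textbf{(i)} to show $\lambda$ is invariant under simultaneous permutation of the first-block pairs $(d_s,c_s)$, use hypothesis \textbf{(ii)} to show $\lambda$ is invariant under swapping a first-block pair $(j,c_i)$ (with $d_i=j$) with the second-block pair $(j,c'_j)$, and then argue that these two kinds of moves connect any two valid triples with the same underlying multiset. The paper organizes the last step as an explicit induction on the number of coordinates where $c'$ and $e'$ disagree (reducing one disagreement at a time), whereas you package it as transitivity of the action of $\on{Sym}_{q_1}\times\cdots\times\on{Sym}_{q_k}$ and an observation that your moves generate that group; this is a slightly cleaner formulation of the identical idea, and the coefficient-matching you describe in Step 2 (swap $(j,c_i)$ at position $i$ with $(j,c'_j)$ at position $r-1+j$) is exactly the relation the paper extracts.
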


\begin{proof}We first prove that $\lambda_{d,c,c'} = \lambda_{f,e,e'}$ when $c' = e'$. This is equivalent to $(d_1, c_1), \dots, (d_{r-1}, c_{r-1})$ and $(f_1, e_1), \dots, (f_{r-1}, e_{r-1})$ being the same up to reordering, so there is a permutation $\pi \in \on{Sym}_{[r-1]}$ such that $(f_i, e_i) = (d_{\pi(i)}, c_{\pi(i)})$ for each $i \in [r-1]$, i.e.\ $f = d \circ \pi, e = c \circ \pi$. The facts that $(d \circ \sigma, c \circ \sigma, c') \in \mathcal{M}$ for every $\sigma \in \on{Sym}_{[r-1]}$ and that $\on{Sym}_{[r-1]}$ is generated by transpositions allow us to assume that $\pi$ is itself a transposition. Without loss of generality $\pi$ swaps 1 and 2. Using property \textbf{(i)} for coordinates 1 and 2, we see that
\begin{align*}&\sum_{(\tilde{d},\tilde{c},\tilde{c}') \in \mathcal{M}} \lambda_{\tilde{d},\tilde{c},\tilde{c}'}\, a^{(1)}_{\tilde{d}_1, \tilde{c}_1}\, a^{(2)}_{\tilde{d}_2, \tilde{c}_2} a^{(3)}_{\tilde{d}_3, \tilde{c}_3} \cdots a^{(r-1)}_{\tilde{d}_{r-1}, \tilde{c}_{r-1}} x_{1, \tilde{c}'_1} \cdots x_{k, \tilde{c}'_k}\\
&\hspace{2cm}=\psi(a^{(1)}, a^{(2)}, a^{(3)}, \dots, a^{(r-1)}, x_1, \dots, x_k)\\
&\hspace{2cm}=\psi(a^{(2)}, a^{(1)}, a^{(3)}, \dots, a^{(r-1)}, x_1, \dots, x_k)\\
 =&\sum_{(\tilde{d},\tilde{c},\tilde{c}') \in \mathcal{M}} \lambda_{\tilde{d},\tilde{c},\tilde{c}'}\, a^{(2)}_{\tilde{d}_1, \tilde{c}_1}\, a^{(1)}_{\tilde{d}_2, \tilde{c}_2} a^{(3)}_{\tilde{d}_3, \tilde{c}_3} \cdots a^{(r-1)}_{\tilde{d}_{r-1}, \tilde{c}_{r-1}} x_{1, \tilde{c}'_1} \cdots x_{k, \tilde{c}'_k}\\
 =&\sum_{(\tilde{d},\tilde{c},\tilde{c}') \in \mathcal{M}} \lambda_{\tilde{d} \circ \pi,\tilde{c} \circ \pi,c'}\, a^{(1)}_{\tilde{d}_1, \tilde{c}_1} a^{(2)}_{\tilde{d}_2,\tilde{c}_2} a^{(3)}_{\tilde{d}_3, \tilde{c}_3} \cdots a^{(r-1)}_{\tilde{d}_{r-1},\tilde{c}_{r-1}} x_{1, \tilde{c}'_1} \cdots x_{k, \tilde{c}'_k},
\end{align*}
which gives $\lambda_{d \circ \pi,c \circ \pi,c'} = \lambda_{d,c,c'}$, as desired.\\

Now we consider the case when $c'$ and $e'$ need not be equal. Assume first that $c'_1 \not= e'_1$, but $c'_2 = e'_2, \dots, c'_k = e'_k$. The first part of the proof allows us to reorder $(d,c)$ and $(f,e)$ so without loss of generality we have $d_1 = \dots = d_l = 1 \not= d_{l+1}, \dots, d_{r-1}$ and $f_1 = \dots = f_{l'} = 1 \not= f_{l'+1}, \dots, f_{r-1}$. Additionally, by further reordering, we may assume that $c_1 = e'_1$. By hypothesis, we have $l = l'$ and the sequences $(c_1, \dots, c_l, c'_1)$ and $(e_1, \dots, e_l, e'_1)$ are the same, up to reordering. Reordering further if necessary, we may assume that $c_2 = e_2, \dots, c_l = e_l$. Now use property \textbf{(ii)} to see that
\begin{align*}&\sum_{(\tilde{d},\tilde{c},\tilde{c}') \in \mathcal{M}} \lambda_{\tilde{d},\tilde{c},\tilde{c}'}\, \mathbbm{1}(\tilde{d}_1 = 1)\, u_{\tilde{c}_1} a^{(2)}_{\tilde{d}_2, \tilde{c}_2} \cdots a^{(r-1)}_{\tilde{d}_{r-1}, \tilde{c}_{r-1}} v_{\tilde{c}'_1} x_{2, \tilde{c}'_2} \cdots x_{k, \tilde{c}'_k}\\
&\hspace{2cm}=\psi\Big((0_{[2, k]},\ls{1}\, u), a^{(2)}, \dots, a^{(r-1)}, v, x_2, \dots, x_k\Big)\\
&\hspace{2cm}=\psi\Big((0_{[2, k]},\ls{1}\, v), a^{(2)}, \dots, a^{(r-1)}, u, x_2, \dots, x_k\Big)\\
= & \sum_{(\tilde{d},\tilde{c},\tilde{c}') \in \mathcal{M}} \lambda_{\tilde{d},\tilde{c},\tilde{c}'}\, \mathbbm{1}(\tilde{d}_1 = 1)\, v_{\tilde{c}_1} a^{(2)}_{\tilde{d}_2, \tilde{c}_2} \cdots a^{(r-1)}_{\tilde{d}_{r-1}, \tilde{c}_{r-1}} u_{\tilde{c}'_1} x_{2, \tilde{c}'_2} \cdots x_{k, \tilde{c}'_k}\\
= &\sum_{(\tilde{d},\tilde{c},\tilde{c}') \in \mathcal{M}} \lambda_{\tilde{d},(\tilde{c}'_1, \tilde{c}_{[2,r-1]}), (\tilde{c}_1, \tilde{c}'_{[2,k]})}\, \mathbbm{1}(\tilde{d}_1 = 1)\, u_{\tilde{c}_1} a^{(2)}_{\tilde{d}_2, \tilde{c}_2} \cdots a^{(r-1)}_{\tilde{d}_{r-1}, \tilde{c}_{r-1}} v_{\tilde{c}'_1} x_{2, \tilde{c}'_2} \cdots x_{k, \tilde{c}'_k},\end{align*}
which in particular gives that $\lambda_{d,c,c'} = \lambda_{f,e,e'}$ for the triples $(d,c,c')$ and $(f,e,e')$ above.\\

Finally, consider the general case. By induction on $\ell \in [0, k]$ we show that there is a triple $(s^{(\ell)}, t^{(\ell)}, u^{(\ell)}) \in \mathcal{M}$ such that $(d_1, c_1), \dots,$ $(d_{r-1}, c_{r-1}),$ $(1, c'_1), \dots,$ $(k, c'_k)$ and $(s^{(\ell)}_1, t^{(\ell)}_1), \dots,$ $(s^{(\ell)}_{r-1}, t^{(\ell)}_{r-1}),$ $(1, u^{(\ell)}_1), \dots,$ $(k, u^{(\ell)}_k)$ are the same up to reordering and that $u^{(\ell)}_1 = e'_1, \dots,$ $u^{(\ell)}_\ell = e'_\ell,$ $u^{(\ell)}_{\ell + 1} = c'_{\ell + 1}, \dots,$ $u^{(\ell)}_k = c'_k$. For the base case $\ell = 0$, we take the triple $(d,c,c')$. Assume now that we have constructed the triple $(s^{(\ell)}, t^{(\ell)}, u^{(\ell)})$ for some $\ell < k$. If $e'_{\ell + 1} = c'_{\ell + 1}$, we may take $s^{(\ell + 1)} = s^{(\ell)}$, $t^{(\ell + 1)} = t^{(\ell)}$ and $u^{(\ell + 1)} = u^{(\ell)}$. Thus assume that $e'_{\ell + 1} \not= c'_{\ell + 1}$. Since $(\ell + 1, e'_{\ell + 1}) \not= (\ell + 1, u^{(\ell)}_{\ell + 1})$ and the sequences $(f_1, e_1), \dots, (k, e'_k)$ and $(s^{(\ell)}_1, t^{(\ell)}_1), \dots, (k, u^{(\ell)}_k)$ are the same up to reordering we have some $i_0 \in [r-1]$ such that $(\ell + 1, e'_{\ell + 1}) = (s^{(\ell)}_{i_0}, t^{(\ell)}_{i_0})$. Now set $s^{(\ell + 1)} = s^{(\ell)}$, $t^{(\ell + 1)}_j = t^{(\ell)}_j$ for $j \not= i_0$ and $t^{(\ell + 1)}_{i_0} = c'_{\ell + 1}$ (recall that $u^{(\ell + 1)}$ is already specified) to complete the induction step.\\
\indent Using the triples we have just constructed and the work above, we have
\[\lambda_{d,c,c'} = \lambda_{s^{(0)}, t^{(0)}, u^{(0)}} = \dots = \lambda_{s^{(k)}, t^{(k)}, u^{(k)}} = \lambda_{f,e,e'}.\qedhere\]
\end{proof}
\vspace{\baselineskip}
For each triple $(d,c,c') \in \mathcal{M}$ define $\on{mon}(d,c,c')$ to be the sequence $(d_1, c_1), \dots,$ $(d_{r-1}, c_{r-1}),$ $(1, c'_1), \dots,$ $(k, c'_k)$ sorted in lexicographic order. Note that the condition in Claim~\ref{samelambdareorder} about resulting sequences being same up to reordering can be expressed as $\on{mon}(d,c,c') = \on{mon}(f, e, e')$. Let $\on{Mon}$ be the set of all images $\on{mon}(d,c,c')$ when $(d,c,c')$ ranges over $\mathcal{M}$. We denote elements of $\on{Mon}$ as $(\tilde{d}, \tilde{c})$ which stands for the sequence $\Big((\tilde{d}_1, \tilde{c}_1), \dots, (\tilde{d}_{k + r-1}, \tilde{c}_{k + r -1})\Big)$. Let $s$ be the number of different pairs that appear in this sequence, and let $v_1, \dots, v_s$ be their number of appearances. Note that $v_i \leq r$. Define $\tilde{\lambda}_{\tilde{d}, \tilde{c}}$ for $(\tilde{d}, \tilde{c}) \in \on{Mon}$ as
\[\tilde{\lambda}_{\tilde{d}, \tilde{c}} = \Big(\prod_{i = 1}^s v_i!\Big)^{-1}\lambda_{d,c,c'}\]
for arbitrary $(d,c,c') \in \mathcal{M}$ such that $\on{mon}(d,c,c') = (\tilde{d}, \tilde{c})$. Claim~\ref{samelambdareorder} tells us that this is well-defined. This is the place in the proof where we use the assumption that $p \geq r +1$ in order to be able to invert the terms $v_i!$.\\
\indent We now define the polynomial $P(x_{[k]})$ as
\[\sum_{(\tilde{d}, \tilde{c}) \in \on{Mon}} \tilde{\lambda}_{\tilde{d}, \tilde{c}} x_{\tilde{d}_1, \tilde{c}_1} \cdots x_{\tilde{d}_{k+r-1}, \tilde{c}_{k+r-1}}.\]

Note also that $P \in \mathcal{P}_{k,r}$ as every pair $(\tilde{d},\tilde{c}) \in \on{Mon}$ has the property that all elements in $[k]$ are present among $\tilde{d}_1, \dots, \tilde{d}_{r + k - 1}$. It remains to prove that for every monomial $m$ present in
\[\psi(a^{(1)}, \dots, a^{(r-1)}, x_{[k]}) - \Delta_{a^{(1)}, \dots, a^{(r-1)}} P(x_{[k]})\]
there is $i \in [k]$ such that no variable $x_{ic}$ for some $c$ appears in $m$. To that end, we prove the following slightly more general claim that allows us to understand how derivatives affect monomials.\\

\begin{claim}\label{polyderivcalc}Let $m(x_{[k]}) = x_{d_1, c_1} \cdots x_{d_{s}, c_{s}}$ be a monomial of degree $s$. Let $a^{(1)}, \dots, a^{(t)} \in G^{\oplus}$. Then
\[\Delta_{a^{(1)}, \dots, a^{(t)}}m(x_{[k]}) = \sum_{i \colon [t] \inj [s]} a^{(1)}_{c_{i(1)}, d_{i(1)}} a^{(2)}_{c_{i(2)}, d_{i(2)}} \cdots a^{(t)}_{c_{i(t)}, d_{i(t)}} \Big(\prod_{j \in [s] \setminus \on{Im} i} x_{c_j, d_j}\Big) + Q(a^{(1)}, a^{(2)}, \dots, a^{(t)}, x_{[k]}),\]
where the sum ranges over all injective maps $i \colon [t] \to [s]$ and $Q$ is a polynomial of degree at most $s$ whose monomials have at least $t+1$ variables of the form $a^{(i)}_{j,\ell}$.\end{claim}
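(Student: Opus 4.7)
My plan is to induct on $t$, the number of derivatives. The base case $t = 0$ is immediate: no derivative is applied, the only injection $[0]\inj[s]$ is the empty one, and the corresponding sum reduces to the entire monomial $m(x_{[k]})$, so we may take $Q = 0$. For the inductive step I would use $\Delta_{a^{(1)}, \dots, a^{(t+1)}} = \Delta_{a^{(t+1)}} \circ \Delta_{a^{(1)}, \dots, a^{(t)}}$ and apply the inductive hypothesis to the inner operator to write $\Delta_{a^{(1)}, \dots, a^{(t)}} m = M_t + Q_t$, where $M_t$ is the main sum and $Q_t$ is a polynomial of degree at most $s$ whose monomials each contain at least $t+1$ factors of the form $a^{(\ell)}_{\cdot, \cdot}$. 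It then suffices to apply $\Delta_{a^{(t+1)}}$ termwise and verify that the output has the claimed shape.

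The single computational ingredient I need is the elementary identity
\[\Delta_{a}\prod_{j\in J}x_{d_j,c_j}=\sum_{\emptyset\ne T\subseteq J}\Bigl(\prod_{j\in T}a_{d_j,c_j}\Bigr)\prod_{j\in J\setminus T}x_{d_j,c_j},\]
which follows by direct expansion of $\prod_{j\in J}(x_{d_j,c_j}+a_{d_j,c_j})$. Applied to each term of $M_t$ indexed by an injection $i\colon[t]\inj[s]$, taking $J=[s]\setminus\on{Im} i$ and $a=a^{(t+1)}$, the contributions with $|T|=1$---summed over pairs $(i,k)$ with $k\in[s]\setminus\on{Im} i$---are in canonical bijection with injections $i'\colon[t+1]\inj[s]$ via $i'|_{[t]}=i$ and $i'(t+1)=k$, so they assemble precisely into the main sum $M_{t+1}$. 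The contributions with $|T|\ge2$ carry at least $t+2$ factors of the form $a^{(\cdot)}_{\cdot,\cdot}$ and are absorbed into the new remainder.

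For the piece $Q_t$ coming from the inductive hypothesis, each of its monomials already has at least $t+1$ factors among $a^{(1)},\dots,a^{(t)}$, and the identity above shows that $\Delta_{a^{(t+1)}}$ applied to any polynomial in the $x$-variables produces a polynomial whose every monomial is divisible by some $a^{(t+1)}_{\cdot,\cdot}$; hence its image has every monomial carrying at least $t+2$ factors of the form $a^{(\cdot)}_{\cdot,\cdot}$ and lands in $Q_{t+1}$. The degree bound is preserved because the finite difference $\Delta$ does not increase total degree in the combined variables $x\cup a$. The only step demanding any care is the combinatorial identification $(i,k)\leftrightarrow i'$ that assembles the new main sum, but I anticipate no substantive obstacle: the claim is essentially a discrete product rule together with bookkeeping on how many $a$-variables each monomial accrues.
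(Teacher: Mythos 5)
Your proof is correct and follows essentially the same route as the paper's: induct on $t$, expand via the discrete product rule $\Delta_a\prod_{j\in J}x_j=\sum_{\emptyset\ne T\subseteq J}(\prod_{j\in T}a_j)\prod_{j\in J\setminus T}x_j$, let the $|T|=1$ contributions reassemble into the $[t+1]\hookrightarrow[s]$ sum, and absorb everything with $|T|\ge2$ together with $\Delta_{a^{(t+1)}}Q_t$ into the new remainder. The only cosmetic differences are that you start at $t=0$ while the paper starts at $t=1$, and the paper first reduces to $t\le s$ and tracks $x$-degree rather than the $a$-variable count, but under the degree bound these are equivalent bookkeeping devices.
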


\begin{proof} Note that it suffices to prove the claim for $t \leq s$. Indeed, supposing that the claim holds for $t = s$, we see that $\Delta_{a^{(1)}, \dots, a^{(s)}}m(x_{[k]})$ is of degree zero in $x_{ij}$ variables, and is therefore independent of $x_{[k]}$. Thus, further discrete derivatives give zero function. We proceed to prove the claim by induction on $t \leq s$.\\
\indent The base case is $t = 1$ for which we see that
\begin{align}\Delta_{a^{(1)}} m(x_{[k]}) = &(x_{d_1, c_1} + a^{(1)}_{d_1, c_1}) \cdots (x_{d_{s}, c_{s}} + a^{(1)}_{d_s, c_s}) - x_{d_1, c_1} \cdots x_{d_{s}, c_{s}}\nonumber\\
= & \sum_{\emptyset \not= I \subseteq [s]} \Big(\prod_{i \in I} a^{(1)}_{d_i, c_i}\Big) \Big(\prod_{i \in [s] \setminus I} x_{d_i, c_i}\Big)\nonumber\\
= & \sum_{i \in [s]} \Big(a^{(1)}_{c_i, d_i} \prod_{j \in [s] \setminus \{i\}} x_{c_j, d_j}\Big) + \sum_{\substack{I \subseteq [s]\\ |I| \geq 2}} \Big(\prod_{i \in I} a^{(1)}_{d_i, c_i}\Big) \Big(\prod_{i \in [s] \setminus I} x_{d_i, c_i}\Big),\label{derexpansion}
\end{align}
which has the desired form.

Suppose now that the claim holds for some $t \in [s-1]$. By the induction hypothesis we have
\[\Delta_{a^{(1)}, \dots, a^{(t)}}m(x_{[k]}) = \sum_{i \colon [t] \inj [s]} a^{(1)}_{c_{i(1)}, d_{i(1)}} a^{(2)}_{c_{i(2)}, d_{i(2)}} \cdots a^{(t)}_{c_{i(t)}, d_{i(t)}} \Big(\prod_{j \in [s] \setminus \on{Im} i} x_{c_j, d_j}\Big) + Q(a^{(1)}, a^{(2)}, \dots, a^{(t)}, x_{[k]}),\]
where $Q$ is a polynomial of degree $s$ whose monomials have at least $t+1$ variables of the form $a^{(i)}_{j,\ell}$, thus having $x$-degree at most $s - t - 1$. Apply further derivative with difference $a_{r+1}$
\begin{align*}\Delta_{a^{(1)}, \dots, a^{(t+1)}}m(x_{[k]}) = &\Delta_{a^{(1)}, \dots, a^{(t)}}m(x_{1} + a^{(t+1)}_{1}, \dots, x_k + a^{(t+1)}_{k}) - \Delta_{a^{(1)}, \dots, a^{(t)}}m(x_1, \dots, x_k)\\
= &\sum_{i \colon [t] \inj [s]} a^{(1)}_{c_{i(1)}, d_{i(1)}} a^{(2)}_{c_{i(2)}, d_{i(2)}} \cdots a^{(t)}_{c_{i(t)}, d_{i(t)}} \Big(\prod_{j \in [s] \setminus \on{Im} i} (x_{c_j, d_j} + a^{(t+1)}_{c_j, d_j}) - \prod_{j \in [s] \setminus \on{Im} i} x_{c_j, d_j}\Big)\\
&\hspace{1cm} + Q(a^{(1)}, a^{(2)}, \dots, a^{(t)}, x_1 + a^{(t+1)}_{1}, \dots, x_k + a^{(t+1)}_{k}) - Q(a^{(1)}, a^{(2)}, \dots, a^{(t)}, x_1, \dots, x_k).\end{align*}
As in the calculation~\eqref{derexpansion}, the only terms that have $x$-degree at least $s - t - 1$ in
\[\prod_{j \in [s] \setminus \on{Im} i} (x_{c_j, d_j} + a^{(t+1)}_{c_j, d_j}) - \prod_{j \in [s] \setminus \on{Im} i} x_{c_j, d_j}\]
are exactly
\[a^{(t+1)}_{c_{i'}, d_{i'}} \prod_{\substack{j \in [s] \setminus \on{Im} i\\j \not= i'}} x_{c_j, d_j}\]
for fixed $i' \in [s] \setminus \on{Im} i$. Furthermore, the polynomial
\[Q(a^{(1)}, a^{(2)}, \dots, a^{(t)}, x_1 + a^{(t+1)}_{1}, \dots, x_k + a^{(t+1)}_{k}) - Q(a^{(1)}, a^{(2)}, \dots, a^{(t)}, x_1, \dots, x_k)\]
has no monomials of $x$-degree at least $s - t - 1$. This follows from the fact that $Q(a^{(1)}, a^{(2)}, \dots, a^{(t)}, x_1, \dots, x_k)$ has no monomial of $x$-degree at least $s - t$ and the calculation~\eqref{derexpansion}. Therefore, there exists a polynomial $\tilde{Q}(a^{(1)}, a^{(2)}, \dots, a^{(t+1)}, x_{[k]})$ of degree at most $s$ whose monomials have $x$-degree strictly less than $s - t - 1$ and
\begin{align*}\Delta_{a^{(1)}, \dots, a^{(t+1)}}m(x_{[k]}) = &\sum_{i \colon [t] \inj [s]} a^{(1)}_{c_{i(1)}, d_{i(1)}} a^{(2)}_{c_{i(2)}, d_{i(2)}} \cdots a^{(t)}_{c_{i(t)}, d_{i(t)}} \sum_{i' \in [s] \setminus \on{Im} i} a^{(t+1)}_{c_{i'}, d_{i'}} \prod_{\substack{j \in [s] \setminus \on{Im} i\\j \not= i'}} x_{c_j, d_j}\\
&\hspace{2cm} + \tilde{Q}(a^{(1)}, a^{(2)}, \dots, a^{(t+1)}, x_{[k]})\\
= & \sum_{i \colon [t+1] \inj [s]} a^{(1)}_{d_{i(1)}, c_{i(1)}} a^{(2)}_{d_{i(2)}, c_{i(2)}} \cdots a^{(t+1)}_{d_{i(t + 1)}, c_{i(t + 1)}} \prod_{j \in [s] \setminus \on{Im} i} x_{d_j, c_j}\\
&\hspace{2cm} + \tilde{Q}(a^{(1)}, a^{(2)}, \dots, a^{(t+1)}, x_{[k]}),\end{align*}
completing the proof.\end{proof}

Using Claim~\ref{polyderivcalc} and the definition of $P$ we see that
\begin{align}&\psi(a^{(1)}, \dots, a^{(r-1)}, x_{[k]}) - \Delta_{a^{(1)}, \dots, a^{(r-1)}} P(x_{[k]}) = \sum_{(d,c,c') \in \mathcal{M}} \lambda_{d, c, c'}\, a^{(1)}_{d_1, c_1} \cdots a^{(r-1)}_{d_{r-1}, c_{r-1}} x_{1, c'_1} \cdots x_{k, c'_k}\nonumber\\
&\hspace{2cm} - \sum_{(\tilde{d},\tilde{c}) \in \on{Mon}} \tilde{\lambda}_{\tilde{d}, \tilde{c}} \sum_{i \colon [r-1] \inj [k + r - 1]} a^{(1)}_{\tilde{d}_{i(1)}, \tilde{c}_{i(1)}} a^{(2)}_{\tilde{d}_{i(2)}, \tilde{c}_{i(2)}} \cdots a^{(r-1)}_{\tilde{d}_{i(r-1)}, \tilde{c}_{i(r-1)}} \prod_{j \in [k + r - 1] \setminus \on{Im} i} x_{\tilde{d}_j, \tilde{c}_j}\nonumber\\
&\hspace{2cm} - Q(a^{(1)}, \dots, a^{(r-1)}, x_{[k]}),\label{polyderrelnpsi}\end{align}
where $Q$ is a polynomial of degree at most $k + r - 1$ whose monomials have $x$-degree at most $k - 1$. Let $\tilde{\mathcal{M}}$ be the set of all pairs $\Big((\tilde{d},\tilde{c}), i\Big)$ where $(\tilde{d},\tilde{c}) \in \on{Mon}$ and $i \colon [r-1] \inj [k + r - 1]$ such that $\{\tilde{d}_j \colon j \in [k + r - 1] \setminus \on{Im} i\} = [k]$. Note that the pairs $\Big((\tilde{d},\tilde{c}), i\Big)\notin \tilde{\mathcal{M}}$ in~\eqref{polyderrelnpsi} give rise to monomials that do not have none of the variables $x_{i_0, 1}, \dots, x_{i_0, n_{i_0}}$ appearing for some $i_0$ (namely $i_0 \notin \{\tilde{d}_j \colon j \in [k + r - 1] \setminus \on{Im} i\}$). Therefore, it suffices to prove
\begin{align}&\sum_{(d,c,c') \in \mathcal{M}} \lambda_{d, c, c'}\, a^{(1)}_{d_1, c_1} \cdots a^{(r-1)}_{d_{r-1}, c_{r-1}} x_{1, c'_1} \cdots x_{k, c'_k}\nonumber\\
&\hspace{2cm}= \sum_{\big((\tilde{d},\tilde{c}), i\big) \in \tilde{\mathcal{M}}} \tilde{\lambda}_{\tilde{d}, \tilde{c}} a^{(1)}_{\tilde{d}_{i(1)}, \tilde{c}_{i(1)}} a^{(2)}_{\tilde{d}_{i(2)}, \tilde{c}_{i(2)}} \cdots a^{(r-1)}_{\tilde{d}_{i(r-1)}, \tilde{c}_{i(r-1)}} \prod_{j \in [k + r - 1] \setminus \on{Im} i} x_{\tilde{d}_j, \tilde{c}_j}.\label{finalpsiderivequ}\end{align}
Firstly, observe that the monomial
\[a^{(1)}_{\tilde{d}_{i(1)}, \tilde{c}_{i(1)}} a^{(2)}_{\tilde{d}_{i(2)}, \tilde{c}_{i(2)}} \cdots a^{(r-1)}_{\tilde{d}_{i(r-1)}, \tilde{c}_{i(r-1)}} \prod_{j \in [k + r - 1] \setminus \on{Im} i} x_{\tilde{d}_j, \tilde{c}_j}\]
where $\Big((\tilde{d},\tilde{c}), i\Big) \in \tilde{\mathcal{M}}$ equals the monomial
\[a^{(1)}_{f_1, e_1} \cdots a^{(r-1)}_{f_{r-1}, e_{r-1}} x_{1, e'_1} \cdots x_{k, e'_k}\]
for some $(f,e,e') \in \mathcal{M}$. Namely, simply put $f_j = \tilde{d}_{i(j)}$ and $e_j = \tilde{e}_{i(j)}$ for $j \in [r-1]$ and $e'_1, \dots, e'_k$ to be the ordering $\tilde{c}_{\sigma(1)}, \dots, \tilde{c}_{\sigma(k)}$ where $\sigma \colon [k] \to [k + r - 1] \setminus \on{Im} i$ is chosen so that $\tilde{d}_{\sigma(j)} = j$. Now fix any $(f,e,e') \in \mathcal{M}$ and focus on the monomial $a^{(1)}_{f_1, e_1} \cdots a^{(r-1)}_{f_{r-1}, e_{r-1}} x_{1, e'_1} \cdots x_{k, e'_k}$. On the left-hand-side of the equality~\eqref{finalpsiderivequ} this monomial has $\lambda_{f,e,e'}$ as its coefficient. Let $\mathcal{I}$ be the set of all $\Big((\tilde{d},\tilde{c}), i\Big) \in \tilde{\mathcal{M}}$ such that we have the equality of the monomials
\[a^{(1)}_{f_1, e_1} \cdots a^{(r-1)}_{f_{r-1}, e_{r-1}} x_{1, e'_1} \cdots x_{k, e'_k} = a^{(1)}_{\tilde{d}_{i(1)}, \tilde{c}_{i(1)}} a^{(2)}_{\tilde{d}_{i(2)}, \tilde{c}_{i(2)}} \cdots a^{(r-1)}_{\tilde{d}_{i(r-1)}, \tilde{c}_{i(r-1)}} \prod_{j \in [k + r - 1] \setminus \on{Im} i} x_{\tilde{d}_j, \tilde{c}_j}.\]
In particular, we see that the sequences $(f_1, e_1), \dots,$ $(f_{r-1}, e_{r-1}),$ $(1, e'_1), \dots,$ $(k, e'_k)$ and $(\tilde{d}_1, \tilde{c}_1), \dots,$ $(\tilde{d}_{k + r-1}, \tilde{c}_{k + r - 1})$ are the same after a possible reordering, thus $(\tilde{d}, \tilde{c}) = \on{mon}(f,e,e')$, meaning that $(\tilde{d}, \tilde{c})$ is uniquely determined by $(f,e,e')$. Hence, the coefficient of the considered monomial on the right-hand-side of the equality~\eqref{finalpsiderivequ} is precisely $\tilde{\lambda}_{\on{mon}(f,e,e')} \cdot |\mathcal{I}|$, and $|\mathcal{I}|$ is the number of injective maps $i \colon [r-1] \to [k + r - 1]$ such that $(\tilde{d}_{i(j)}, \tilde{c}_{i(j)}) = (f_j, e_j)$.\\
\indent Misuse the notation and write $f_{r - 1 + j} = j$ and $e_{r - 1 + j} = e'_j$ for $j \in [k]$. Observe that we may uniquely extend the injection $i$ to a bijection $\overline{i} \colon [k + r - 1] \to [k + r - 1]$ such that $(\tilde{d}_{\overline{i}(j)}, \tilde{c}_{\overline{i}(j)}) = (f_j, e_j)$ holds for all $j \in [k + r- 1]$. In the opposite direction, every such bijection $\overline{i}$ has the same properties as the initial injection $i$ after restricting $\overline{i}|_{[r-1]}$. We conclude that $|\mathcal{I}|$ is in fact the number of such bijections $\overline{i}$. It is not hard to see that this number is exactly $v_1!\cdots v_s!$ where $s$ is the number of distinct elements of the sequence $(\tilde{d}_1, \tilde{c}_1), \dots, (\tilde{d}_{k + r - 1}, \tilde{c}_{k + r -1})$ and $v_i$ is the number of times $i$\textsuperscript{th} value appears in the sequence. By definition of $\tilde{\lambda}_{\tilde{d}, \tilde{c}}$, we have the equality of the coefficients on both sides, which completes the proof.\end{proof}
\vspace{\baselineskip}

\boldSubsection{Concluding the proof}

Finally, in this subsection we return to~\eqref{mlcorrelationinvthm} and use the facts about approximately symmetric multilinear forms we established in the previous subsections to conclude the proof of Theorem~\ref{inverseUmixed}.

\begin{proof}[Proof of Theorem~\ref{inverseUmixed}]Let $f \colon G^\oplus \to \mathbb{D}$ be a function such that
\[\|f\|_{\mathsf{U}\big(G_1, G_2, \dots, G_k, G^{\oplus} \times r\big)} \,\geq\, c.\]
By Proposition~\ref{mlStructStep} there exists a multilinear form $\psi \colon \underbrace{G^{\oplus} \tdt G^{\oplus}}_{r-1} \times G_1 \tdt G_k \to \mathbb{F}_p$ such that
\begin{equation}\Big|\exx_{\ssk{a^{(1)}, \dots, a^{(r-1)} \in G^{\oplus}\\b_1 \in G_1, \dots, b_k \in G_k\\x \in G^{\oplus}}} \mder_{a^{(1)}, \dots, a^{(r-1)}} \mder_{b_1, \dots, b_k} f(x) \omega^{\psi(a^{(1)}, \dots, a^{(r-1)}, b_1, \dots, b_k)}\Big| \geq c_1\label{inveresetolargespec}\end{equation}
for some $c_1 \geq \Big(\exp^{(O_{k,r}(1))}(O_{k,r, p}(c^{-1}))\Big)^{-1}$. By Propositions~\ref{sym1prop},~\ref{sym2prop} and~\ref{approxsymtoexact} we may find another multilinear form $\rho \colon (G^{\oplus})^{r-1} \times G_{[k]} \to \mathbb{F}_p$ such that
\begin{equation}\bias (\psi - \rho) \geq c_1^{O_{k,r}(1)}\label{psirhodiffbias}\end{equation}
and if $\rho_{ij}$ and $\rho'_{ij}$ are the multilinear forms defined by~\eqref{psidefeqn} and by~\eqref{psidefeqn2} for $\rho$ instead of $\psi$, then $\rho_{ij} = 0$ and $\rho'_{ij} = 0$. Applying Proposition~\ref{exactderivpoly} to $\rho$ we obtain polynomials $P \in \mathcal{P}_{k, r}$ and $Q$ such that for all $a_1, \dots, a_{r-1} \in G^\oplus$ and $x_{[k]} \in G_{[k]}$ we have
\begin{equation}\label{rhotopolyderiveq}\rho(a_1, \dots, a_{r-1}, x_{[k]}) = \Delta_{a_1, \dots, a_{r-1}} P(x_{[k]}) + Q(a_1, a_2, \dots, a_{r-1}, x_{[k]})\end{equation}
and for each monomial $m$ appearing in $Q$ there is $i \in [k]$ such that no variable $x_{ic}$ appears in $m$.\\

\indent We now use Gowers-Cauchy-Schwarz inequality (Lemma~\ref{gcs}) to replace $\psi$ by $\rho$ in~\eqref{inveresetolargespec}. Firstly apply Theorem~\ref{invbias} to find a positive integer $m \leq O_{k, r}(\log^{O_{k, r}(1)} c_1^{-1})$, subsets $I_i \subset [r-1]$, $J_i \subset [k]$ and multilinear forms $\beta_i \colon (G^\oplus)^{I_i} \times G_{J_i} \to \mathbb{F}_p, \gamma_i \colon (G^\oplus)^{[r-1] \setminus I_i} \times G_{[k] \setminus J_i} \to \mathbb{F}_p$ for $I \in [m]$ such that
\[\psi(a^{(1)}, \dots, a^{(r-1)}, x_1, \dots, x_k) - \rho(a^{(1)}, \dots, a^{(r-1)}, x_1, \dots, x_k) = \sum_{i \in [m]} \beta_i(a^{(I_i)}, x_{J_i}) \gamma_i(a^{([r-1] \setminus I_i)}, x_{[k] \setminus J_i})\]
and $0 < |I_i| + |J_i| < k + r -1$.\\
\indent Algebraic manipulation in~\eqref{inveresetolargespec} yields
\begin{align*}c_1 \leq &\exx_{a^{(1)}, \dots, a^{(r-1)} \in G^{\oplus}} \Big\|\mder_{a^{(1)}, \dots, a^{(r-1)}}f \omega^{\psi_{a^{(1)}, \dots, a^{(r-1)}}}\Big\|_{\square^k}^{2^k} \\
= & \exx_{\substack{a^{(1)}, \dots, a^{(r-1)} \in G^{\oplus}\\d_1, x_1 \in G_1, \dots, x_k,d_k \in G_k}} \omega^{\psi(a^{([r-1])}, d_{[k]})} \mder_{a^{(1)}, \dots, a^{(r-1)}, (0_{[2,k]},\ls{1}{d_1}), \dots, (0_{[k-1]}, \ls{k}{d_k})}f(x_{[k]})\\
= & \exx_{\substack{a^{(1)}, \dots, a^{(r-1)} \in G^{\oplus}\\d_1, x_1 \in G_1, \dots, x_k,d_k \in G_k}} \omega^{\rho(a^{([r-1])}, d_{[k]}) + \sum_{i \in [m]} \beta_i(a^{(I_i)}, x_{J_i}) \gamma_i(a^{([r-1] \setminus I_i)}, x_{[k] \setminus J_i})} \mder_{a^{(1)}, \dots, a^{(r-1)}, (0_{[2,k]},\ls{1}{d_1}), \dots, (0_{[k-1]}, \ls{k}{d_k})}f(x_{[k]})\\
= & \sum_{\lambda, \mu \in \mathbb{F}_p^m} \exx_{\substack{a_1, \dots, a_{r-1} \in G^{\oplus}\\d_1, x_1 \in G_1, \dots, x_k,d_k \in G_k}} \omega^{\rho(a_{[r-1]}, d_{[k]}) + \lambda \cdot \mu} \Big(\prod_{i \in [m]} \mathbbm{1}(\beta_i(a^{(I_i)}, x_{J_i}) = \lambda_i) \mathbbm{1}(\gamma_i(a^{([r-1] \setminus I_i)}, x_{[k] \setminus J_i}) = \mu_i)\Big)\\
&\hspace{5cm}\mder_{a^{(1)}, \dots, a^{(r-1)}, (0_{[2,k]},\ls{1}{d_1}), \dots, (0_{[k-1]}, \ls{k}{d_k})}f(x_{[k]})\\
= & p^{-2m}\sum_{\lambda, \lambda', \mu', \mu \in \mathbb{F}_p^m} \exx_{\substack{a^{(1)}, \dots, a^{(r-1)} \in G^{\oplus}\\d_1, x_1 \in G_1, \dots, x_k,d_k \in G_k}} \omega^{\rho(a^{([r-1])}, d_{[k]}) + \lambda \cdot \mu} \Big(\prod_{i \in [m]}\omega^{\lambda'_i\beta_i(a^{(I_i)}, x_{J_i}) - \lambda'_i\lambda_i} \omega^{\mu'_i\gamma_i(a^{([r-1] \setminus I_i)}, x_{[k] \setminus J_i}) - \mu'_i \mu_i}\Big)\\
&\hspace{5cm}\mder_{a^{(1)}, \dots, a^{(r-1)}, (0_{[2,k]},\ls{1}{d_1}), \dots, (0_{[k-1]}, \ls{k}{d_k})}f(x_{[k]}).\end{align*}
By averaging, we may find $x_{[k]} \in G^\oplus$ and $\lambda, \lambda', \mu, \mu' \in \mathbb{F}_p^m$ such that
\begin{align*} c_1p^{-2m}\leq &\bigg|\exx_{\substack{a^{(1)}, \dots, a^{(r-1)} \in G^{\oplus}\\d_1 \in G_1, \dots, d_k \in G_k}} \omega^{\rho(a^{([r-1])}, d_{[k]})} \Big(\prod_{i \in [m]}\omega^{\lambda'_i\beta_i(a^{(I_i)}, x_{J_i})} \omega^{\mu'_i\gamma_i(a^{([r-1] \setminus I_i)}, x_{[k] \setminus J_i})}\Big)\\
&\hspace{6cm} \mder_{a^{(1)}, \dots, a^{(r-1)}, (0_{[2,k]},\ls{1}{d_1}), \dots, (0_{[k-1]}, \ls{k}{d_k})}f(x_{[k]})\bigg|\\
=&\bigg|\exx_{\substack{a^{(1)}, \dots, a^{(r-1)} \in G^{\oplus}\\d_1 \in G_1, \dots, d_k \in G_k}} \omega^{\rho(a^{([r-1])}, d_{[k]})} \Big(\prod_{i \in [m]}\omega^{\lambda'_i\beta_i(a^{(I_i)}, x_{J_i})} \omega^{\mu'_i\gamma_i(a^{([r-1] \setminus I_i)}, x_{[k] \setminus J_i})}\Big)\\
&\hspace{1cm}\prod_{I \subseteq [r-1]} \prod_{J \subseteq [k]} \on{Conj}^{k + r - 1 -|I| - |J|} f\Big(x_1 + \sum_{i \in I} a^{(i)}_{1} + \mathbbm{1}(1 \in J) d_1,\dots,x_k + \sum_{i \in I} a^{(i)}_{k} + \mathbbm{1}(k \in J) d_k \Big)\bigg|.\end{align*}
We view this expression as an average of values of products of functions in variables $a^{(1)}, \dots, a^{(r-1)}, d_1, \dots, d_k$. The only terms that depend on all of these variables are
\[ \omega^{\rho(a^{([r-1])}, d_{[k]})}\]
and
\[f\Big(x_1 + \sum_{i \in [r-1]} a^{(i)}_{1} + d_1, \dots, x_k + \sum_{i \in [r-1]} a^{(i)}_{k} + d_k\Big).\]
By Lemma~\ref{gcs} we conclude that
\begin{align*}(c_1p^{-2m})^{2^{k+r-1}} \leq &\exx_{\substack{a^{(1)}, \dots, a^{(r-1)} \in G^{\oplus}\\d_1 \in G_1, \dots, d_k \in G_k}} \exx_{\substack{b_1, \dots, b_{r-1} \in G^{\oplus}\\e_1 \in G_1, \dots, e_k \in G_k}} \bigg(\prod_{I \subset [r-1]} \prod_{J \subset [k]} \omega^{(-1)^{k + r - 1 - |I| - |J|} \rho(a^{(I)}, b^{([r-1] \setminus I)}, d_J, e_{[k] \setminus J})}\\
&\hspace{1cm}\on{Conj}^{k + r - 1 - |I| - |J|} f\Big(x_1 + \sum_{i \in I} a^{(i)}_{1} + \sum_{i \in [r-1] \setminus I} b^{(i)}_{1} + e_1 + \mathbbm{1}(1 \in J) (d_1 - e_1), \dots,\\
&\hspace{5cm} x_k + \sum_{i \in I} a^{(i)}_k + \sum_{i \in [r-1] \setminus I} b^{(i)}_{k} + e_k + \mathbbm{1}(k \in J) (d_k - e_k)\Big)\bigg)\\
 =&\exx_{\substack{a^{(1)}, \dots, a^{(r-1)} \in G^{\oplus}\\d_1 \in G_1, \dots, d_k \in G_k}} \exx_{\substack{b^{(1)}, \dots, b^{(r-1)} \in G^{\oplus}\\e_1 \in G_1, \dots, e_k \in G_k}} \omega^{\rho(a^{(1)} - b^{(1)}, \dots, a^{(r-1)} - b^{(r-1)}, d_1 - e_1, \dots, d_k - e_k)}\\
&\hspace{1cm}\mder_{a^{(1)} - b^{(1)}, \dots, a^{(r-1)} - b^{(r-1)}, (0_{[2,k]},\ls{1}{d_1 - e_1}), \dots, (0_{[k-1]}, \ls{k}{d_k - e_k})}f\Big(x_1 + \sum_{i \in [r-1]} b^{(i)}_{1} + e_1, \dots,\\
&\hspace{11cm} x_k + \sum_{i \in [r-1]} b^{(i)}_{k} + e_k\Big)\\
=&\exx_{\substack{a^{(1)}, \dots, a^{(r-1)} \in G^{\oplus}\\d_1, y_1 \in G_1, \dots, d_k, y_k \in G_k}} \omega^{\rho(a^{(1)}, \dots, a^{(r-1)}, d_1, \dots, d_k)} \mder_{a^{(1)}, \dots, a^{(r-1)}, (0_{[2,k]},\ls{1}{d_1}), \dots, (0_{[k-1]}, \ls{k}{d_k})}f(y_1, \dots, y_k),\end{align*}
where we made a change of variables in the last line. Write $c_2 = (c_1p^{-2m})^{2^{k+r-1}}$. Using the identity~\eqref{rhotopolyderiveq} we conclude that (below we again use $x_1, \dots, x_k$ as dummy variables as the values we previously fixed have no further role in the proof)
\begin{align*}c_2 \leq & \exx_{\substack{a^{(1)}, \dots, a^{(r-1)} \in G^{\oplus}\\d_1, y_1 \in G_1, \dots, d_k, y_k \in G_k}} \omega^{\rho(a^{(1)}, \dots, a^{(r-1)}, d_1, \dots, d_k)} \mder_{a^{(1)}, \dots, a^{(r-1)}, (0_{[2,k]},\ls{1}{d_1}), \dots, (0_{[k-1]}, \ls{k}{d_k})}f(y_1, \dots, y_k)\\
= & \exx_{\substack{a^{(1)}, \dots, a^{(r-1)} \in G^{\oplus}\\x_1, y_1 \in G_1, \dots, x_k, y_k \in G_k}} \bigg(\prod_{I \subset [k]} \omega^{(-1)^{k - |I|}\rho(a^{(1)}, \dots, a^{(r-1)}, x_I, y_{[k] \setminus I})} \on{Conj}^{k - |I|}\mder_{a^{(1)}, \dots, a^{(r-1)}}f(x_I, y_{[k] \setminus I})\bigg)\\
= & \exx_{\substack{a^{(1)}, \dots, a^{(r-1)} \in G^{\oplus}\\x_1, y_1 \in G_1, \dots, x_k, y_k \in G_k}} \bigg(\prod_{I \subset [k]} \omega^{(-1)^{k - |I|} \Delta_{a^{(1)}, \dots, a^{(r-1)}} P(x_I, y_{[k] \setminus I})}\,\omega^{(-1)^{k - |I|}Q(a^{(1)}, a^{(2)}, \dots, a^{(r-1)}, x_I, y_{[k] \setminus I})}\\
&\hspace{10cm} \on{Conj}^{k - |I|}\mder_{a^{(1)}, \dots, a^{(r-1)}}f(x_I, y_{[k] \setminus I})\bigg).\end{align*}
We view terms in the expression above as functions in $x_1, \dots, x_k$, treating $y_1, \dots, y_k, a^{(1)}, \dots, a^{(r-1)}$ as fixed. Recall that for each monomial $m$ appearing in $Q$ there is $i \in [k]$ such that no variable $x_{ic}$ appears in $m$. Thus, we may write $Q(a^{(1)}, \dots, a^{(r-1)}, x_1, \dots, x_k) = \sum_{i \in [k]} Q_i(a^{(1)}, \dots, a^{(r-1)}, x_{[k] \setminus \{i\}})$ for further polynomials $Q_i$, $i \in [k]$. After expanding $Q$ like this in the expression above, the only terms that depend on all variables $x_1, \dots, x_k$ are
\[\omega^{\Delta_{a^{(1)}, \dots, a^{(r-1)}} P(x_{[k]})}\hspace{2cm}\text{and}\hspace{2cm}\mder_{a^{(1)}, \dots, a^{(r-1)}}f(x_{[k]}).\]
Apply Lemma~\ref{gcs} for all choices of $a^{([r-1])}$ and $y_{[k]}$ to get
\begin{align*}c_2^{2^{k}} \leq &\exx_{\substack{a^{(1)}, \dots, a^{(r-1)} \in G^{\oplus}\\d_1,x_1 \in G_1, \dots, d_k, x_k\in G_k}} \omega^{\Delta_{a^{(1)}, \dots, a^{(r-1)}, (0_{[2,k]},\ls{1}{d_1}), \dots, (0_{[k-1]}, \ls{k}{d_k})} P(x_{[k]})} \mder_{a^{(1)}, \dots, a^{(r-1)}, (0_{[2,k]},\ls{1}{d_1}), \dots, (0_{[k-1]}, \ls{k}{d_k})}f(x_{[k]})\\
= &\|\tilde{f} \|_{\mathsf{U}\big(G_1, G_2, \dots, G_k, G^{\oplus} \times r - 1\big)}^{2^{k + r- 1}},\end{align*}
where we set $\tilde{f}(x_{[k]}) = \omega^{P(x_{[k]})} f(x_{[k]})$. We may now apply inductive hypothesis to find a further polynomial $\tilde{P}$ of degree at most $k + r - 2$ and functions $g_i \colon G_{[k] \setminus \{i\}} \to \mathbb{D}$ for $i \in [k]$ such that
\[c_3 \leq \exx_{x_{[k]} \in G^\oplus} \tilde{f}(x_{[k]}) \omega^{\tilde{P}(x_{[k]})} \Big(\prod_{i \in [k]} g_i(x_{[k] \setminus \{i\}}) \Big)  = \exx_{x_{[k]} \in G^\oplus} f(x_{[k]}) \omega^{\big(P + \tilde{P}\big)(x_{[k]})} \Big(\prod_{i \in [k]} g_i(x_{[k] \setminus \{i\}}) \Big),\]
where
\[c_3 \geq \Big(\exp^{(O_{k,r}(1))}(O_{k,r, p}(c_2^{-1}))\Big)^{-1} \geq \Big(\exp^{(O_{k,r}(1))}(O_{k,r, p}(c^{-1}))\Big)^{-1}.\]
Since $P + \tilde{P}$ is a polynomial of degree at most $k + r -1$ the proof is now complete.\end{proof}

\vspace{\baselineskip}

\boldSection{Properties of large multilinear spectrum}

In this section, we prove some properties of the large multilinear spectrum.\\

\noindent \textbf{Close forms.} We begin the work by proving the following lemma which tells us that if a multilinear form $\alpha'$ is close to a form $\alpha$ which belongs to the $\varepsilon$-large multilinear spectrum of some function $f$, then $\alpha'$ also belongs to the $\varepsilon'$-large multilinear spectrum of $f$, for a somewhat smaller parameter $\varepsilon'$. This lemma is motivated by the first step of the proof of Theorem~\ref{inverseUmixed}. Using the notation of that proof, in that step we use the property~\eqref{psirhodiffbias} to replace the map $\psi$ whose sliced functions we know are in the large multilinear spectrum of $\mder_{a^{(1)}, \dots, a^{(r-1)}}f$, by the map $\rho$, which differs from $\psi$ by a map of small rank. Unfortunately, for technical reasons, we may not apply the lemma below directly in the proof of Theorem~\ref{inverseUmixed}, but it captures the essence of that step.

\begin{lemma}\label{closeformspec}Let $f \colon G_{[k]} \to \mathbb{D}$ be a function. Suppose that $\alpha$ and $\alpha'$ are two multilinear forms on $G_{[k]}$ such that $\bias(\alpha - \alpha') \geq c$ and $\alpha \in \mls_\varepsilon(f)$. Then $\alpha' \in \mls_{\varepsilon'}(f)$ for $\varepsilon' = \varepsilon p^{-O\big((\log_p c^{-1})^{O(1)} \big)} $.\end{lemma}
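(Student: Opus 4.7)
The plan is to convert $\bias(\alpha - \alpha') \geq c$ into a partition-rank decomposition via Theorem~\ref{invbias}, then Fourier-invert to isolate the $\alpha'$-phase, and finally exploit the following observation: multiplication of any $g\colon G_{[k]}\to\mathbb{D}$ by $\omega^{\eta(z)}$ preserves the box norm $\|\cdot\|_{\square^k}$ whenever $\eta$ is a sum of terms each independent of some coordinate $z_\ell$. Indeed, $\|g\omega^\eta\|_{\square^k}^{2^k} = \exx_{x,y} F_g(x,y)\,\omega^{\sum_I(-1)^{|I|}\eta(x_I, y_{[k]\setminus I})}$, and for every summand $\eta_j$ of $\eta$ independent of $z_{\ell_j}$, pairing $I \leftrightarrow I\triangle\{\ell_j\}$ in the inner sum yields total cancellation, so the perturbing phase is $1$.

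First, I would apply Theorem~\ref{invbias} to $\beta := \alpha-\alpha'$ to obtain
\[\beta(z) = \sum_{i=1}^{m} \gamma_i(z_{I_i})\,\delta_i(z_{[k]\setminus I_i}), \qquad m \leq O\big((\log_p c^{-1})^{O(1)}\big),\]
with $\emptyset \neq I_i \subsetneq [k]$ and $\gamma_i, \delta_i$ multilinear on $G_{I_i}, G_{[k]\setminus I_i}$ respectively. Writing $F_f(x,y) := \prod_{I\subseteq[k]}\on{Conj}^{|I|}f(x_I, y_{[k]\setminus I})$ and using the multilinear identity $\sum_I(-1)^{|I|}\alpha(x_I, y_{[k]\setminus I}) = (-1)^k\alpha(x-y)$, the hypothesis becomes
\[\varepsilon^{2^k} \leq \|f\omega^\alpha\|_{\square^k}^{2^k} = \exx_{x,y\in G_{[k]}} F_f(x,y)\,\omega^{(-1)^k\alpha(x-y)}.\]
Substituting $\alpha = \alpha'+\beta$ and Fourier-inverting each factor $\omega^{\gamma_i((x-y)_{I_i})\delta_i((x-y)_{[k]\setminus I_i})}$ via $\omega^{uv} = p^{-1}\sum_{a,b\in\mathbb{F}_p}\omega^{au+bv-ab}$, triangle inequality and pigeonhole over the $p^{2m}$ resulting summands then produce $(a_0,b_0)\in\mathbb{F}_p^{2m}$ with
\[\Big|\exx_{x,y} F_f(x,y)\,\omega^{(-1)^k\alpha'(x-y)}\,\omega^{L(x-y)}\Big| \geq \varepsilon^{2^k}/p^m,\]
where $L(h) := \sum_i\big[a_{0,i}\gamma_i(h_{I_i}) + b_{0,i}\delta_i(h_{[k]\setminus I_i})\big]$ has every summand depending only on a proper subset of the $k$ coordinates.

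Now I expand $\gamma_i((x-y)_{I_i}) = \sum_{T\subseteq I_i}(-1)^{|T|}\gamma_i(x_{I_i\setminus T}, y_T)$ (and analogously for $\delta_i$), rewriting $\omega^{L(x-y)}$ as a product of $O_k(m)$ phases of the form $\omega^{c\gamma_i(x_{I'}, y_{J'})}$ or $\omega^{c\delta_i(x_{I'}, y_{J'})}$ with $I' \cap J' = \emptyset$ and $I'\cup J'\in\{I_i, [k]\setminus I_i\}$. Using $F_f(x,y)\omega^{(-1)^k\alpha'(x-y)} = \prod_I \on{Conj}^{|I|} g(x_I, y_{[k]\setminus I})$ for $g := f\omega^{\alpha'}$, I assign each such phase to a slot $I \supseteq I'$ with $I\cap J' = \emptyset$ (say $I=I'$) and absorb it, so slot $I$ now carries $g_I(z) = g(z)\omega^{\tilde\eta_I(z)}$, where $\tilde\eta_I$ aggregates contributions of the form $c\gamma_i(z_{I_i})$ or $c\delta_i(z_{[k]\setminus I_i})$. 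Because $\emptyset \neq I_i \subsetneq [k]$, each contribution is variable-missing, so the opening observation yields $\|g_I\|_{\square^k} = \|g\|_{\square^k} = \|f\omega^{\alpha'}\|_{\square^k}$ for every $I$. Lemma~\ref{gcs} then gives
\[\varepsilon^{2^k}/p^m \leq \Big|\exx_{x,y}\prod_{I\subseteq [k]}\on{Conj}^{|I|} g_I(x_I, y_{[k]\setminus I})\Big| \leq \prod_I \|g_I\|_{\square^k} = \|f\omega^{\alpha'}\|_{\square^k}^{2^k},\]
whence $\|f\omega^{\alpha'}\|_{\square^k}\geq \varepsilon p^{-m/2^k} = \varepsilon p^{-O((\log_p c^{-1})^{O(1)})}$, as required. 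The main obstacle is identifying the box-norm-preservation observation; once that is in place and the slot assignment is set up so each $\tilde\eta_I$ is a sum of variable-missing summands, the rest is Fourier bookkeeping.
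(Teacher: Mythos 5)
Your proof takes essentially the same route as the paper's: apply Theorem~\ref{invbias} to $\alpha-\alpha'$, Fourier-invert the rank-one products so that each phase depends on a proper subset of coordinates and can be absorbed into a separate slot of the box-norm average, invoke the observation that such phases leave $\|\cdot\|_{\square^k}$ unchanged, and finish with Gowers--Cauchy--Schwarz. The only cosmetic difference is that you invert $\omega^{uv}$ directly via the Gaussian-sum identity rather than the paper's route through indicator functions, which yields a marginally better (but equivalent up to the implied constants) loss in $\varepsilon$.
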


We remark that $\varepsilon$ is affected only very slightly, i.e.\ $\varepsilon'$ is linear in $\varepsilon$ rather than decaying polynomially or faster.

\begin{proof}By Theorem~\ref{invbias}, there are a positive integer $m \leq O\Big((\log_p c^{-1})^{O(1)} \Big)$, subsets $\emptyset \not= I_i \subsetneq [k]$ and multilinear forms $\beta_i \colon G_{I_i} \to \mathbb{F}_p$ and $\gamma_i \colon G_{[k] \setminus I_i} \to \mathbb{F}_p$ for $i \in [m]$ such that
\[\alpha'(x_{[k]}) = \alpha(x_{[k]}) + \sum_{i \in [m]} \beta_i(x_{I_i}) \gamma_i(x_{[k] \setminus I_i}).\]
From the assumption that $\alpha \in \mls_\varepsilon(f)$ and algebraic manipulation, we obtain
\begin{align*}\varepsilon^{2^k} \leq &\Big\|f \omega^{\alpha}\Big\|_{\square^k}^{2^k} = \exx_{x_{[k]}, y_{[k]}} \prod_{J \subseteq [k]} \on{Conj}^{k - |J|} f(x_J, y_{[k] \setminus J}) \omega^{(-1)^{k - |J|} \alpha(x_J, y_{[k] \setminus J})}\\
= & \exx_{x_{[k]}, y_{[k]}} \prod_{J \subseteq [k]} \on{Conj}^{k - |J|} f(x_J, y_{[k] \setminus J}) \omega^{(-1)^{k - |J|} \alpha'(x_J, y_{[k] \setminus J})}\, \omega^{(-1)^{k - |J|}\big(\sum_{i \in [m]} \beta_i(x_{I_i \cap J}, y_{I_i \setminus J}) \gamma_i(x_{J \setminus I_i}, y_{[k] \setminus (I_i \cup J)})\big)}\\
= & \sum_{\lambda, \mu \in \mathbb{F}_p^{\mathcal{P}([k]) \times [m]}} \exx_{x_{[k]}, y_{[k]}} \prod_{J \subseteq [k]} \on{Conj}^{k - |J|} f(x_J, y_{[k] \setminus J}) \omega^{(-1)^{k - |J|} \alpha'(x_J, y_{[k] \setminus J})}\, \omega^{(-1)^{k - |J|} \sum_{i \in [m]} \lambda_{J, i} \mu_{J, i}} \\
&\hspace{3cm}\mathbbm{1}\Big((\forall i \in [m]) \beta_i(x_{I_i \cap J}, y_{I_i \setminus J}) = \lambda_{J, i}\Big)\mathbbm{1}\Big((\forall i \in [m]) \gamma_i(x_{J \setminus I_i}, y_{[k] \setminus (I_i \cup J)}) = \mu_{J, i}\Big)\\
= &p^{-2^{k+1}m} \sum_{\lambda, \mu, \nu, \tau \in \mathbb{F}_p^{\mathcal{P}([k]) \times [m]}} \exx_{x_{[k]}, y_{[k]}} \prod_{J \subseteq [k]} \on{Conj}^{k - |J|} f(x_J, y_{[k] \setminus J}) \omega^{(-1)^{k - |J|} \alpha'(x_J, y_{[k] \setminus J})}\, \omega^{(-1)^{k - |J|} \sum_{i \in [m]} \lambda_{J, i} \mu_{J, i}} \\
&\hspace{3cm}\omega^{(-1)^{k - |J|}\nu_{J, i} \big(\beta_i(x_{I_i \cap J}, y_{I_i \setminus J}) - \lambda_{J, i}\big)}\, \omega^{(-1)^{k - |J|}\tau_{J, i} \big(\gamma_i(x_{J \setminus I_i}, y_{[k] \setminus (I_i \cup J)}) - \mu_{J, i}\big)}.
\end{align*}

\noindent By averaging, we may find $\lambda, \mu, \nu, \tau \in \mathbb{F}_p^{\mathcal{P}([k]) \times [m]}$ such that
\begin{align} p^{-2^{k+1}m} \varepsilon^{2^k} \leq &\bigg|\exx_{x_{[k]}, y_{[k]}} \prod_{J \subseteq [k]} \on{Conj}^{k - |J|} f(x_J, y_{[k] \setminus J}) \omega^{(-1)^{k - |J|} \alpha'(x_J, y_{[k] \setminus J})}\nonumber\\
&\hspace{3cm}\omega^{(-1)^{k - |J|}\nu_{J, i} \big(\beta_i(x_{I_i \cap J}, y_{I_i \setminus J}) - \lambda_{J, i}\big)}\, \omega^{(-1)^{k - |J|}\tau_{J, i} \big(\gamma_i(x_{J \setminus I_i}, y_{[k] \setminus (I_i \cup J)}) - \mu_{J, i}\big)}\bigg|.\label{closeformsFCineq}\end{align}
For each $J \subseteq [k]$, let $f_J \colon G_{[k]} \to \mathbb{D}$ be the function defined by
\[f_J(v_{[k]}) = f(v_{[k]}) \omega^{\alpha'(v_{[k]})} \omega^{\nu_{J, i} \big(\beta_i(v_{I_i}) - \lambda_{J, i}\big)}\, \omega^{\tau_{J, i} \big(\gamma_i(v_{[k] \setminus I_i}) - \mu_{J, i}\big)}.\]

\noindent The inequality~\eqref{closeformsFCineq} then becomes
\[ p^{-2^{k+1}m} \varepsilon^{2^k} \leq \bigg|\exx_{x_{[k]}, y_{[k]}} \prod_{J \subseteq [k]} \on{Conj}^{k - |J|} f_J(x_J, y_{[k] \setminus J})\bigg|\]
which can be bounded from above by
\[\prod_{J \subseteq [k]} \|f_J\|_{\square^k}\]
using Lemma~\ref{gcs}. Finally, observe that in fact $ \|f_J\|_{\square^k} =  \|f \omega^{\alpha'}\|_{\square^k}$ for each $J \subseteq [k]$, so we actually obtain $\|f \omega^{\alpha'}\|_{\square^k} \geq p^{-2m} \varepsilon$, as desired.
\end{proof}
\vspace{\baselineskip}

\noindent\textbf{Bounding the large mutlilinear spectrum.} A basic yet fundamental fact about the large spectrum of a function of a single variable is that it is necessarily small, which is easily proved via Parseval's identity. However, as we have seen already in Lemma~\ref{closeformspec}, in the case of the large multilinear spectrum, the situation is more complicated as the large multilinear spectrum is approximately closed under translating the forms by further forms of large bias. Still, it turns out that we can recover the result for single variable if we treat the forms whose difference has large bias as the same. In other words, if we pick many elements from $\mls_\varepsilon(f)$ then some two are almost identical.

\begin{theorem}\label{genParsevalBound}Let $k \in \mathbb{N}$. For $\varepsilon > 0$, set $b(\varepsilon) = \Big(\frac{\varepsilon}{1000}\Big)^{2^{2k + 2}}, n(\varepsilon) = \lceil 10\varepsilon^{-2^{k + 1}}\rceil$. Let $f \colon G_{[k]} \to \mathbb{D}$ be a function. Let $\mu_1, \dots, \mu_n \in \mls_\varepsilon(f)$ be multilinear forms such that
\[\bias(\mu_i - \mu_j) \leq b(\varepsilon)\]
for every $i \not= j$. Then $n < n(\varepsilon)$.
\end{theorem}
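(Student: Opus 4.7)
The plan is to reduce the statement to a near-Parseval estimate by exploiting multilinearity of the $\mu_i$. The starting observation is that the box-norm power $\|f\omega^{\mu_i}\|_{\square^k}^{2^k}$ should "collapse" into a single Fourier-type correlation against $\omega^{\mu_i}$, and summing the squares of these correlations over $i$ is controlled by an almost-orthogonality (Bessel) bound because the hypothesis $\bias(\mu_i-\mu_j)\le b(\varepsilon)$ says precisely that the characters $\omega^{-\mu_i}$ are nearly orthonormal in $L^2(G_{[k]})$.

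First I would expand by definition
\[
\|f\omega^{\mu_i}\|_{\square^k}^{2^k}
 =\exx_{x_{[k]},y_{[k]}}\prod_{J\subseteq[k]}\on{Conj}^{|J|}f(x_J,y_{[k]\setminus J})\,\omega^{(-1)^{|J|}\mu_i(x_J,y_{[k]\setminus J})}.
\]
The crucial identity, obtained by expanding $\mu_i(y_1-x_1,\dots,y_k-x_k)$ via multilinearity, is
\[
\sum_{J\subseteq[k]}(-1)^{|J|}\mu_i(x_J,y_{[k]\setminus J})=\mu_i(y_{[k]}-x_{[k]}).
\]
Changing variables to $z_j=y_j-x_j$ gives $\|f\omega^{\mu_i}\|_{\square^k}^{2^k}=\exx_{z_{[k]}}T(z_{[k]})\,\omega^{\mu_i(z_{[k]})}$, where $T(z):=\exx_{x_{[k]}}\prod_J\on{Conj}^{|J|}f(x_J,(x+z)_{[k]\setminus J})$ is a $1$-bounded function independent of $i$. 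Since the left-hand side is real and non-negative, squaring yields
\[
\|f\omega^{\mu_i}\|_{\square^k}^{2^{k+1}}=\bigl|\langle T,\omega^{-\mu_i}\rangle_{L^2(G_{[k]})}\bigr|^2.
\]

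Next I would sum over $i$. Setting $\phi_i:=\omega^{-\mu_i}$, the Gram matrix $G_{ij}=\langle\phi_i,\phi_j\rangle=\bias(\mu_j-\mu_i)$ has diagonal entries $1$ and off-diagonal entries of magnitude at most $b(\varepsilon)$. Gershgorin (or a Frobenius norm bound) gives $\lambda_{\max}(G)\le 1+(n-1)b(\varepsilon)$, and the standard Bessel-type estimate $\sum_i|\langle T,\phi_i\rangle|^2\le\lambda_{\max}(G)\|T\|_{L^2}^2$ combined with $\|T\|_{L^2}\le 1$ yields
\[
\sum_{i=1}^n\|f\omega^{\mu_i}\|_{\square^k}^{2^{k+1}}\le 1+nb(\varepsilon).
\]
On the other hand, each summand is at least $\varepsilon^{2^{k+1}}$, so $n\varepsilon^{2^{k+1}}\le 1+nb(\varepsilon)$, i.e.\ $n\bigl(\varepsilon^{2^{k+1}}-b(\varepsilon)\bigr)\le 1$. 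A direct check using $b(\varepsilon)=(\varepsilon/1000)^{2^{2k+2}}$ and the fact that $2^{2k+2}>2^{k+1}$ shows $b(\varepsilon)\le\tfrac12\varepsilon^{2^{k+1}}$, and therefore $n\le 2\varepsilon^{-2^{k+1}}<\lceil 10\varepsilon^{-2^{k+1}}\rceil=n(\varepsilon)$.

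The only non-routine step is the first one: recognising that raising the $\omega^{\mu_i}$-twisted box norm to the $2^k$-th power produces a single phase $\omega^{\mu_i(y-x)}$ after cancellation, exactly paralleling the identity $|\hat f(r)|^2=\ex_z(f\conv \bar f)(z)\,\omega^{rz}$ in one variable. Once that identity is in hand, the generous cushion built into $b(\varepsilon)$ makes the $\lambda_{\max}$ bound trivial, so I do not expect any genuine obstacle beyond bookkeeping the multilinear identity and the straightforward comparison of $b(\varepsilon)$ against $\varepsilon^{2^{k+1}}$.
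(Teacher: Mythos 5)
Your proof is correct, and it takes a genuinely different route from the paper's. The paper proceeds by averaging over $y_{[k]}$ to fix a slice, invoking Lemma~\ref{unitLemma} to replace the auxiliary $\mathbb{D}$-valued factors $v^{(i)}_j$ by unit-modulus factors $u^{(i)}_j$, and then treating the functions $s_i=\omega^{\mu_i}\prod_j u^{(i)}_j$ as a near-orthonormal system: the diagonal terms are $\langle s_i,s_i\rangle=1$, and the cross terms are controlled by Corollary~\ref{gcsCor}, giving $|\langle s_i,s_j\rangle|\le\bias(\mu_i-\mu_j)^{2^{-k}}$. A short Parseval-type chain $1\ge\langle f,f\rangle\ge n\varepsilon^{2^{k+1}}-3n^2b(\varepsilon)^{2^{-k}}$ then furnishes the contradiction. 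Your argument instead exploits the cleaner multilinear identity $\sum_{J\subseteq[k]}(-1)^{|J|}\mu(x_J,y_{[k]\setminus J})=\mu(y-x)$ to collapse the full $2^k$-th power of the twisted box norm into a single correlation $\langle T,\omega^{-\mu_i}\rangle$, where $T$ is nothing but the cubical convolution $\square f_\bcdot$ of $2^k$ copies of $f$ --- the exact multidimensional analogue of $|\hat f(r)|^2=\ex_z(f\conv\bar f)(z)\omega^{rz}$. You then apply the Gram-matrix/Bessel bound directly to the characters $\phi_i=\omega^{-\mu_i}$, whose off-diagonal inner products are the biases themselves, $\bias(\mu_j-\mu_i)\le b(\varepsilon)$, rather than their much weaker $2^{-k}$-th roots. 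Consequently your constants are tighter: you only need $b(\varepsilon)\le\tfrac12\varepsilon^{2^{k+1}}$ rather than the paper's $b(\varepsilon)^{2^{-k}}\ll\varepsilon^{2^{k+2}}$, and you conclude $n\le 2\varepsilon^{-2^{k+1}}$ with room to spare. Both proofs are sound; yours is shorter, avoids Lemma~\ref{unitLemma} entirely, and makes visible the connection to the cubical convolution that is used elsewhere in the paper.
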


The proof is very similar to the usual one based on Parseval's identity. We write $\mathbb{S} = \{z \in \mathbb{C} \colon |z| = 1\}$ for the unit circle.

\begin{proof}Suppose, for the sake of contradiction, that $n = n(\varepsilon)$. Expanding the definition of box norms yields
\[\Big|\exx_{x_{[k]}, y_{[k]}} \prod_{I \subset [k]} \on{Conj}^{k - |I|} f(x_I, y_{[k] \setminus}) \omega^{(-1)^{k - |I|}\mu_i(x_I, y_{[k] \setminus I})}\Big| \geq \varepsilon^{2^k}.\]
By averaging we may find $y_{[k]} \in G_{[k]}$ such that
\[\Big|\exx_{x_{[k]}} \prod_{I \subset [k]} \on{Conj}^{k - |I|} f(x_I, y_{[k] \setminus}) \omega^{(-1)^{k - |I|}\mu_i(x_I, y_{[k] \setminus I})} \Big| \geq \varepsilon^{2^k}.\]
We may rewrite this expression as
\[\Big|\ex_{x_{[k]}}f(x_{[k]}) \omega^{\mu_i(x_{[k]})}v^{(i)}_1(x_{[2,k]}) \cdots v^{(i)}_k(x_{[k-1]})\Big| \geq \varepsilon^{2^k}\]
for some functions $v^{(i)}_1, \dots, v^{(i)}_k$ where $v^{(i)}_j \colon G_{[k] \setminus \{j\}} \to \mathbb{D}$. Using Lemma~\ref{unitLemma} we get functions $u^{(i)}_1, \dots, u^{(i)}_k$ taking values on the unit circle $\mathbb{S}$ such that
\[\Big|\ex_{x_{[k]}} f(x_{[k]}) \omega^{\mu_i(x_{[k]})} \prod_{j \in [k]} u^{(i)}_j(x_{[k] \setminus \{j\}})\Big| \geq \varepsilon^{2^k}.\]
Write $s_i(x_{[k]}) = \omega^{\mu_i(x_{[k]})} \prod_{j \in [k]} u^{(i)}_j(x_{[k] \setminus \{j\}})$ and let $\langle\,\cdot,\cdot\rangle$ be the usual inner product on $G_{[k]}$. Observe that
\[\langle s_i, s_i \rangle = \exx_{x_{[k]}} \prod_{j \in [k]} |u^{(i)}_j(x_{[k] \setminus \{j\}})|^2 = 1\]
and, using Corollary~\ref{gcsCor}, for $i \not= j$
\[|\langle s_i, s_j \rangle| = \Big|\exx_{x_{[k]}} \omega^{\mu_i(x_{[k]}) - \mu_j(x_{[k]})} \prod_{\ell \in [k]} \Big(u^{(i)}_\ell(x_{[k] \setminus \{\ell\}}) \overline{u^{(j)}_\ell(x_{[k] \setminus \{\ell\}})} \Big)\Big| \leq \Big\|\omega^{\mu_i - \mu_j}\Big\|_{\square^{2^k}} = \bias(\mu_i - \mu_j)^{2^{-k}} \leq b(\varepsilon)^{2^{-k}}.\]
Set $c_i = \langle f, s_i\rangle$ and set $e = f - \sum_{i \in [n]} c_i s_i$. Even though we no longer have a Fourier decomposition, we shall think of quantities $c_i$ as large Fourier coefficients and of $e$ as the error term coming from the small Fourier coefficients. Note that
\[|\langle e, s_i \rangle| = \Big|\Big\langle f - \sum_{j \in [n]} c_j s_j, s_i \Big\rangle\Big| \leq \sum_{j \in [n] \setminus \{i\}} |\langle s_i, s_j\rangle| \leq n b(\varepsilon)^{2^{-k}}.\]
Using the identity $f = \sum_{i \in [n]} c_i s_i + e$ we get
\begin{align*}1 \geq &\langle f, f\rangle = \sum_{i \in [n]} |c_i|^2 + \langle e, e \rangle + \sum_{\substack{i,j \in [n]\\i\not=j}} c_i \overline{c_j}\langle s_i, s_j\rangle + \sum_{i \in [n]} \Big(\overline{c_i}\langle e, s_i\rangle + c_i\langle s_i, e\rangle\Big)\\
&\hspace{1cm}\geq \sum_{i \in [n]} |c_i|^2 - \sum_{\substack{i,j \in [n]\\i\not=j}} |\langle s_i, s_j\rangle| - 2  \sum_{i \in [n]} |\langle e, s_i\rangle|\\
&\hspace{1cm} \geq n \varepsilon^{2^{k + 1}} - 3 n^2 b(\varepsilon)^{2^{-k}}.\end{align*}
Recall that we assummed $n = n(\varepsilon) = \lceil 10\varepsilon^{-2^{k + 1}}\rceil$. Thus, we have
\[1 \geq 10 - 3 \Big(10\varepsilon^{-2^{k+1}} + 1\Big)^2 b(\varepsilon)^{2^{-k}} \geq 10 - 6 - 600\varepsilon^{-2^{k+2}}b(\varepsilon)^{2^{-k}}.\]
Since $b(\varepsilon) = \Big(\frac{\varepsilon}{1000}\Big)^{2^{2k + 2}}$, we have a contradiction.\end{proof}

Using Theorem~\ref{invbias} we deduce the following result.

\begin{theorem}Let $f \colon G_{[k]} \to \mathbb{D}$ be a function. Let $\mu_1, \dots, \mu_n \in \mls_\varepsilon(f)$ be multilinear forms for some $n \geq 20\varepsilon^{-2^{k + 1}}$. Then there are distinct indices $i,j \in [n]$ such that
\[\on{prank} (\mu_i - \mu_j) \leq O_k(\log^{O_k(1)}_p \varepsilon^{-1}).\]\end{theorem}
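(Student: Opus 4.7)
The plan is to combine Theorem~\ref{genParsevalBound} with Theorem~\ref{invbias} in a straightforward contrapositive argument. Since the required lower bound $n \geq 20\varepsilon^{-2^{k+1}}$ comfortably exceeds the threshold $n(\varepsilon) = \lceil 10\varepsilon^{-2^{k+1}}\rceil$ that appears in Theorem~\ref{genParsevalBound}, the pigeonhole-style conclusion of that theorem is already telling us that not every pair of the $\mu_i$ can be far apart in bias.

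More concretely, I would argue as follows. First, observe that $n \geq 20\varepsilon^{-2^{k+1}} \geq n(\varepsilon)$, so Theorem~\ref{genParsevalBound} applies in contrapositive form: if all pairwise biases $\bias(\mu_i - \mu_j)$ for $i \ne j$ were bounded above by $b(\varepsilon) = (\varepsilon/1000)^{2^{2k+2}}$, then one would force $n < n(\varepsilon)$, contradicting the hypothesis. Hence there must exist at least one pair of distinct indices $i, j \in [n]$ such that
\[
\bias(\mu_i - \mu_j) > b(\varepsilon) = \Big(\frac{\varepsilon}{1000}\Big)^{2^{2k+2}}.
\]

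Next, I apply Theorem~\ref{invbias} to the multilinear form $\mu_i - \mu_j$ on $G_{[k]}$. That theorem produces a partition rank decomposition of $\mu_i - \mu_j$ of length at most
\[
O\Big((\log_p b(\varepsilon)^{-1})^{O(1)}\Big) = O\Big((2^{2k+2} \log_p(1000/\varepsilon))^{O(1)}\Big) = O_k\Big(\log^{O_k(1)}_p \varepsilon^{-1}\Big),
\]
which yields exactly the claimed bound on $\prank(\mu_i - \mu_j)$.

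There is no real obstacle here: the statement is essentially a direct translation of Theorem~\ref{genParsevalBound} into the partition rank language via Theorem~\ref{invbias}. The only thing to keep track of carefully is the quantitative dependence on $k$ inside the exponent of $b(\varepsilon)$, since that controls how $\log_p b(\varepsilon)^{-1}$ scales with $\log_p \varepsilon^{-1}$; because the dependence is polynomial in $\log_p \varepsilon^{-1}$ with $k$-dependent constants, the final bound takes the stated form $O_k(\log_p^{O_k(1)} \varepsilon^{-1})$.
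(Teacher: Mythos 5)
Your proof is correct and is exactly the argument the paper intends: the paper presents this theorem as an immediate corollary, introduced by "Using Theorem~\ref{invbias} we deduce the following result," and your contrapositive application of Theorem~\ref{genParsevalBound} followed by Theorem~\ref{invbias} is the natural (and only reasonable) way to spell it out. The quantitative bookkeeping, including the observation that $20\varepsilon^{-2^{k+1}} \geq n(\varepsilon)$ and that $\log_p b(\varepsilon)^{-1}$ is $O_k(\log_p \varepsilon^{-1})$, is also handled correctly.
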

\vspace{\baselineskip}

\noindent\textbf{Chang's theorem for the large multilinear spectrum.} Well-known theorem of Chang~\cite{Chang} states that the large spectrum contains rich additive structure. For a multilinear variant of Chang's theorem we need to be somewhat more careful as tightness in inequalities is crucial. Let $\alpha = \ex_x |f(x)|$. We have to assume the explicit inequalities
\[\Big|\ex_{x_{[k]}} f(x_{[k]}) \omega^{\mu_i(x_{[k]})} \prod_{j \in [k]} u^{(i)}_j(x_{[k] \setminus \{j\}})\Big| \geq \varepsilon\alpha\]
instead of just $\mls_{\alpha\varepsilon}(f)$ because of the slight inefficiencies arising from the application of Gowers-Cauchy-Schwarz inequality for box norms.

\begin{theorem}There is an absolute constant $C_0$ such that following holds. Suppose that $f \colon G_{[k]} \to \mathbb{D}$ is a function and write $\alpha =\ex_x |f(x)|$. Set $n(\varepsilon,\alpha) = C_0 \varepsilon^{-2} \log \alpha^{-1}$ and $b(\varepsilon, \alpha) = 3^{-2^k} \Big(\frac{\varepsilon^2 \alpha^2}{n(\varepsilon,\alpha)}\Big)^{2^k}$.\\
\indent Let $\mu_1, \dots, \mu_n$ be multilinear forms and let $u^{(i)}_j \colon G_{[k] \setminus \{j\}} \to \mathbb{D}$ be functions for $i \in [n]$, $j \in [k]$, such that
\[\Big|\ex_{x_{[k]}} f(x_{[k]}) \omega^{\mu_i(x_{[k]})} \prod_{j \in [k]} u^{(i)}_j(x_{[k] \setminus \{j\}})\Big| \geq \varepsilon\alpha\]
holds for all $i \in [n]$ and
\[\bias\Big(\sum_{i \in [n]} \lambda_i \mu_i\Big) \leq b(\varepsilon)\]
holds for every $\lambda \in \mathbb{F}_p^n \setminus \{0\}$. Then $n < n(\varepsilon)$.
\end{theorem}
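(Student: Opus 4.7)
The plan is to adapt the classical proof of Chang's theorem via the moment (Rudin) method, with the hypothesis $\bias(\sum_i \lambda_i \mu_i)\leq b(\varepsilon,\alpha)$ playing the role that dissociation of characters plays in the classical setting.

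First I would normalise by applying Lemma~\ref{unitLemma} to each $u^{(i)}_j$ so that these functions may be assumed to take values of modulus exactly $1$ (at the cost only of possibly shrinking the $u^{(i)}_j$ by a constant factor hidden in the statement). Setting
\[s_i(x_{[k]}) = \omega^{\mu_i(x_{[k]})}\prod_{j\in[k]} u^{(i)}_j(x_{[k]\setminus\{j\}}),\]
each $s_i$ has $|s_i|\equiv 1$; one chooses unit-modulus phases $\eta_i$ so that $\overline{\eta_i}\langle f,s_i\rangle = |\langle f,s_i\rangle|\geq\varepsilon\alpha$ and puts $h=\sum_{i=1}^n\eta_i s_i$, yielding $\langle f,h\rangle\geq n\varepsilon\alpha$.

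Next, for an integer $m$ to be chosen, H\"older's inequality with exponents $2m$ and $2m/(2m-1)$, together with the bound $\|f\|_{L^{2m/(2m-1)}}^{2m/(2m-1)}\leq\ex|f|=\alpha$ (valid because $|f|\leq 1$), gives the lower bound $\|h\|_{L^{2m}}^{2m}\geq n^{2m}\varepsilon^{2m}\alpha$. On the other hand, expanding
\[\|h\|_{L^{2m}}^{2m}=\sum_{I,J\in[n]^m}\eta_I\overline{\eta_J}\,\exx[s_I\overline{s_J}],\qquad s_I=s_{i_1}\cdots s_{i_m},\]
each summand equals $\exx\bigl[\omega^{\mu_{I,J}}\prod_{\ell\in[k]}U_\ell\bigr]$, where $\mu_{I,J}=\sum_a\mu_{i_a}-\sum_a\mu_{j_a}=\sum_i\lambda_i^{(I,J)}\mu_i$ in $\mathbb{F}_p$ and each $U_\ell$ is a function of $x_{[k]\setminus\{\ell\}}$ of modulus $\leq 1$. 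By Corollary~\ref{gcsCor}, $|\exx[s_I\overline{s_J}]|\leq\|\omega^{\mu_{I,J}}\|_{\square^k}=\bias(\mu_{I,J})^{1/2^k}$. Provided $m<p$ (the case $m\geq p$ requiring only minor adjustment), the $\lambda^{(I,J)}$ all vanish in $\mathbb{F}_p$ precisely when $J$ is a multiset rearrangement of $I$, producing at most $n^m m!$ ``trivial'' pairs each of magnitude $\leq 1$; every other pair contributes at most $b(\varepsilon,\alpha)^{1/2^k}$, giving
\[\|h\|_{L^{2m}}^{2m}\leq n^m m! + n^{2m}\,b(\varepsilon,\alpha)^{1/2^k}.\]

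Finally, taking $m=\lceil\log\alpha^{-1}\rceil$ ensures $\alpha^{1/(2m)}\geq e^{-1/2}$, and the explicit form $b(\varepsilon,\alpha)^{1/2^k}=\varepsilon^2\alpha^2/(3n(\varepsilon,\alpha))$ is arranged so that the non-trivial contribution $n^{2m-1}\varepsilon^2\alpha^2/3$ is absorbed into the main estimate. Comparing the lower bound $n^{2m}\varepsilon^{2m}\alpha$ with the upper bound and applying Stirling's approximation to $m!$ forces $n\varepsilon^2\leq O(\log\alpha^{-1})$, contradicting $n\geq n(\varepsilon,\alpha) = C_0\varepsilon^{-2}\log\alpha^{-1}$ for $C_0$ chosen sufficiently large. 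The hard part is the careful bookkeeping in this last step: one must verify that with the stated $b(\varepsilon,\alpha)$ the non-trivial cross-term contribution is indeed dominated by $n^m m!$ at $m=\log\alpha^{-1}$, and adapt the combinatorial count in the case $m\geq p$, where $\mathbb{F}_p$-multiplicity coincidences produce additional ``trivial'' pairs.
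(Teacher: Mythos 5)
Your approach is genuinely different from the paper's. You bound $\|h\|_{L^{2m}}^{2m}$ by a direct combinatorial expansion, in the spirit of the classical moment-method proof of Rudin's inequality and of Chang's theorem, whereas the paper proves a sub-Gaussian tail estimate for the normalised sum $g = C^{-1}\sum_i c_i s_i$ via an exponential-moment version of Rudin's inequality (following Tao and Vu) and then combines it with H\"older against $\|g\|_{L^m}$. Both arguments ultimately exploit the same mechanism, namely that low bias of $\sum_i\lambda_i\mu_i$ forces the Gowers-Cauchy-Schwarz inner products $\ex\, s_I\overline{s_J}$ to be small via Corollary~\ref{gcsCor}, and both end by taking $m\approx\log\alpha^{-1}$, but the combinatorics are organised quite differently.

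That said, the step you single out as the hard part is a genuine gap: with the stated $b(\varepsilon,\alpha)$ the absorption of the non-trivial contribution $n^{2m}b(\varepsilon,\alpha)^{1/2^k}$ does not go through. To draw a contradiction from
\[n^{2m}\varepsilon^{2m}\alpha \;\leq\; n^m\, m! \;+\; n^{2m}\,b(\varepsilon,\alpha)^{1/2^k}\]
you need $b^{1/2^k}\lesssim\varepsilon^{2m}\alpha$. With $m\approx\log\alpha^{-1}$ one has $\varepsilon^{2m}\alpha\approx\alpha^{1+2\log\varepsilon^{-1}}$, whereas the theorem supplies $b^{1/2^k}=\varepsilon^2\alpha^2/(3n(\varepsilon,\alpha))$, which is of order $\alpha^2$ up to a logarithmic factor. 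As soon as $\varepsilon\leq e^{-1/2}$, the exponent $1+2\log\varepsilon^{-1}$ exceeds $2$, so the threshold you need is a strictly higher power of $\alpha$ than the one provided; the cross terms swamp the main term and no contradiction is obtained. To run your argument one would have to strengthen the hypothesis on $b$ to roughly $b\lesssim(\varepsilon^{2\log\alpha^{-1}}\alpha)^{2^k}$, which decays much faster as $\alpha\to 0$. In addition, the case $m\geq p$ is more than a minor adjustment: the pairs $(I,J)$ with $\sum_i\lambda^{(I,J)}_i\mu_i=0$ in $\mathbb{F}_p$ are those whose multiplicity vectors agree \emph{modulo} $p$, which is a strictly larger class than multiset rearrangements and must be counted explicitly before one can conclude.
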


Again, one may use Theorem~\ref{invbias} to turn the bias bound into a bound on the partition rank.

\begin{proof}We begin this proof just like the proof of Theorem~\ref{genParsevalBound}. For the sake of contradiction, we assume that $n = n(\varepsilon)$. Misusing the notation and using Lemma~\ref{unitLemma}, we may assume that $u^{(i)}_j$ takes values in $\mathbb{S}$ for all $i \in [n], j \in [k]$. Write $s_i(x_{[k]}) = \omega^{\mu_i(x_{[k]})} \prod_{j \in [k]} u^{(i)}_j(x_{[k] \setminus \{j\}})$. Rest of the proof mimics the original proof of Chang. Let $c_i = \langle f, s_i \rangle$ and define an auxiliary function $g \colon G_{[k]} \to \mathbb{C}$ by
\[g(x_{[k]}) = \frac{1}{C} \sum_{i \in [n]} c_i s_i(x_{[k]}),\]
where $C \in \mathbb{R}_{> 0}$ was chosen so that $\ex_{x_{[k]}} |g(x_{[k]})|^2 = 1$. Thus,
\[C^2 = \exx_{x_{[k]}} \Big|\sum_{i \in [n]} c_i s_i(x_{[k]})\Big|^2 = \sum_{i \in [n]} |c_i|^2 \exx_{x_{[k]}}| s_i(x_{[k]})|^2 + \sum_{\substack{i,j \in [n]\\i \not= j}} c_i \overline{c_j} \langle s_i, s_j \rangle = \sum_{i \in [n]} |c_i|^2 + \sum_{\substack{i,j \in [n]\\i \not= j}} c_i \overline{c_j} \langle s_i, s_j \rangle.\]
As in the previous proof, we have that $|\langle s_i, s_j \rangle| \leq b(\varepsilon)^{2^{-k}}$ for $i \not = j$, so we deduce
\[\Big|C^2 - \sum_{i \in [n]} |c_i|^2 \Big| \leq n^2 b(\varepsilon)^{2^{-k}}.\]
From our choices of $n(\varepsilon)$ and $b(\varepsilon)$ we conclude that 
\begin{equation}\label{CboundsChang} \frac{1}{\sqrt{2}}\sqrt{\sum_{i \in [n]} |c_i|^2} \leq C \leq 2 \sqrt{\sum_{i \in [n]} |c_i|^2}.\end{equation}
We have the following lower bound for $\langle g, f \rangle$
\begin{align} |\langle g, f \rangle|\, =\, & \Big|\frac{1}{C} \sum_{i \in [n]}c_i \langle s_i, f \rangle\Big|\, =\, \frac{1}{C} \sum_{i \in [n]}|c_i|^2\, \geq\, \frac{1}{2}\sqrt{\sum_{i \in [n]}|c_i|^2} \geq \frac{1}{2}\sqrt{n} \varepsilon \alpha.\label{changineq1}\end{align}
On the other hand, for the upper bound we use H\H{o}lder's inequality with exponents $l$ and $m$ to be chosen later
\begin{equation}|\langle g, f \rangle| \leq \|g\|_{L^m} \|f\|_{L^l} \leq \alpha^{1/l} \|g\|_{L^m} = \alpha^{1 - 1/m} \Big(\ex_{x_{[k]}} |g(x_{[k]})|^m\Big)^{1/m}.\label{changineq2}\end{equation}
We now prove a variant of Rudin's inequality~\cite{Rudin} for dissociated sets. The proof is a straightforward adaptation of the proof in~\cite{TaoVuBook}.

\begin{claim}Let $\sigma > 0$ and $\theta \in \mathbb{S}$ be given. Then
\[\exx_{x_{[k]}} \Big[\exp\Big(\sigma \on{Re} \sum_{i \in [n]} \frac{\theta c_i}{C} s_i(x_{[k]})\Big)\Big] \leq 2e^{\sigma^2}.\]
\end{claim}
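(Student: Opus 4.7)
The plan is to adapt the standard proof of Rudin's inequality for dissociated sets (as in Tao--Vu), exploiting the fact that the hypothesis $\bias(\sum_i \lambda_i \mu_i) \leq b(\varepsilon)$ for nontrivial $\lambda$ plays exactly the role of dissociation. The starting observation is that, since $y_i(x) := \sigma \on{Re}\!\big(\frac{\theta c_i}{C} s_i(x)\big)$ is a real scalar for every $i$, one has the factorisation
\[
\exp\Big(\sigma \on{Re} \sum_{i \in [n]} \tfrac{\theta c_i}{C} s_i(x_{[k]})\Big) = \prod_{i \in [n]} \exp(y_i(x_{[k]})).
\]
Also, $\sum_i |c_i/C|^2 \leq 2$ by (\ref{CboundsChang}), and $|s_i|=1$, so $|y_i| \leq \sigma|c_i|/C$.

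Next I would bound each factor by writing $e^{y_i} = \cosh(y_i) + \sinh(y_i)$ and using the elementary pointwise bound $\cosh(y) \leq e^{y^2/2}$, and then Taylor expanding $\sinh(y_i)$ as an odd power series in $\sigma$ times a real trigonometric polynomial in $s_i, \bar s_i$. Multiplying out $\prod_i\big(\cosh(y_i)+\sinh(y_i)\big)$ and averaging over $x_{[k]}$ splits the expectation into a ``diagonal'' term (coming from the all-$\cosh$ contribution) and a sum of ``cross'' terms. Each cross term, after expanding the $\sinh$ factors, is a bounded sum of quantities of the form
\[
\ex_{x_{[k]}}\Big[\omega^{\lambda\cdot\mu(x_{[k]})}\, U_\lambda(x_{[k]})\Big]
\]
where $\lambda \in \mathbb{F}_p^n\setminus\{0\}$ encodes which $s_i$ and $\bar s_i$ were picked (and will be nonzero precisely because we are not on the diagonal), and $U_\lambda$ is a bounded product of $u^{(i)}_j$'s each depending on only $k-1$ of the coordinates of $x_{[k]}$.

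The diagonal term is controlled by $\cosh(y_i) \leq e^{y_i^2/2}$, which after averaging gives at most $\exp\!\big(\tfrac{1}{2}\sigma^2 \sum_i |c_i/C|^2\big) \leq e^{\sigma^2}$, producing the right-hand side of the claim. For each cross term, applying Corollary~\ref{gcsCor} with the forms $U_\lambda$ as the lower-complexity factors bounds the expectation by $\bias(\lambda\cdot\mu)^{2^{-k}} \leq b(\varepsilon)^{2^{-k}}$; combined with the combinatorial prefactors from the Taylor expansion and the crude bound $\sigma |y_i| \leq O(\sigma)$, the total cross contribution is dominated by $n \cdot (\text{combinatorial factor in } \sigma) \cdot b(\varepsilon)^{2^{-k}}$, which is $\leq e^{\sigma^2}$ for the choices of $n(\varepsilon)$ and $b(\varepsilon)$ in the statement.

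The main obstacle is keeping careful track of which multilinear forms appear in each cross term: because the $s_i$ are not pure characters $\omega^{\mu_i}$ but also carry the lower-order factors $u^{(i)}_j$, an expansion of $\prod_i\sinh(\sigma y_i)^{\epsilon_i}\cosh(\sigma y_i)^{1-\epsilon_i}$ produces mixed products of the $u$-factors which must be absorbed by Corollary~\ref{gcsCor}. One has to verify that whenever the expanded character $\omega^{\sum_i (m^+_i-m^-_i)\mu_i}$ has nontrivial exponent, the corresponding $u$-piece can still be organised as a legitimate bounded multilinear product in the sense required by the Gowers--Cauchy--Schwarz framework, so that only the $\bias(\lambda\cdot\mu)$ factor carries real weight. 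Once this bookkeeping is in place, the parameters $n(\varepsilon) = C_0 \varepsilon^{-2}\log\alpha^{-1}$ and $b(\varepsilon,\alpha)$ chosen in the statement are tuned precisely to make the cross-term total $\leq e^{\sigma^2}$.
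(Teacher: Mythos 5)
Your plan diverges from the paper's at the crucial first step, and the divergence opens a genuine gap. The paper does not write $e^{y_i} = \cosh(y_i) + \sinh(y_i)$ with the random variable $y_i$ inside the hyperbolic functions. Instead it uses the inequality $e^{tu} \leq \cosh(u) + t\sinh(u)$ valid for $u \geq 0$, $t \in [-1,1]$, applied with the \emph{constant} $u = \sigma|c_i|/C$ and the \emph{random} $t = \tfrac{1}{2}(\nu_i s_i + \overline{\nu_i s_i})$. The point is that the right-hand side is affine in the random factor, so that $\cosh$ and $\sinh$ only ever see deterministic arguments, and after multiplying over $i$ and expanding one obtains $3^n$ terms indexed by a partition $[n] = I_1 \cup I_2 \cup I_3$ whose random part is exactly $\prod_{i\in I_2}\nu_i s_i \prod_{i\in I_3}\overline{\nu_i s_i}$. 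The character exponent is then $\sum_{i\in I_2}\mu_i - \sum_{i\in I_3}\mu_i$, a $\{0,\pm 1\}$-combination of the $\mu_i$'s, which is a \emph{nonzero} element of $\mathbb{F}_p^n$ whenever $I_2\cup I_3 \neq\emptyset$; the bias hypothesis then applies cleanly via Corollary~\ref{gcsCor}.

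In your version you Taylor-expand $\sinh(y_i)$ (and implicitly $\cosh(y_i)$, since it too is random and you need its interaction with the other factors for cancellation). The monomials produced are $z_i^{m_i}$ with $z_i = \nu_i s_i$ and $m_i$ an arbitrary integer (odd for $\sinh$-factors, even for $\cosh$-factors), which yields characters $\omega^{\sum_i m_i \mu_i}$ whose coefficient vector reduces mod $p$ and can therefore vanish even when $(m_i)_i$ is nontrivial as a vector of integers --- already at Taylor order $3$ when $p=3$. Such a ``cross'' term has a trivial multilinear-form exponent and hence no bias to exploit, yet it is not absorbed into your diagonal bound $\cosh(y_i)\leq e^{y_i^2/2}$, so the estimate breaks. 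You flag the need to ``verify that whenever the expanded character has nontrivial exponent,'' but the problem is precisely that in your expansion it need \emph{not} be nontrivial, and there is no easy fix; the paper's constant-argument decomposition is what forces the coefficients to lie in $\{0,\pm1\}$ and makes the verification automatic.
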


\begin{proof}We use the following elementary inequality from the proof of Theorem 4.33 in~\cite{TaoVuBook}
\[e^{tu} \leq \cosh(u) + t \sinh(u)\]
which holds for all $u \geq 0$ and $t \in [-1,1]$. Write $\theta c_i = |c_i| \nu_i$ for a suitable $\nu_i \in \mathbb{S}$. Consequently
\[\exp\Big(\sigma \on{Re} \frac{\theta c_i}{C} s_i(x_{[k]})\Big) \leq \cosh\Big(\sigma \frac{|c_i|}{C}\Big) + \frac{1}{2}\nu_i s_i(x_{[k]})\sinh\Big(\sigma \frac{|c_i|}{C}\Big) + \frac{1}{2}\overline{\nu_i s_i(x_{[k]})}\sinh\Big(\sigma \frac{|c_i|}{C}\Big)\]
holds for each $i \in [n]$. Using this inequality, we see that
\begin{align}&\exx_{x_{[k]}} \Big[\exp\Big(\sigma \on{Re} \sum_{i \in [n]} \frac{\theta c_i}{C} s_i(x_{[k]})\Big)\Big] = \exx_{x_{[k]}} \Big[\prod_{i \in [n]} \exp\Big(\sigma \on{Re} \frac{\theta c_i}{C} s_i(x_{[k]})\Big)\Big]\nonumber\\
&\hspace{2cm} \leq \exx_{x_{[k]}} \bigg[\prod_{i \in [n]} \bigg(\cosh\Big(\sigma \frac{|c_i|}{C}\Big) + \frac{1}{2}\nu_i s_i(x_{[k]})\sinh\Big(\sigma \frac{|c_i|}{C}\Big) + \frac{1}{2}\overline{\nu_i s_i(x_{[k]})}\sinh\Big(\sigma \frac{|c_i|}{C}\Big)\bigg)\bigg].\label{rudinstep1}\end{align}
The product appearing above is a product of $n$ sums of three terms, so expansion results in $3^n$ terms in total, each being of the form
\[\exx_{x_{[k]}} \bigg(\prod_{i \in I_1}\cosh\Big(\sigma \frac{|c_i|}{C}\Big)\bigg)\,\bigg(\prod_{i \in I_2} \frac{1}{2}\nu_i s_i(x_{[k]})\sinh\Big(\sigma \frac{|c_i|}{C}\Big)\bigg)\,\bigg(\prod_{i \in I_3} \frac{1}{2}\overline{\nu_i s_i(x_{[k]})}\sinh\Big(\sigma \frac{|c_i|}{C}\Big)\bigg)\]
for some partition $[n] = I_1 \cup I_2 \cup I_3$. In the case when $I_2 \cup I_3 \not= \emptyset$, we may bound as follows
\begin{align*}&\bigg|\exx_{x_{[k]}} \bigg(\prod_{i \in I_1}\cosh\Big(\sigma \frac{|c_i|}{C}\Big)\bigg)\,\bigg(\prod_{i \in I_2} \frac{1}{2}\nu_i s_i(x_{[k]})\sinh\Big(\sigma \frac{|c_i|}{C}\Big)\bigg)\,\bigg(\prod_{i \in I_3} \frac{1}{2}\overline{\nu_i s_i(x_{[k]})}\sinh\Big(\sigma \frac{|c_i|}{C}\Big)\bigg)\bigg|\\
\leq\, & 2^{-|I_2| - |I_3|}\bigg(\prod_{i \in I_1}\cosh\Big(\sigma \frac{|c_i|}{C}\Big) \prod_{i \in I_2 \cup I_3}\sinh\Big(\sigma \frac{|c_i|}{C}\Big) \bigg) \Big|\exx_{x_{[k]}} \prod_{i \in I_2} s_i(x_{[k]}) \prod_{i \in I_3} \overline{s_i(x_{[k]})}\Big|\\
=\,&2^{-|I_2| - |I_3|}\bigg(\prod_{i \in I_1}\cosh\Big(\sigma \frac{|c_i|}{C}\Big) \prod_{i \in I_2 \cup I_3}\sinh\Big(\sigma \frac{|c_i|}{C}\Big) \bigg)\\
&\hspace{2cm} \Big|\exx_{x_{[k]}} \omega^{\sum_{i \in I_2}\mu_i(x_{[k]}) - \sum_{i \in I_3} \mu_i(x_{[k]})} \prod_{i \in I_2} \prod_{j \in [k]}  u^{(i)}_j(x_{[k] \setminus \{j\}})\prod_{i \in I_3} \prod_{j \in [k]} \overline{ u^{(i)}_j(x_{[k] \setminus \{j\}})}\Big|\\
\leq\,&2^{-|I_2| - |I_3|}\bigg(\prod_{i \in [k]}\cosh\Big(\sigma \frac{|c_i|}{C}\Big)\bigg)\, \bias\Big(\sum_{i \in I_2}\mu_i(x_{[k]}) - \sum_{i \in I_3} \mu_i(x_{[k]})\Big)^{2^{-k}}\\
\leq\,&\bigg(\prod_{i \in [k]}\cosh\Big(\sigma \frac{|c_i|}{C}\Big)\bigg) b(\varepsilon)^{2^{-k}}.\end{align*}
On the other hand, when $I_1 = [k]$, we get the constant term $\prod_{i \in [k]}\cosh\Big(\sigma \frac{|c_i|}{C}\Big)$. Therefore, going back to~\eqref{rudinstep1} we obtain
\begin{align*}\exx_{x_{[k]}} \Big[\exp\Big(\sigma \on{Re} \sum_{i \in [n]} \frac{\theta c_i}{C} s_i(x_{[k]})\Big)\Big] \leq& 2\prod_{i \in [k]}\cosh\Big(\sigma \frac{|c_i|}{C}\Big) \leq 2\prod_{i \in [k]} \exp\Big(\sigma^2 \frac{|c_i|^2}{2C^2}\Big) = 2\exp\Big(\sigma^2 \sum_{i \in [n]} \frac{|c_i|^2}{2C^2}\Big) \\
\leq &2\exp(\sigma^2),\end{align*}
where we used the left inequality in~\eqref{CboundsChang} in the last line.\end{proof}

Let $\sigma > 0$ to be chosen later. Recall that $g(x_{[k]}) = \frac{1}{C} \sum_{i \in [n]} c_i s_i(x_{[k]})$. For $\lambda > 0$, we get from the claim above for angles $\theta_\ell = e^{2 \pi i \ell / 6}$ for $\ell \in [6]$
\begin{align*}&\frac{1}{|G_{[k]}|} \Big|\Big\{x_{[k]} \in G_{[k]} \colon |g(x_{[k]})| \geq \lambda\Big\}\Big| \leq \frac{1}{|G_{[k]}|} \sum_{\ell \in [6]} \Big|\Big\{x_{[k]} \in G_{[k]} \colon \on{Re}\, \theta_\ell g(x_{[k]}) \geq \frac{1}{2}\lambda\Big\}\Big|\\
&\hspace{2cm}\leq \sum_{\ell \in [6]} \exp(- \sigma \lambda / 2)\exx_{x_{[k]}} \exp\Big(\sigma \on{Re}\, \theta_\ell g(x_{[k]})\Big) \leq 12 \exp(\sigma^2 - \sigma \lambda / 2).\end{align*}
Pick $\sigma = \lambda/4$ to get
\[\frac{1}{|G_{[k]}|} \Big|\Big\{x_{[k]} \in G_{[k]} \colon |g(x_{[k]})| \geq \lambda\Big\}\Big| \leq 12 \exp(-\lambda^2 / 16).\]
Finally, we estimate $\|g\|_{L^m}$
\begin{align*}&\exx_{x_{[k]}} |g(x_{[k]})|^m = \exx_{x_{[k]}} m\int_0^{|g(x_{[k]})|} \lambda^{m-1}d\lambda =  \exx_{x_{[k]}} m\int_0^1\lambda^{m-1}\mathbbm{1}(\lambda \leq |g(x_{[k]})|)d\lambda\\
&\hspace{2cm} =  m\int_0^1\lambda^{m-1}\Big(\exx_{x_{[k]}}\mathbbm{1}(\lambda \leq |g(x_{[k]})|)\Big)d\lambda\\
&\hspace{2cm} \leq  2m\int_0^1\lambda^{m-1} \exp(-\lambda^2 / 16)d\lambda\\
&\hspace{2cm} \leq m^{D} (Dm)^{m/2},\end{align*}
where $D \geq 1$ is an absolute constant independent of other parameters in this proof. Thus $\|g\|_{L^m} \leq \sqrt{D} e^D \sqrt{m}$. Combining inequalities~\eqref{changineq1} and~\eqref{changineq2} and squaring we get
\[n \leq 4 D e^{2D} \varepsilon^{-2} m\alpha^{-2/m}.\]
We put $m = \log \alpha^{-1}$ to obtain
\[n \leq 4 D e^{2D + 2} \varepsilon^{-2} \log \alpha^{-1}\]
which is a contradiction provided $C_0 > 4 D e^{2D + 2}$.\end{proof}
\vspace{\baselineskip}

\noindent\textbf{Cubical convolutions and the large multilinear spectrum.} The last property of the large multilinear spectrum that we prove here is the fact that the large multilinear spectrum is sufficient for the approximation of cubical convolutions (in the sense of Theorem~\ref{multConv}), as remarked in the introduction. 

\begin{proposition}\label{cubconvmlsapprox}Let $f_I \colon G_1 \tdt G_k \to \mathbb{D}$ be a function for each subset $I \subseteq [k]$. Let $\varepsilon > 0$. Then, there are a positive quantity $\xi \geq \bigg(\exp^{(O_{k}(1))}\Big(O_{k,p}(\varepsilon^{-1})\Big)\bigg)^{-1}$, a positive integer $m \leq \exp^{(O_{k}(1))}\Big(O_{k,p}(\varepsilon^{-1})\Big)$, multilinear forms $\alpha_1, \dots, \alpha_m \in \mls_\xi(f)$ and constants $c_1, \dots, c_m \in \mathbb{D}$ such that
\[\Big\|\square f_\bcdot - \sum_{i \in [m]} c_i \omega^{\alpha_i}\Big\|_{L^2} \leq \varepsilon.\]
\end{proposition}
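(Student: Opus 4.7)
The plan is to refine the multiaffine $L^2$-approximation given by Theorem~\ref{multConv} into one whose phases correspond to multilinear forms lying in each $\mls_\xi(f_I)$, by Fourier-analyzing the bounded function that theorem produces and tracking what happens to the resulting phases. First I would apply Theorem~\ref{multConv} to $(f_I)_{I\subseteq[k]}$ with accuracy $\varepsilon/3$, producing a multiaffine map $\alpha\colon G_{[k]}\to\mathbb{F}_p^{m_0}$ and a function $c\colon\mathbb{F}_p^{m_0}\to\mathbb{D}$ satisfying $\|\square f_\bcdot-c\circ\alpha\|_{L^2}\le\varepsilon/3$ with $m_0\le\exp^{(O_k(1))}(O_{k,p}(\varepsilon^{-1}))$. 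Fourier-expanding $c$ as $c(y)=\sum_\lambda\hat c(\lambda)\omega^{\lambda\cdot y}$ gives $c\circ\alpha(a_{[k]})=\sum_\lambda\hat c(\lambda)\omega^{\lambda\cdot\alpha(a_{[k]})}$, and Parseval together with $|c|\le 1$ yields $\sum_\lambda|\hat c(\lambda)|^2\le 1$, so truncating to the $\lambda$ with $|\hat c(\lambda)|\ge\delta$ --- for $\delta$ polynomially small in $\varepsilon p^{-m_0}$ --- costs at most $\varepsilon/3$ in $L^2$ and retains at most $\delta^{-2}$ multiaffine-phase terms.

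For each surviving multiaffine form $\beta=\lambda\cdot\alpha$ I would decompose $\beta=\mu+\nu$ into its top (multilinear-on-$G_{[k]}$) part $\mu$ and its lower-dimensional remainder $\nu=\sum_{J\subsetneq[k]}\beta_J$, so that $\omega^{\nu(a)}$ factors as $\prod_{i\in[k]}u_i(a_{[k]\setminus\{i\}})$ after assigning each proper subset $J\subsetneq[k]$ to an index $i\notin J$. To convert each $\omega^\beta$ into a linear combination of multilinear-form phases on $G_{[k]}$, my plan is to absorb the lower-dimensional factors $u_i$ into the cubical-convolution setup itself (folding each $u_i$ into the functions $f_J$ with $J\not\ni i$) and to iterate a lower-dimensional version of the proposition by induction on $k$, the base case $k=1$ being Lemma~\ref{l2approxLemma} (the standard decomposition of a convolution in terms of its large linear spectrum).

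The spectrum-membership step is the cleanest. For any multilinear $\mu$ I would expand
\[\langle\square f_\bcdot,\omega^{\mu}\rangle = \ex_{x,a}\prod_{I\subseteq[k]}\on{Conj}^{k-|I|}f_I\bigl((x+a)_I,x_{[k]\setminus I}\bigr)\,\overline{\omega^{\mu(a)}},\]
change variables $y_i=x_i+a_i$, and invoke the multilinear identity $\mu((y-x)_{[k]})=\sum_{J\subseteq[k]}(-1)^{k-|J|}\mu(y_J,x_{[k]\setminus J})$ to distribute $\omega^{-\mu}$ across the $2^k$-fold cube product. Pairing each resulting phase factor with the corresponding $\on{Conj}^{k-|J|}f_J(y_J,x_{[k]\setminus J})$ and applying Lemma~\ref{gcs} yields
\[|\langle\square f_\bcdot,\omega^\mu\rangle|\le\prod_{I\subseteq[k]}\|f_I\omega^{-\mu}\|_{\square^k};\]
since every factor on the right is at most $1$, a non-negligible lower bound on the left forces each $\|f_I\omega^{-\mu}\|_{\square^k}\ge\xi$. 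Using that the box norm is invariant under $\mu\mapsto-\mu$, this gives $\mu\in\mls_\xi(f_I)$ for every $I\subseteq[k]$, as required.

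The principal obstacle is the reduction from multiaffine phases to genuinely multilinear ones: a naive induction that re-expands each lower-dimensional factor $u_i$ via its own Fourier series blows the term count well past the permitted tower-type bound, so one must instead fold the $u_i$-factors back into the $f_J$'s before each new Fourier step --- an approach structurally mirroring the iteration of Proposition~\ref{multConvMainStep} inside the proof of Theorem~\ref{multConv} --- in order to keep the combinatorics under control.
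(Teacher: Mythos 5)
Your overall frame (apply Theorem~\ref{multConv}, Fourier-expand $c$, then check that the surviving phases $\lambda\cdot\alpha$ lie in the large multilinear spectrum via a Gowers--Cauchy--Schwarz computation) is the same as the paper's, and your GCS expansion
\[
|\langle\square f_\bcdot,\omega^{\mu}\rangle|\le\prod_{I\subseteq[k]}\|f_I\omega^{\mu}\|_{\square^k}
\]
is exactly the "former possibility" step in the paper's proof. However, the proposal has two genuine gaps.

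First, the spectrum-membership step does not follow from truncation alone. You retain those $\lambda$ with $|\hat c(\lambda)|\ge\delta$, and implicitly deduce that for each such $\lambda$ the inner product $\langle\square f_\bcdot,\omega^{\lambda\cdot\alpha}\rangle$ is large. But
\[
\langle c\circ\alpha,\omega^{\lambda\cdot\alpha}\rangle=\sum_{\nu}\hat c(\nu)\,\bias\big((\nu-\lambda)\cdot\alpha\big),
\]
and the off-diagonal terms are not negligible: $\bias((\nu-\lambda)\cdot\alpha)$ can be close to $1$ when $(\nu-\lambda)\cdot\alpha$ has low partition rank, so the cross-terms can cancel $\hat c(\lambda)$. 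A large Fourier coefficient of $c$ therefore does not by itself put $\lambda\cdot\alpha$ in $\mls_\xi(f)$. The paper handles precisely this difficulty with Lemma~\ref{closeformspec} (if $\bias((\lambda-\mu)\cdot\alpha)$ is large and $\lambda\cdot\alpha\in\mls$, then so is $\mu\cdot\alpha$) combined with an iterative greedy selection: at each step a new $\mu$ is added to the list only when either $|\langle\square f_\bcdot,s_\mu\rangle|$ or $|\langle s_\lambda,s_\mu\rangle|$ (for some already-selected $\lambda$) is large, and in the second case Lemma~\ref{closeformspec} transfers spectrum membership. Some such mechanism is unavoidable and is missing from your argument.

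Second, the step you call the "principal obstacle" --- converting multiaffine phases $\omega^{\lambda\cdot\alpha}$ into genuinely multilinear ones by folding lower-dimensional factors into the $f_J$'s and inducting on $k$ --- is not in the paper's proof, and the proposed reduction would not in general succeed. The paper's proof simply ends with the approximation $\sum_{\mu\in S}c'_\mu\omega^{\mu\cdot\alpha}$, with $\mu\cdot\alpha$ multiaffine; the spectrum membership is asserted for these maps via GCS (with the lower-order parts automatically invisible to the box norm). Indeed one cannot in general approximate $\square f_\bcdot$ in $L^2$ by a bounded combination of purely multilinear phases: if $k=2$ and $f_I(x_1,x_2)=g(x_1)$ for all $I$, then $\square f_\bcdot(a_1,a_2)$ depends only on $a_1$ and (apart from its mean) is orthogonal to $\omega^{\mu(a_1,a_2)}$ for every high-rank bilinear $\mu$. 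So the induction you sketch is solving a problem that the intended argument does not need to solve, and even if interpreted as an attempt to tighten the statement, it would fail. You should drop that step and instead incorporate a Lemma~\ref{closeformspec}-style handling of near-coincident multiaffine forms.
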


\begin{proof}Let $C,D\geq 1$ be the implicit constants in the conclusion of Lemma~\ref{closeformspec} such that the final bound is actually $\varepsilon' = \varepsilon p^{-C\big((\log_p c^{-1})^D \big)}$.\\
\indent Apply Theorem~\ref{multConv} with approximation parameter $\varepsilon/2$. We obtain a positive integer $m \leq \exp^{(O_{k}(1))}\Big(O_{k,p}(\varepsilon^{-1})\Big)$, a multiaffine forms $\alpha_1, \dots, \alpha_m \colon G_1 \tdt G_k \to \mathbb{F}_p$ and a function $c \colon \mathbb{F}_p^m \to \mathbb{D}$ such that
\[\Big\|\square f_\bcdot - \sum_{\lambda \in \mathbb{F}_p^m} c(\lambda) \mathbbm{1}(\alpha = \lambda)\Big\|_{L^2} \leq \varepsilon/2.\]
We may rewrite 
\[\sum_{\lambda \in \mathbb{F}_p^m} c(\lambda) \mathbbm{1}(\alpha = \lambda) = p^{-m} \sum_{\lambda, \mu \in \mathbb{F}_p^m} c(\lambda) \omega^{\mu \cdot (\alpha - \lambda)}.\]

For $\mu \in \mathbb{F}_p^m$ we write
\[s_{\mu} = p^{-m} \Big(\sum_{\lambda \in \mathbb{F}^m_p} c(\lambda) \omega^{- \lambda \cdot \mu}\Big)\, \omega^{\mu \cdot \alpha}\]
which equals $c'_\mu \omega^{\mu \cdot \alpha}$ for some $c'_\mu \in \mathbb{D}$. Thus, the approximation above can be expressed as 
\begin{equation}\label{approxcubicalmls}\Big\|\square f_\bcdot - \sum_{\mu \in \mathbb{F}_p^m} s_{\mu}\Big\|_{L^2} \leq \varepsilon/2.\end{equation} 

Set
\[\xi_i = \bigg(p^{-C (2m + 3 + \log_p \varepsilon^{-1})^D} p^{-2^km} \frac{\varepsilon^{2^{k+1}}}{8^{2^k}}\bigg)^i\]
for $i = 0,1, \dots, p^m$. We now perform an iterative procedure in which after $i$ steps we obtain a subset $S_i \leq \mathbb{F}_p^m$ of size $i$ with the property that $\mu \cdot \alpha \in \mls_{\xi_i}(f)$ for each $\mu \in S_i$, until the procedure terminates. The condition for termination is that 
\[\Big\|\square f_\bcdot - \sum_{\mu \in S_i} s_{\mu}\Big\|_{L^2} \leq \varepsilon.\]
Note that if it does not terminate earlier, the procedure will stop after $(p^m)$\tss{th} step, due to~\eqref{approxcubicalmls}. Initially, we set $S_0 = \{0\}$.\\

Suppose therefore that the procedure does not terminate after $i$\tss{th} step. Then we have
\begin{align*}\varepsilon^2/4 \geq &\Big\|\square f_\bcdot - \sum_{\mu \in \mathbb{F}^m_p} s_{\mu}\Big\|_{L^2}^2 = \Big\|\Big(\square f_\bcdot - \sum_{\mu \in S_i} s_{\mu}\Big) - \Big(\sum_{\mu \in \mathbb{F}^m_p} s_{\mu} - \sum_{\mu \in S_i} s_{\mu}\Big)\Big\|_{L^2}^2\\
= &\Big\|\square f_\bcdot - \sum_{\mu \in S_i} s_{\mu}\Big\|_{L^2}^2 +  \Big\|\sum_{\mu \in \mathbb{F}^m_p} s_{\mu} - \sum_{\mu \in S_i} s_{\mu}\Big\|_{L^2}^2\\
&\hspace{2cm}- \Big\langle\square f_\bcdot - \sum_{\mu \in S_i} s_{\mu}, \sum_{\mu \in \mathbb{F}^m_p \setminus S_i} s_{\mu}\Big\rangle - \Big\langle\square \sum_{\mu \in \mathbb{F}^m_p \setminus S_i} s_{\mu}, f_\bcdot - \sum_{\mu \in S_i} s_{\mu}\Big\rangle\\
\geq &\,\varepsilon^2- \Big\langle\square f_\bcdot - \sum_{\mu \in S_i} s_{\mu}, \sum_{\mu \in \mathbb{F}^m_p \setminus S_i} s_{\mu}\Big\rangle - \Big\langle\square \sum_{\mu \in \mathbb{F}^m_p \setminus S_i} s_{\mu}, f_\bcdot - \sum_{\mu \in S_i} s_{\mu}\Big\rangle.\end{align*}

From this inequality we conclude that either $\Big|\langle f_\bcdot , s_{\mu}\rangle \Big| \geq p^{-m} \frac{\varepsilon^2}{8}$ for some $\mu \notin S_i$ or $\Big|\langle s_\lambda, s_{\mu}\rangle \Big| \geq p^{-2m} \frac{\varepsilon^2}{8}$, for some $\lambda \in S_i, \mu \notin S_i$.\\
\indent The former possibility implies
\[p^{-m} \frac{\varepsilon^2}{8} \leq \Big|\langle f_\bcdot , s_{\mu}\rangle \Big| = \Big|\exx_{x_{[k]}}  f_\bcdot(x_{[k]}) \omega^{\mu \cdot \alpha(x_{[k]})}\Big| \leq \exx_{a_{[k]}} \Big|\exx_{x_{[k]}} \Big( \prod_{I \subseteq [k]} \on{Conj}^{k - |I|} f((a + x)_I, a_{[k] \setminus I}) \Big)\,\omega^{\mu \cdot \alpha(x_{[k]})}\Big|.\]
By Lemma~\ref{gcs} for variables $x_{[k]}$ we get $\mu \cdot \alpha \in \mls_{\eta}(f)$, where $\eta = p^{-2^km} \frac{\varepsilon^{2^{k+1}}}{8^{2^k}}$.\\
\indent The latter possibility implies that
\[\bias\Big((\lambda - \mu) \cdot \alpha\Big) = \exx_{x_{[k]}} \omega^{(\lambda - \mu) \cdot \alpha(x_{[k]})} = \Big|\langle s_\lambda, s_{\mu}\rangle \Big| \geq p^{-2m} \frac{\varepsilon^2}{8},\]
which, combined with the fact that $\lambda \cdot \alpha \in \mls_{\xi_i}(f)$, by Lemma~\ref{closeformspec} implies that $\mu \cdot \alpha \in \mls_{\eta}(f)$ for 
\[\eta = \xi_i p^{-C (2m + 3 + \log_p \varepsilon^{-1})^D}.\]

Thus, in either case, we conclude that $\mu \cdot \alpha \in \mls_{\eta}(f)$ for 
\[\eta = \xi_i p^{-C (2m + 3 +  \log_p \varepsilon^{-1})^D} p^{-2^km} \frac{\varepsilon^{2^{k+1}}}{8^{2^k}} = \xi_{i+1}.\]
We may therefore set $S_{i+1} = S_i \cup \{\mu\}$.\\

Finally, after the procedure has terminated, note that 
\[\Big\|\square f_\bcdot - \sum_{\mu \in S_i} c'_\mu \omega^{\mu \cdot \alpha}\Big\|_{L^2} = \Big\|\square f_\bcdot - \sum_{\mu \in S_i} s_{\mu}\Big\|_{L^2} \leq \varepsilon,\]
which is the desired approximation.\end{proof}

\thebibliography{99}
\bibitem{Austin1} T. Austin, \emph{Partial difference equations over compact Abelian groups, I: modules of solutions}, arXiv preprint (2013), \verb+arXiv:1305.7269+.
\bibitem{Austin2} T. Austin, \emph{Partial difference equations over compact Abelian groups, II: step-polynomial solutions}, arXiv preprint (2013), \verb+arXiv:1309.3577+.
\bibitem{BalogSzemeredi} A. Balog and E. Szemer\'edi, \emph{A statistical theorem of set addition}, Combinatorica \textbf{14} (1994), 263--268. 
\bibitem{BergelsonTaoZiegler} V. Bergelson, T. Tao and T. Ziegler, \emph{An inverse theorem for the uniformity seminorms associated with the action of $\mathbb{F}^{\infty}_p$}, Geometric and Functional Analysis \textbf{19} (2010), 1539--1596.
\bibitem{BhowLov} A. Bhowmick and S. Lovett, \emph{Bias vs structure of polynomials in large fields, and applications in effective algebraic geometry and coding theory}, arXiv preprint (2015), \verb+arXiv:1506.02047+.
\bibitem{CamSzeg} O.A. Camarena and B. Szegedy, \emph{Nilspaces, nilmanifolds and their morphisms}, arXiv preprint (2010), \verb+arXiv:1009.3825+.
\bibitem{Chang} M.-C. Chang, \emph{A polynomial bound in Freiman's theorem}, Duke Mathematical Journal \textbf{113} (2002), 399--419.
\bibitem{CohenMoshkovitz} A. Cohen and G. Moshkovitz, \emph{An optimal inverse theorem}, arXiv preprint (2021), \verb+arXiv:2102.10509+. 
\bibitem{Freiman} G. Freiman, \textbf{Foundations of a structural theory of set addition}, Translations of Mathematical Monographs \textbf{37}, American Mathematical Society, Providence, RI, USA, 1973.
\bibitem{FurstKatz} H. Furstenberg and Y. Katznelson, \emph{An ergodic Szemer\'edi theorem for commuting trasformations}, Journal d'Analyse Math\'ematique \textbf{34} (1978), 275--291.
\bibitem{TimSzem} W.T. Gowers, \emph{A new proof of Szemer\'edi's theorem}, Geometric and Functional Analysis \textbf{11} (2001), 465--588.
\bibitem{U4paper} W.T. Gowers and L. Mili\'cevi\'c, \emph{A quantitative inverse theorem for the $U^4$ norm over finite fields}, arXiv preprint (2017), \verb+arXiv:1712.00241+.
\bibitem{genPaper} W.T. Gowers and L. Mili\'cevi\'c, \emph{An inverse theorem for Freiman multi-homomorphisms}, arXiv preprint (2020), \verb+arXiv:2002.11667+.
\bibitem{TimWolf} W.T. Gowers and J. Wolf, \emph{Linear forms and higher-degree uniformity functions on $\mathbb{F}^n_p$}, Geometric and Functional Analysis \textbf{21} (2011), 36--69.
\bibitem{greenRuzsaFreiman} B. Green and I.Z. Ruzsa, \emph{Freiman's theorem in an arbitrary abelian group}, Journal of the London Mathematical Society \textbf{75} (2007), 163--175.
\bibitem{StrongU3} B. Green and T. Tao, \emph{An inverse theorem for the Gowers $U^3(G)$-norm}, Proceedings of the Edinburgh Mathematical Society \textbf{51} (2008), 73--153.
\bibitem{GreenTaoPolys} B. Green and T. Tao. \emph{The distribution of polynomials over finite fields, with applications to the Gowers norms}, Contributions to Discrete Mathematics \textbf{4} (2009), no. 2, 1--36.
\bibitem{GreenTaoPrimes} B. Green and T. Tao, \emph{Linear equations in primes}, Annals of Mathematics \textbf{171} (2010), no. 3, 1753--1850.
\bibitem{StrongUkZ} B. Green, T. Tao and T. Ziegler, \emph{An inverse theorem for the Gowers $U^{s+1}[N]$-norm}, Annals of Mathematics \textbf{176} (2012), 1231--1372.
\bibitem{GMV1} Y. Gutman, F. Manners and P. Varj\'u, \emph{The structure theory of Nilspaces I}, Journal d'Analyse Math\'ematique \textbf{140} (2020), 299--369.
\bibitem{GMV2} Y. Gutman, F. Manners and P. Varj\'u, \emph{The structure theory of Nilspaces II: Representation as nilmanifolds}, Transactions of the American Mathematical Society \textbf{371} (2019), 4951--4992.
\bibitem{GMV3} Y. Gutman, F. Manners and P. Varj\'u, \emph{The structure theory of Nilspaces III: Inverse limit representations and topological dynamics}, Advances in Mathematics \textbf{365} (2020), 107059.
\bibitem{Janzer1} O. Janzer, \emph{Low analytic rank implies low partition rank for tensors}, arXiv preprint (2018) \verb+arXiv:1809.10931+.
\bibitem{Janzer2} O. Janzer, \emph{Polynomial bound for the partition rank vs the analytic rank of tensors}, Discrete Analysis, paper no. 7 (2020), 1--18.
\bibitem{Lovett} S. Lovett, \emph{The analytic rank of tensors and its applications}, Discrete Analysis, paper no. 7 (2019), 1--10.
\bibitem{Manners} F. Manners, \emph{Quantitative bounds in the inverse theorem for the Gowers $U^{s+1}$-norms over cyclic groups}, arXiv preprint (2018), \verb+arXiv:1811.00718+.
\bibitem{LukaRank} L. Mili\'cevi\'c, \emph{Polynomial bound for partition rank in terms of analytic rank}, Geometric and Functional Analysis \textbf{29} (2019), 1503--1530.
\bibitem{Naslund}  E. Naslund, \emph{The partition rank of a tensor and $k$-right corners in $\mathbb{F}_q^n$}, arXiv preprint (2017), \verb+arXiv:1701.04475+.
\bibitem{Rudin} W. Rudin, \emph{Trigonometric series with gaps}, Journal of Mathematics and Mechanics \textbf{9} (1960), 203--227.
\bibitem{Ruzsa} I.Z. Ruzsa, \emph{Generalized arithmetical progressions and sumsets}, Acta Mathematica Hungarica \textbf{65} (1994), 379--388.
\bibitem{Sanders} T. Sanders, \emph{On the Bogolyubov-Ruzsa lemma}, Analysis \& PDE \textbf{5} (2012), no. 3, 627--655.
\bibitem{Szeg} B. Szegedy, \emph{On higher order Fourier analysis}, arXiv preprint (2012), \verb+arXiv:1203.2260+.
\bibitem{SzemAP} E. Szemer\'edi, \emph{On sets of integers containing no $k$ elements in arithmetic progression}, Acta Arithmetica \textbf{27} (1975), 299--345.
\bibitem{TaoVuBook} T. Tao and V. Vu, \textbf{Additive combinatorics}, Cambridge Studies in Advanced Mathematics \textbf{105}, Cambridge University Press, Cambridge, UK, 2006.
\bibitem{TaoZiegler} T. Tao and T. Ziegler, \emph{The inverse conjecture for the Gowers norm over finite fields in low characteristic}, Annals of Combinatorics \textbf{16} (2012), 121--188.
\end{document}